\journal{{\tt ArXiv}}
\pgfplotsset{compat=newest}       
\newtheorem{theorem}{Theorem}[section]
\newtheorem{definition}[theorem]{Definition}
\newtheorem{remark}[theorem]{Remark}
\newtheorem{assumption}[theorem]{Assumption}
\newtheorem{lemma}[theorem]{Lemma}
\newcounter{tikzsubfigcounter}[figure]
\renewcommand{\thetikzsubfigcounter}{\the\numexpr\value{figure}+1\relax\alph{tikzsubfigcounter}}
\newcommand{\tikztitle}[1]{ %
	\refstepcounter{tikzsubfigcounter}
	\textbf{(\alph{tikzsubfigcounter})}\space\space #1 
}
\newcounter{tikzsubfigcounterinvisible}[figure]
\renewcommand{\thetikzsubfigcounterinvisible}{\the\numexpr\value{figure}+1\relax\alph{tikzsubfigcounterinvisible}}
\newcommand{\settikzlabel}[1]{ %
	\refstepcounter{tikzsubfigcounterinvisible} \label{#1} 
}
\newcommand{\refone}[1]{\textcolor{black}{#1}}
\newcommand{\reftwo}[1]{\textcolor{black}{#1}}
\numberwithin{equation}{section}
\title{A \hyperbolicity-preserving discontinuous stochastic Galerkin scheme for uncertain hyperbolic systems of equations}
\author[jd]{Jakob D\"{u}rrw\"{a}chter}
\address[jd]{Institut f\"{u}r  Aerodynamik und Gasdynamik, Universit\"{a}t Stuttgart, Pfaffenwaldring 21, 70569 Stuttgart, Germany, {\tt jakob.duerrwaechter@iag.uni-stuttgart.de}}
\author[tk]{Thomas  Kuhn}
\address[tk]{Institut f\"{u}r  Aerodynamik und Gasdynamik, Universit\"{a}t Stuttgart, Pfaffenwaldring 21, 70569 Stuttgart, Germany, {\tt thomas.kuhn@iag.uni-stuttgart.de}}
\author[fm]{Fabian Meyer}
\address[fm]{Institut f\"{u}r Angewandte Analysis und numerische Simulation, Universit\"{a}t Stuttgart, Pfaffenwaldring 57, 70569 Stuttgart, Germany, {\tt fabian.meyer@mathematik.uni-stuttgart.de}}
\author[ls]{Louisa Schlachter}
\address[ls]{Fachbereich Mathematik, TU Kaiserslautern, Erwin-Schr\"odinger-Str., 67663 Kaiserslautern, Germany, {\tt schlacht@mathematik.uni-kl.de}}
\author[fs]{Florian Schneider}
\address[fs]{Fachbereich Mathematik, TU Kaiserslautern, Erwin-Schr\"odinger-Str., 67663 Kaiserslautern, Germany, {\tt schneider@mathematik.uni-kl.de}}
\date{}
\definecolor{greenyellow}   {cmyk}{0.15, 0   , 0.69, 0   }
\definecolor{yellow}        {cmyk}{0   , 0   , 1   , 0   }
\definecolor{goldenrod}     {cmyk}{0   , 0.10, 0.84, 0   }
\definecolor{dandelion}     {cmyk}{0   , 0.29, 0.84, 0   }
\definecolor{apricot}       {cmyk}{0   , 0.32, 0.52, 0   }
\definecolor{peach}         {cmyk}{0   , 0.50, 0.70, 0   }
\definecolor{melon}         {cmyk}{0   , 0.46, 0.50, 0   }
\definecolor{yelloworange}  {cmyk}{0   , 0.42, 1   , 0   }
\definecolor{orange}        {cmyk}{0   , 0.61, 0.87, 0   }
\definecolor{burntorange}   {cmyk}{0   , 0.51, 1   , 0   }
\definecolor{bittersweet}   {cmyk}{0   , 0.75, 1   , 0.24}
\definecolor{redorange}     {cmyk}{0   , 0.77, 0.87, 0   }
\definecolor{mahogany}      {cmyk}{0   , 0.85, 0.87, 0.35}
\definecolor{maroon}        {cmyk}{0   , 0.87, 0.68, 0.32}
\definecolor{brickred}      {cmyk}{0   , 0.89, 0.94, 0.28}
\definecolor{red}           {cmyk}{0   , 1   , 1   , 0   }
\definecolor{orangered}     {cmyk}{0   , 1   , 0.50, 0   }
\definecolor{rubinered}     {cmyk}{0   , 1   , 0.13, 0   }
\definecolor{wildstrawberry}{cmyk}{0   , 0.96, 0.39, 0   }
\definecolor{salmon}        {cmyk}{0   , 0.53, 0.38, 0   }
\definecolor{carnationpink} {cmyk}{0   , 0.63, 0   , 0   }
\definecolor{magenta}       {cmyk}{0   , 1   , 0   , 0   }
\definecolor{violetred}     {cmyk}{0   , 0.81, 0   , 0   }
\definecolor{rhodamine}     {cmyk}{0   , 0.82, 0   , 0   }
\definecolor{mulberry}      {cmyk}{0.34, 0.90, 0   , 0.02}
\definecolor{redviolet}     {cmyk}{0.07, 0.90, 0   , 0.34}
\definecolor{fuchsia}       {cmyk}{0.47, 0.91, 0   , 0.08}
\definecolor{lavender}      {cmyk}{0   , 0.48, 0   , 0   }
\definecolor{thistle}       {cmyk}{0.12, 0.59, 0   , 0   }
\definecolor{orchid}        {cmyk}{0.32, 0.64, 0   , 0   }
\definecolor{darkorchid}    {cmyk}{0.40, 0.80, 0.20, 0   }
\definecolor{purple}        {cmyk}{0.45, 0.86, 0   , 0   }
\definecolor{plum}          {cmyk}{0.50, 1   , 0   , 0   }
\definecolor{violet}        {cmyk}{0.79, 0.88, 0   , 0   }
\definecolor{royalpurple}   {cmyk}{0.75, 0.90, 0   , 0   }
\definecolor{blueviolet}    {cmyk}{0.86, 0.91, 0   , 0.04}
\definecolor{periwinkle}    {cmyk}{0.57, 0.55, 0   , 0   }
\definecolor{cadetblue}     {cmyk}{0.62, 0.57, 0.23, 0   }
\definecolor{cornflowerblue}{cmyk}{0.65, 0.13, 0   , 0   }
\definecolor{midnightblue}  {cmyk}{0.98, 0.13, 0   , 0.43}
\definecolor{navyblue}      {cmyk}{0.94, 0.54, 0   , 0   }
\definecolor{royalblue}     {cmyk}{1   , 0.50, 0   , 0   }
\definecolor{blue}          {cmyk}{1   , 1   , 0   , 0   }
\definecolor{cerulean}      {cmyk}{0.94, 0.11, 0   , 0   }
\definecolor{cyan}          {cmyk}{1   , 0   , 0   , 0   }
\definecolor{processblue}   {cmyk}{0.96, 0   , 0   , 0   }
\definecolor{skyblue}       {cmyk}{0.62, 0   , 0.12, 0   }
\definecolor{turquoise}     {cmyk}{0.85, 0   , 0.20, 0   }
\definecolor{tealblue}      {cmyk}{0.86, 0   , 0.34, 0.02}
\definecolor{aquamarine}    {cmyk}{0.82, 0   , 0.30, 0   }
\definecolor{bluegreen}     {cmyk}{0.85, 0   , 0.33, 0   }
\definecolor{emerald}       {cmyk}{1   , 0   , 0.50, 0   }
\definecolor{junglegreen}   {cmyk}{0.99, 0   , 0.52, 0   }
\definecolor{seagreen}      {cmyk}{0.69, 0   , 0.50, 0   }
\definecolor{green}         {cmyk}{1   , 0   , 1   , 0   }
\definecolor{forestgreen}   {cmyk}{0.91, 0   , 0.88, 0.12}
\definecolor{pinegreen}     {cmyk}{0.92, 0   , 0.59, 0.25}
\definecolor{limegreen}     {cmyk}{0.50, 0   , 1   , 0   }
\definecolor{yellowgreen}   {cmyk}{0.44, 0   , 0.74, 0   }
\definecolor{springgreen}   {cmyk}{0.26, 0   , 0.76, 0   }
\definecolor{olivegreen}    {cmyk}{0.64, 0   , 0.95, 0.40}
\definecolor{rawsienna}     {cmyk}{0   , 0.72, 1   , 0.45}
\definecolor{sepia}         {cmyk}{0   , 0.83, 1   , 0.70}
\definecolor{brown}         {cmyk}{0   , 0.81, 1   , 0.60}
\definecolor{tan}           {cmyk}{0.14, 0.42, 0.56, 0   }
\definecolor{gray}          {cmyk}{0   , 0   , 0   , 0.50}
\definecolor{black}         {cmyk}{0   , 0   , 0   , 1   }
\definecolor{white}         {cmyk}{0   , 0   , 0   , 0   } 
\pgfplotsset{
	colormap={jet}{
rgb(0.000000 pt)=(0.000000,0.000000,0.504000);
rgb(1.000000 pt)=(0.000000,0.000000,0.508000);
rgb(2.000000 pt)=(0.000000,0.000000,0.512000);
rgb(3.000000 pt)=(0.000000,0.000000,0.516000);
rgb(4.000000 pt)=(0.000000,0.000000,0.520000);
rgb(5.000000 pt)=(0.000000,0.000000,0.524000);
rgb(6.000000 pt)=(0.000000,0.000000,0.528000);
rgb(7.000000 pt)=(0.000000,0.000000,0.532000);
rgb(8.000000 pt)=(0.000000,0.000000,0.536000);
rgb(9.000000 pt)=(0.000000,0.000000,0.540000);
rgb(10.000000 pt)=(0.000000,0.000000,0.544000);
rgb(11.000000 pt)=(0.000000,0.000000,0.548000);
rgb(12.000000 pt)=(0.000000,0.000000,0.552000);
rgb(13.000000 pt)=(0.000000,0.000000,0.556000);
rgb(14.000000 pt)=(0.000000,0.000000,0.560000);
rgb(15.000000 pt)=(0.000000,0.000000,0.564000);
rgb(16.000000 pt)=(0.000000,0.000000,0.568000);
rgb(17.000000 pt)=(0.000000,0.000000,0.572000);
rgb(18.000000 pt)=(0.000000,0.000000,0.576000);
rgb(19.000000 pt)=(0.000000,0.000000,0.580000);
rgb(20.000000 pt)=(0.000000,0.000000,0.584000);
rgb(21.000000 pt)=(0.000000,0.000000,0.588000);
rgb(22.000000 pt)=(0.000000,0.000000,0.592000);
rgb(23.000000 pt)=(0.000000,0.000000,0.596000);
rgb(24.000000 pt)=(0.000000,0.000000,0.600000);
rgb(25.000000 pt)=(0.000000,0.000000,0.604000);
rgb(26.000000 pt)=(0.000000,0.000000,0.608000);
rgb(27.000000 pt)=(0.000000,0.000000,0.612000);
rgb(28.000000 pt)=(0.000000,0.000000,0.616000);
rgb(29.000000 pt)=(0.000000,0.000000,0.620000);
rgb(30.000000 pt)=(0.000000,0.000000,0.624000);
rgb(31.000000 pt)=(0.000000,0.000000,0.628000);
rgb(32.000000 pt)=(0.000000,0.000000,0.632000);
rgb(33.000000 pt)=(0.000000,0.000000,0.636000);
rgb(34.000000 pt)=(0.000000,0.000000,0.640000);
rgb(35.000000 pt)=(0.000000,0.000000,0.644000);
rgb(36.000000 pt)=(0.000000,0.000000,0.648000);
rgb(37.000000 pt)=(0.000000,0.000000,0.652000);
rgb(38.000000 pt)=(0.000000,0.000000,0.656000);
rgb(39.000000 pt)=(0.000000,0.000000,0.660000);
rgb(40.000000 pt)=(0.000000,0.000000,0.664000);
rgb(41.000000 pt)=(0.000000,0.000000,0.668000);
rgb(42.000000 pt)=(0.000000,0.000000,0.672000);
rgb(43.000000 pt)=(0.000000,0.000000,0.676000);
rgb(44.000000 pt)=(0.000000,0.000000,0.680000);
rgb(45.000000 pt)=(0.000000,0.000000,0.684000);
rgb(46.000000 pt)=(0.000000,0.000000,0.688000);
rgb(47.000000 pt)=(0.000000,0.000000,0.692000);
rgb(48.000000 pt)=(0.000000,0.000000,0.696000);
rgb(49.000000 pt)=(0.000000,0.000000,0.700000);
rgb(50.000000 pt)=(0.000000,0.000000,0.704000);
rgb(51.000000 pt)=(0.000000,0.000000,0.708000);
rgb(52.000000 pt)=(0.000000,0.000000,0.712000);
rgb(53.000000 pt)=(0.000000,0.000000,0.716000);
rgb(54.000000 pt)=(0.000000,0.000000,0.720000);
rgb(55.000000 pt)=(0.000000,0.000000,0.724000);
rgb(56.000000 pt)=(0.000000,0.000000,0.728000);
rgb(57.000000 pt)=(0.000000,0.000000,0.732000);
rgb(58.000000 pt)=(0.000000,0.000000,0.736000);
rgb(59.000000 pt)=(0.000000,0.000000,0.740000);
rgb(60.000000 pt)=(0.000000,0.000000,0.744000);
rgb(61.000000 pt)=(0.000000,0.000000,0.748000);
rgb(62.000000 pt)=(0.000000,0.000000,0.752000);
rgb(63.000000 pt)=(0.000000,0.000000,0.756000);
rgb(64.000000 pt)=(0.000000,0.000000,0.760000);
rgb(65.000000 pt)=(0.000000,0.000000,0.764000);
rgb(66.000000 pt)=(0.000000,0.000000,0.768000);
rgb(67.000000 pt)=(0.000000,0.000000,0.772000);
rgb(68.000000 pt)=(0.000000,0.000000,0.776000);
rgb(69.000000 pt)=(0.000000,0.000000,0.780000);
rgb(70.000000 pt)=(0.000000,0.000000,0.784000);
rgb(71.000000 pt)=(0.000000,0.000000,0.788000);
rgb(72.000000 pt)=(0.000000,0.000000,0.792000);
rgb(73.000000 pt)=(0.000000,0.000000,0.796000);
rgb(74.000000 pt)=(0.000000,0.000000,0.800000);
rgb(75.000000 pt)=(0.000000,0.000000,0.804000);
rgb(76.000000 pt)=(0.000000,0.000000,0.808000);
rgb(77.000000 pt)=(0.000000,0.000000,0.812000);
rgb(78.000000 pt)=(0.000000,0.000000,0.816000);
rgb(79.000000 pt)=(0.000000,0.000000,0.820000);
rgb(80.000000 pt)=(0.000000,0.000000,0.824000);
rgb(81.000000 pt)=(0.000000,0.000000,0.828000);
rgb(82.000000 pt)=(0.000000,0.000000,0.832000);
rgb(83.000000 pt)=(0.000000,0.000000,0.836000);
rgb(84.000000 pt)=(0.000000,0.000000,0.840000);
rgb(85.000000 pt)=(0.000000,0.000000,0.844000);
rgb(86.000000 pt)=(0.000000,0.000000,0.848000);
rgb(87.000000 pt)=(0.000000,0.000000,0.852000);
rgb(88.000000 pt)=(0.000000,0.000000,0.856000);
rgb(89.000000 pt)=(0.000000,0.000000,0.860000);
rgb(90.000000 pt)=(0.000000,0.000000,0.864000);
rgb(91.000000 pt)=(0.000000,0.000000,0.868000);
rgb(92.000000 pt)=(0.000000,0.000000,0.872000);
rgb(93.000000 pt)=(0.000000,0.000000,0.876000);
rgb(94.000000 pt)=(0.000000,0.000000,0.880000);
rgb(95.000000 pt)=(0.000000,0.000000,0.884000);
rgb(96.000000 pt)=(0.000000,0.000000,0.888000);
rgb(97.000000 pt)=(0.000000,0.000000,0.892000);
rgb(98.000000 pt)=(0.000000,0.000000,0.896000);
rgb(99.000000 pt)=(0.000000,0.000000,0.900000);
rgb(100.000000 pt)=(0.000000,0.000000,0.904000);
rgb(101.000000 pt)=(0.000000,0.000000,0.908000);
rgb(102.000000 pt)=(0.000000,0.000000,0.912000);
rgb(103.000000 pt)=(0.000000,0.000000,0.916000);
rgb(104.000000 pt)=(0.000000,0.000000,0.920000);
rgb(105.000000 pt)=(0.000000,0.000000,0.924000);
rgb(106.000000 pt)=(0.000000,0.000000,0.928000);
rgb(107.000000 pt)=(0.000000,0.000000,0.932000);
rgb(108.000000 pt)=(0.000000,0.000000,0.936000);
rgb(109.000000 pt)=(0.000000,0.000000,0.940000);
rgb(110.000000 pt)=(0.000000,0.000000,0.944000);
rgb(111.000000 pt)=(0.000000,0.000000,0.948000);
rgb(112.000000 pt)=(0.000000,0.000000,0.952000);
rgb(113.000000 pt)=(0.000000,0.000000,0.956000);
rgb(114.000000 pt)=(0.000000,0.000000,0.960000);
rgb(115.000000 pt)=(0.000000,0.000000,0.964000);
rgb(116.000000 pt)=(0.000000,0.000000,0.968000);
rgb(117.000000 pt)=(0.000000,0.000000,0.972000);
rgb(118.000000 pt)=(0.000000,0.000000,0.976000);
rgb(119.000000 pt)=(0.000000,0.000000,0.980000);
rgb(120.000000 pt)=(0.000000,0.000000,0.984000);
rgb(121.000000 pt)=(0.000000,0.000000,0.988000);
rgb(122.000000 pt)=(0.000000,0.000000,0.992000);
rgb(123.000000 pt)=(0.000000,0.000000,0.996000);
rgb(124.000000 pt)=(0.000000,0.000000,1.000000);
rgb(125.000000 pt)=(0.000000,0.004000,1.000000);
rgb(126.000000 pt)=(0.000000,0.008000,1.000000);
rgb(127.000000 pt)=(0.000000,0.012000,1.000000);
rgb(128.000000 pt)=(0.000000,0.016000,1.000000);
rgb(129.000000 pt)=(0.000000,0.020000,1.000000);
rgb(130.000000 pt)=(0.000000,0.024000,1.000000);
rgb(131.000000 pt)=(0.000000,0.028000,1.000000);
rgb(132.000000 pt)=(0.000000,0.032000,1.000000);
rgb(133.000000 pt)=(0.000000,0.036000,1.000000);
rgb(134.000000 pt)=(0.000000,0.040000,1.000000);
rgb(135.000000 pt)=(0.000000,0.044000,1.000000);
rgb(136.000000 pt)=(0.000000,0.048000,1.000000);
rgb(137.000000 pt)=(0.000000,0.052000,1.000000);
rgb(138.000000 pt)=(0.000000,0.056000,1.000000);
rgb(139.000000 pt)=(0.000000,0.060000,1.000000);
rgb(140.000000 pt)=(0.000000,0.064000,1.000000);
rgb(141.000000 pt)=(0.000000,0.068000,1.000000);
rgb(142.000000 pt)=(0.000000,0.072000,1.000000);
rgb(143.000000 pt)=(0.000000,0.076000,1.000000);
rgb(144.000000 pt)=(0.000000,0.080000,1.000000);
rgb(145.000000 pt)=(0.000000,0.084000,1.000000);
rgb(146.000000 pt)=(0.000000,0.088000,1.000000);
rgb(147.000000 pt)=(0.000000,0.092000,1.000000);
rgb(148.000000 pt)=(0.000000,0.096000,1.000000);
rgb(149.000000 pt)=(0.000000,0.100000,1.000000);
rgb(150.000000 pt)=(0.000000,0.104000,1.000000);
rgb(151.000000 pt)=(0.000000,0.108000,1.000000);
rgb(152.000000 pt)=(0.000000,0.112000,1.000000);
rgb(153.000000 pt)=(0.000000,0.116000,1.000000);
rgb(154.000000 pt)=(0.000000,0.120000,1.000000);
rgb(155.000000 pt)=(0.000000,0.124000,1.000000);
rgb(156.000000 pt)=(0.000000,0.128000,1.000000);
rgb(157.000000 pt)=(0.000000,0.132000,1.000000);
rgb(158.000000 pt)=(0.000000,0.136000,1.000000);
rgb(159.000000 pt)=(0.000000,0.140000,1.000000);
rgb(160.000000 pt)=(0.000000,0.144000,1.000000);
rgb(161.000000 pt)=(0.000000,0.148000,1.000000);
rgb(162.000000 pt)=(0.000000,0.152000,1.000000);
rgb(163.000000 pt)=(0.000000,0.156000,1.000000);
rgb(164.000000 pt)=(0.000000,0.160000,1.000000);
rgb(165.000000 pt)=(0.000000,0.164000,1.000000);
rgb(166.000000 pt)=(0.000000,0.168000,1.000000);
rgb(167.000000 pt)=(0.000000,0.172000,1.000000);
rgb(168.000000 pt)=(0.000000,0.176000,1.000000);
rgb(169.000000 pt)=(0.000000,0.180000,1.000000);
rgb(170.000000 pt)=(0.000000,0.184000,1.000000);
rgb(171.000000 pt)=(0.000000,0.188000,1.000000);
rgb(172.000000 pt)=(0.000000,0.192000,1.000000);
rgb(173.000000 pt)=(0.000000,0.196000,1.000000);
rgb(174.000000 pt)=(0.000000,0.200000,1.000000);
rgb(175.000000 pt)=(0.000000,0.204000,1.000000);
rgb(176.000000 pt)=(0.000000,0.208000,1.000000);
rgb(177.000000 pt)=(0.000000,0.212000,1.000000);
rgb(178.000000 pt)=(0.000000,0.216000,1.000000);
rgb(179.000000 pt)=(0.000000,0.220000,1.000000);
rgb(180.000000 pt)=(0.000000,0.224000,1.000000);
rgb(181.000000 pt)=(0.000000,0.228000,1.000000);
rgb(182.000000 pt)=(0.000000,0.232000,1.000000);
rgb(183.000000 pt)=(0.000000,0.236000,1.000000);
rgb(184.000000 pt)=(0.000000,0.240000,1.000000);
rgb(185.000000 pt)=(0.000000,0.244000,1.000000);
rgb(186.000000 pt)=(0.000000,0.248000,1.000000);
rgb(187.000000 pt)=(0.000000,0.252000,1.000000);
rgb(188.000000 pt)=(0.000000,0.256000,1.000000);
rgb(189.000000 pt)=(0.000000,0.260000,1.000000);
rgb(190.000000 pt)=(0.000000,0.264000,1.000000);
rgb(191.000000 pt)=(0.000000,0.268000,1.000000);
rgb(192.000000 pt)=(0.000000,0.272000,1.000000);
rgb(193.000000 pt)=(0.000000,0.276000,1.000000);
rgb(194.000000 pt)=(0.000000,0.280000,1.000000);
rgb(195.000000 pt)=(0.000000,0.284000,1.000000);
rgb(196.000000 pt)=(0.000000,0.288000,1.000000);
rgb(197.000000 pt)=(0.000000,0.292000,1.000000);
rgb(198.000000 pt)=(0.000000,0.296000,1.000000);
rgb(199.000000 pt)=(0.000000,0.300000,1.000000);
rgb(200.000000 pt)=(0.000000,0.304000,1.000000);
rgb(201.000000 pt)=(0.000000,0.308000,1.000000);
rgb(202.000000 pt)=(0.000000,0.312000,1.000000);
rgb(203.000000 pt)=(0.000000,0.316000,1.000000);
rgb(204.000000 pt)=(0.000000,0.320000,1.000000);
rgb(205.000000 pt)=(0.000000,0.324000,1.000000);
rgb(206.000000 pt)=(0.000000,0.328000,1.000000);
rgb(207.000000 pt)=(0.000000,0.332000,1.000000);
rgb(208.000000 pt)=(0.000000,0.336000,1.000000);
rgb(209.000000 pt)=(0.000000,0.340000,1.000000);
rgb(210.000000 pt)=(0.000000,0.344000,1.000000);
rgb(211.000000 pt)=(0.000000,0.348000,1.000000);
rgb(212.000000 pt)=(0.000000,0.352000,1.000000);
rgb(213.000000 pt)=(0.000000,0.356000,1.000000);
rgb(214.000000 pt)=(0.000000,0.360000,1.000000);
rgb(215.000000 pt)=(0.000000,0.364000,1.000000);
rgb(216.000000 pt)=(0.000000,0.368000,1.000000);
rgb(217.000000 pt)=(0.000000,0.372000,1.000000);
rgb(218.000000 pt)=(0.000000,0.376000,1.000000);
rgb(219.000000 pt)=(0.000000,0.380000,1.000000);
rgb(220.000000 pt)=(0.000000,0.384000,1.000000);
rgb(221.000000 pt)=(0.000000,0.388000,1.000000);
rgb(222.000000 pt)=(0.000000,0.392000,1.000000);
rgb(223.000000 pt)=(0.000000,0.396000,1.000000);
rgb(224.000000 pt)=(0.000000,0.400000,1.000000);
rgb(225.000000 pt)=(0.000000,0.404000,1.000000);
rgb(226.000000 pt)=(0.000000,0.408000,1.000000);
rgb(227.000000 pt)=(0.000000,0.412000,1.000000);
rgb(228.000000 pt)=(0.000000,0.416000,1.000000);
rgb(229.000000 pt)=(0.000000,0.420000,1.000000);
rgb(230.000000 pt)=(0.000000,0.424000,1.000000);
rgb(231.000000 pt)=(0.000000,0.428000,1.000000);
rgb(232.000000 pt)=(0.000000,0.432000,1.000000);
rgb(233.000000 pt)=(0.000000,0.436000,1.000000);
rgb(234.000000 pt)=(0.000000,0.440000,1.000000);
rgb(235.000000 pt)=(0.000000,0.444000,1.000000);
rgb(236.000000 pt)=(0.000000,0.448000,1.000000);
rgb(237.000000 pt)=(0.000000,0.452000,1.000000);
rgb(238.000000 pt)=(0.000000,0.456000,1.000000);
rgb(239.000000 pt)=(0.000000,0.460000,1.000000);
rgb(240.000000 pt)=(0.000000,0.464000,1.000000);
rgb(241.000000 pt)=(0.000000,0.468000,1.000000);
rgb(242.000000 pt)=(0.000000,0.472000,1.000000);
rgb(243.000000 pt)=(0.000000,0.476000,1.000000);
rgb(244.000000 pt)=(0.000000,0.480000,1.000000);
rgb(245.000000 pt)=(0.000000,0.484000,1.000000);
rgb(246.000000 pt)=(0.000000,0.488000,1.000000);
rgb(247.000000 pt)=(0.000000,0.492000,1.000000);
rgb(248.000000 pt)=(0.000000,0.496000,1.000000);
rgb(249.000000 pt)=(0.000000,0.500000,1.000000);
rgb(250.000000 pt)=(0.000000,0.504000,1.000000);
rgb(251.000000 pt)=(0.000000,0.508000,1.000000);
rgb(252.000000 pt)=(0.000000,0.512000,1.000000);
rgb(253.000000 pt)=(0.000000,0.516000,1.000000);
rgb(254.000000 pt)=(0.000000,0.520000,1.000000);
rgb(255.000000 pt)=(0.000000,0.524000,1.000000);
rgb(256.000000 pt)=(0.000000,0.528000,1.000000);
rgb(257.000000 pt)=(0.000000,0.532000,1.000000);
rgb(258.000000 pt)=(0.000000,0.536000,1.000000);
rgb(259.000000 pt)=(0.000000,0.540000,1.000000);
rgb(260.000000 pt)=(0.000000,0.544000,1.000000);
rgb(261.000000 pt)=(0.000000,0.548000,1.000000);
rgb(262.000000 pt)=(0.000000,0.552000,1.000000);
rgb(263.000000 pt)=(0.000000,0.556000,1.000000);
rgb(264.000000 pt)=(0.000000,0.560000,1.000000);
rgb(265.000000 pt)=(0.000000,0.564000,1.000000);
rgb(266.000000 pt)=(0.000000,0.568000,1.000000);
rgb(267.000000 pt)=(0.000000,0.572000,1.000000);
rgb(268.000000 pt)=(0.000000,0.576000,1.000000);
rgb(269.000000 pt)=(0.000000,0.580000,1.000000);
rgb(270.000000 pt)=(0.000000,0.584000,1.000000);
rgb(271.000000 pt)=(0.000000,0.588000,1.000000);
rgb(272.000000 pt)=(0.000000,0.592000,1.000000);
rgb(273.000000 pt)=(0.000000,0.596000,1.000000);
rgb(274.000000 pt)=(0.000000,0.600000,1.000000);
rgb(275.000000 pt)=(0.000000,0.604000,1.000000);
rgb(276.000000 pt)=(0.000000,0.608000,1.000000);
rgb(277.000000 pt)=(0.000000,0.612000,1.000000);
rgb(278.000000 pt)=(0.000000,0.616000,1.000000);
rgb(279.000000 pt)=(0.000000,0.620000,1.000000);
rgb(280.000000 pt)=(0.000000,0.624000,1.000000);
rgb(281.000000 pt)=(0.000000,0.628000,1.000000);
rgb(282.000000 pt)=(0.000000,0.632000,1.000000);
rgb(283.000000 pt)=(0.000000,0.636000,1.000000);
rgb(284.000000 pt)=(0.000000,0.640000,1.000000);
rgb(285.000000 pt)=(0.000000,0.644000,1.000000);
rgb(286.000000 pt)=(0.000000,0.648000,1.000000);
rgb(287.000000 pt)=(0.000000,0.652000,1.000000);
rgb(288.000000 pt)=(0.000000,0.656000,1.000000);
rgb(289.000000 pt)=(0.000000,0.660000,1.000000);
rgb(290.000000 pt)=(0.000000,0.664000,1.000000);
rgb(291.000000 pt)=(0.000000,0.668000,1.000000);
rgb(292.000000 pt)=(0.000000,0.672000,1.000000);
rgb(293.000000 pt)=(0.000000,0.676000,1.000000);
rgb(294.000000 pt)=(0.000000,0.680000,1.000000);
rgb(295.000000 pt)=(0.000000,0.684000,1.000000);
rgb(296.000000 pt)=(0.000000,0.688000,1.000000);
rgb(297.000000 pt)=(0.000000,0.692000,1.000000);
rgb(298.000000 pt)=(0.000000,0.696000,1.000000);
rgb(299.000000 pt)=(0.000000,0.700000,1.000000);
rgb(300.000000 pt)=(0.000000,0.704000,1.000000);
rgb(301.000000 pt)=(0.000000,0.708000,1.000000);
rgb(302.000000 pt)=(0.000000,0.712000,1.000000);
rgb(303.000000 pt)=(0.000000,0.716000,1.000000);
rgb(304.000000 pt)=(0.000000,0.720000,1.000000);
rgb(305.000000 pt)=(0.000000,0.724000,1.000000);
rgb(306.000000 pt)=(0.000000,0.728000,1.000000);
rgb(307.000000 pt)=(0.000000,0.732000,1.000000);
rgb(308.000000 pt)=(0.000000,0.736000,1.000000);
rgb(309.000000 pt)=(0.000000,0.740000,1.000000);
rgb(310.000000 pt)=(0.000000,0.744000,1.000000);
rgb(311.000000 pt)=(0.000000,0.748000,1.000000);
rgb(312.000000 pt)=(0.000000,0.752000,1.000000);
rgb(313.000000 pt)=(0.000000,0.756000,1.000000);
rgb(314.000000 pt)=(0.000000,0.760000,1.000000);
rgb(315.000000 pt)=(0.000000,0.764000,1.000000);
rgb(316.000000 pt)=(0.000000,0.768000,1.000000);
rgb(317.000000 pt)=(0.000000,0.772000,1.000000);
rgb(318.000000 pt)=(0.000000,0.776000,1.000000);
rgb(319.000000 pt)=(0.000000,0.780000,1.000000);
rgb(320.000000 pt)=(0.000000,0.784000,1.000000);
rgb(321.000000 pt)=(0.000000,0.788000,1.000000);
rgb(322.000000 pt)=(0.000000,0.792000,1.000000);
rgb(323.000000 pt)=(0.000000,0.796000,1.000000);
rgb(324.000000 pt)=(0.000000,0.800000,1.000000);
rgb(325.000000 pt)=(0.000000,0.804000,1.000000);
rgb(326.000000 pt)=(0.000000,0.808000,1.000000);
rgb(327.000000 pt)=(0.000000,0.812000,1.000000);
rgb(328.000000 pt)=(0.000000,0.816000,1.000000);
rgb(329.000000 pt)=(0.000000,0.820000,1.000000);
rgb(330.000000 pt)=(0.000000,0.824000,1.000000);
rgb(331.000000 pt)=(0.000000,0.828000,1.000000);
rgb(332.000000 pt)=(0.000000,0.832000,1.000000);
rgb(333.000000 pt)=(0.000000,0.836000,1.000000);
rgb(334.000000 pt)=(0.000000,0.840000,1.000000);
rgb(335.000000 pt)=(0.000000,0.844000,1.000000);
rgb(336.000000 pt)=(0.000000,0.848000,1.000000);
rgb(337.000000 pt)=(0.000000,0.852000,1.000000);
rgb(338.000000 pt)=(0.000000,0.856000,1.000000);
rgb(339.000000 pt)=(0.000000,0.860000,1.000000);
rgb(340.000000 pt)=(0.000000,0.864000,1.000000);
rgb(341.000000 pt)=(0.000000,0.868000,1.000000);
rgb(342.000000 pt)=(0.000000,0.872000,1.000000);
rgb(343.000000 pt)=(0.000000,0.876000,1.000000);
rgb(344.000000 pt)=(0.000000,0.880000,1.000000);
rgb(345.000000 pt)=(0.000000,0.884000,1.000000);
rgb(346.000000 pt)=(0.000000,0.888000,1.000000);
rgb(347.000000 pt)=(0.000000,0.892000,1.000000);
rgb(348.000000 pt)=(0.000000,0.896000,1.000000);
rgb(349.000000 pt)=(0.000000,0.900000,1.000000);
rgb(350.000000 pt)=(0.000000,0.904000,1.000000);
rgb(351.000000 pt)=(0.000000,0.908000,1.000000);
rgb(352.000000 pt)=(0.000000,0.912000,1.000000);
rgb(353.000000 pt)=(0.000000,0.916000,1.000000);
rgb(354.000000 pt)=(0.000000,0.920000,1.000000);
rgb(355.000000 pt)=(0.000000,0.924000,1.000000);
rgb(356.000000 pt)=(0.000000,0.928000,1.000000);
rgb(357.000000 pt)=(0.000000,0.932000,1.000000);
rgb(358.000000 pt)=(0.000000,0.936000,1.000000);
rgb(359.000000 pt)=(0.000000,0.940000,1.000000);
rgb(360.000000 pt)=(0.000000,0.944000,1.000000);
rgb(361.000000 pt)=(0.000000,0.948000,1.000000);
rgb(362.000000 pt)=(0.000000,0.952000,1.000000);
rgb(363.000000 pt)=(0.000000,0.956000,1.000000);
rgb(364.000000 pt)=(0.000000,0.960000,1.000000);
rgb(365.000000 pt)=(0.000000,0.964000,1.000000);
rgb(366.000000 pt)=(0.000000,0.968000,1.000000);
rgb(367.000000 pt)=(0.000000,0.972000,1.000000);
rgb(368.000000 pt)=(0.000000,0.976000,1.000000);
rgb(369.000000 pt)=(0.000000,0.980000,1.000000);
rgb(370.000000 pt)=(0.000000,0.984000,1.000000);
rgb(371.000000 pt)=(0.000000,0.988000,1.000000);
rgb(372.000000 pt)=(0.000000,0.992000,1.000000);
rgb(373.000000 pt)=(0.000000,0.996000,1.000000);
rgb(374.000000 pt)=(0.000000,1.000000,1.000000);
rgb(375.000000 pt)=(0.004000,1.000000,0.996000);
rgb(376.000000 pt)=(0.008000,1.000000,0.992000);
rgb(377.000000 pt)=(0.012000,1.000000,0.988000);
rgb(378.000000 pt)=(0.016000,1.000000,0.984000);
rgb(379.000000 pt)=(0.020000,1.000000,0.980000);
rgb(380.000000 pt)=(0.024000,1.000000,0.976000);
rgb(381.000000 pt)=(0.028000,1.000000,0.972000);
rgb(382.000000 pt)=(0.032000,1.000000,0.968000);
rgb(383.000000 pt)=(0.036000,1.000000,0.964000);
rgb(384.000000 pt)=(0.040000,1.000000,0.960000);
rgb(385.000000 pt)=(0.044000,1.000000,0.956000);
rgb(386.000000 pt)=(0.048000,1.000000,0.952000);
rgb(387.000000 pt)=(0.052000,1.000000,0.948000);
rgb(388.000000 pt)=(0.056000,1.000000,0.944000);
rgb(389.000000 pt)=(0.060000,1.000000,0.940000);
rgb(390.000000 pt)=(0.064000,1.000000,0.936000);
rgb(391.000000 pt)=(0.068000,1.000000,0.932000);
rgb(392.000000 pt)=(0.072000,1.000000,0.928000);
rgb(393.000000 pt)=(0.076000,1.000000,0.924000);
rgb(394.000000 pt)=(0.080000,1.000000,0.920000);
rgb(395.000000 pt)=(0.084000,1.000000,0.916000);
rgb(396.000000 pt)=(0.088000,1.000000,0.912000);
rgb(397.000000 pt)=(0.092000,1.000000,0.908000);
rgb(398.000000 pt)=(0.096000,1.000000,0.904000);
rgb(399.000000 pt)=(0.100000,1.000000,0.900000);
rgb(400.000000 pt)=(0.104000,1.000000,0.896000);
rgb(401.000000 pt)=(0.108000,1.000000,0.892000);
rgb(402.000000 pt)=(0.112000,1.000000,0.888000);
rgb(403.000000 pt)=(0.116000,1.000000,0.884000);
rgb(404.000000 pt)=(0.120000,1.000000,0.880000);
rgb(405.000000 pt)=(0.124000,1.000000,0.876000);
rgb(406.000000 pt)=(0.128000,1.000000,0.872000);
rgb(407.000000 pt)=(0.132000,1.000000,0.868000);
rgb(408.000000 pt)=(0.136000,1.000000,0.864000);
rgb(409.000000 pt)=(0.140000,1.000000,0.860000);
rgb(410.000000 pt)=(0.144000,1.000000,0.856000);
rgb(411.000000 pt)=(0.148000,1.000000,0.852000);
rgb(412.000000 pt)=(0.152000,1.000000,0.848000);
rgb(413.000000 pt)=(0.156000,1.000000,0.844000);
rgb(414.000000 pt)=(0.160000,1.000000,0.840000);
rgb(415.000000 pt)=(0.164000,1.000000,0.836000);
rgb(416.000000 pt)=(0.168000,1.000000,0.832000);
rgb(417.000000 pt)=(0.172000,1.000000,0.828000);
rgb(418.000000 pt)=(0.176000,1.000000,0.824000);
rgb(419.000000 pt)=(0.180000,1.000000,0.820000);
rgb(420.000000 pt)=(0.184000,1.000000,0.816000);
rgb(421.000000 pt)=(0.188000,1.000000,0.812000);
rgb(422.000000 pt)=(0.192000,1.000000,0.808000);
rgb(423.000000 pt)=(0.196000,1.000000,0.804000);
rgb(424.000000 pt)=(0.200000,1.000000,0.800000);
rgb(425.000000 pt)=(0.204000,1.000000,0.796000);
rgb(426.000000 pt)=(0.208000,1.000000,0.792000);
rgb(427.000000 pt)=(0.212000,1.000000,0.788000);
rgb(428.000000 pt)=(0.216000,1.000000,0.784000);
rgb(429.000000 pt)=(0.220000,1.000000,0.780000);
rgb(430.000000 pt)=(0.224000,1.000000,0.776000);
rgb(431.000000 pt)=(0.228000,1.000000,0.772000);
rgb(432.000000 pt)=(0.232000,1.000000,0.768000);
rgb(433.000000 pt)=(0.236000,1.000000,0.764000);
rgb(434.000000 pt)=(0.240000,1.000000,0.760000);
rgb(435.000000 pt)=(0.244000,1.000000,0.756000);
rgb(436.000000 pt)=(0.248000,1.000000,0.752000);
rgb(437.000000 pt)=(0.252000,1.000000,0.748000);
rgb(438.000000 pt)=(0.256000,1.000000,0.744000);
rgb(439.000000 pt)=(0.260000,1.000000,0.740000);
rgb(440.000000 pt)=(0.264000,1.000000,0.736000);
rgb(441.000000 pt)=(0.268000,1.000000,0.732000);
rgb(442.000000 pt)=(0.272000,1.000000,0.728000);
rgb(443.000000 pt)=(0.276000,1.000000,0.724000);
rgb(444.000000 pt)=(0.280000,1.000000,0.720000);
rgb(445.000000 pt)=(0.284000,1.000000,0.716000);
rgb(446.000000 pt)=(0.288000,1.000000,0.712000);
rgb(447.000000 pt)=(0.292000,1.000000,0.708000);
rgb(448.000000 pt)=(0.296000,1.000000,0.704000);
rgb(449.000000 pt)=(0.300000,1.000000,0.700000);
rgb(450.000000 pt)=(0.304000,1.000000,0.696000);
rgb(451.000000 pt)=(0.308000,1.000000,0.692000);
rgb(452.000000 pt)=(0.312000,1.000000,0.688000);
rgb(453.000000 pt)=(0.316000,1.000000,0.684000);
rgb(454.000000 pt)=(0.320000,1.000000,0.680000);
rgb(455.000000 pt)=(0.324000,1.000000,0.676000);
rgb(456.000000 pt)=(0.328000,1.000000,0.672000);
rgb(457.000000 pt)=(0.332000,1.000000,0.668000);
rgb(458.000000 pt)=(0.336000,1.000000,0.664000);
rgb(459.000000 pt)=(0.340000,1.000000,0.660000);
rgb(460.000000 pt)=(0.344000,1.000000,0.656000);
rgb(461.000000 pt)=(0.348000,1.000000,0.652000);
rgb(462.000000 pt)=(0.352000,1.000000,0.648000);
rgb(463.000000 pt)=(0.356000,1.000000,0.644000);
rgb(464.000000 pt)=(0.360000,1.000000,0.640000);
rgb(465.000000 pt)=(0.364000,1.000000,0.636000);
rgb(466.000000 pt)=(0.368000,1.000000,0.632000);
rgb(467.000000 pt)=(0.372000,1.000000,0.628000);
rgb(468.000000 pt)=(0.376000,1.000000,0.624000);
rgb(469.000000 pt)=(0.380000,1.000000,0.620000);
rgb(470.000000 pt)=(0.384000,1.000000,0.616000);
rgb(471.000000 pt)=(0.388000,1.000000,0.612000);
rgb(472.000000 pt)=(0.392000,1.000000,0.608000);
rgb(473.000000 pt)=(0.396000,1.000000,0.604000);
rgb(474.000000 pt)=(0.400000,1.000000,0.600000);
rgb(475.000000 pt)=(0.404000,1.000000,0.596000);
rgb(476.000000 pt)=(0.408000,1.000000,0.592000);
rgb(477.000000 pt)=(0.412000,1.000000,0.588000);
rgb(478.000000 pt)=(0.416000,1.000000,0.584000);
rgb(479.000000 pt)=(0.420000,1.000000,0.580000);
rgb(480.000000 pt)=(0.424000,1.000000,0.576000);
rgb(481.000000 pt)=(0.428000,1.000000,0.572000);
rgb(482.000000 pt)=(0.432000,1.000000,0.568000);
rgb(483.000000 pt)=(0.436000,1.000000,0.564000);
rgb(484.000000 pt)=(0.440000,1.000000,0.560000);
rgb(485.000000 pt)=(0.444000,1.000000,0.556000);
rgb(486.000000 pt)=(0.448000,1.000000,0.552000);
rgb(487.000000 pt)=(0.452000,1.000000,0.548000);
rgb(488.000000 pt)=(0.456000,1.000000,0.544000);
rgb(489.000000 pt)=(0.460000,1.000000,0.540000);
rgb(490.000000 pt)=(0.464000,1.000000,0.536000);
rgb(491.000000 pt)=(0.468000,1.000000,0.532000);
rgb(492.000000 pt)=(0.472000,1.000000,0.528000);
rgb(493.000000 pt)=(0.476000,1.000000,0.524000);
rgb(494.000000 pt)=(0.480000,1.000000,0.520000);
rgb(495.000000 pt)=(0.484000,1.000000,0.516000);
rgb(496.000000 pt)=(0.488000,1.000000,0.512000);
rgb(497.000000 pt)=(0.492000,1.000000,0.508000);
rgb(498.000000 pt)=(0.496000,1.000000,0.504000);
rgb(499.000000 pt)=(0.500000,1.000000,0.500000);
rgb(500.000000 pt)=(0.504000,1.000000,0.496000);
rgb(501.000000 pt)=(0.508000,1.000000,0.492000);
rgb(502.000000 pt)=(0.512000,1.000000,0.488000);
rgb(503.000000 pt)=(0.516000,1.000000,0.484000);
rgb(504.000000 pt)=(0.520000,1.000000,0.480000);
rgb(505.000000 pt)=(0.524000,1.000000,0.476000);
rgb(506.000000 pt)=(0.528000,1.000000,0.472000);
rgb(507.000000 pt)=(0.532000,1.000000,0.468000);
rgb(508.000000 pt)=(0.536000,1.000000,0.464000);
rgb(509.000000 pt)=(0.540000,1.000000,0.460000);
rgb(510.000000 pt)=(0.544000,1.000000,0.456000);
rgb(511.000000 pt)=(0.548000,1.000000,0.452000);
rgb(512.000000 pt)=(0.552000,1.000000,0.448000);
rgb(513.000000 pt)=(0.556000,1.000000,0.444000);
rgb(514.000000 pt)=(0.560000,1.000000,0.440000);
rgb(515.000000 pt)=(0.564000,1.000000,0.436000);
rgb(516.000000 pt)=(0.568000,1.000000,0.432000);
rgb(517.000000 pt)=(0.572000,1.000000,0.428000);
rgb(518.000000 pt)=(0.576000,1.000000,0.424000);
rgb(519.000000 pt)=(0.580000,1.000000,0.420000);
rgb(520.000000 pt)=(0.584000,1.000000,0.416000);
rgb(521.000000 pt)=(0.588000,1.000000,0.412000);
rgb(522.000000 pt)=(0.592000,1.000000,0.408000);
rgb(523.000000 pt)=(0.596000,1.000000,0.404000);
rgb(524.000000 pt)=(0.600000,1.000000,0.400000);
rgb(525.000000 pt)=(0.604000,1.000000,0.396000);
rgb(526.000000 pt)=(0.608000,1.000000,0.392000);
rgb(527.000000 pt)=(0.612000,1.000000,0.388000);
rgb(528.000000 pt)=(0.616000,1.000000,0.384000);
rgb(529.000000 pt)=(0.620000,1.000000,0.380000);
rgb(530.000000 pt)=(0.624000,1.000000,0.376000);
rgb(531.000000 pt)=(0.628000,1.000000,0.372000);
rgb(532.000000 pt)=(0.632000,1.000000,0.368000);
rgb(533.000000 pt)=(0.636000,1.000000,0.364000);
rgb(534.000000 pt)=(0.640000,1.000000,0.360000);
rgb(535.000000 pt)=(0.644000,1.000000,0.356000);
rgb(536.000000 pt)=(0.648000,1.000000,0.352000);
rgb(537.000000 pt)=(0.652000,1.000000,0.348000);
rgb(538.000000 pt)=(0.656000,1.000000,0.344000);
rgb(539.000000 pt)=(0.660000,1.000000,0.340000);
rgb(540.000000 pt)=(0.664000,1.000000,0.336000);
rgb(541.000000 pt)=(0.668000,1.000000,0.332000);
rgb(542.000000 pt)=(0.672000,1.000000,0.328000);
rgb(543.000000 pt)=(0.676000,1.000000,0.324000);
rgb(544.000000 pt)=(0.680000,1.000000,0.320000);
rgb(545.000000 pt)=(0.684000,1.000000,0.316000);
rgb(546.000000 pt)=(0.688000,1.000000,0.312000);
rgb(547.000000 pt)=(0.692000,1.000000,0.308000);
rgb(548.000000 pt)=(0.696000,1.000000,0.304000);
rgb(549.000000 pt)=(0.700000,1.000000,0.300000);
rgb(550.000000 pt)=(0.704000,1.000000,0.296000);
rgb(551.000000 pt)=(0.708000,1.000000,0.292000);
rgb(552.000000 pt)=(0.712000,1.000000,0.288000);
rgb(553.000000 pt)=(0.716000,1.000000,0.284000);
rgb(554.000000 pt)=(0.720000,1.000000,0.280000);
rgb(555.000000 pt)=(0.724000,1.000000,0.276000);
rgb(556.000000 pt)=(0.728000,1.000000,0.272000);
rgb(557.000000 pt)=(0.732000,1.000000,0.268000);
rgb(558.000000 pt)=(0.736000,1.000000,0.264000);
rgb(559.000000 pt)=(0.740000,1.000000,0.260000);
rgb(560.000000 pt)=(0.744000,1.000000,0.256000);
rgb(561.000000 pt)=(0.748000,1.000000,0.252000);
rgb(562.000000 pt)=(0.752000,1.000000,0.248000);
rgb(563.000000 pt)=(0.756000,1.000000,0.244000);
rgb(564.000000 pt)=(0.760000,1.000000,0.240000);
rgb(565.000000 pt)=(0.764000,1.000000,0.236000);
rgb(566.000000 pt)=(0.768000,1.000000,0.232000);
rgb(567.000000 pt)=(0.772000,1.000000,0.228000);
rgb(568.000000 pt)=(0.776000,1.000000,0.224000);
rgb(569.000000 pt)=(0.780000,1.000000,0.220000);
rgb(570.000000 pt)=(0.784000,1.000000,0.216000);
rgb(571.000000 pt)=(0.788000,1.000000,0.212000);
rgb(572.000000 pt)=(0.792000,1.000000,0.208000);
rgb(573.000000 pt)=(0.796000,1.000000,0.204000);
rgb(574.000000 pt)=(0.800000,1.000000,0.200000);
rgb(575.000000 pt)=(0.804000,1.000000,0.196000);
rgb(576.000000 pt)=(0.808000,1.000000,0.192000);
rgb(577.000000 pt)=(0.812000,1.000000,0.188000);
rgb(578.000000 pt)=(0.816000,1.000000,0.184000);
rgb(579.000000 pt)=(0.820000,1.000000,0.180000);
rgb(580.000000 pt)=(0.824000,1.000000,0.176000);
rgb(581.000000 pt)=(0.828000,1.000000,0.172000);
rgb(582.000000 pt)=(0.832000,1.000000,0.168000);
rgb(583.000000 pt)=(0.836000,1.000000,0.164000);
rgb(584.000000 pt)=(0.840000,1.000000,0.160000);
rgb(585.000000 pt)=(0.844000,1.000000,0.156000);
rgb(586.000000 pt)=(0.848000,1.000000,0.152000);
rgb(587.000000 pt)=(0.852000,1.000000,0.148000);
rgb(588.000000 pt)=(0.856000,1.000000,0.144000);
rgb(589.000000 pt)=(0.860000,1.000000,0.140000);
rgb(590.000000 pt)=(0.864000,1.000000,0.136000);
rgb(591.000000 pt)=(0.868000,1.000000,0.132000);
rgb(592.000000 pt)=(0.872000,1.000000,0.128000);
rgb(593.000000 pt)=(0.876000,1.000000,0.124000);
rgb(594.000000 pt)=(0.880000,1.000000,0.120000);
rgb(595.000000 pt)=(0.884000,1.000000,0.116000);
rgb(596.000000 pt)=(0.888000,1.000000,0.112000);
rgb(597.000000 pt)=(0.892000,1.000000,0.108000);
rgb(598.000000 pt)=(0.896000,1.000000,0.104000);
rgb(599.000000 pt)=(0.900000,1.000000,0.100000);
rgb(600.000000 pt)=(0.904000,1.000000,0.096000);
rgb(601.000000 pt)=(0.908000,1.000000,0.092000);
rgb(602.000000 pt)=(0.912000,1.000000,0.088000);
rgb(603.000000 pt)=(0.916000,1.000000,0.084000);
rgb(604.000000 pt)=(0.920000,1.000000,0.080000);
rgb(605.000000 pt)=(0.924000,1.000000,0.076000);
rgb(606.000000 pt)=(0.928000,1.000000,0.072000);
rgb(607.000000 pt)=(0.932000,1.000000,0.068000);
rgb(608.000000 pt)=(0.936000,1.000000,0.064000);
rgb(609.000000 pt)=(0.940000,1.000000,0.060000);
rgb(610.000000 pt)=(0.944000,1.000000,0.056000);
rgb(611.000000 pt)=(0.948000,1.000000,0.052000);
rgb(612.000000 pt)=(0.952000,1.000000,0.048000);
rgb(613.000000 pt)=(0.956000,1.000000,0.044000);
rgb(614.000000 pt)=(0.960000,1.000000,0.040000);
rgb(615.000000 pt)=(0.964000,1.000000,0.036000);
rgb(616.000000 pt)=(0.968000,1.000000,0.032000);
rgb(617.000000 pt)=(0.972000,1.000000,0.028000);
rgb(618.000000 pt)=(0.976000,1.000000,0.024000);
rgb(619.000000 pt)=(0.980000,1.000000,0.020000);
rgb(620.000000 pt)=(0.984000,1.000000,0.016000);
rgb(621.000000 pt)=(0.988000,1.000000,0.012000);
rgb(622.000000 pt)=(0.992000,1.000000,0.008000);
rgb(623.000000 pt)=(0.996000,1.000000,0.004000);
rgb(624.000000 pt)=(1.000000,1.000000,0.000000);
rgb(625.000000 pt)=(1.000000,0.996000,0.000000);
rgb(626.000000 pt)=(1.000000,0.992000,0.000000);
rgb(627.000000 pt)=(1.000000,0.988000,0.000000);
rgb(628.000000 pt)=(1.000000,0.984000,0.000000);
rgb(629.000000 pt)=(1.000000,0.980000,0.000000);
rgb(630.000000 pt)=(1.000000,0.976000,0.000000);
rgb(631.000000 pt)=(1.000000,0.972000,0.000000);
rgb(632.000000 pt)=(1.000000,0.968000,0.000000);
rgb(633.000000 pt)=(1.000000,0.964000,0.000000);
rgb(634.000000 pt)=(1.000000,0.960000,0.000000);
rgb(635.000000 pt)=(1.000000,0.956000,0.000000);
rgb(636.000000 pt)=(1.000000,0.952000,0.000000);
rgb(637.000000 pt)=(1.000000,0.948000,0.000000);
rgb(638.000000 pt)=(1.000000,0.944000,0.000000);
rgb(639.000000 pt)=(1.000000,0.940000,0.000000);
rgb(640.000000 pt)=(1.000000,0.936000,0.000000);
rgb(641.000000 pt)=(1.000000,0.932000,0.000000);
rgb(642.000000 pt)=(1.000000,0.928000,0.000000);
rgb(643.000000 pt)=(1.000000,0.924000,0.000000);
rgb(644.000000 pt)=(1.000000,0.920000,0.000000);
rgb(645.000000 pt)=(1.000000,0.916000,0.000000);
rgb(646.000000 pt)=(1.000000,0.912000,0.000000);
rgb(647.000000 pt)=(1.000000,0.908000,0.000000);
rgb(648.000000 pt)=(1.000000,0.904000,0.000000);
rgb(649.000000 pt)=(1.000000,0.900000,0.000000);
rgb(650.000000 pt)=(1.000000,0.896000,0.000000);
rgb(651.000000 pt)=(1.000000,0.892000,0.000000);
rgb(652.000000 pt)=(1.000000,0.888000,0.000000);
rgb(653.000000 pt)=(1.000000,0.884000,0.000000);
rgb(654.000000 pt)=(1.000000,0.880000,0.000000);
rgb(655.000000 pt)=(1.000000,0.876000,0.000000);
rgb(656.000000 pt)=(1.000000,0.872000,0.000000);
rgb(657.000000 pt)=(1.000000,0.868000,0.000000);
rgb(658.000000 pt)=(1.000000,0.864000,0.000000);
rgb(659.000000 pt)=(1.000000,0.860000,0.000000);
rgb(660.000000 pt)=(1.000000,0.856000,0.000000);
rgb(661.000000 pt)=(1.000000,0.852000,0.000000);
rgb(662.000000 pt)=(1.000000,0.848000,0.000000);
rgb(663.000000 pt)=(1.000000,0.844000,0.000000);
rgb(664.000000 pt)=(1.000000,0.840000,0.000000);
rgb(665.000000 pt)=(1.000000,0.836000,0.000000);
rgb(666.000000 pt)=(1.000000,0.832000,0.000000);
rgb(667.000000 pt)=(1.000000,0.828000,0.000000);
rgb(668.000000 pt)=(1.000000,0.824000,0.000000);
rgb(669.000000 pt)=(1.000000,0.820000,0.000000);
rgb(670.000000 pt)=(1.000000,0.816000,0.000000);
rgb(671.000000 pt)=(1.000000,0.812000,0.000000);
rgb(672.000000 pt)=(1.000000,0.808000,0.000000);
rgb(673.000000 pt)=(1.000000,0.804000,0.000000);
rgb(674.000000 pt)=(1.000000,0.800000,0.000000);
rgb(675.000000 pt)=(1.000000,0.796000,0.000000);
rgb(676.000000 pt)=(1.000000,0.792000,0.000000);
rgb(677.000000 pt)=(1.000000,0.788000,0.000000);
rgb(678.000000 pt)=(1.000000,0.784000,0.000000);
rgb(679.000000 pt)=(1.000000,0.780000,0.000000);
rgb(680.000000 pt)=(1.000000,0.776000,0.000000);
rgb(681.000000 pt)=(1.000000,0.772000,0.000000);
rgb(682.000000 pt)=(1.000000,0.768000,0.000000);
rgb(683.000000 pt)=(1.000000,0.764000,0.000000);
rgb(684.000000 pt)=(1.000000,0.760000,0.000000);
rgb(685.000000 pt)=(1.000000,0.756000,0.000000);
rgb(686.000000 pt)=(1.000000,0.752000,0.000000);
rgb(687.000000 pt)=(1.000000,0.748000,0.000000);
rgb(688.000000 pt)=(1.000000,0.744000,0.000000);
rgb(689.000000 pt)=(1.000000,0.740000,0.000000);
rgb(690.000000 pt)=(1.000000,0.736000,0.000000);
rgb(691.000000 pt)=(1.000000,0.732000,0.000000);
rgb(692.000000 pt)=(1.000000,0.728000,0.000000);
rgb(693.000000 pt)=(1.000000,0.724000,0.000000);
rgb(694.000000 pt)=(1.000000,0.720000,0.000000);
rgb(695.000000 pt)=(1.000000,0.716000,0.000000);
rgb(696.000000 pt)=(1.000000,0.712000,0.000000);
rgb(697.000000 pt)=(1.000000,0.708000,0.000000);
rgb(698.000000 pt)=(1.000000,0.704000,0.000000);
rgb(699.000000 pt)=(1.000000,0.700000,0.000000);
rgb(700.000000 pt)=(1.000000,0.696000,0.000000);
rgb(701.000000 pt)=(1.000000,0.692000,0.000000);
rgb(702.000000 pt)=(1.000000,0.688000,0.000000);
rgb(703.000000 pt)=(1.000000,0.684000,0.000000);
rgb(704.000000 pt)=(1.000000,0.680000,0.000000);
rgb(705.000000 pt)=(1.000000,0.676000,0.000000);
rgb(706.000000 pt)=(1.000000,0.672000,0.000000);
rgb(707.000000 pt)=(1.000000,0.668000,0.000000);
rgb(708.000000 pt)=(1.000000,0.664000,0.000000);
rgb(709.000000 pt)=(1.000000,0.660000,0.000000);
rgb(710.000000 pt)=(1.000000,0.656000,0.000000);
rgb(711.000000 pt)=(1.000000,0.652000,0.000000);
rgb(712.000000 pt)=(1.000000,0.648000,0.000000);
rgb(713.000000 pt)=(1.000000,0.644000,0.000000);
rgb(714.000000 pt)=(1.000000,0.640000,0.000000);
rgb(715.000000 pt)=(1.000000,0.636000,0.000000);
rgb(716.000000 pt)=(1.000000,0.632000,0.000000);
rgb(717.000000 pt)=(1.000000,0.628000,0.000000);
rgb(718.000000 pt)=(1.000000,0.624000,0.000000);
rgb(719.000000 pt)=(1.000000,0.620000,0.000000);
rgb(720.000000 pt)=(1.000000,0.616000,0.000000);
rgb(721.000000 pt)=(1.000000,0.612000,0.000000);
rgb(722.000000 pt)=(1.000000,0.608000,0.000000);
rgb(723.000000 pt)=(1.000000,0.604000,0.000000);
rgb(724.000000 pt)=(1.000000,0.600000,0.000000);
rgb(725.000000 pt)=(1.000000,0.596000,0.000000);
rgb(726.000000 pt)=(1.000000,0.592000,0.000000);
rgb(727.000000 pt)=(1.000000,0.588000,0.000000);
rgb(728.000000 pt)=(1.000000,0.584000,0.000000);
rgb(729.000000 pt)=(1.000000,0.580000,0.000000);
rgb(730.000000 pt)=(1.000000,0.576000,0.000000);
rgb(731.000000 pt)=(1.000000,0.572000,0.000000);
rgb(732.000000 pt)=(1.000000,0.568000,0.000000);
rgb(733.000000 pt)=(1.000000,0.564000,0.000000);
rgb(734.000000 pt)=(1.000000,0.560000,0.000000);
rgb(735.000000 pt)=(1.000000,0.556000,0.000000);
rgb(736.000000 pt)=(1.000000,0.552000,0.000000);
rgb(737.000000 pt)=(1.000000,0.548000,0.000000);
rgb(738.000000 pt)=(1.000000,0.544000,0.000000);
rgb(739.000000 pt)=(1.000000,0.540000,0.000000);
rgb(740.000000 pt)=(1.000000,0.536000,0.000000);
rgb(741.000000 pt)=(1.000000,0.532000,0.000000);
rgb(742.000000 pt)=(1.000000,0.528000,0.000000);
rgb(743.000000 pt)=(1.000000,0.524000,0.000000);
rgb(744.000000 pt)=(1.000000,0.520000,0.000000);
rgb(745.000000 pt)=(1.000000,0.516000,0.000000);
rgb(746.000000 pt)=(1.000000,0.512000,0.000000);
rgb(747.000000 pt)=(1.000000,0.508000,0.000000);
rgb(748.000000 pt)=(1.000000,0.504000,0.000000);
rgb(749.000000 pt)=(1.000000,0.500000,0.000000);
rgb(750.000000 pt)=(1.000000,0.496000,0.000000);
rgb(751.000000 pt)=(1.000000,0.492000,0.000000);
rgb(752.000000 pt)=(1.000000,0.488000,0.000000);
rgb(753.000000 pt)=(1.000000,0.484000,0.000000);
rgb(754.000000 pt)=(1.000000,0.480000,0.000000);
rgb(755.000000 pt)=(1.000000,0.476000,0.000000);
rgb(756.000000 pt)=(1.000000,0.472000,0.000000);
rgb(757.000000 pt)=(1.000000,0.468000,0.000000);
rgb(758.000000 pt)=(1.000000,0.464000,0.000000);
rgb(759.000000 pt)=(1.000000,0.460000,0.000000);
rgb(760.000000 pt)=(1.000000,0.456000,0.000000);
rgb(761.000000 pt)=(1.000000,0.452000,0.000000);
rgb(762.000000 pt)=(1.000000,0.448000,0.000000);
rgb(763.000000 pt)=(1.000000,0.444000,0.000000);
rgb(764.000000 pt)=(1.000000,0.440000,0.000000);
rgb(765.000000 pt)=(1.000000,0.436000,0.000000);
rgb(766.000000 pt)=(1.000000,0.432000,0.000000);
rgb(767.000000 pt)=(1.000000,0.428000,0.000000);
rgb(768.000000 pt)=(1.000000,0.424000,0.000000);
rgb(769.000000 pt)=(1.000000,0.420000,0.000000);
rgb(770.000000 pt)=(1.000000,0.416000,0.000000);
rgb(771.000000 pt)=(1.000000,0.412000,0.000000);
rgb(772.000000 pt)=(1.000000,0.408000,0.000000);
rgb(773.000000 pt)=(1.000000,0.404000,0.000000);
rgb(774.000000 pt)=(1.000000,0.400000,0.000000);
rgb(775.000000 pt)=(1.000000,0.396000,0.000000);
rgb(776.000000 pt)=(1.000000,0.392000,0.000000);
rgb(777.000000 pt)=(1.000000,0.388000,0.000000);
rgb(778.000000 pt)=(1.000000,0.384000,0.000000);
rgb(779.000000 pt)=(1.000000,0.380000,0.000000);
rgb(780.000000 pt)=(1.000000,0.376000,0.000000);
rgb(781.000000 pt)=(1.000000,0.372000,0.000000);
rgb(782.000000 pt)=(1.000000,0.368000,0.000000);
rgb(783.000000 pt)=(1.000000,0.364000,0.000000);
rgb(784.000000 pt)=(1.000000,0.360000,0.000000);
rgb(785.000000 pt)=(1.000000,0.356000,0.000000);
rgb(786.000000 pt)=(1.000000,0.352000,0.000000);
rgb(787.000000 pt)=(1.000000,0.348000,0.000000);
rgb(788.000000 pt)=(1.000000,0.344000,0.000000);
rgb(789.000000 pt)=(1.000000,0.340000,0.000000);
rgb(790.000000 pt)=(1.000000,0.336000,0.000000);
rgb(791.000000 pt)=(1.000000,0.332000,0.000000);
rgb(792.000000 pt)=(1.000000,0.328000,0.000000);
rgb(793.000000 pt)=(1.000000,0.324000,0.000000);
rgb(794.000000 pt)=(1.000000,0.320000,0.000000);
rgb(795.000000 pt)=(1.000000,0.316000,0.000000);
rgb(796.000000 pt)=(1.000000,0.312000,0.000000);
rgb(797.000000 pt)=(1.000000,0.308000,0.000000);
rgb(798.000000 pt)=(1.000000,0.304000,0.000000);
rgb(799.000000 pt)=(1.000000,0.300000,0.000000);
rgb(800.000000 pt)=(1.000000,0.296000,0.000000);
rgb(801.000000 pt)=(1.000000,0.292000,0.000000);
rgb(802.000000 pt)=(1.000000,0.288000,0.000000);
rgb(803.000000 pt)=(1.000000,0.284000,0.000000);
rgb(804.000000 pt)=(1.000000,0.280000,0.000000);
rgb(805.000000 pt)=(1.000000,0.276000,0.000000);
rgb(806.000000 pt)=(1.000000,0.272000,0.000000);
rgb(807.000000 pt)=(1.000000,0.268000,0.000000);
rgb(808.000000 pt)=(1.000000,0.264000,0.000000);
rgb(809.000000 pt)=(1.000000,0.260000,0.000000);
rgb(810.000000 pt)=(1.000000,0.256000,0.000000);
rgb(811.000000 pt)=(1.000000,0.252000,0.000000);
rgb(812.000000 pt)=(1.000000,0.248000,0.000000);
rgb(813.000000 pt)=(1.000000,0.244000,0.000000);
rgb(814.000000 pt)=(1.000000,0.240000,0.000000);
rgb(815.000000 pt)=(1.000000,0.236000,0.000000);
rgb(816.000000 pt)=(1.000000,0.232000,0.000000);
rgb(817.000000 pt)=(1.000000,0.228000,0.000000);
rgb(818.000000 pt)=(1.000000,0.224000,0.000000);
rgb(819.000000 pt)=(1.000000,0.220000,0.000000);
rgb(820.000000 pt)=(1.000000,0.216000,0.000000);
rgb(821.000000 pt)=(1.000000,0.212000,0.000000);
rgb(822.000000 pt)=(1.000000,0.208000,0.000000);
rgb(823.000000 pt)=(1.000000,0.204000,0.000000);
rgb(824.000000 pt)=(1.000000,0.200000,0.000000);
rgb(825.000000 pt)=(1.000000,0.196000,0.000000);
rgb(826.000000 pt)=(1.000000,0.192000,0.000000);
rgb(827.000000 pt)=(1.000000,0.188000,0.000000);
rgb(828.000000 pt)=(1.000000,0.184000,0.000000);
rgb(829.000000 pt)=(1.000000,0.180000,0.000000);
rgb(830.000000 pt)=(1.000000,0.176000,0.000000);
rgb(831.000000 pt)=(1.000000,0.172000,0.000000);
rgb(832.000000 pt)=(1.000000,0.168000,0.000000);
rgb(833.000000 pt)=(1.000000,0.164000,0.000000);
rgb(834.000000 pt)=(1.000000,0.160000,0.000000);
rgb(835.000000 pt)=(1.000000,0.156000,0.000000);
rgb(836.000000 pt)=(1.000000,0.152000,0.000000);
rgb(837.000000 pt)=(1.000000,0.148000,0.000000);
rgb(838.000000 pt)=(1.000000,0.144000,0.000000);
rgb(839.000000 pt)=(1.000000,0.140000,0.000000);
rgb(840.000000 pt)=(1.000000,0.136000,0.000000);
rgb(841.000000 pt)=(1.000000,0.132000,0.000000);
rgb(842.000000 pt)=(1.000000,0.128000,0.000000);
rgb(843.000000 pt)=(1.000000,0.124000,0.000000);
rgb(844.000000 pt)=(1.000000,0.120000,0.000000);
rgb(845.000000 pt)=(1.000000,0.116000,0.000000);
rgb(846.000000 pt)=(1.000000,0.112000,0.000000);
rgb(847.000000 pt)=(1.000000,0.108000,0.000000);
rgb(848.000000 pt)=(1.000000,0.104000,0.000000);
rgb(849.000000 pt)=(1.000000,0.100000,0.000000);
rgb(850.000000 pt)=(1.000000,0.096000,0.000000);
rgb(851.000000 pt)=(1.000000,0.092000,0.000000);
rgb(852.000000 pt)=(1.000000,0.088000,0.000000);
rgb(853.000000 pt)=(1.000000,0.084000,0.000000);
rgb(854.000000 pt)=(1.000000,0.080000,0.000000);
rgb(855.000000 pt)=(1.000000,0.076000,0.000000);
rgb(856.000000 pt)=(1.000000,0.072000,0.000000);
rgb(857.000000 pt)=(1.000000,0.068000,0.000000);
rgb(858.000000 pt)=(1.000000,0.064000,0.000000);
rgb(859.000000 pt)=(1.000000,0.060000,0.000000);
rgb(860.000000 pt)=(1.000000,0.056000,0.000000);
rgb(861.000000 pt)=(1.000000,0.052000,0.000000);
rgb(862.000000 pt)=(1.000000,0.048000,0.000000);
rgb(863.000000 pt)=(1.000000,0.044000,0.000000);
rgb(864.000000 pt)=(1.000000,0.040000,0.000000);
rgb(865.000000 pt)=(1.000000,0.036000,0.000000);
rgb(866.000000 pt)=(1.000000,0.032000,0.000000);
rgb(867.000000 pt)=(1.000000,0.028000,0.000000);
rgb(868.000000 pt)=(1.000000,0.024000,0.000000);
rgb(869.000000 pt)=(1.000000,0.020000,0.000000);
rgb(870.000000 pt)=(1.000000,0.016000,0.000000);
rgb(871.000000 pt)=(1.000000,0.012000,0.000000);
rgb(872.000000 pt)=(1.000000,0.008000,0.000000);
rgb(873.000000 pt)=(1.000000,0.004000,0.000000);
rgb(874.000000 pt)=(1.000000,0.000000,0.000000);
rgb(875.000000 pt)=(0.996000,0.000000,0.000000);
rgb(876.000000 pt)=(0.992000,0.000000,0.000000);
rgb(877.000000 pt)=(0.988000,0.000000,0.000000);
rgb(878.000000 pt)=(0.984000,0.000000,0.000000);
rgb(879.000000 pt)=(0.980000,0.000000,0.000000);
rgb(880.000000 pt)=(0.976000,0.000000,0.000000);
rgb(881.000000 pt)=(0.972000,0.000000,0.000000);
rgb(882.000000 pt)=(0.968000,0.000000,0.000000);
rgb(883.000000 pt)=(0.964000,0.000000,0.000000);
rgb(884.000000 pt)=(0.960000,0.000000,0.000000);
rgb(885.000000 pt)=(0.956000,0.000000,0.000000);
rgb(886.000000 pt)=(0.952000,0.000000,0.000000);
rgb(887.000000 pt)=(0.948000,0.000000,0.000000);
rgb(888.000000 pt)=(0.944000,0.000000,0.000000);
rgb(889.000000 pt)=(0.940000,0.000000,0.000000);
rgb(890.000000 pt)=(0.936000,0.000000,0.000000);
rgb(891.000000 pt)=(0.932000,0.000000,0.000000);
rgb(892.000000 pt)=(0.928000,0.000000,0.000000);
rgb(893.000000 pt)=(0.924000,0.000000,0.000000);
rgb(894.000000 pt)=(0.920000,0.000000,0.000000);
rgb(895.000000 pt)=(0.916000,0.000000,0.000000);
rgb(896.000000 pt)=(0.912000,0.000000,0.000000);
rgb(897.000000 pt)=(0.908000,0.000000,0.000000);
rgb(898.000000 pt)=(0.904000,0.000000,0.000000);
rgb(899.000000 pt)=(0.900000,0.000000,0.000000);
rgb(900.000000 pt)=(0.896000,0.000000,0.000000);
rgb(901.000000 pt)=(0.892000,0.000000,0.000000);
rgb(902.000000 pt)=(0.888000,0.000000,0.000000);
rgb(903.000000 pt)=(0.884000,0.000000,0.000000);
rgb(904.000000 pt)=(0.880000,0.000000,0.000000);
rgb(905.000000 pt)=(0.876000,0.000000,0.000000);
rgb(906.000000 pt)=(0.872000,0.000000,0.000000);
rgb(907.000000 pt)=(0.868000,0.000000,0.000000);
rgb(908.000000 pt)=(0.864000,0.000000,0.000000);
rgb(909.000000 pt)=(0.860000,0.000000,0.000000);
rgb(910.000000 pt)=(0.856000,0.000000,0.000000);
rgb(911.000000 pt)=(0.852000,0.000000,0.000000);
rgb(912.000000 pt)=(0.848000,0.000000,0.000000);
rgb(913.000000 pt)=(0.844000,0.000000,0.000000);
rgb(914.000000 pt)=(0.840000,0.000000,0.000000);
rgb(915.000000 pt)=(0.836000,0.000000,0.000000);
rgb(916.000000 pt)=(0.832000,0.000000,0.000000);
rgb(917.000000 pt)=(0.828000,0.000000,0.000000);
rgb(918.000000 pt)=(0.824000,0.000000,0.000000);
rgb(919.000000 pt)=(0.820000,0.000000,0.000000);
rgb(920.000000 pt)=(0.816000,0.000000,0.000000);
rgb(921.000000 pt)=(0.812000,0.000000,0.000000);
rgb(922.000000 pt)=(0.808000,0.000000,0.000000);
rgb(923.000000 pt)=(0.804000,0.000000,0.000000);
rgb(924.000000 pt)=(0.800000,0.000000,0.000000);
rgb(925.000000 pt)=(0.796000,0.000000,0.000000);
rgb(926.000000 pt)=(0.792000,0.000000,0.000000);
rgb(927.000000 pt)=(0.788000,0.000000,0.000000);
rgb(928.000000 pt)=(0.784000,0.000000,0.000000);
rgb(929.000000 pt)=(0.780000,0.000000,0.000000);
rgb(930.000000 pt)=(0.776000,0.000000,0.000000);
rgb(931.000000 pt)=(0.772000,0.000000,0.000000);
rgb(932.000000 pt)=(0.768000,0.000000,0.000000);
rgb(933.000000 pt)=(0.764000,0.000000,0.000000);
rgb(934.000000 pt)=(0.760000,0.000000,0.000000);
rgb(935.000000 pt)=(0.756000,0.000000,0.000000);
rgb(936.000000 pt)=(0.752000,0.000000,0.000000);
rgb(937.000000 pt)=(0.748000,0.000000,0.000000);
rgb(938.000000 pt)=(0.744000,0.000000,0.000000);
rgb(939.000000 pt)=(0.740000,0.000000,0.000000);
rgb(940.000000 pt)=(0.736000,0.000000,0.000000);
rgb(941.000000 pt)=(0.732000,0.000000,0.000000);
rgb(942.000000 pt)=(0.728000,0.000000,0.000000);
rgb(943.000000 pt)=(0.724000,0.000000,0.000000);
rgb(944.000000 pt)=(0.720000,0.000000,0.000000);
rgb(945.000000 pt)=(0.716000,0.000000,0.000000);
rgb(946.000000 pt)=(0.712000,0.000000,0.000000);
rgb(947.000000 pt)=(0.708000,0.000000,0.000000);
rgb(948.000000 pt)=(0.704000,0.000000,0.000000);
rgb(949.000000 pt)=(0.700000,0.000000,0.000000);
rgb(950.000000 pt)=(0.696000,0.000000,0.000000);
rgb(951.000000 pt)=(0.692000,0.000000,0.000000);
rgb(952.000000 pt)=(0.688000,0.000000,0.000000);
rgb(953.000000 pt)=(0.684000,0.000000,0.000000);
rgb(954.000000 pt)=(0.680000,0.000000,0.000000);
rgb(955.000000 pt)=(0.676000,0.000000,0.000000);
rgb(956.000000 pt)=(0.672000,0.000000,0.000000);
rgb(957.000000 pt)=(0.668000,0.000000,0.000000);
rgb(958.000000 pt)=(0.664000,0.000000,0.000000);
rgb(959.000000 pt)=(0.660000,0.000000,0.000000);
rgb(960.000000 pt)=(0.656000,0.000000,0.000000);
rgb(961.000000 pt)=(0.652000,0.000000,0.000000);
rgb(962.000000 pt)=(0.648000,0.000000,0.000000);
rgb(963.000000 pt)=(0.644000,0.000000,0.000000);
rgb(964.000000 pt)=(0.640000,0.000000,0.000000);
rgb(965.000000 pt)=(0.636000,0.000000,0.000000);
rgb(966.000000 pt)=(0.632000,0.000000,0.000000);
rgb(967.000000 pt)=(0.628000,0.000000,0.000000);
rgb(968.000000 pt)=(0.624000,0.000000,0.000000);
rgb(969.000000 pt)=(0.620000,0.000000,0.000000);
rgb(970.000000 pt)=(0.616000,0.000000,0.000000);
rgb(971.000000 pt)=(0.612000,0.000000,0.000000);
rgb(972.000000 pt)=(0.608000,0.000000,0.000000);
rgb(973.000000 pt)=(0.604000,0.000000,0.000000);
rgb(974.000000 pt)=(0.600000,0.000000,0.000000);
rgb(975.000000 pt)=(0.596000,0.000000,0.000000);
rgb(976.000000 pt)=(0.592000,0.000000,0.000000);
rgb(977.000000 pt)=(0.588000,0.000000,0.000000);
rgb(978.000000 pt)=(0.584000,0.000000,0.000000);
rgb(979.000000 pt)=(0.580000,0.000000,0.000000);
rgb(980.000000 pt)=(0.576000,0.000000,0.000000);
rgb(981.000000 pt)=(0.572000,0.000000,0.000000);
rgb(982.000000 pt)=(0.568000,0.000000,0.000000);
rgb(983.000000 pt)=(0.564000,0.000000,0.000000);
rgb(984.000000 pt)=(0.560000,0.000000,0.000000);
rgb(985.000000 pt)=(0.556000,0.000000,0.000000);
rgb(986.000000 pt)=(0.552000,0.000000,0.000000);
rgb(987.000000 pt)=(0.548000,0.000000,0.000000);
rgb(988.000000 pt)=(0.544000,0.000000,0.000000);
rgb(989.000000 pt)=(0.540000,0.000000,0.000000);
rgb(990.000000 pt)=(0.536000,0.000000,0.000000);
rgb(991.000000 pt)=(0.532000,0.000000,0.000000);
rgb(992.000000 pt)=(0.528000,0.000000,0.000000);
rgb(993.000000 pt)=(0.524000,0.000000,0.000000);
rgb(994.000000 pt)=(0.520000,0.000000,0.000000);
rgb(995.000000 pt)=(0.516000,0.000000,0.000000);
rgb(996.000000 pt)=(0.512000,0.000000,0.000000);
rgb(997.000000 pt)=(0.508000,0.000000,0.000000);
rgb(998.000000 pt)=(0.504000,0.000000,0.000000);
rgb(999.000000 pt)=(0.500000,0.000000,0.000000);
}}
\pgfplotsset{
	colormap={myhot}{
		rgb(0pt)=(1.000000,1.000000,1.000000);
		rgb(1pt)=(1.000000,1.000000,0.996000);
		rgb(2pt)=(1.000000,1.000000,0.992000);
		rgb(3pt)=(1.000000,1.000000,0.988000);
		rgb(4pt)=(1.000000,1.000000,0.984000);
		rgb(5pt)=(1.000000,1.000000,0.980000);
		rgb(6pt)=(1.000000,1.000000,0.976000);
		rgb(7pt)=(1.000000,1.000000,0.972000);
		rgb(8pt)=(1.000000,1.000000,0.968000);
		rgb(9pt)=(1.000000,1.000000,0.964000);
		rgb(10pt)=(1.000000,1.000000,0.960000);
		rgb(11pt)=(1.000000,1.000000,0.956000);
		rgb(12pt)=(1.000000,1.000000,0.952000);
		rgb(13pt)=(1.000000,1.000000,0.948000);
		rgb(14pt)=(1.000000,1.000000,0.944000);
		rgb(15pt)=(1.000000,1.000000,0.940000);
		rgb(16pt)=(1.000000,1.000000,0.936000);
		rgb(17pt)=(1.000000,1.000000,0.932000);
		rgb(18pt)=(1.000000,1.000000,0.928000);
		rgb(19pt)=(1.000000,1.000000,0.924000);
		rgb(20pt)=(1.000000,1.000000,0.920000);
		rgb(21pt)=(1.000000,1.000000,0.916000);
		rgb(22pt)=(1.000000,1.000000,0.912000);
		rgb(23pt)=(1.000000,1.000000,0.908000);
		rgb(24pt)=(1.000000,1.000000,0.904000);
		rgb(25pt)=(1.000000,1.000000,0.900000);
		rgb(26pt)=(1.000000,1.000000,0.896000);
		rgb(27pt)=(1.000000,1.000000,0.892000);
		rgb(28pt)=(1.000000,1.000000,0.888000);
		rgb(29pt)=(1.000000,1.000000,0.884000);
		rgb(30pt)=(1.000000,1.000000,0.880000);
		rgb(31pt)=(1.000000,1.000000,0.876000);
		rgb(32pt)=(1.000000,1.000000,0.872000);
		rgb(33pt)=(1.000000,1.000000,0.868000);
		rgb(34pt)=(1.000000,1.000000,0.864000);
		rgb(35pt)=(1.000000,1.000000,0.860000);
		rgb(36pt)=(1.000000,1.000000,0.856000);
		rgb(37pt)=(1.000000,1.000000,0.852000);
		rgb(38pt)=(1.000000,1.000000,0.848000);
		rgb(39pt)=(1.000000,1.000000,0.844000);
		rgb(40pt)=(1.000000,1.000000,0.840000);
		rgb(41pt)=(1.000000,1.000000,0.836000);
		rgb(42pt)=(1.000000,1.000000,0.832000);
		rgb(43pt)=(1.000000,1.000000,0.828000);
		rgb(44pt)=(1.000000,1.000000,0.824000);
		rgb(45pt)=(1.000000,1.000000,0.820000);
		rgb(46pt)=(1.000000,1.000000,0.816000);
		rgb(47pt)=(1.000000,1.000000,0.812000);
		rgb(48pt)=(1.000000,1.000000,0.808000);
		rgb(49pt)=(1.000000,1.000000,0.804000);
		rgb(50pt)=(1.000000,1.000000,0.800000);
		rgb(51pt)=(1.000000,1.000000,0.796000);
		rgb(52pt)=(1.000000,1.000000,0.792000);
		rgb(53pt)=(1.000000,1.000000,0.788000);
		rgb(54pt)=(1.000000,1.000000,0.784000);
		rgb(55pt)=(1.000000,1.000000,0.780000);
		rgb(56pt)=(1.000000,1.000000,0.776000);
		rgb(57pt)=(1.000000,1.000000,0.772000);
		rgb(58pt)=(1.000000,1.000000,0.768000);
		rgb(59pt)=(1.000000,1.000000,0.764000);
		rgb(60pt)=(1.000000,1.000000,0.760000);
		rgb(61pt)=(1.000000,1.000000,0.756000);
		rgb(62pt)=(1.000000,1.000000,0.752000);
		rgb(63pt)=(1.000000,1.000000,0.748000);
		rgb(64pt)=(1.000000,1.000000,0.744000);
		rgb(65pt)=(1.000000,1.000000,0.740000);
		rgb(66pt)=(1.000000,1.000000,0.736000);
		rgb(67pt)=(1.000000,1.000000,0.732000);
		rgb(68pt)=(1.000000,1.000000,0.728000);
		rgb(69pt)=(1.000000,1.000000,0.724000);
		rgb(70pt)=(1.000000,1.000000,0.720000);
		rgb(71pt)=(1.000000,1.000000,0.716000);
		rgb(72pt)=(1.000000,1.000000,0.712000);
		rgb(73pt)=(1.000000,1.000000,0.708000);
		rgb(74pt)=(1.000000,1.000000,0.704000);
		rgb(75pt)=(1.000000,1.000000,0.700000);
		rgb(76pt)=(1.000000,1.000000,0.696000);
		rgb(77pt)=(1.000000,1.000000,0.692000);
		rgb(78pt)=(1.000000,1.000000,0.688000);
		rgb(79pt)=(1.000000,1.000000,0.684000);
		rgb(80pt)=(1.000000,1.000000,0.680000);
		rgb(81pt)=(1.000000,1.000000,0.676000);
		rgb(82pt)=(1.000000,1.000000,0.672000);
		rgb(83pt)=(1.000000,1.000000,0.668000);
		rgb(84pt)=(1.000000,1.000000,0.664000);
		rgb(85pt)=(1.000000,1.000000,0.660000);
		rgb(86pt)=(1.000000,1.000000,0.656000);
		rgb(87pt)=(1.000000,1.000000,0.652000);
		rgb(88pt)=(1.000000,1.000000,0.648000);
		rgb(89pt)=(1.000000,1.000000,0.644000);
		rgb(90pt)=(1.000000,1.000000,0.640000);
		rgb(91pt)=(1.000000,1.000000,0.636000);
		rgb(92pt)=(1.000000,1.000000,0.632000);
		rgb(93pt)=(1.000000,1.000000,0.628000);
		rgb(94pt)=(1.000000,1.000000,0.624000);
		rgb(95pt)=(1.000000,1.000000,0.620000);
		rgb(96pt)=(1.000000,1.000000,0.616000);
		rgb(97pt)=(1.000000,1.000000,0.612000);
		rgb(98pt)=(1.000000,1.000000,0.608000);
		rgb(99pt)=(1.000000,1.000000,0.604000);
		rgb(100pt)=(1.000000,1.000000,0.600000);
		rgb(101pt)=(1.000000,1.000000,0.596000);
		rgb(102pt)=(1.000000,1.000000,0.592000);
		rgb(103pt)=(1.000000,1.000000,0.588000);
		rgb(104pt)=(1.000000,1.000000,0.584000);
		rgb(105pt)=(1.000000,1.000000,0.580000);
		rgb(106pt)=(1.000000,1.000000,0.576000);
		rgb(107pt)=(1.000000,1.000000,0.572000);
		rgb(108pt)=(1.000000,1.000000,0.568000);
		rgb(109pt)=(1.000000,1.000000,0.564000);
		rgb(110pt)=(1.000000,1.000000,0.560000);
		rgb(111pt)=(1.000000,1.000000,0.556000);
		rgb(112pt)=(1.000000,1.000000,0.552000);
		rgb(113pt)=(1.000000,1.000000,0.548000);
		rgb(114pt)=(1.000000,1.000000,0.544000);
		rgb(115pt)=(1.000000,1.000000,0.540000);
		rgb(116pt)=(1.000000,1.000000,0.536000);
		rgb(117pt)=(1.000000,1.000000,0.532000);
		rgb(118pt)=(1.000000,1.000000,0.528000);
		rgb(119pt)=(1.000000,1.000000,0.524000);
		rgb(120pt)=(1.000000,1.000000,0.520000);
		rgb(121pt)=(1.000000,1.000000,0.516000);
		rgb(122pt)=(1.000000,1.000000,0.512000);
		rgb(123pt)=(1.000000,1.000000,0.508000);
		rgb(124pt)=(1.000000,1.000000,0.504000);
		rgb(125pt)=(1.000000,1.000000,0.500000);
		rgb(126pt)=(1.000000,1.000000,0.496000);
		rgb(127pt)=(1.000000,1.000000,0.492000);
		rgb(128pt)=(1.000000,1.000000,0.488000);
		rgb(129pt)=(1.000000,1.000000,0.484000);
		rgb(130pt)=(1.000000,1.000000,0.480000);
		rgb(131pt)=(1.000000,1.000000,0.476000);
		rgb(132pt)=(1.000000,1.000000,0.472000);
		rgb(133pt)=(1.000000,1.000000,0.468000);
		rgb(134pt)=(1.000000,1.000000,0.464000);
		rgb(135pt)=(1.000000,1.000000,0.460000);
		rgb(136pt)=(1.000000,1.000000,0.456000);
		rgb(137pt)=(1.000000,1.000000,0.452000);
		rgb(138pt)=(1.000000,1.000000,0.448000);
		rgb(139pt)=(1.000000,1.000000,0.444000);
		rgb(140pt)=(1.000000,1.000000,0.440000);
		rgb(141pt)=(1.000000,1.000000,0.436000);
		rgb(142pt)=(1.000000,1.000000,0.432000);
		rgb(143pt)=(1.000000,1.000000,0.428000);
		rgb(144pt)=(1.000000,1.000000,0.424000);
		rgb(145pt)=(1.000000,1.000000,0.420000);
		rgb(146pt)=(1.000000,1.000000,0.416000);
		rgb(147pt)=(1.000000,1.000000,0.412000);
		rgb(148pt)=(1.000000,1.000000,0.408000);
		rgb(149pt)=(1.000000,1.000000,0.404000);
		rgb(150pt)=(1.000000,1.000000,0.400000);
		rgb(151pt)=(1.000000,1.000000,0.396000);
		rgb(152pt)=(1.000000,1.000000,0.392000);
		rgb(153pt)=(1.000000,1.000000,0.388000);
		rgb(154pt)=(1.000000,1.000000,0.384000);
		rgb(155pt)=(1.000000,1.000000,0.380000);
		rgb(156pt)=(1.000000,1.000000,0.376000);
		rgb(157pt)=(1.000000,1.000000,0.372000);
		rgb(158pt)=(1.000000,1.000000,0.368000);
		rgb(159pt)=(1.000000,1.000000,0.364000);
		rgb(160pt)=(1.000000,1.000000,0.360000);
		rgb(161pt)=(1.000000,1.000000,0.356000);
		rgb(162pt)=(1.000000,1.000000,0.352000);
		rgb(163pt)=(1.000000,1.000000,0.348000);
		rgb(164pt)=(1.000000,1.000000,0.344000);
		rgb(165pt)=(1.000000,1.000000,0.340000);
		rgb(166pt)=(1.000000,1.000000,0.336000);
		rgb(167pt)=(1.000000,1.000000,0.332000);
		rgb(168pt)=(1.000000,1.000000,0.328000);
		rgb(169pt)=(1.000000,1.000000,0.324000);
		rgb(170pt)=(1.000000,1.000000,0.320000);
		rgb(171pt)=(1.000000,1.000000,0.316000);
		rgb(172pt)=(1.000000,1.000000,0.312000);
		rgb(173pt)=(1.000000,1.000000,0.308000);
		rgb(174pt)=(1.000000,1.000000,0.304000);
		rgb(175pt)=(1.000000,1.000000,0.300000);
		rgb(176pt)=(1.000000,1.000000,0.296000);
		rgb(177pt)=(1.000000,1.000000,0.292000);
		rgb(178pt)=(1.000000,1.000000,0.288000);
		rgb(179pt)=(1.000000,1.000000,0.284000);
		rgb(180pt)=(1.000000,1.000000,0.280000);
		rgb(181pt)=(1.000000,1.000000,0.276000);
		rgb(182pt)=(1.000000,1.000000,0.272000);
		rgb(183pt)=(1.000000,1.000000,0.268000);
		rgb(184pt)=(1.000000,1.000000,0.264000);
		rgb(185pt)=(1.000000,1.000000,0.260000);
		rgb(186pt)=(1.000000,1.000000,0.256000);
		rgb(187pt)=(1.000000,1.000000,0.252000);
		rgb(188pt)=(1.000000,1.000000,0.248000);
		rgb(189pt)=(1.000000,1.000000,0.244000);
		rgb(190pt)=(1.000000,1.000000,0.240000);
		rgb(191pt)=(1.000000,1.000000,0.236000);
		rgb(192pt)=(1.000000,1.000000,0.232000);
		rgb(193pt)=(1.000000,1.000000,0.228000);
		rgb(194pt)=(1.000000,1.000000,0.224000);
		rgb(195pt)=(1.000000,1.000000,0.220000);
		rgb(196pt)=(1.000000,1.000000,0.216000);
		rgb(197pt)=(1.000000,1.000000,0.212000);
		rgb(198pt)=(1.000000,1.000000,0.208000);
		rgb(199pt)=(1.000000,1.000000,0.204000);
		rgb(200pt)=(1.000000,1.000000,0.200000);
		rgb(201pt)=(1.000000,1.000000,0.196000);
		rgb(202pt)=(1.000000,1.000000,0.192000);
		rgb(203pt)=(1.000000,1.000000,0.188000);
		rgb(204pt)=(1.000000,1.000000,0.184000);
		rgb(205pt)=(1.000000,1.000000,0.180000);
		rgb(206pt)=(1.000000,1.000000,0.176000);
		rgb(207pt)=(1.000000,1.000000,0.172000);
		rgb(208pt)=(1.000000,1.000000,0.168000);
		rgb(209pt)=(1.000000,1.000000,0.164000);
		rgb(210pt)=(1.000000,1.000000,0.160000);
		rgb(211pt)=(1.000000,1.000000,0.156000);
		rgb(212pt)=(1.000000,1.000000,0.152000);
		rgb(213pt)=(1.000000,1.000000,0.148000);
		rgb(214pt)=(1.000000,1.000000,0.144000);
		rgb(215pt)=(1.000000,1.000000,0.140000);
		rgb(216pt)=(1.000000,1.000000,0.136000);
		rgb(217pt)=(1.000000,1.000000,0.132000);
		rgb(218pt)=(1.000000,1.000000,0.128000);
		rgb(219pt)=(1.000000,1.000000,0.124000);
		rgb(220pt)=(1.000000,1.000000,0.120000);
		rgb(221pt)=(1.000000,1.000000,0.116000);
		rgb(222pt)=(1.000000,1.000000,0.112000);
		rgb(223pt)=(1.000000,1.000000,0.108000);
		rgb(224pt)=(1.000000,1.000000,0.104000);
		rgb(225pt)=(1.000000,1.000000,0.100000);
		rgb(226pt)=(1.000000,1.000000,0.096000);
		rgb(227pt)=(1.000000,1.000000,0.092000);
		rgb(228pt)=(1.000000,1.000000,0.088000);
		rgb(229pt)=(1.000000,1.000000,0.084000);
		rgb(230pt)=(1.000000,1.000000,0.080000);
		rgb(231pt)=(1.000000,1.000000,0.076000);
		rgb(232pt)=(1.000000,1.000000,0.072000);
		rgb(233pt)=(1.000000,1.000000,0.068000);
		rgb(234pt)=(1.000000,1.000000,0.064000);
		rgb(235pt)=(1.000000,1.000000,0.060000);
		rgb(236pt)=(1.000000,1.000000,0.056000);
		rgb(237pt)=(1.000000,1.000000,0.052000);
		rgb(238pt)=(1.000000,1.000000,0.048000);
		rgb(239pt)=(1.000000,1.000000,0.044000);
		rgb(240pt)=(1.000000,1.000000,0.040000);
		rgb(241pt)=(1.000000,1.000000,0.036000);
		rgb(242pt)=(1.000000,1.000000,0.032000);
		rgb(243pt)=(1.000000,1.000000,0.028000);
		rgb(244pt)=(1.000000,1.000000,0.024000);
		rgb(245pt)=(1.000000,1.000000,0.020000);
		rgb(246pt)=(1.000000,1.000000,0.016000);
		rgb(247pt)=(1.000000,1.000000,0.012000);
		rgb(248pt)=(1.000000,1.000000,0.008000);
		rgb(249pt)=(1.000000,1.000000,0.004000);
		rgb(250pt)=(1.000000,1.000000,0.000000);
		rgb(251pt)=(1.000000,0.997333,0.000000);
		rgb(252pt)=(1.000000,0.994667,0.000000);
		rgb(253pt)=(1.000000,0.992000,0.000000);
		rgb(254pt)=(1.000000,0.989333,0.000000);
		rgb(255pt)=(1.000000,0.986667,0.000000);
		rgb(256pt)=(1.000000,0.984000,0.000000);
		rgb(257pt)=(1.000000,0.981333,0.000000);
		rgb(258pt)=(1.000000,0.978667,0.000000);
		rgb(259pt)=(1.000000,0.976000,0.000000);
		rgb(260pt)=(1.000000,0.973333,0.000000);
		rgb(261pt)=(1.000000,0.970667,0.000000);
		rgb(262pt)=(1.000000,0.968000,0.000000);
		rgb(263pt)=(1.000000,0.965333,0.000000);
		rgb(264pt)=(1.000000,0.962667,0.000000);
		rgb(265pt)=(1.000000,0.960000,0.000000);
		rgb(266pt)=(1.000000,0.957333,0.000000);
		rgb(267pt)=(1.000000,0.954667,0.000000);
		rgb(268pt)=(1.000000,0.952000,0.000000);
		rgb(269pt)=(1.000000,0.949333,0.000000);
		rgb(270pt)=(1.000000,0.946667,0.000000);
		rgb(271pt)=(1.000000,0.944000,0.000000);
		rgb(272pt)=(1.000000,0.941333,0.000000);
		rgb(273pt)=(1.000000,0.938667,0.000000);
		rgb(274pt)=(1.000000,0.936000,0.000000);
		rgb(275pt)=(1.000000,0.933333,0.000000);
		rgb(276pt)=(1.000000,0.930667,0.000000);
		rgb(277pt)=(1.000000,0.928000,0.000000);
		rgb(278pt)=(1.000000,0.925333,0.000000);
		rgb(279pt)=(1.000000,0.922667,0.000000);
		rgb(280pt)=(1.000000,0.920000,0.000000);
		rgb(281pt)=(1.000000,0.917333,0.000000);
		rgb(282pt)=(1.000000,0.914667,0.000000);
		rgb(283pt)=(1.000000,0.912000,0.000000);
		rgb(284pt)=(1.000000,0.909333,0.000000);
		rgb(285pt)=(1.000000,0.906667,0.000000);
		rgb(286pt)=(1.000000,0.904000,0.000000);
		rgb(287pt)=(1.000000,0.901333,0.000000);
		rgb(288pt)=(1.000000,0.898667,0.000000);
		rgb(289pt)=(1.000000,0.896000,0.000000);
		rgb(290pt)=(1.000000,0.893333,0.000000);
		rgb(291pt)=(1.000000,0.890667,0.000000);
		rgb(292pt)=(1.000000,0.888000,0.000000);
		rgb(293pt)=(1.000000,0.885333,0.000000);
		rgb(294pt)=(1.000000,0.882667,0.000000);
		rgb(295pt)=(1.000000,0.880000,0.000000);
		rgb(296pt)=(1.000000,0.877333,0.000000);
		rgb(297pt)=(1.000000,0.874667,0.000000);
		rgb(298pt)=(1.000000,0.872000,0.000000);
		rgb(299pt)=(1.000000,0.869333,0.000000);
		rgb(300pt)=(1.000000,0.866667,0.000000);
		rgb(301pt)=(1.000000,0.864000,0.000000);
		rgb(302pt)=(1.000000,0.861333,0.000000);
		rgb(303pt)=(1.000000,0.858667,0.000000);
		rgb(304pt)=(1.000000,0.856000,0.000000);
		rgb(305pt)=(1.000000,0.853333,0.000000);
		rgb(306pt)=(1.000000,0.850667,0.000000);
		rgb(307pt)=(1.000000,0.848000,0.000000);
		rgb(308pt)=(1.000000,0.845333,0.000000);
		rgb(309pt)=(1.000000,0.842667,0.000000);
		rgb(310pt)=(1.000000,0.840000,0.000000);
		rgb(311pt)=(1.000000,0.837333,0.000000);
		rgb(312pt)=(1.000000,0.834667,0.000000);
		rgb(313pt)=(1.000000,0.832000,0.000000);
		rgb(314pt)=(1.000000,0.829333,0.000000);
		rgb(315pt)=(1.000000,0.826667,0.000000);
		rgb(316pt)=(1.000000,0.824000,0.000000);
		rgb(317pt)=(1.000000,0.821333,0.000000);
		rgb(318pt)=(1.000000,0.818667,0.000000);
		rgb(319pt)=(1.000000,0.816000,0.000000);
		rgb(320pt)=(1.000000,0.813333,0.000000);
		rgb(321pt)=(1.000000,0.810667,0.000000);
		rgb(322pt)=(1.000000,0.808000,0.000000);
		rgb(323pt)=(1.000000,0.805333,0.000000);
		rgb(324pt)=(1.000000,0.802667,0.000000);
		rgb(325pt)=(1.000000,0.800000,0.000000);
		rgb(326pt)=(1.000000,0.797333,0.000000);
		rgb(327pt)=(1.000000,0.794667,0.000000);
		rgb(328pt)=(1.000000,0.792000,0.000000);
		rgb(329pt)=(1.000000,0.789333,0.000000);
		rgb(330pt)=(1.000000,0.786667,0.000000);
		rgb(331pt)=(1.000000,0.784000,0.000000);
		rgb(332pt)=(1.000000,0.781333,0.000000);
		rgb(333pt)=(1.000000,0.778667,0.000000);
		rgb(334pt)=(1.000000,0.776000,0.000000);
		rgb(335pt)=(1.000000,0.773333,0.000000);
		rgb(336pt)=(1.000000,0.770667,0.000000);
		rgb(337pt)=(1.000000,0.768000,0.000000);
		rgb(338pt)=(1.000000,0.765333,0.000000);
		rgb(339pt)=(1.000000,0.762667,0.000000);
		rgb(340pt)=(1.000000,0.760000,0.000000);
		rgb(341pt)=(1.000000,0.757333,0.000000);
		rgb(342pt)=(1.000000,0.754667,0.000000);
		rgb(343pt)=(1.000000,0.752000,0.000000);
		rgb(344pt)=(1.000000,0.749333,0.000000);
		rgb(345pt)=(1.000000,0.746667,0.000000);
		rgb(346pt)=(1.000000,0.744000,0.000000);
		rgb(347pt)=(1.000000,0.741333,0.000000);
		rgb(348pt)=(1.000000,0.738667,0.000000);
		rgb(349pt)=(1.000000,0.736000,0.000000);
		rgb(350pt)=(1.000000,0.733333,0.000000);
		rgb(351pt)=(1.000000,0.730667,0.000000);
		rgb(352pt)=(1.000000,0.728000,0.000000);
		rgb(353pt)=(1.000000,0.725333,0.000000);
		rgb(354pt)=(1.000000,0.722667,0.000000);
		rgb(355pt)=(1.000000,0.720000,0.000000);
		rgb(356pt)=(1.000000,0.717333,0.000000);
		rgb(357pt)=(1.000000,0.714667,0.000000);
		rgb(358pt)=(1.000000,0.712000,0.000000);
		rgb(359pt)=(1.000000,0.709333,0.000000);
		rgb(360pt)=(1.000000,0.706667,0.000000);
		rgb(361pt)=(1.000000,0.704000,0.000000);
		rgb(362pt)=(1.000000,0.701333,0.000000);
		rgb(363pt)=(1.000000,0.698667,0.000000);
		rgb(364pt)=(1.000000,0.696000,0.000000);
		rgb(365pt)=(1.000000,0.693333,0.000000);
		rgb(366pt)=(1.000000,0.690667,0.000000);
		rgb(367pt)=(1.000000,0.688000,0.000000);
		rgb(368pt)=(1.000000,0.685333,0.000000);
		rgb(369pt)=(1.000000,0.682667,0.000000);
		rgb(370pt)=(1.000000,0.680000,0.000000);
		rgb(371pt)=(1.000000,0.677333,0.000000);
		rgb(372pt)=(1.000000,0.674667,0.000000);
		rgb(373pt)=(1.000000,0.672000,0.000000);
		rgb(374pt)=(1.000000,0.669333,0.000000);
		rgb(375pt)=(1.000000,0.666667,0.000000);
		rgb(376pt)=(1.000000,0.664000,0.000000);
		rgb(377pt)=(1.000000,0.661333,0.000000);
		rgb(378pt)=(1.000000,0.658667,0.000000);
		rgb(379pt)=(1.000000,0.656000,0.000000);
		rgb(380pt)=(1.000000,0.653333,0.000000);
		rgb(381pt)=(1.000000,0.650667,0.000000);
		rgb(382pt)=(1.000000,0.648000,0.000000);
		rgb(383pt)=(1.000000,0.645333,0.000000);
		rgb(384pt)=(1.000000,0.642667,0.000000);
		rgb(385pt)=(1.000000,0.640000,0.000000);
		rgb(386pt)=(1.000000,0.637333,0.000000);
		rgb(387pt)=(1.000000,0.634667,0.000000);
		rgb(388pt)=(1.000000,0.632000,0.000000);
		rgb(389pt)=(1.000000,0.629333,0.000000);
		rgb(390pt)=(1.000000,0.626667,0.000000);
		rgb(391pt)=(1.000000,0.624000,0.000000);
		rgb(392pt)=(1.000000,0.621333,0.000000);
		rgb(393pt)=(1.000000,0.618667,0.000000);
		rgb(394pt)=(1.000000,0.616000,0.000000);
		rgb(395pt)=(1.000000,0.613333,0.000000);
		rgb(396pt)=(1.000000,0.610667,0.000000);
		rgb(397pt)=(1.000000,0.608000,0.000000);
		rgb(398pt)=(1.000000,0.605333,0.000000);
		rgb(399pt)=(1.000000,0.602667,0.000000);
		rgb(400pt)=(1.000000,0.600000,0.000000);
		rgb(401pt)=(1.000000,0.597333,0.000000);
		rgb(402pt)=(1.000000,0.594667,0.000000);
		rgb(403pt)=(1.000000,0.592000,0.000000);
		rgb(404pt)=(1.000000,0.589333,0.000000);
		rgb(405pt)=(1.000000,0.586667,0.000000);
		rgb(406pt)=(1.000000,0.584000,0.000000);
		rgb(407pt)=(1.000000,0.581333,0.000000);
		rgb(408pt)=(1.000000,0.578667,0.000000);
		rgb(409pt)=(1.000000,0.576000,0.000000);
		rgb(410pt)=(1.000000,0.573333,0.000000);
		rgb(411pt)=(1.000000,0.570667,0.000000);
		rgb(412pt)=(1.000000,0.568000,0.000000);
		rgb(413pt)=(1.000000,0.565333,0.000000);
		rgb(414pt)=(1.000000,0.562667,0.000000);
		rgb(415pt)=(1.000000,0.560000,0.000000);
		rgb(416pt)=(1.000000,0.557333,0.000000);
		rgb(417pt)=(1.000000,0.554667,0.000000);
		rgb(418pt)=(1.000000,0.552000,0.000000);
		rgb(419pt)=(1.000000,0.549333,0.000000);
		rgb(420pt)=(1.000000,0.546667,0.000000);
		rgb(421pt)=(1.000000,0.544000,0.000000);
		rgb(422pt)=(1.000000,0.541333,0.000000);
		rgb(423pt)=(1.000000,0.538667,0.000000);
		rgb(424pt)=(1.000000,0.536000,0.000000);
		rgb(425pt)=(1.000000,0.533333,0.000000);
		rgb(426pt)=(1.000000,0.530667,0.000000);
		rgb(427pt)=(1.000000,0.528000,0.000000);
		rgb(428pt)=(1.000000,0.525333,0.000000);
		rgb(429pt)=(1.000000,0.522667,0.000000);
		rgb(430pt)=(1.000000,0.520000,0.000000);
		rgb(431pt)=(1.000000,0.517333,0.000000);
		rgb(432pt)=(1.000000,0.514667,0.000000);
		rgb(433pt)=(1.000000,0.512000,0.000000);
		rgb(434pt)=(1.000000,0.509333,0.000000);
		rgb(435pt)=(1.000000,0.506667,0.000000);
		rgb(436pt)=(1.000000,0.504000,0.000000);
		rgb(437pt)=(1.000000,0.501333,0.000000);
		rgb(438pt)=(1.000000,0.498667,0.000000);
		rgb(439pt)=(1.000000,0.496000,0.000000);
		rgb(440pt)=(1.000000,0.493333,0.000000);
		rgb(441pt)=(1.000000,0.490667,0.000000);
		rgb(442pt)=(1.000000,0.488000,0.000000);
		rgb(443pt)=(1.000000,0.485333,0.000000);
		rgb(444pt)=(1.000000,0.482667,0.000000);
		rgb(445pt)=(1.000000,0.480000,0.000000);
		rgb(446pt)=(1.000000,0.477333,0.000000);
		rgb(447pt)=(1.000000,0.474667,0.000000);
		rgb(448pt)=(1.000000,0.472000,0.000000);
		rgb(449pt)=(1.000000,0.469333,0.000000);
		rgb(450pt)=(1.000000,0.466667,0.000000);
		rgb(451pt)=(1.000000,0.464000,0.000000);
		rgb(452pt)=(1.000000,0.461333,0.000000);
		rgb(453pt)=(1.000000,0.458667,0.000000);
		rgb(454pt)=(1.000000,0.456000,0.000000);
		rgb(455pt)=(1.000000,0.453333,0.000000);
		rgb(456pt)=(1.000000,0.450667,0.000000);
		rgb(457pt)=(1.000000,0.448000,0.000000);
		rgb(458pt)=(1.000000,0.445333,0.000000);
		rgb(459pt)=(1.000000,0.442667,0.000000);
		rgb(460pt)=(1.000000,0.440000,0.000000);
		rgb(461pt)=(1.000000,0.437333,0.000000);
		rgb(462pt)=(1.000000,0.434667,0.000000);
		rgb(463pt)=(1.000000,0.432000,0.000000);
		rgb(464pt)=(1.000000,0.429333,0.000000);
		rgb(465pt)=(1.000000,0.426667,0.000000);
		rgb(466pt)=(1.000000,0.424000,0.000000);
		rgb(467pt)=(1.000000,0.421333,0.000000);
		rgb(468pt)=(1.000000,0.418667,0.000000);
		rgb(469pt)=(1.000000,0.416000,0.000000);
		rgb(470pt)=(1.000000,0.413333,0.000000);
		rgb(471pt)=(1.000000,0.410667,0.000000);
		rgb(472pt)=(1.000000,0.408000,0.000000);
		rgb(473pt)=(1.000000,0.405333,0.000000);
		rgb(474pt)=(1.000000,0.402667,0.000000);
		rgb(475pt)=(1.000000,0.400000,0.000000);
		rgb(476pt)=(1.000000,0.397333,0.000000);
		rgb(477pt)=(1.000000,0.394667,0.000000);
		rgb(478pt)=(1.000000,0.392000,0.000000);
		rgb(479pt)=(1.000000,0.389333,0.000000);
		rgb(480pt)=(1.000000,0.386667,0.000000);
		rgb(481pt)=(1.000000,0.384000,0.000000);
		rgb(482pt)=(1.000000,0.381333,0.000000);
		rgb(483pt)=(1.000000,0.378667,0.000000);
		rgb(484pt)=(1.000000,0.376000,0.000000);
		rgb(485pt)=(1.000000,0.373333,0.000000);
		rgb(486pt)=(1.000000,0.370667,0.000000);
		rgb(487pt)=(1.000000,0.368000,0.000000);
		rgb(488pt)=(1.000000,0.365333,0.000000);
		rgb(489pt)=(1.000000,0.362667,0.000000);
		rgb(490pt)=(1.000000,0.360000,0.000000);
		rgb(491pt)=(1.000000,0.357333,0.000000);
		rgb(492pt)=(1.000000,0.354667,0.000000);
		rgb(493pt)=(1.000000,0.352000,0.000000);
		rgb(494pt)=(1.000000,0.349333,0.000000);
		rgb(495pt)=(1.000000,0.346667,0.000000);
		rgb(496pt)=(1.000000,0.344000,0.000000);
		rgb(497pt)=(1.000000,0.341333,0.000000);
		rgb(498pt)=(1.000000,0.338667,0.000000);
		rgb(499pt)=(1.000000,0.336000,0.000000);
		rgb(500pt)=(1.000000,0.333333,0.000000);
		rgb(501pt)=(1.000000,0.330667,0.000000);
		rgb(502pt)=(1.000000,0.328000,0.000000);
		rgb(503pt)=(1.000000,0.325333,0.000000);
		rgb(504pt)=(1.000000,0.322667,0.000000);
		rgb(505pt)=(1.000000,0.320000,0.000000);
		rgb(506pt)=(1.000000,0.317333,0.000000);
		rgb(507pt)=(1.000000,0.314667,0.000000);
		rgb(508pt)=(1.000000,0.312000,0.000000);
		rgb(509pt)=(1.000000,0.309333,0.000000);
		rgb(510pt)=(1.000000,0.306667,0.000000);
		rgb(511pt)=(1.000000,0.304000,0.000000);
		rgb(512pt)=(1.000000,0.301333,0.000000);
		rgb(513pt)=(1.000000,0.298667,0.000000);
		rgb(514pt)=(1.000000,0.296000,0.000000);
		rgb(515pt)=(1.000000,0.293333,0.000000);
		rgb(516pt)=(1.000000,0.290667,0.000000);
		rgb(517pt)=(1.000000,0.288000,0.000000);
		rgb(518pt)=(1.000000,0.285333,0.000000);
		rgb(519pt)=(1.000000,0.282667,0.000000);
		rgb(520pt)=(1.000000,0.280000,0.000000);
		rgb(521pt)=(1.000000,0.277333,0.000000);
		rgb(522pt)=(1.000000,0.274667,0.000000);
		rgb(523pt)=(1.000000,0.272000,0.000000);
		rgb(524pt)=(1.000000,0.269333,0.000000);
		rgb(525pt)=(1.000000,0.266667,0.000000);
		rgb(526pt)=(1.000000,0.264000,0.000000);
		rgb(527pt)=(1.000000,0.261333,0.000000);
		rgb(528pt)=(1.000000,0.258667,0.000000);
		rgb(529pt)=(1.000000,0.256000,0.000000);
		rgb(530pt)=(1.000000,0.253333,0.000000);
		rgb(531pt)=(1.000000,0.250667,0.000000);
		rgb(532pt)=(1.000000,0.248000,0.000000);
		rgb(533pt)=(1.000000,0.245333,0.000000);
		rgb(534pt)=(1.000000,0.242667,0.000000);
		rgb(535pt)=(1.000000,0.240000,0.000000);
		rgb(536pt)=(1.000000,0.237333,0.000000);
		rgb(537pt)=(1.000000,0.234667,0.000000);
		rgb(538pt)=(1.000000,0.232000,0.000000);
		rgb(539pt)=(1.000000,0.229333,0.000000);
		rgb(540pt)=(1.000000,0.226667,0.000000);
		rgb(541pt)=(1.000000,0.224000,0.000000);
		rgb(542pt)=(1.000000,0.221333,0.000000);
		rgb(543pt)=(1.000000,0.218667,0.000000);
		rgb(544pt)=(1.000000,0.216000,0.000000);
		rgb(545pt)=(1.000000,0.213333,0.000000);
		rgb(546pt)=(1.000000,0.210667,0.000000);
		rgb(547pt)=(1.000000,0.208000,0.000000);
		rgb(548pt)=(1.000000,0.205333,0.000000);
		rgb(549pt)=(1.000000,0.202667,0.000000);
		rgb(550pt)=(1.000000,0.200000,0.000000);
		rgb(551pt)=(1.000000,0.197333,0.000000);
		rgb(552pt)=(1.000000,0.194667,0.000000);
		rgb(553pt)=(1.000000,0.192000,0.000000);
		rgb(554pt)=(1.000000,0.189333,0.000000);
		rgb(555pt)=(1.000000,0.186667,0.000000);
		rgb(556pt)=(1.000000,0.184000,0.000000);
		rgb(557pt)=(1.000000,0.181333,0.000000);
		rgb(558pt)=(1.000000,0.178667,0.000000);
		rgb(559pt)=(1.000000,0.176000,0.000000);
		rgb(560pt)=(1.000000,0.173333,0.000000);
		rgb(561pt)=(1.000000,0.170667,0.000000);
		rgb(562pt)=(1.000000,0.168000,0.000000);
		rgb(563pt)=(1.000000,0.165333,0.000000);
		rgb(564pt)=(1.000000,0.162667,0.000000);
		rgb(565pt)=(1.000000,0.160000,0.000000);
		rgb(566pt)=(1.000000,0.157333,0.000000);
		rgb(567pt)=(1.000000,0.154667,0.000000);
		rgb(568pt)=(1.000000,0.152000,0.000000);
		rgb(569pt)=(1.000000,0.149333,0.000000);
		rgb(570pt)=(1.000000,0.146667,0.000000);
		rgb(571pt)=(1.000000,0.144000,0.000000);
		rgb(572pt)=(1.000000,0.141333,0.000000);
		rgb(573pt)=(1.000000,0.138667,0.000000);
		rgb(574pt)=(1.000000,0.136000,0.000000);
		rgb(575pt)=(1.000000,0.133333,0.000000);
		rgb(576pt)=(1.000000,0.130667,0.000000);
		rgb(577pt)=(1.000000,0.128000,0.000000);
		rgb(578pt)=(1.000000,0.125333,0.000000);
		rgb(579pt)=(1.000000,0.122667,0.000000);
		rgb(580pt)=(1.000000,0.120000,0.000000);
		rgb(581pt)=(1.000000,0.117333,0.000000);
		rgb(582pt)=(1.000000,0.114667,0.000000);
		rgb(583pt)=(1.000000,0.112000,0.000000);
		rgb(584pt)=(1.000000,0.109333,0.000000);
		rgb(585pt)=(1.000000,0.106667,0.000000);
		rgb(586pt)=(1.000000,0.104000,0.000000);
		rgb(587pt)=(1.000000,0.101333,0.000000);
		rgb(588pt)=(1.000000,0.098667,0.000000);
		rgb(589pt)=(1.000000,0.096000,0.000000);
		rgb(590pt)=(1.000000,0.093333,0.000000);
		rgb(591pt)=(1.000000,0.090667,0.000000);
		rgb(592pt)=(1.000000,0.088000,0.000000);
		rgb(593pt)=(1.000000,0.085333,0.000000);
		rgb(594pt)=(1.000000,0.082667,0.000000);
		rgb(595pt)=(1.000000,0.080000,0.000000);
		rgb(596pt)=(1.000000,0.077333,0.000000);
		rgb(597pt)=(1.000000,0.074667,0.000000);
		rgb(598pt)=(1.000000,0.072000,0.000000);
		rgb(599pt)=(1.000000,0.069333,0.000000);
		rgb(600pt)=(1.000000,0.066667,0.000000);
		rgb(601pt)=(1.000000,0.064000,0.000000);
		rgb(602pt)=(1.000000,0.061333,0.000000);
		rgb(603pt)=(1.000000,0.058667,0.000000);
		rgb(604pt)=(1.000000,0.056000,0.000000);
		rgb(605pt)=(1.000000,0.053333,0.000000);
		rgb(606pt)=(1.000000,0.050667,0.000000);
		rgb(607pt)=(1.000000,0.048000,0.000000);
		rgb(608pt)=(1.000000,0.045333,0.000000);
		rgb(609pt)=(1.000000,0.042667,0.000000);
		rgb(610pt)=(1.000000,0.040000,0.000000);
		rgb(611pt)=(1.000000,0.037333,0.000000);
		rgb(612pt)=(1.000000,0.034667,0.000000);
		rgb(613pt)=(1.000000,0.032000,0.000000);
		rgb(614pt)=(1.000000,0.029333,0.000000);
		rgb(615pt)=(1.000000,0.026667,0.000000);
		rgb(616pt)=(1.000000,0.024000,0.000000);
		rgb(617pt)=(1.000000,0.021333,0.000000);
		rgb(618pt)=(1.000000,0.018667,0.000000);
		rgb(619pt)=(1.000000,0.016000,0.000000);
		rgb(620pt)=(1.000000,0.013333,0.000000);
		rgb(621pt)=(1.000000,0.010667,0.000000);
		rgb(622pt)=(1.000000,0.008000,0.000000);
		rgb(623pt)=(1.000000,0.005333,0.000000);
		rgb(624pt)=(1.000000,0.002667,0.000000);
		rgb(625pt)=(1.000000,0.000000,0.000000);
		rgb(626pt)=(0.997333,0.000000,0.000000);
		rgb(627pt)=(0.994667,0.000000,0.000000);
		rgb(628pt)=(0.992000,0.000000,0.000000);
		rgb(629pt)=(0.989333,0.000000,0.000000);
		rgb(630pt)=(0.986667,0.000000,0.000000);
		rgb(631pt)=(0.984000,0.000000,0.000000);
		rgb(632pt)=(0.981333,0.000000,0.000000);
		rgb(633pt)=(0.978667,0.000000,0.000000);
		rgb(634pt)=(0.976000,0.000000,0.000000);
		rgb(635pt)=(0.973333,0.000000,0.000000);
		rgb(636pt)=(0.970667,0.000000,0.000000);
		rgb(637pt)=(0.968000,0.000000,0.000000);
		rgb(638pt)=(0.965333,0.000000,0.000000);
		rgb(639pt)=(0.962667,0.000000,0.000000);
		rgb(640pt)=(0.960000,0.000000,0.000000);
		rgb(641pt)=(0.957333,0.000000,0.000000);
		rgb(642pt)=(0.954667,0.000000,0.000000);
		rgb(643pt)=(0.952000,0.000000,0.000000);
		rgb(644pt)=(0.949333,0.000000,0.000000);
		rgb(645pt)=(0.946667,0.000000,0.000000);
		rgb(646pt)=(0.944000,0.000000,0.000000);
		rgb(647pt)=(0.941333,0.000000,0.000000);
		rgb(648pt)=(0.938667,0.000000,0.000000);
		rgb(649pt)=(0.936000,0.000000,0.000000);
		rgb(650pt)=(0.933333,0.000000,0.000000);
		rgb(651pt)=(0.930667,0.000000,0.000000);
		rgb(652pt)=(0.928000,0.000000,0.000000);
		rgb(653pt)=(0.925333,0.000000,0.000000);
		rgb(654pt)=(0.922667,0.000000,0.000000);
		rgb(655pt)=(0.920000,0.000000,0.000000);
		rgb(656pt)=(0.917333,0.000000,0.000000);
		rgb(657pt)=(0.914667,0.000000,0.000000);
		rgb(658pt)=(0.912000,0.000000,0.000000);
		rgb(659pt)=(0.909333,0.000000,0.000000);
		rgb(660pt)=(0.906667,0.000000,0.000000);
		rgb(661pt)=(0.904000,0.000000,0.000000);
		rgb(662pt)=(0.901333,0.000000,0.000000);
		rgb(663pt)=(0.898667,0.000000,0.000000);
		rgb(664pt)=(0.896000,0.000000,0.000000);
		rgb(665pt)=(0.893333,0.000000,0.000000);
		rgb(666pt)=(0.890667,0.000000,0.000000);
		rgb(667pt)=(0.888000,0.000000,0.000000);
		rgb(668pt)=(0.885333,0.000000,0.000000);
		rgb(669pt)=(0.882667,0.000000,0.000000);
		rgb(670pt)=(0.880000,0.000000,0.000000);
		rgb(671pt)=(0.877333,0.000000,0.000000);
		rgb(672pt)=(0.874667,0.000000,0.000000);
		rgb(673pt)=(0.872000,0.000000,0.000000);
		rgb(674pt)=(0.869333,0.000000,0.000000);
		rgb(675pt)=(0.866667,0.000000,0.000000);
		rgb(676pt)=(0.864000,0.000000,0.000000);
		rgb(677pt)=(0.861333,0.000000,0.000000);
		rgb(678pt)=(0.858667,0.000000,0.000000);
		rgb(679pt)=(0.856000,0.000000,0.000000);
		rgb(680pt)=(0.853333,0.000000,0.000000);
		rgb(681pt)=(0.850667,0.000000,0.000000);
		rgb(682pt)=(0.848000,0.000000,0.000000);
		rgb(683pt)=(0.845333,0.000000,0.000000);
		rgb(684pt)=(0.842667,0.000000,0.000000);
		rgb(685pt)=(0.840000,0.000000,0.000000);
		rgb(686pt)=(0.837333,0.000000,0.000000);
		rgb(687pt)=(0.834667,0.000000,0.000000);
		rgb(688pt)=(0.832000,0.000000,0.000000);
		rgb(689pt)=(0.829333,0.000000,0.000000);
		rgb(690pt)=(0.826667,0.000000,0.000000);
		rgb(691pt)=(0.824000,0.000000,0.000000);
		rgb(692pt)=(0.821333,0.000000,0.000000);
		rgb(693pt)=(0.818667,0.000000,0.000000);
		rgb(694pt)=(0.816000,0.000000,0.000000);
		rgb(695pt)=(0.813333,0.000000,0.000000);
		rgb(696pt)=(0.810667,0.000000,0.000000);
		rgb(697pt)=(0.808000,0.000000,0.000000);
		rgb(698pt)=(0.805333,0.000000,0.000000);
		rgb(699pt)=(0.802667,0.000000,0.000000);
		rgb(700pt)=(0.800000,0.000000,0.000000);
		rgb(701pt)=(0.797333,0.000000,0.000000);
		rgb(702pt)=(0.794667,0.000000,0.000000);
		rgb(703pt)=(0.792000,0.000000,0.000000);
		rgb(704pt)=(0.789333,0.000000,0.000000);
		rgb(705pt)=(0.786667,0.000000,0.000000);
		rgb(706pt)=(0.784000,0.000000,0.000000);
		rgb(707pt)=(0.781333,0.000000,0.000000);
		rgb(708pt)=(0.778667,0.000000,0.000000);
		rgb(709pt)=(0.776000,0.000000,0.000000);
		rgb(710pt)=(0.773333,0.000000,0.000000);
		rgb(711pt)=(0.770667,0.000000,0.000000);
		rgb(712pt)=(0.768000,0.000000,0.000000);
		rgb(713pt)=(0.765333,0.000000,0.000000);
		rgb(714pt)=(0.762667,0.000000,0.000000);
		rgb(715pt)=(0.760000,0.000000,0.000000);
		rgb(716pt)=(0.757333,0.000000,0.000000);
		rgb(717pt)=(0.754667,0.000000,0.000000);
		rgb(718pt)=(0.752000,0.000000,0.000000);
		rgb(719pt)=(0.749333,0.000000,0.000000);
		rgb(720pt)=(0.746667,0.000000,0.000000);
		rgb(721pt)=(0.744000,0.000000,0.000000);
		rgb(722pt)=(0.741333,0.000000,0.000000);
		rgb(723pt)=(0.738667,0.000000,0.000000);
		rgb(724pt)=(0.736000,0.000000,0.000000);
		rgb(725pt)=(0.733333,0.000000,0.000000);
		rgb(726pt)=(0.730667,0.000000,0.000000);
		rgb(727pt)=(0.728000,0.000000,0.000000);
		rgb(728pt)=(0.725333,0.000000,0.000000);
		rgb(729pt)=(0.722667,0.000000,0.000000);
		rgb(730pt)=(0.720000,0.000000,0.000000);
		rgb(731pt)=(0.717333,0.000000,0.000000);
		rgb(732pt)=(0.714667,0.000000,0.000000);
		rgb(733pt)=(0.712000,0.000000,0.000000);
		rgb(734pt)=(0.709333,0.000000,0.000000);
		rgb(735pt)=(0.706667,0.000000,0.000000);
		rgb(736pt)=(0.704000,0.000000,0.000000);
		rgb(737pt)=(0.701333,0.000000,0.000000);
		rgb(738pt)=(0.698667,0.000000,0.000000);
		rgb(739pt)=(0.696000,0.000000,0.000000);
		rgb(740pt)=(0.693333,0.000000,0.000000);
		rgb(741pt)=(0.690667,0.000000,0.000000);
		rgb(742pt)=(0.688000,0.000000,0.000000);
		rgb(743pt)=(0.685333,0.000000,0.000000);
		rgb(744pt)=(0.682667,0.000000,0.000000);
		rgb(745pt)=(0.680000,0.000000,0.000000);
		rgb(746pt)=(0.677333,0.000000,0.000000);
		rgb(747pt)=(0.674667,0.000000,0.000000);
		rgb(748pt)=(0.672000,0.000000,0.000000);
		rgb(749pt)=(0.669333,0.000000,0.000000);
		rgb(750pt)=(0.666667,0.000000,0.000000);
		rgb(751pt)=(0.664000,0.000000,0.000000);
		rgb(752pt)=(0.661333,0.000000,0.000000);
		rgb(753pt)=(0.658667,0.000000,0.000000);
		rgb(754pt)=(0.656000,0.000000,0.000000);
		rgb(755pt)=(0.653333,0.000000,0.000000);
		rgb(756pt)=(0.650667,0.000000,0.000000);
		rgb(757pt)=(0.648000,0.000000,0.000000);
		rgb(758pt)=(0.645333,0.000000,0.000000);
		rgb(759pt)=(0.642667,0.000000,0.000000);
		rgb(760pt)=(0.640000,0.000000,0.000000);
		rgb(761pt)=(0.637333,0.000000,0.000000);
		rgb(762pt)=(0.634667,0.000000,0.000000);
		rgb(763pt)=(0.632000,0.000000,0.000000);
		rgb(764pt)=(0.629333,0.000000,0.000000);
		rgb(765pt)=(0.626667,0.000000,0.000000);
		rgb(766pt)=(0.624000,0.000000,0.000000);
		rgb(767pt)=(0.621333,0.000000,0.000000);
		rgb(768pt)=(0.618667,0.000000,0.000000);
		rgb(769pt)=(0.616000,0.000000,0.000000);
		rgb(770pt)=(0.613333,0.000000,0.000000);
		rgb(771pt)=(0.610667,0.000000,0.000000);
		rgb(772pt)=(0.608000,0.000000,0.000000);
		rgb(773pt)=(0.605333,0.000000,0.000000);
		rgb(774pt)=(0.602667,0.000000,0.000000);
		rgb(775pt)=(0.600000,0.000000,0.000000);
		rgb(776pt)=(0.597333,0.000000,0.000000);
		rgb(777pt)=(0.594667,0.000000,0.000000);
		rgb(778pt)=(0.592000,0.000000,0.000000);
		rgb(779pt)=(0.589333,0.000000,0.000000);
		rgb(780pt)=(0.586667,0.000000,0.000000);
		rgb(781pt)=(0.584000,0.000000,0.000000);
		rgb(782pt)=(0.581333,0.000000,0.000000);
		rgb(783pt)=(0.578667,0.000000,0.000000);
		rgb(784pt)=(0.576000,0.000000,0.000000);
		rgb(785pt)=(0.573333,0.000000,0.000000);
		rgb(786pt)=(0.570667,0.000000,0.000000);
		rgb(787pt)=(0.568000,0.000000,0.000000);
		rgb(788pt)=(0.565333,0.000000,0.000000);
		rgb(789pt)=(0.562667,0.000000,0.000000);
		rgb(790pt)=(0.560000,0.000000,0.000000);
		rgb(791pt)=(0.557333,0.000000,0.000000);
		rgb(792pt)=(0.554667,0.000000,0.000000);
		rgb(793pt)=(0.552000,0.000000,0.000000);
		rgb(794pt)=(0.549333,0.000000,0.000000);
		rgb(795pt)=(0.546667,0.000000,0.000000);
		rgb(796pt)=(0.544000,0.000000,0.000000);
		rgb(797pt)=(0.541333,0.000000,0.000000);
		rgb(798pt)=(0.538667,0.000000,0.000000);
		rgb(799pt)=(0.536000,0.000000,0.000000);
		rgb(800pt)=(0.533333,0.000000,0.000000);
		rgb(801pt)=(0.530667,0.000000,0.000000);
		rgb(802pt)=(0.528000,0.000000,0.000000);
		rgb(803pt)=(0.525333,0.000000,0.000000);
		rgb(804pt)=(0.522667,0.000000,0.000000);
		rgb(805pt)=(0.520000,0.000000,0.000000);
		rgb(806pt)=(0.517333,0.000000,0.000000);
		rgb(807pt)=(0.514667,0.000000,0.000000);
		rgb(808pt)=(0.512000,0.000000,0.000000);
		rgb(809pt)=(0.509333,0.000000,0.000000);
		rgb(810pt)=(0.506667,0.000000,0.000000);
		rgb(811pt)=(0.504000,0.000000,0.000000);
		rgb(812pt)=(0.501333,0.000000,0.000000);
		rgb(813pt)=(0.498667,0.000000,0.000000);
		rgb(814pt)=(0.496000,0.000000,0.000000);
		rgb(815pt)=(0.493333,0.000000,0.000000);
		rgb(816pt)=(0.490667,0.000000,0.000000);
		rgb(817pt)=(0.488000,0.000000,0.000000);
		rgb(818pt)=(0.485333,0.000000,0.000000);
		rgb(819pt)=(0.482667,0.000000,0.000000);
		rgb(820pt)=(0.480000,0.000000,0.000000);
		rgb(821pt)=(0.477333,0.000000,0.000000);
		rgb(822pt)=(0.474667,0.000000,0.000000);
		rgb(823pt)=(0.472000,0.000000,0.000000);
		rgb(824pt)=(0.469333,0.000000,0.000000);
		rgb(825pt)=(0.466667,0.000000,0.000000);
		rgb(826pt)=(0.464000,0.000000,0.000000);
		rgb(827pt)=(0.461333,0.000000,0.000000);
		rgb(828pt)=(0.458667,0.000000,0.000000);
		rgb(829pt)=(0.456000,0.000000,0.000000);
		rgb(830pt)=(0.453333,0.000000,0.000000);
		rgb(831pt)=(0.450667,0.000000,0.000000);
		rgb(832pt)=(0.448000,0.000000,0.000000);
		rgb(833pt)=(0.445333,0.000000,0.000000);
		rgb(834pt)=(0.442667,0.000000,0.000000);
		rgb(835pt)=(0.440000,0.000000,0.000000);
		rgb(836pt)=(0.437333,0.000000,0.000000);
		rgb(837pt)=(0.434667,0.000000,0.000000);
		rgb(838pt)=(0.432000,0.000000,0.000000);
		rgb(839pt)=(0.429333,0.000000,0.000000);
		rgb(840pt)=(0.426667,0.000000,0.000000);
		rgb(841pt)=(0.424000,0.000000,0.000000);
		rgb(842pt)=(0.421333,0.000000,0.000000);
		rgb(843pt)=(0.418667,0.000000,0.000000);
		rgb(844pt)=(0.416000,0.000000,0.000000);
		rgb(845pt)=(0.413333,0.000000,0.000000);
		rgb(846pt)=(0.410667,0.000000,0.000000);
		rgb(847pt)=(0.408000,0.000000,0.000000);
		rgb(848pt)=(0.405333,0.000000,0.000000);
		rgb(849pt)=(0.402667,0.000000,0.000000);
		rgb(850pt)=(0.400000,0.000000,0.000000);
		rgb(851pt)=(0.397333,0.000000,0.000000);
		rgb(852pt)=(0.394667,0.000000,0.000000);
		rgb(853pt)=(0.392000,0.000000,0.000000);
		rgb(854pt)=(0.389333,0.000000,0.000000);
		rgb(855pt)=(0.386667,0.000000,0.000000);
		rgb(856pt)=(0.384000,0.000000,0.000000);
		rgb(857pt)=(0.381333,0.000000,0.000000);
		rgb(858pt)=(0.378667,0.000000,0.000000);
		rgb(859pt)=(0.376000,0.000000,0.000000);
		rgb(860pt)=(0.373333,0.000000,0.000000);
		rgb(861pt)=(0.370667,0.000000,0.000000);
		rgb(862pt)=(0.368000,0.000000,0.000000);
		rgb(863pt)=(0.365333,0.000000,0.000000);
		rgb(864pt)=(0.362667,0.000000,0.000000);
		rgb(865pt)=(0.360000,0.000000,0.000000);
		rgb(866pt)=(0.357333,0.000000,0.000000);
		rgb(867pt)=(0.354667,0.000000,0.000000);
		rgb(868pt)=(0.352000,0.000000,0.000000);
		rgb(869pt)=(0.349333,0.000000,0.000000);
		rgb(870pt)=(0.346667,0.000000,0.000000);
		rgb(871pt)=(0.344000,0.000000,0.000000);
		rgb(872pt)=(0.341333,0.000000,0.000000);
		rgb(873pt)=(0.338667,0.000000,0.000000);
		rgb(874pt)=(0.336000,0.000000,0.000000);
		rgb(875pt)=(0.333333,0.000000,0.000000);
		rgb(876pt)=(0.330667,0.000000,0.000000);
		rgb(877pt)=(0.328000,0.000000,0.000000);
		rgb(878pt)=(0.325333,0.000000,0.000000);
		rgb(879pt)=(0.322667,0.000000,0.000000);
		rgb(880pt)=(0.320000,0.000000,0.000000);
		rgb(881pt)=(0.317333,0.000000,0.000000);
		rgb(882pt)=(0.314667,0.000000,0.000000);
		rgb(883pt)=(0.312000,0.000000,0.000000);
		rgb(884pt)=(0.309333,0.000000,0.000000);
		rgb(885pt)=(0.306667,0.000000,0.000000);
		rgb(886pt)=(0.304000,0.000000,0.000000);
		rgb(887pt)=(0.301333,0.000000,0.000000);
		rgb(888pt)=(0.298667,0.000000,0.000000);
		rgb(889pt)=(0.296000,0.000000,0.000000);
		rgb(890pt)=(0.293333,0.000000,0.000000);
		rgb(891pt)=(0.290667,0.000000,0.000000);
		rgb(892pt)=(0.288000,0.000000,0.000000);
		rgb(893pt)=(0.285333,0.000000,0.000000);
		rgb(894pt)=(0.282667,0.000000,0.000000);
		rgb(895pt)=(0.280000,0.000000,0.000000);
		rgb(896pt)=(0.277333,0.000000,0.000000);
		rgb(897pt)=(0.274667,0.000000,0.000000);
		rgb(898pt)=(0.272000,0.000000,0.000000);
		rgb(899pt)=(0.269333,0.000000,0.000000);
		rgb(900pt)=(0.266667,0.000000,0.000000);
		rgb(901pt)=(0.264000,0.000000,0.000000);
		rgb(902pt)=(0.261333,0.000000,0.000000);
		rgb(903pt)=(0.258667,0.000000,0.000000);
		rgb(904pt)=(0.256000,0.000000,0.000000);
		rgb(905pt)=(0.253333,0.000000,0.000000);
		rgb(906pt)=(0.250667,0.000000,0.000000);
		rgb(907pt)=(0.248000,0.000000,0.000000);
		rgb(908pt)=(0.245333,0.000000,0.000000);
		rgb(909pt)=(0.242667,0.000000,0.000000);
		rgb(910pt)=(0.240000,0.000000,0.000000);
		rgb(911pt)=(0.237333,0.000000,0.000000);
		rgb(912pt)=(0.234667,0.000000,0.000000);
		rgb(913pt)=(0.232000,0.000000,0.000000);
		rgb(914pt)=(0.229333,0.000000,0.000000);
		rgb(915pt)=(0.226667,0.000000,0.000000);
		rgb(916pt)=(0.224000,0.000000,0.000000);
		rgb(917pt)=(0.221333,0.000000,0.000000);
		rgb(918pt)=(0.218667,0.000000,0.000000);
		rgb(919pt)=(0.216000,0.000000,0.000000);
		rgb(920pt)=(0.213333,0.000000,0.000000);
		rgb(921pt)=(0.210667,0.000000,0.000000);
		rgb(922pt)=(0.208000,0.000000,0.000000);
		rgb(923pt)=(0.205333,0.000000,0.000000);
		rgb(924pt)=(0.202667,0.000000,0.000000);
		rgb(925pt)=(0.200000,0.000000,0.000000);
		rgb(926pt)=(0.197333,0.000000,0.000000);
		rgb(927pt)=(0.194667,0.000000,0.000000);
		rgb(928pt)=(0.192000,0.000000,0.000000);
		rgb(929pt)=(0.189333,0.000000,0.000000);
		rgb(930pt)=(0.186667,0.000000,0.000000);
		rgb(931pt)=(0.184000,0.000000,0.000000);
		rgb(932pt)=(0.181333,0.000000,0.000000);
		rgb(933pt)=(0.178667,0.000000,0.000000);
		rgb(934pt)=(0.176000,0.000000,0.000000);
		rgb(935pt)=(0.173333,0.000000,0.000000);
		rgb(936pt)=(0.170667,0.000000,0.000000);
		rgb(937pt)=(0.168000,0.000000,0.000000);
		rgb(938pt)=(0.165333,0.000000,0.000000);
		rgb(939pt)=(0.162667,0.000000,0.000000);
		rgb(940pt)=(0.160000,0.000000,0.000000);
		rgb(941pt)=(0.157333,0.000000,0.000000);
		rgb(942pt)=(0.154667,0.000000,0.000000);
		rgb(943pt)=(0.152000,0.000000,0.000000);
		rgb(944pt)=(0.149333,0.000000,0.000000);
		rgb(945pt)=(0.146667,0.000000,0.000000);
		rgb(946pt)=(0.144000,0.000000,0.000000);
		rgb(947pt)=(0.141333,0.000000,0.000000);
		rgb(948pt)=(0.138667,0.000000,0.000000);
		rgb(949pt)=(0.136000,0.000000,0.000000);
		rgb(950pt)=(0.133333,0.000000,0.000000);
		rgb(951pt)=(0.130667,0.000000,0.000000);
		rgb(952pt)=(0.128000,0.000000,0.000000);
		rgb(953pt)=(0.125333,0.000000,0.000000);
		rgb(954pt)=(0.122667,0.000000,0.000000);
		rgb(955pt)=(0.120000,0.000000,0.000000);
		rgb(956pt)=(0.117333,0.000000,0.000000);
		rgb(957pt)=(0.114667,0.000000,0.000000);
		rgb(958pt)=(0.112000,0.000000,0.000000);
		rgb(959pt)=(0.109333,0.000000,0.000000);
		rgb(960pt)=(0.106667,0.000000,0.000000);
		rgb(961pt)=(0.104000,0.000000,0.000000);
		rgb(962pt)=(0.101333,0.000000,0.000000);
		rgb(963pt)=(0.098667,0.000000,0.000000);
		rgb(964pt)=(0.096000,0.000000,0.000000);
		rgb(965pt)=(0.093333,0.000000,0.000000);
		rgb(966pt)=(0.090667,0.000000,0.000000);
		rgb(967pt)=(0.088000,0.000000,0.000000);
		rgb(968pt)=(0.085333,0.000000,0.000000);
		rgb(969pt)=(0.082667,0.000000,0.000000);
		rgb(970pt)=(0.080000,0.000000,0.000000);
		rgb(971pt)=(0.077333,0.000000,0.000000);
		rgb(972pt)=(0.074667,0.000000,0.000000);
		rgb(973pt)=(0.072000,0.000000,0.000000);
		rgb(974pt)=(0.069333,0.000000,0.000000);
		rgb(975pt)=(0.066667,0.000000,0.000000);
		rgb(976pt)=(0.064000,0.000000,0.000000);
		rgb(977pt)=(0.061333,0.000000,0.000000);
		rgb(978pt)=(0.058667,0.000000,0.000000);
		rgb(979pt)=(0.056000,0.000000,0.000000);
		rgb(980pt)=(0.053333,0.000000,0.000000);
		rgb(981pt)=(0.050667,0.000000,0.000000);
		rgb(982pt)=(0.048000,0.000000,0.000000);
		rgb(983pt)=(0.045333,0.000000,0.000000);
		rgb(984pt)=(0.042667,0.000000,0.000000);
		rgb(985pt)=(0.040000,0.000000,0.000000);
		rgb(986pt)=(0.037333,0.000000,0.000000);
		rgb(987pt)=(0.034667,0.000000,0.000000);
		rgb(988pt)=(0.032000,0.000000,0.000000);
		rgb(989pt)=(0.029333,0.000000,0.000000);
		rgb(990pt)=(0.026667,0.000000,0.000000);
		rgb(991pt)=(0.024000,0.000000,0.000000);
		rgb(992pt)=(0.021333,0.000000,0.000000);
		rgb(993pt)=(0.018667,0.000000,0.000000);
		rgb(994pt)=(0.016000,0.000000,0.000000);
		rgb(995pt)=(0.013333,0.000000,0.000000);
		rgb(996pt)=(0.010667,0.000000,0.000000);
		rgb(997pt)=(0.008000,0.000000,0.000000);
		rgb(998pt)=(0.005333,0.000000,0.000000);
		rgb(999pt)=(0.002667,0.000000,0.000000);
}}
\newlength{\figureheight}
\newlength{\figurewidth}
\pgfplotsset{
	colormap={parula}{
		rgb(0pt)=(0.2081,0.1663,0.5292);
		rgb(1pt)=(0.208355,0.16778,0.532238);
		rgb(2pt)=(0.208611,0.169261,0.535275);
		rgb(3pt)=(0.208866,0.170741,0.538313);
		rgb(4pt)=(0.209121,0.172222,0.54135);
		rgb(5pt)=(0.209376,0.173702,0.544388);
		rgb(6pt)=(0.209632,0.175183,0.547425);
		rgb(7pt)=(0.209887,0.176663,0.550463);
		rgb(8pt)=(0.210134,0.178144,0.553505);
		rgb(9pt)=(0.210338,0.179624,0.556568);
		rgb(10pt)=(0.210542,0.181105,0.559631);
		rgb(11pt)=(0.210746,0.182585,0.562694);
		rgb(12pt)=(0.210944,0.184066,0.565763);
		rgb(13pt)=(0.211123,0.185546,0.568852);
		rgb(14pt)=(0.211302,0.187027,0.57194);
		rgb(15pt)=(0.21148,0.188507,0.575029);
		rgb(16pt)=(0.211642,0.189996,0.578117);
		rgb(17pt)=(0.21177,0.191502,0.581206);
		rgb(18pt)=(0.211897,0.193008,0.584295);
		rgb(19pt)=(0.212025,0.194514,0.587383);
		rgb(20pt)=(0.212132,0.19602,0.590472);
		rgb(21pt)=(0.212208,0.197526,0.59356);
		rgb(22pt)=(0.212285,0.199032,0.596649);
		rgb(23pt)=(0.212361,0.200538,0.599738);
		rgb(24pt)=(0.212413,0.202044,0.602839);
		rgb(25pt)=(0.212438,0.20355,0.605953);
		rgb(26pt)=(0.212464,0.205056,0.609067);
		rgb(27pt)=(0.212489,0.206562,0.612181);
		rgb(28pt)=(0.212471,0.208083,0.61531);
		rgb(29pt)=(0.21242,0.209614,0.61845);
		rgb(30pt)=(0.212368,0.211146,0.621589);
		rgb(31pt)=(0.212317,0.212677,0.624729);
		rgb(32pt)=(0.212216,0.214209,0.627868);
		rgb(33pt)=(0.212088,0.215741,0.631008);
		rgb(34pt)=(0.211961,0.217272,0.634148);
		rgb(35pt)=(0.211833,0.218804,0.637287);
		rgb(36pt)=(0.211668,0.220354,0.640446);
		rgb(37pt)=(0.211489,0.221911,0.643611);
		rgb(38pt)=(0.21131,0.223468,0.646776);
		rgb(39pt)=(0.211132,0.225025,0.649941);
		rgb(40pt)=(0.210848,0.226603,0.653107);
		rgb(41pt)=(0.210541,0.228186,0.656272);
		rgb(42pt)=(0.210235,0.229768,0.659437);
		rgb(43pt)=(0.209929,0.231351,0.662602);
		rgb(44pt)=(0.209553,0.232934,0.665767);
		rgb(45pt)=(0.20917,0.234516,0.668932);
		rgb(46pt)=(0.208787,0.236099,0.672098);
		rgb(47pt)=(0.208405,0.237681,0.675263);
		rgb(48pt)=(0.20787,0.239289,0.678453);
		rgb(49pt)=(0.207334,0.240897,0.681644);
		rgb(50pt)=(0.206798,0.242505,0.684835);
		rgb(51pt)=(0.206255,0.244114,0.688025);
		rgb(52pt)=(0.205617,0.245722,0.691216);
		rgb(53pt)=(0.204979,0.24733,0.694407);
		rgb(54pt)=(0.204341,0.248938,0.697597);
		rgb(55pt)=(0.203675,0.250554,0.700792);
		rgb(56pt)=(0.202858,0.252213,0.704008);
		rgb(57pt)=(0.202041,0.253872,0.707224);
		rgb(58pt)=(0.201225,0.255531,0.710441);
		rgb(59pt)=(0.200372,0.257184,0.713657);
		rgb(60pt)=(0.199402,0.258818,0.716873);
		rgb(61pt)=(0.198432,0.260452,0.720089);
		rgb(62pt)=(0.197462,0.262085,0.723305);
		rgb(63pt)=(0.196419,0.263735,0.726522);
		rgb(64pt)=(0.195219,0.26542,0.729738);
		rgb(65pt)=(0.19402,0.267105,0.732954);
		rgb(66pt)=(0.19282,0.268789,0.73617);
		rgb(67pt)=(0.191549,0.270474,0.739386);
		rgb(68pt)=(0.19017,0.272159,0.742603);
		rgb(69pt)=(0.188792,0.273843,0.745819);
		rgb(70pt)=(0.187414,0.275528,0.749035);
		rgb(71pt)=(0.1859,0.277237,0.752264);
		rgb(72pt)=(0.184241,0.278973,0.755505);
		rgb(73pt)=(0.182581,0.280709,0.758747);
		rgb(74pt)=(0.180922,0.282444,0.761989);
		rgb(75pt)=(0.179133,0.284209,0.765245);
		rgb(76pt)=(0.177244,0.285996,0.768512);
		rgb(77pt)=(0.175356,0.287783,0.77178);
		rgb(78pt)=(0.173467,0.289569,0.775047);
		rgb(79pt)=(0.171363,0.291406,0.778314);
		rgb(80pt)=(0.169142,0.293269,0.781581);
		rgb(81pt)=(0.166922,0.295132,0.784849);
		rgb(82pt)=(0.164701,0.296996,0.788116);
		rgb(83pt)=(0.162238,0.298934,0.791365);
		rgb(84pt)=(0.159686,0.300899,0.794606);
		rgb(85pt)=(0.157133,0.302865,0.797848);
		rgb(86pt)=(0.15458,0.30483,0.80109);
		rgb(87pt)=(0.151738,0.306858,0.804352);
		rgb(88pt)=(0.148828,0.3089,0.80762);
		rgb(89pt)=(0.145918,0.310942,0.810887);
		rgb(90pt)=(0.143008,0.312984,0.814154);
		rgb(91pt)=(0.139687,0.31514,0.81733);
		rgb(92pt)=(0.136318,0.31731,0.820495);
		rgb(93pt)=(0.132949,0.319479,0.82366);
		rgb(94pt)=(0.129579,0.321649,0.826826);
		rgb(95pt)=(0.125811,0.323918,0.829841);
		rgb(96pt)=(0.122033,0.32619,0.832853);
		rgb(97pt)=(0.118256,0.328462,0.835865);
		rgb(98pt)=(0.114458,0.330737,0.838862);
		rgb(99pt)=(0.110349,0.333059,0.841619);
		rgb(100pt)=(0.106239,0.335382,0.844376);
		rgb(101pt)=(0.102129,0.337705,0.847132);
		rgb(102pt)=(0.0979874,0.340021,0.849835);
		rgb(103pt)=(0.093648,0.342292,0.852209);
		rgb(104pt)=(0.0893087,0.344564,0.854583);
		rgb(105pt)=(0.0849694,0.346836,0.856957);
		rgb(106pt)=(0.08063,0.349091,0.859234);
		rgb(107pt)=(0.0762907,0.351286,0.861174);
		rgb(108pt)=(0.0719514,0.353481,0.863114);
		rgb(109pt)=(0.067612,0.355676,0.865053);
		rgb(110pt)=(0.0633195,0.357817,0.866853);
		rgb(111pt)=(0.0591333,0.359833,0.868333);
		rgb(112pt)=(0.0549471,0.36185,0.869814);
		rgb(113pt)=(0.050761,0.363866,0.871294);
		rgb(114pt)=(0.0466838,0.365823,0.872626);
		rgb(115pt)=(0.0427784,0.367687,0.873724);
		rgb(116pt)=(0.038873,0.36955,0.874821);
		rgb(117pt)=(0.0349676,0.371414,0.875919);
		rgb(118pt)=(0.0315066,0.373217,0.876872);
		rgb(119pt)=(0.0285456,0.374953,0.877664);
		rgb(120pt)=(0.0255847,0.376688,0.878455);
		rgb(121pt)=(0.0226237,0.378424,0.879246);
		rgb(122pt)=(0.0202132,0.380061,0.879868);
		rgb(123pt)=(0.0182477,0.381618,0.880353);
		rgb(124pt)=(0.0162823,0.383175,0.880838);
		rgb(125pt)=(0.0143168,0.384732,0.881323);
		rgb(126pt)=(0.0127892,0.386241,0.881695);
		rgb(127pt)=(0.0115129,0.387721,0.882001);
		rgb(128pt)=(0.0102366,0.389202,0.882307);
		rgb(129pt)=(0.00896036,0.390682,0.882614);
		rgb(130pt)=(0.00812372,0.392089,0.88281);
		rgb(131pt)=(0.00746006,0.393468,0.882963);
		rgb(132pt)=(0.0067964,0.394846,0.883116);
		rgb(133pt)=(0.00613273,0.396224,0.883269);
		rgb(134pt)=(0.00581622,0.397562,0.88332);
		rgb(135pt)=(0.00558649,0.398889,0.883346);
		rgb(136pt)=(0.00535676,0.400217,0.883371);
		rgb(137pt)=(0.00512703,0.401544,0.883397);
		rgb(138pt)=(0.00516757,0.402804,0.883332);
		rgb(139pt)=(0.00524414,0.404054,0.883256);
		rgb(140pt)=(0.00532072,0.405305,0.883179);
		rgb(141pt)=(0.0053973,0.406556,0.883103);
		rgb(142pt)=(0.00572012,0.407757,0.882952);
		rgb(143pt)=(0.00605195,0.408957,0.882799);
		rgb(144pt)=(0.00638378,0.410157,0.882646);
		rgb(145pt)=(0.00672643,0.411355,0.882489);
		rgb(146pt)=(0.00728799,0.412529,0.882259);
		rgb(147pt)=(0.00784955,0.413704,0.88203);
		rgb(148pt)=(0.00841111,0.414878,0.8818);
		rgb(149pt)=(0.00898919,0.416045,0.881564);
		rgb(150pt)=(0.00967838,0.417168,0.881283);
		rgb(151pt)=(0.0103676,0.418292,0.881002);
		rgb(152pt)=(0.0110568,0.419415,0.880721);
		rgb(153pt)=(0.011773,0.420532,0.880435);
		rgb(154pt)=(0.0125898,0.42163,0.880129);
		rgb(155pt)=(0.0134066,0.422728,0.879823);
		rgb(156pt)=(0.0142234,0.423825,0.879516);
		rgb(157pt)=(0.0150703,0.424915,0.879195);
		rgb(158pt)=(0.0159892,0.425987,0.878838);
		rgb(159pt)=(0.0169081,0.427059,0.87848);
		rgb(160pt)=(0.017827,0.428132,0.878123);
		rgb(161pt)=(0.0187748,0.429194,0.877746);
		rgb(162pt)=(0.0197703,0.430241,0.877338);
		rgb(163pt)=(0.0207658,0.431287,0.876929);
		rgb(164pt)=(0.0217613,0.432334,0.876521);
		rgb(165pt)=(0.0227802,0.43338,0.876113);
		rgb(166pt)=(0.0238267,0.434427,0.875704);
		rgb(167pt)=(0.0248733,0.435473,0.875296);
		rgb(168pt)=(0.0259198,0.43652,0.874887);
		rgb(169pt)=(0.0269802,0.437553,0.874451);
		rgb(170pt)=(0.0280523,0.438574,0.873992);
		rgb(171pt)=(0.0291243,0.439595,0.873532);
		rgb(172pt)=(0.0301964,0.440616,0.873073);
		rgb(173pt)=(0.0312844,0.441621,0.872614);
		rgb(174pt)=(0.032382,0.442616,0.872154);
		rgb(175pt)=(0.0334796,0.443612,0.871695);
		rgb(176pt)=(0.0345772,0.444607,0.871235);
		rgb(177pt)=(0.0357108,0.445603,0.870758);
		rgb(178pt)=(0.0368595,0.446598,0.870273);
		rgb(179pt)=(0.0380081,0.447594,0.869788);
		rgb(180pt)=(0.0391568,0.448589,0.869303);
		rgb(181pt)=(0.0402652,0.449565,0.868798);
		rgb(182pt)=(0.0413628,0.450535,0.868287);
		rgb(183pt)=(0.0424604,0.451505,0.867777);
		rgb(184pt)=(0.043558,0.452474,0.867266);
		rgb(185pt)=(0.0445889,0.453444,0.866756);
		rgb(186pt)=(0.0456099,0.454414,0.866245);
		rgb(187pt)=(0.0466309,0.455384,0.865735);
		rgb(188pt)=(0.047652,0.456354,0.865224);
		rgb(189pt)=(0.0486,0.457324,0.864714);
		rgb(190pt)=(0.0495444,0.458294,0.864203);
		rgb(191pt)=(0.0504889,0.459264,0.863692);
		rgb(192pt)=(0.0514315,0.460234,0.863181);
		rgb(193pt)=(0.0523249,0.461204,0.862645);
		rgb(194pt)=(0.0532183,0.462174,0.862109);
		rgb(195pt)=(0.0541117,0.463144,0.861573);
		rgb(196pt)=(0.0549991,0.464111,0.861034);
		rgb(197pt)=(0.0558414,0.465056,0.860472);
		rgb(198pt)=(0.0566838,0.466,0.859911);
		rgb(199pt)=(0.0575261,0.466944,0.859349);
		rgb(200pt)=(0.0583532,0.467889,0.858793);
		rgb(201pt)=(0.0591189,0.468833,0.858257);
		rgb(202pt)=(0.0598847,0.469778,0.857721);
		rgb(203pt)=(0.0606505,0.470722,0.857185);
		rgb(204pt)=(0.0614018,0.471667,0.856641);
		rgb(205pt)=(0.0621165,0.472611,0.85608);
		rgb(206pt)=(0.0628312,0.473556,0.855518);
		rgb(207pt)=(0.0635459,0.4745,0.854957);
		rgb(208pt)=(0.064242,0.475444,0.854405);
		rgb(209pt)=(0.0649057,0.476389,0.853868);
		rgb(210pt)=(0.0655694,0.477333,0.853332);
		rgb(211pt)=(0.066233,0.478278,0.852796);
		rgb(212pt)=(0.0668625,0.479222,0.852249);
		rgb(213pt)=(0.0674495,0.480167,0.851687);
		rgb(214pt)=(0.0680366,0.481111,0.851126);
		rgb(215pt)=(0.0686237,0.482056,0.850564);
		rgb(216pt)=(0.0691838,0.483,0.850003);
		rgb(217pt)=(0.0697198,0.483944,0.849441);
		rgb(218pt)=(0.0702559,0.484889,0.84888);
		rgb(219pt)=(0.0707919,0.485833,0.848318);
		rgb(220pt)=(0.0712967,0.486778,0.847772);
		rgb(221pt)=(0.0717817,0.487722,0.847236);
		rgb(222pt)=(0.0722667,0.488667,0.8467);
		rgb(223pt)=(0.0727517,0.489611,0.846164);
		rgb(224pt)=(0.0732012,0.490573,0.845628);
		rgb(225pt)=(0.0736351,0.491543,0.845092);
		rgb(226pt)=(0.0740691,0.492513,0.844556);
		rgb(227pt)=(0.074503,0.493483,0.84402);
		rgb(228pt)=(0.0748973,0.494433,0.843484);
		rgb(229pt)=(0.0752802,0.495378,0.842948);
		rgb(230pt)=(0.0756631,0.496322,0.842412);
		rgb(231pt)=(0.0760459,0.497267,0.841876);
		rgb(232pt)=(0.0763631,0.498233,0.841362);
		rgb(233pt)=(0.0766694,0.499203,0.840851);
		rgb(234pt)=(0.0769757,0.500173,0.840341);
		rgb(235pt)=(0.077282,0.501143,0.83983);
		rgb(236pt)=(0.0775162,0.502137,0.83932);
		rgb(237pt)=(0.0777459,0.503132,0.838809);
		rgb(238pt)=(0.0779757,0.504128,0.838298);
		rgb(239pt)=(0.0782042,0.505123,0.837789);
		rgb(240pt)=(0.0783829,0.506093,0.837304);
		rgb(241pt)=(0.0785616,0.507063,0.836819);
		rgb(242pt)=(0.0787402,0.508033,0.836334);
		rgb(243pt)=(0.0789135,0.509008,0.835851);
		rgb(244pt)=(0.0790411,0.510029,0.835392);
		rgb(245pt)=(0.0791688,0.51105,0.834932);
		rgb(246pt)=(0.0792964,0.512071,0.834473);
		rgb(247pt)=(0.0794048,0.513092,0.834018);
		rgb(248pt)=(0.0794303,0.514113,0.833584);
		rgb(249pt)=(0.0794559,0.515134,0.83315);
		rgb(250pt)=(0.0794814,0.516155,0.832717);
		rgb(251pt)=(0.0794862,0.517183,0.832289);
		rgb(252pt)=(0.0794351,0.51823,0.831881);
		rgb(253pt)=(0.0793841,0.519276,0.831473);
		rgb(254pt)=(0.079333,0.520323,0.831064);
		rgb(255pt)=(0.079255,0.521369,0.830665);
		rgb(256pt)=(0.0791273,0.522416,0.830282);
		rgb(257pt)=(0.0789997,0.523462,0.829899);
		rgb(258pt)=(0.0788721,0.524509,0.829516);
		rgb(259pt)=(0.0786889,0.525589,0.829156);
		rgb(260pt)=(0.0784336,0.526712,0.828824);
		rgb(261pt)=(0.0781784,0.527835,0.828492);
		rgb(262pt)=(0.0779231,0.528958,0.82816);
		rgb(263pt)=(0.077615,0.530081,0.827868);
		rgb(264pt)=(0.0772577,0.531205,0.827613);
		rgb(265pt)=(0.0769003,0.532328,0.827357);
		rgb(266pt)=(0.0765429,0.533451,0.827102);
		rgb(267pt)=(0.0761243,0.534589,0.826862);
		rgb(268pt)=(0.0756649,0.535738,0.826632);
		rgb(269pt)=(0.0752054,0.536886,0.826403);
		rgb(270pt)=(0.0747459,0.538035,0.826173);
		rgb(271pt)=(0.0742168,0.539219,0.825961);
		rgb(272pt)=(0.0736553,0.540418,0.825756);
		rgb(273pt)=(0.0730937,0.541618,0.825552);
		rgb(274pt)=(0.0725321,0.542818,0.825348);
		rgb(275pt)=(0.0718925,0.544037,0.825183);
		rgb(276pt)=(0.0712288,0.545262,0.82503);
		rgb(277pt)=(0.0705652,0.546487,0.824877);
		rgb(278pt)=(0.0699015,0.547713,0.824723);
		rgb(279pt)=(0.0691514,0.548938,0.824614);
		rgb(280pt)=(0.0683856,0.550163,0.824511);
		rgb(281pt)=(0.0676198,0.551388,0.824409);
		rgb(282pt)=(0.0668541,0.552614,0.824307);
		rgb(283pt)=(0.0660408,0.553886,0.824205);
		rgb(284pt)=(0.065224,0.555162,0.824103);
		rgb(285pt)=(0.0644072,0.556439,0.824001);
		rgb(286pt)=(0.0635892,0.557715,0.823899);
		rgb(287pt)=(0.0626703,0.558991,0.823848);
		rgb(288pt)=(0.0617514,0.560268,0.823797);
		rgb(289pt)=(0.0608324,0.561544,0.823746);
		rgb(290pt)=(0.0599087,0.56282,0.823693);
		rgb(291pt)=(0.0589387,0.564096,0.823616);
		rgb(292pt)=(0.0579688,0.565373,0.82354);
		rgb(293pt)=(0.0569988,0.566649,0.823463);
		rgb(294pt)=(0.0560243,0.567925,0.823386);
		rgb(295pt)=(0.0550288,0.569202,0.82331);
		rgb(296pt)=(0.0540333,0.570478,0.823233);
		rgb(297pt)=(0.0530378,0.571754,0.823157);
		rgb(298pt)=(0.0520423,0.57303,0.82308);
		rgb(299pt)=(0.0510468,0.574307,0.823004);
		rgb(300pt)=(0.0500514,0.575583,0.822927);
		rgb(301pt)=(0.0490559,0.576859,0.82285);
		rgb(302pt)=(0.0480604,0.578127,0.822756);
		rgb(303pt)=(0.0470649,0.579377,0.822629);
		rgb(304pt)=(0.0460694,0.580628,0.822501);
		rgb(305pt)=(0.0450739,0.581879,0.822374);
		rgb(306pt)=(0.0441,0.583119,0.822235);
		rgb(307pt)=(0.0431556,0.584344,0.822082);
		rgb(308pt)=(0.0422111,0.585569,0.821929);
		rgb(309pt)=(0.0412667,0.586795,0.821776);
		rgb(310pt)=(0.0403351,0.58802,0.821597);
		rgb(311pt)=(0.0394162,0.589245,0.821392);
		rgb(312pt)=(0.0384973,0.59047,0.821188);
		rgb(313pt)=(0.0375784,0.591695,0.820984);
		rgb(314pt)=(0.0367495,0.592891,0.820735);
		rgb(315pt)=(0.0359838,0.594065,0.820454);
		rgb(316pt)=(0.035218,0.595239,0.820173);
		rgb(317pt)=(0.0344523,0.596413,0.819892);
		rgb(318pt)=(0.0337721,0.597553,0.819595);
		rgb(319pt)=(0.0331339,0.598676,0.819288);
		rgb(320pt)=(0.0324958,0.599799,0.818982);
		rgb(321pt)=(0.0318577,0.600923,0.818676);
		rgb(322pt)=(0.0312964,0.602026,0.818312);
		rgb(323pt)=(0.0307604,0.603124,0.817929);
		rgb(324pt)=(0.0302243,0.604222,0.817546);
		rgb(325pt)=(0.0296883,0.605319,0.817163);
		rgb(326pt)=(0.0292375,0.606395,0.816738);
		rgb(327pt)=(0.0288036,0.607468,0.816304);
		rgb(328pt)=(0.0283697,0.60854,0.81587);
		rgb(329pt)=(0.0279357,0.609612,0.815436);
		rgb(330pt)=(0.0275721,0.610637,0.814955);
		rgb(331pt)=(0.0272147,0.611658,0.81447);
		rgb(332pt)=(0.0268574,0.612679,0.813985);
		rgb(333pt)=(0.0265,0.6137,0.8135);
		rgb(334pt)=(0.0262447,0.614695,0.812964);
		rgb(335pt)=(0.0259895,0.615691,0.812428);
		rgb(336pt)=(0.0257342,0.616686,0.811892);
		rgb(337pt)=(0.0254853,0.61768,0.811352);
		rgb(338pt)=(0.0253066,0.61865,0.810765);
		rgb(339pt)=(0.0251279,0.61962,0.810177);
		rgb(340pt)=(0.0249492,0.62059,0.80959);
		rgb(341pt)=(0.024779,0.621551,0.808995);
		rgb(342pt)=(0.0246514,0.62247,0.808357);
		rgb(343pt)=(0.0245237,0.623389,0.807719);
		rgb(344pt)=(0.0243961,0.624308,0.80708);
		rgb(345pt)=(0.0242748,0.625221,0.80643);
		rgb(346pt)=(0.0241727,0.626114,0.805741);
		rgb(347pt)=(0.0240706,0.627008,0.805051);
		rgb(348pt)=(0.0239685,0.627901,0.804362);
		rgb(349pt)=(0.0238832,0.628786,0.803656);
		rgb(350pt)=(0.0238321,0.629654,0.802916);
		rgb(351pt)=(0.0237811,0.630522,0.802176);
		rgb(352pt)=(0.02373,0.631389,0.801435);
		rgb(353pt)=(0.023679,0.632247,0.800685);
		rgb(354pt)=(0.0236279,0.633089,0.799919);
		rgb(355pt)=(0.0235769,0.633932,0.799153);
		rgb(356pt)=(0.0235258,0.634774,0.798387);
		rgb(357pt)=(0.0234748,0.635604,0.797596);
		rgb(358pt)=(0.0234237,0.63642,0.79678);
		rgb(359pt)=(0.0233727,0.637237,0.795963);
		rgb(360pt)=(0.0233216,0.638054,0.795146);
		rgb(361pt)=(0.0232706,0.638856,0.794329);
		rgb(362pt)=(0.0232195,0.639647,0.793512);
		rgb(363pt)=(0.0231685,0.640439,0.792695);
		rgb(364pt)=(0.0231174,0.64123,0.791879);
		rgb(365pt)=(0.0230832,0.642005,0.791011);
		rgb(366pt)=(0.0230577,0.64277,0.790118);
		rgb(367pt)=(0.0230321,0.643536,0.789225);
		rgb(368pt)=(0.0230066,0.644302,0.788331);
		rgb(369pt)=(0.0229811,0.645049,0.787438);
		rgb(370pt)=(0.0229556,0.645789,0.786544);
		rgb(371pt)=(0.02293,0.646529,0.785651);
		rgb(372pt)=(0.0229045,0.647269,0.784758);
		rgb(373pt)=(0.022858,0.64801,0.783843);
		rgb(374pt)=(0.0228069,0.64875,0.782924);
		rgb(375pt)=(0.0227559,0.64949,0.782005);
		rgb(376pt)=(0.0227048,0.65023,0.781086);
		rgb(377pt)=(0.0227,0.650947,0.780144);
		rgb(378pt)=(0.0227,0.651662,0.7792);
		rgb(379pt)=(0.0227,0.652377,0.778256);
		rgb(380pt)=(0.0227,0.653092,0.777311);
		rgb(381pt)=(0.0228261,0.653781,0.776341);
		rgb(382pt)=(0.0229538,0.65447,0.775371);
		rgb(383pt)=(0.0230814,0.655159,0.774402);
		rgb(384pt)=(0.0232108,0.655849,0.77343);
		rgb(385pt)=(0.023364,0.656538,0.772434);
		rgb(386pt)=(0.0235171,0.657227,0.771439);
		rgb(387pt)=(0.0236703,0.657916,0.770443);
		rgb(388pt)=(0.0238312,0.658602,0.769444);
		rgb(389pt)=(0.0240354,0.659265,0.768423);
		rgb(390pt)=(0.0242396,0.659929,0.767402);
		rgb(391pt)=(0.0244438,0.660592,0.766381);
		rgb(392pt)=(0.0247021,0.661256,0.765354);
		rgb(393pt)=(0.025136,0.66192,0.764307);
		rgb(394pt)=(0.02557,0.662583,0.763261);
		rgb(395pt)=(0.0260039,0.663247,0.762214);
		rgb(396pt)=(0.0264541,0.663911,0.761168);
		rgb(397pt)=(0.026939,0.664574,0.760121);
		rgb(398pt)=(0.027424,0.665238,0.759074);
		rgb(399pt)=(0.027909,0.665902,0.758028);
		rgb(400pt)=(0.028445,0.666555,0.756971);
		rgb(401pt)=(0.0290577,0.667193,0.755899);
		rgb(402pt)=(0.0296703,0.667832,0.754827);
		rgb(403pt)=(0.0302829,0.66847,0.753755);
		rgb(404pt)=(0.030994,0.669095,0.752683);
		rgb(405pt)=(0.0318108,0.669708,0.751611);
		rgb(406pt)=(0.0326276,0.670321,0.750539);
		rgb(407pt)=(0.0334444,0.670933,0.749467);
		rgb(408pt)=(0.0343045,0.67156,0.748366);
		rgb(409pt)=(0.0351979,0.672198,0.747243);
		rgb(410pt)=(0.0360913,0.672837,0.74612);
		rgb(411pt)=(0.0369847,0.673475,0.744996);
		rgb(412pt)=(0.0380432,0.674096,0.743873);
		rgb(413pt)=(0.0391919,0.674709,0.74275);
		rgb(414pt)=(0.0403405,0.675322,0.741627);
		rgb(415pt)=(0.0414892,0.675934,0.740504);
		rgb(416pt)=(0.0427123,0.676528,0.739381);
		rgb(417pt)=(0.0439631,0.677115,0.738258);
		rgb(418pt)=(0.0452138,0.677702,0.737135);
		rgb(419pt)=(0.0464646,0.678289,0.736011);
		rgb(420pt)=(0.0477153,0.678897,0.734868);
		rgb(421pt)=(0.0489661,0.67951,0.733719);
		rgb(422pt)=(0.0502168,0.680123,0.73257);
		rgb(423pt)=(0.0514676,0.680735,0.731422);
		rgb(424pt)=(0.0529237,0.681325,0.73025);
		rgb(425pt)=(0.0544042,0.681912,0.729076);
		rgb(426pt)=(0.0558847,0.682499,0.727902);
		rgb(427pt)=(0.0573652,0.683086,0.726728);
		rgb(428pt)=(0.0587709,0.683673,0.725553);
		rgb(429pt)=(0.0601748,0.68426,0.724379);
		rgb(430pt)=(0.0615787,0.684847,0.723205);
		rgb(431pt)=(0.0629946,0.685435,0.722028);
		rgb(432pt)=(0.0646027,0.686022,0.720803);
		rgb(433pt)=(0.0662108,0.686609,0.719577);
		rgb(434pt)=(0.0678189,0.687196,0.718352);
		rgb(435pt)=(0.069427,0.687779,0.717131);
		rgb(436pt)=(0.0710351,0.688341,0.715931);
		rgb(437pt)=(0.0726432,0.688902,0.714731);
		rgb(438pt)=(0.0742514,0.689464,0.713532);
		rgb(439pt)=(0.0758709,0.690026,0.712326);
		rgb(440pt)=(0.07753,0.690587,0.711101);
		rgb(441pt)=(0.0791892,0.691149,0.709876);
		rgb(442pt)=(0.0808483,0.69171,0.70865);
		rgb(443pt)=(0.0825387,0.692272,0.707417);
		rgb(444pt)=(0.0843,0.692833,0.706167);
		rgb(445pt)=(0.0860613,0.693395,0.704916);
		rgb(446pt)=(0.0878225,0.693956,0.703665);
		rgb(447pt)=(0.089564,0.694518,0.702405);
		rgb(448pt)=(0.0912742,0.69508,0.701128);
		rgb(449pt)=(0.0929844,0.695641,0.699852);
		rgb(450pt)=(0.0946946,0.696203,0.698576);
		rgb(451pt)=(0.0965009,0.696752,0.697299);
		rgb(452pt)=(0.0984153,0.697288,0.696023);
		rgb(453pt)=(0.10033,0.697824,0.694747);
		rgb(454pt)=(0.102244,0.69836,0.693471);
		rgb(455pt)=(0.10413,0.698896,0.69218);
		rgb(456pt)=(0.105994,0.699432,0.690878);
		rgb(457pt)=(0.107857,0.699968,0.689577);
		rgb(458pt)=(0.10972,0.700505,0.688275);
		rgb(459pt)=(0.111632,0.701041,0.686973);
		rgb(460pt)=(0.113572,0.701577,0.685671);
		rgb(461pt)=(0.115512,0.702113,0.684369);
		rgb(462pt)=(0.117452,0.702649,0.683068);
		rgb(463pt)=(0.119429,0.703185,0.681747);
		rgb(464pt)=(0.12142,0.703721,0.68042);
		rgb(465pt)=(0.123411,0.704257,0.679093);
		rgb(466pt)=(0.125402,0.704793,0.677765);
		rgb(467pt)=(0.127372,0.705308,0.676438);
		rgb(468pt)=(0.129338,0.705819,0.675111);
		rgb(469pt)=(0.131303,0.706329,0.673783);
		rgb(470pt)=(0.133269,0.70684,0.672456);
		rgb(471pt)=(0.135369,0.70735,0.671084);
		rgb(472pt)=(0.137488,0.707861,0.669705);
		rgb(473pt)=(0.139607,0.708371,0.668327);
		rgb(474pt)=(0.141725,0.708882,0.666949);
		rgb(475pt)=(0.143795,0.709392,0.665595);
		rgb(476pt)=(0.145862,0.709903,0.664242);
		rgb(477pt)=(0.14793,0.710414,0.662889);
		rgb(478pt)=(0.150003,0.710924,0.661534);
		rgb(479pt)=(0.152198,0.711435,0.66013);
		rgb(480pt)=(0.154394,0.711945,0.658726);
		rgb(481pt)=(0.156589,0.712456,0.657322);
		rgb(482pt)=(0.158784,0.712963,0.655922);
		rgb(483pt)=(0.160979,0.713448,0.654543);
		rgb(484pt)=(0.163174,0.713933,0.653165);
		rgb(485pt)=(0.16537,0.714418,0.651786);
		rgb(486pt)=(0.16757,0.714908,0.650397);
		rgb(487pt)=(0.169791,0.715419,0.648968);
		rgb(488pt)=(0.172012,0.715929,0.647538);
		rgb(489pt)=(0.174232,0.71644,0.646109);
		rgb(490pt)=(0.176483,0.716935,0.64468);
		rgb(491pt)=(0.178806,0.717395,0.64325);
		rgb(492pt)=(0.181129,0.717854,0.641821);
		rgb(493pt)=(0.183452,0.718314,0.640391);
		rgb(494pt)=(0.185755,0.718783,0.638952);
		rgb(495pt)=(0.188027,0.719268,0.637497);
		rgb(496pt)=(0.190299,0.719753,0.636042);
		rgb(497pt)=(0.192571,0.720238,0.634587);
		rgb(498pt)=(0.194913,0.720711,0.633132);
		rgb(499pt)=(0.197338,0.72117,0.631677);
		rgb(500pt)=(0.199762,0.72163,0.630223);
		rgb(501pt)=(0.202187,0.722089,0.628768);
		rgb(502pt)=(0.204612,0.722549,0.627299);
		rgb(503pt)=(0.207037,0.723008,0.625818);
		rgb(504pt)=(0.209462,0.723468,0.624338);
		rgb(505pt)=(0.211887,0.723927,0.622857);
		rgb(506pt)=(0.214328,0.724386,0.621377);
		rgb(507pt)=(0.216778,0.724846,0.619896);
		rgb(508pt)=(0.219229,0.725305,0.618416);
		rgb(509pt)=(0.221679,0.725765,0.616935);
		rgb(510pt)=(0.224202,0.726188,0.615455);
		rgb(511pt)=(0.226754,0.726597,0.613974);
		rgb(512pt)=(0.229307,0.727005,0.612494);
		rgb(513pt)=(0.231859,0.727414,0.611014);
		rgb(514pt)=(0.234392,0.727842,0.609513);
		rgb(515pt)=(0.236919,0.728276,0.608007);
		rgb(516pt)=(0.239446,0.72871,0.606501);
		rgb(517pt)=(0.241973,0.729144,0.604995);
		rgb(518pt)=(0.244611,0.729556,0.603467);
		rgb(519pt)=(0.247266,0.729964,0.601935);
		rgb(520pt)=(0.24992,0.730372,0.600404);
		rgb(521pt)=(0.252575,0.730781,0.598872);
		rgb(522pt)=(0.25523,0.731189,0.597365);
		rgb(523pt)=(0.257884,0.731598,0.595859);
		rgb(524pt)=(0.260539,0.732006,0.594353);
		rgb(525pt)=(0.263194,0.732414,0.592846);
		rgb(526pt)=(0.265848,0.732796,0.591314);
		rgb(527pt)=(0.268503,0.733179,0.589783);
		rgb(528pt)=(0.271158,0.733562,0.588251);
		rgb(529pt)=(0.27383,0.733945,0.58672);
		rgb(530pt)=(0.276638,0.734328,0.585188);
		rgb(531pt)=(0.279446,0.734711,0.583657);
		rgb(532pt)=(0.282254,0.735094,0.582125);
		rgb(533pt)=(0.285051,0.735471,0.580594);
		rgb(534pt)=(0.287808,0.735829,0.579062);
		rgb(535pt)=(0.290565,0.736186,0.577531);
		rgb(536pt)=(0.293322,0.736544,0.575999);
		rgb(537pt)=(0.2961,0.736894,0.574468);
		rgb(538pt)=(0.298933,0.737226,0.572936);
		rgb(539pt)=(0.301767,0.737557,0.571405);
		rgb(540pt)=(0.3046,0.737889,0.569873);
		rgb(541pt)=(0.307452,0.738221,0.568351);
		rgb(542pt)=(0.310336,0.738553,0.566845);
		rgb(543pt)=(0.313221,0.738885,0.565339);
		rgb(544pt)=(0.316105,0.739217,0.563833);
		rgb(545pt)=(0.318978,0.739537,0.562315);
		rgb(546pt)=(0.321837,0.739843,0.560784);
		rgb(547pt)=(0.324696,0.74015,0.559252);
		rgb(548pt)=(0.327555,0.740456,0.557721);
		rgb(549pt)=(0.330468,0.740749,0.556216);
		rgb(550pt)=(0.333429,0.741029,0.554736);
		rgb(551pt)=(0.336389,0.74131,0.553255);
		rgb(552pt)=(0.33935,0.741591,0.551775);
		rgb(553pt)=(0.342296,0.741872,0.550279);
		rgb(554pt)=(0.345231,0.742153,0.548773);
		rgb(555pt)=(0.348167,0.742433,0.547267);
		rgb(556pt)=(0.351102,0.742714,0.545761);
		rgb(557pt)=(0.354038,0.742977,0.54429);
		rgb(558pt)=(0.356973,0.743232,0.542835);
		rgb(559pt)=(0.359908,0.743488,0.54138);
		rgb(560pt)=(0.362844,0.743743,0.539925);
		rgb(561pt)=(0.365839,0.743959,0.53847);
		rgb(562pt)=(0.368851,0.744163,0.537015);
		rgb(563pt)=(0.371863,0.744367,0.53556);
		rgb(564pt)=(0.374875,0.744571,0.534105);
		rgb(565pt)=(0.377843,0.744775,0.532672);
		rgb(566pt)=(0.380804,0.74498,0.531243);
		rgb(567pt)=(0.383765,0.745184,0.529814);
		rgb(568pt)=(0.386726,0.745388,0.528384);
		rgb(569pt)=(0.389711,0.745568,0.527003);
		rgb(570pt)=(0.392697,0.745747,0.525624);
		rgb(571pt)=(0.395684,0.745926,0.524246);
		rgb(572pt)=(0.39867,0.746104,0.522868);
		rgb(573pt)=(0.401657,0.746257,0.521489);
		rgb(574pt)=(0.404643,0.74641,0.520111);
		rgb(575pt)=(0.40763,0.746563,0.518732);
		rgb(576pt)=(0.410611,0.746716,0.517359);
		rgb(577pt)=(0.413546,0.746869,0.516032);
		rgb(578pt)=(0.416482,0.747023,0.514705);
		rgb(579pt)=(0.419417,0.747176,0.513377);
		rgb(580pt)=(0.422357,0.747319,0.512055);
		rgb(581pt)=(0.425318,0.747421,0.510753);
		rgb(582pt)=(0.428279,0.747523,0.509451);
		rgb(583pt)=(0.43124,0.747626,0.50815);
		rgb(584pt)=(0.43418,0.747735,0.506848);
		rgb(585pt)=(0.437065,0.747862,0.505546);
		rgb(586pt)=(0.439949,0.74799,0.504244);
		rgb(587pt)=(0.442834,0.748117,0.502942);
		rgb(588pt)=(0.445727,0.748227,0.501659);
		rgb(589pt)=(0.448637,0.748304,0.500408);
		rgb(590pt)=(0.451547,0.74838,0.499157);
		rgb(591pt)=(0.454457,0.748457,0.497906);
		rgb(592pt)=(0.457333,0.748522,0.496667);
		rgb(593pt)=(0.460167,0.748573,0.495441);
		rgb(594pt)=(0.463,0.748624,0.494216);
		rgb(595pt)=(0.465833,0.748675,0.492991);
		rgb(596pt)=(0.468667,0.748726,0.491779);
		rgb(597pt)=(0.4715,0.748777,0.490579);
		rgb(598pt)=(0.474333,0.748829,0.48938);
		rgb(599pt)=(0.477167,0.74888,0.48818);
		rgb(600pt)=(0.479969,0.748931,0.486995);
		rgb(601pt)=(0.482752,0.748982,0.485821);
		rgb(602pt)=(0.485534,0.749033,0.484647);
		rgb(603pt)=(0.488316,0.749084,0.483473);
		rgb(604pt)=(0.491081,0.7491,0.482316);
		rgb(605pt)=(0.493838,0.7491,0.481168);
		rgb(606pt)=(0.496595,0.7491,0.480019);
		rgb(607pt)=(0.499351,0.7491,0.47887);
		rgb(608pt)=(0.502069,0.74912,0.477722);
		rgb(609pt)=(0.504775,0.749145,0.476573);
		rgb(610pt)=(0.50748,0.749171,0.475424);
		rgb(611pt)=(0.510186,0.749196,0.474276);
		rgb(612pt)=(0.512892,0.7492,0.47317);
		rgb(613pt)=(0.515598,0.7492,0.472073);
		rgb(614pt)=(0.518303,0.7492,0.470975);
		rgb(615pt)=(0.521009,0.7492,0.469877);
		rgb(616pt)=(0.523644,0.749176,0.46878);
		rgb(617pt)=(0.526273,0.749151,0.467682);
		rgb(618pt)=(0.528902,0.749125,0.466585);
		rgb(619pt)=(0.531531,0.7491,0.465487);
		rgb(620pt)=(0.53416,0.749074,0.464415);
		rgb(621pt)=(0.536789,0.749049,0.463343);
		rgb(622pt)=(0.539418,0.749023,0.462271);
		rgb(623pt)=(0.542043,0.748998,0.461199);
		rgb(624pt)=(0.544621,0.748972,0.460127);
		rgb(625pt)=(0.547199,0.748947,0.459055);
		rgb(626pt)=(0.549777,0.748921,0.457983);
		rgb(627pt)=(0.55235,0.748891,0.45692);
		rgb(628pt)=(0.554903,0.74884,0.455899);
		rgb(629pt)=(0.557456,0.748789,0.454878);
		rgb(630pt)=(0.560008,0.748738,0.453857);
		rgb(631pt)=(0.562554,0.748687,0.452829);
		rgb(632pt)=(0.565081,0.748636,0.451783);
		rgb(633pt)=(0.567608,0.748585,0.450736);
		rgb(634pt)=(0.570135,0.748534,0.449689);
		rgb(635pt)=(0.572653,0.748474,0.44866);
		rgb(636pt)=(0.575155,0.748397,0.447665);
		rgb(637pt)=(0.577656,0.748321,0.446669);
		rgb(638pt)=(0.580158,0.748244,0.445674);
		rgb(639pt)=(0.582649,0.748168,0.444678);
		rgb(640pt)=(0.585125,0.748091,0.443683);
		rgb(641pt)=(0.587601,0.748014,0.442687);
		rgb(642pt)=(0.590077,0.747938,0.441692);
		rgb(643pt)=(0.59254,0.747861,0.440709);
		rgb(644pt)=(0.59499,0.747785,0.439739);
		rgb(645pt)=(0.597441,0.747708,0.438769);
		rgb(646pt)=(0.599891,0.747632,0.437799);
		rgb(647pt)=(0.602311,0.747555,0.436814);
		rgb(648pt)=(0.604711,0.747478,0.435819);
		rgb(649pt)=(0.60711,0.747402,0.434823);
		rgb(650pt)=(0.60951,0.747325,0.433828);
		rgb(651pt)=(0.611909,0.747232,0.432867);
		rgb(652pt)=(0.614308,0.747129,0.431922);
		rgb(653pt)=(0.616708,0.747027,0.430978);
		rgb(654pt)=(0.619107,0.746925,0.430033);
		rgb(655pt)=(0.621487,0.746823,0.429089);
		rgb(656pt)=(0.623861,0.746721,0.428144);
		rgb(657pt)=(0.626235,0.746619,0.4272);
		rgb(658pt)=(0.628609,0.746517,0.426256);
		rgb(659pt)=(0.630962,0.746393,0.425311);
		rgb(660pt)=(0.63331,0.746266,0.424367);
		rgb(661pt)=(0.635658,0.746138,0.423422);
		rgb(662pt)=(0.638007,0.746011,0.422478);
		rgb(663pt)=(0.640332,0.745906,0.421557);
		rgb(664pt)=(0.642654,0.745804,0.420638);
		rgb(665pt)=(0.644977,0.745702,0.419719);
		rgb(666pt)=(0.6473,0.7456,0.4188);
		rgb(667pt)=(0.649623,0.745472,0.417881);
		rgb(668pt)=(0.651946,0.745345,0.416962);
		rgb(669pt)=(0.654268,0.745217,0.416043);
		rgb(670pt)=(0.656587,0.745089,0.415124);
		rgb(671pt)=(0.658859,0.744962,0.414205);
		rgb(672pt)=(0.661131,0.744834,0.413286);
		rgb(673pt)=(0.663402,0.744707,0.412368);
		rgb(674pt)=(0.665674,0.744579,0.411453);
		rgb(675pt)=(0.667946,0.744451,0.410559);
		rgb(676pt)=(0.670218,0.744324,0.409666);
		rgb(677pt)=(0.672489,0.744196,0.408773);
		rgb(678pt)=(0.674755,0.744062,0.407879);
		rgb(679pt)=(0.677001,0.743909,0.406986);
		rgb(680pt)=(0.679247,0.743756,0.406092);
		rgb(681pt)=(0.681494,0.743603,0.405199);
		rgb(682pt)=(0.68374,0.743458,0.404306);
		rgb(683pt)=(0.685986,0.74333,0.403412);
		rgb(684pt)=(0.688232,0.743203,0.402519);
		rgb(685pt)=(0.690479,0.743075,0.401626);
		rgb(686pt)=(0.692704,0.742937,0.400732);
		rgb(687pt)=(0.694899,0.742784,0.399839);
		rgb(688pt)=(0.697094,0.742631,0.398945);
		rgb(689pt)=(0.699289,0.742477,0.398052);
		rgb(690pt)=(0.701497,0.742324,0.397171);
		rgb(691pt)=(0.703718,0.742171,0.396303);
		rgb(692pt)=(0.705939,0.742018,0.395435);
		rgb(693pt)=(0.708159,0.741865,0.394568);
		rgb(694pt)=(0.710351,0.741712,0.3937);
		rgb(695pt)=(0.71252,0.741559,0.392832);
		rgb(696pt)=(0.71469,0.741405,0.391964);
		rgb(697pt)=(0.71686,0.741252,0.391096);
		rgb(698pt)=(0.719029,0.741082,0.390228);
		rgb(699pt)=(0.721199,0.740904,0.38936);
		rgb(700pt)=(0.723369,0.740725,0.388492);
		rgb(701pt)=(0.725538,0.740546,0.387625);
		rgb(702pt)=(0.727708,0.740386,0.386757);
		rgb(703pt)=(0.729878,0.740233,0.385889);
		rgb(704pt)=(0.732047,0.74008,0.385021);
		rgb(705pt)=(0.734217,0.739927,0.384153);
		rgb(706pt)=(0.736366,0.739753,0.383285);
		rgb(707pt)=(0.73851,0.739574,0.382417);
		rgb(708pt)=(0.740654,0.739395,0.38155);
		rgb(709pt)=(0.742798,0.739217,0.380682);
		rgb(710pt)=(0.744919,0.739038,0.379837);
		rgb(711pt)=(0.747038,0.738859,0.378995);
		rgb(712pt)=(0.749156,0.738681,0.378152);
		rgb(713pt)=(0.751275,0.738502,0.37731);
		rgb(714pt)=(0.753394,0.738323,0.376442);
		rgb(715pt)=(0.755512,0.738145,0.375574);
		rgb(716pt)=(0.757631,0.737966,0.374707);
		rgb(717pt)=(0.75975,0.737789,0.373841);
		rgb(718pt)=(0.761868,0.737636,0.372998);
		rgb(719pt)=(0.763987,0.737483,0.372156);
		rgb(720pt)=(0.766105,0.73733,0.371314);
		rgb(721pt)=(0.76822,0.737169,0.370471);
		rgb(722pt)=(0.770313,0.736965,0.369629);
		rgb(723pt)=(0.772406,0.73676,0.368786);
		rgb(724pt)=(0.774499,0.736556,0.367944);
		rgb(725pt)=(0.776592,0.736358,0.367096);
		rgb(726pt)=(0.778686,0.736179,0.366228);
		rgb(727pt)=(0.780779,0.736001,0.36536);
		rgb(728pt)=(0.782872,0.735822,0.364492);
		rgb(729pt)=(0.784957,0.735643,0.363632);
		rgb(730pt)=(0.787024,0.735465,0.36279);
		rgb(731pt)=(0.789092,0.735286,0.361948);
		rgb(732pt)=(0.791159,0.735107,0.361105);
		rgb(733pt)=(0.793227,0.734929,0.360263);
		rgb(734pt)=(0.795295,0.73475,0.359421);
		rgb(735pt)=(0.797362,0.734571,0.358578);
		rgb(736pt)=(0.79943,0.734392,0.357736);
		rgb(737pt)=(0.801485,0.734214,0.356881);
		rgb(738pt)=(0.803527,0.734035,0.356014);
		rgb(739pt)=(0.805569,0.733856,0.355146);
		rgb(740pt)=(0.807611,0.733678,0.354278);
		rgb(741pt)=(0.809668,0.733499,0.353424);
		rgb(742pt)=(0.811735,0.73332,0.352582);
		rgb(743pt)=(0.813803,0.733142,0.35174);
		rgb(744pt)=(0.81587,0.732963,0.350897);
		rgb(745pt)=(0.817921,0.732784,0.350038);
		rgb(746pt)=(0.819963,0.732606,0.349171);
		rgb(747pt)=(0.822005,0.732427,0.348303);
		rgb(748pt)=(0.824047,0.732248,0.347435);
		rgb(749pt)=(0.826071,0.73207,0.346567);
		rgb(750pt)=(0.828087,0.731891,0.345699);
		rgb(751pt)=(0.830104,0.731712,0.344831);
		rgb(752pt)=(0.83212,0.731534,0.343963);
		rgb(753pt)=(0.834158,0.731355,0.343095);
		rgb(754pt)=(0.8362,0.731176,0.342228);
		rgb(755pt)=(0.838242,0.730998,0.34136);
		rgb(756pt)=(0.840284,0.730819,0.340492);
		rgb(757pt)=(0.842303,0.73064,0.339624);
		rgb(758pt)=(0.84432,0.730462,0.338756);
		rgb(759pt)=(0.846336,0.730283,0.337888);
		rgb(760pt)=(0.848353,0.730104,0.33702);
		rgb(761pt)=(0.850369,0.729926,0.336153);
		rgb(762pt)=(0.852386,0.729747,0.335285);
		rgb(763pt)=(0.854402,0.729568,0.334417);
		rgb(764pt)=(0.856419,0.729391,0.333546);
		rgb(765pt)=(0.858435,0.729238,0.332627);
		rgb(766pt)=(0.860452,0.729085,0.331708);
		rgb(767pt)=(0.862468,0.728932,0.330789);
		rgb(768pt)=(0.864481,0.728778,0.329874);
		rgb(769pt)=(0.866472,0.728625,0.32898);
		rgb(770pt)=(0.868463,0.728472,0.328087);
		rgb(771pt)=(0.870454,0.728319,0.327194);
		rgb(772pt)=(0.872445,0.728166,0.326295);
		rgb(773pt)=(0.874436,0.728013,0.325376);
		rgb(774pt)=(0.876427,0.727859,0.324457);
		rgb(775pt)=(0.878418,0.727706,0.323538);
		rgb(776pt)=(0.880417,0.727561,0.322619);
		rgb(777pt)=(0.882433,0.727433,0.3217);
		rgb(778pt)=(0.88445,0.727306,0.320781);
		rgb(779pt)=(0.886466,0.727178,0.319862);
		rgb(780pt)=(0.888463,0.72705,0.318933);
		rgb(781pt)=(0.890429,0.726923,0.317989);
		rgb(782pt)=(0.892394,0.726795,0.317044);
		rgb(783pt)=(0.894359,0.726668,0.3161);
		rgb(784pt)=(0.896337,0.726552,0.315132);
		rgb(785pt)=(0.898328,0.72645,0.314136);
		rgb(786pt)=(0.900319,0.726348,0.313141);
		rgb(787pt)=(0.90231,0.726246,0.312145);
		rgb(788pt)=(0.904301,0.726158,0.31115);
		rgb(789pt)=(0.906292,0.726081,0.310154);
		rgb(790pt)=(0.908283,0.726005,0.309159);
		rgb(791pt)=(0.910274,0.725928,0.308163);
		rgb(792pt)=(0.912249,0.725851,0.307151);
		rgb(793pt)=(0.914214,0.725775,0.30613);
		rgb(794pt)=(0.91618,0.725698,0.305109);
		rgb(795pt)=(0.918145,0.725622,0.304088);
		rgb(796pt)=(0.920111,0.7256,0.303031);
		rgb(797pt)=(0.922076,0.7256,0.301959);
		rgb(798pt)=(0.924041,0.7256,0.300886);
		rgb(799pt)=(0.926007,0.7256,0.299814);
		rgb(800pt)=(0.927972,0.7256,0.298722);
		rgb(801pt)=(0.929938,0.7256,0.297624);
		rgb(802pt)=(0.931903,0.7256,0.296527);
		rgb(803pt)=(0.933869,0.7256,0.295429);
		rgb(804pt)=(0.935812,0.725668,0.294264);
		rgb(805pt)=(0.937752,0.725744,0.29309);
		rgb(806pt)=(0.939692,0.725821,0.291916);
		rgb(807pt)=(0.941632,0.725897,0.290741);
		rgb(808pt)=(0.943571,0.726023,0.289518);
		rgb(809pt)=(0.945511,0.726151,0.288293);
		rgb(810pt)=(0.947451,0.726278,0.287068);
		rgb(811pt)=(0.949389,0.726411,0.285839);
		rgb(812pt)=(0.951278,0.726641,0.284537);
		rgb(813pt)=(0.953167,0.72687,0.283235);
		rgb(814pt)=(0.955056,0.7271,0.281933);
		rgb(815pt)=(0.956938,0.72734,0.280622);
		rgb(816pt)=(0.958776,0.727646,0.279243);
		rgb(817pt)=(0.960614,0.727952,0.277865);
		rgb(818pt)=(0.962451,0.728259,0.276486);
		rgb(819pt)=(0.964273,0.728597,0.275086);
		rgb(820pt)=(0.966034,0.729057,0.273606);
		rgb(821pt)=(0.967795,0.729516,0.272126);
		rgb(822pt)=(0.969557,0.729976,0.270645);
		rgb(823pt)=(0.971288,0.730473,0.269135);
		rgb(824pt)=(0.972947,0.73106,0.267552);
		rgb(825pt)=(0.974606,0.731647,0.265969);
		rgb(826pt)=(0.976265,0.732234,0.264387);
		rgb(827pt)=(0.977857,0.732879,0.262785);
		rgb(828pt)=(0.979338,0.733619,0.261151);
		rgb(829pt)=(0.980818,0.734359,0.259518);
		rgb(830pt)=(0.982299,0.735099,0.257884);
		rgb(831pt)=(0.983697,0.73591,0.256227);
		rgb(832pt)=(0.984999,0.736803,0.254542);
		rgb(833pt)=(0.986301,0.737697,0.252858);
		rgb(834pt)=(0.987603,0.73859,0.251173);
		rgb(835pt)=(0.988753,0.739566,0.249474);
		rgb(836pt)=(0.989774,0.740613,0.247764);
		rgb(837pt)=(0.990795,0.741659,0.246054);
		rgb(838pt)=(0.991816,0.742706,0.244344);
		rgb(839pt)=(0.992677,0.743816,0.242681);
		rgb(840pt)=(0.993443,0.744965,0.241048);
		rgb(841pt)=(0.994209,0.746114,0.239414);
		rgb(842pt)=(0.994975,0.747262,0.23778);
		rgb(843pt)=(0.995578,0.748465,0.236165);
		rgb(844pt)=(0.996114,0.74969,0.234557);
		rgb(845pt)=(0.99665,0.750915,0.232949);
		rgb(846pt)=(0.997186,0.752141,0.231341);
		rgb(847pt)=(0.997562,0.753386,0.229813);
		rgb(848pt)=(0.997893,0.754637,0.228307);
		rgb(849pt)=(0.998225,0.755887,0.226801);
		rgb(850pt)=(0.998557,0.757138,0.225295);
		rgb(851pt)=(0.998711,0.758433,0.223856);
		rgb(852pt)=(0.998839,0.759735,0.222426);
		rgb(853pt)=(0.998966,0.761037,0.220997);
		rgb(854pt)=(0.999094,0.762339,0.219567);
		rgb(855pt)=(0.999076,0.763641,0.218186);
		rgb(856pt)=(0.99905,0.764942,0.216808);
		rgb(857pt)=(0.999025,0.766244,0.21543);
		rgb(858pt)=(0.998995,0.767546,0.214054);
		rgb(859pt)=(0.998868,0.768848,0.212752);
		rgb(860pt)=(0.99874,0.77015,0.21145);
		rgb(861pt)=(0.998613,0.771451,0.210149);
		rgb(862pt)=(0.998473,0.772756,0.208856);
		rgb(863pt)=(0.998243,0.774083,0.207631);
		rgb(864pt)=(0.998014,0.775411,0.206405);
		rgb(865pt)=(0.997784,0.776738,0.20518);
		rgb(866pt)=(0.997539,0.77806,0.20396);
		rgb(867pt)=(0.997232,0.779362,0.20276);
		rgb(868pt)=(0.996926,0.780664,0.201561);
		rgb(869pt)=(0.99662,0.781966,0.200361);
		rgb(870pt)=(0.996299,0.783268,0.199168);
		rgb(871pt)=(0.995942,0.784569,0.197994);
		rgb(872pt)=(0.995584,0.785871,0.19682);
		rgb(873pt)=(0.995227,0.787173,0.195646);
		rgb(874pt)=(0.994842,0.788475,0.19449);
		rgb(875pt)=(0.994408,0.789777,0.193367);
		rgb(876pt)=(0.993974,0.791078,0.192244);
		rgb(877pt)=(0.99354,0.79238,0.191121);
		rgb(878pt)=(0.993083,0.793671,0.190021);
		rgb(879pt)=(0.992598,0.794947,0.188949);
		rgb(880pt)=(0.992113,0.796223,0.187877);
		rgb(881pt)=(0.991628,0.797499,0.186805);
		rgb(882pt)=(0.99113,0.798789,0.185732);
		rgb(883pt)=(0.990619,0.800091,0.18466);
		rgb(884pt)=(0.990109,0.801393,0.183588);
		rgb(885pt)=(0.989598,0.802695,0.182516);
		rgb(886pt)=(0.989072,0.803996,0.18146);
		rgb(887pt)=(0.988536,0.805298,0.180413);
		rgb(888pt)=(0.988,0.8066,0.179367);
		rgb(889pt)=(0.987464,0.807902,0.17832);
		rgb(890pt)=(0.98691,0.809186,0.177291);
		rgb(891pt)=(0.986349,0.810462,0.17627);
		rgb(892pt)=(0.985787,0.811738,0.175249);
		rgb(893pt)=(0.985226,0.813015,0.174228);
		rgb(894pt)=(0.984644,0.814311,0.173207);
		rgb(895pt)=(0.984057,0.815613,0.172186);
		rgb(896pt)=(0.98347,0.816914,0.171165);
		rgb(897pt)=(0.982883,0.818216,0.170144);
		rgb(898pt)=(0.982296,0.819518,0.169145);
		rgb(899pt)=(0.981709,0.82082,0.16815);
		rgb(900pt)=(0.981122,0.822122,0.167154);
		rgb(901pt)=(0.980535,0.823423,0.166159);
		rgb(902pt)=(0.979947,0.824725,0.165163);
		rgb(903pt)=(0.97936,0.826027,0.164168);
		rgb(904pt)=(0.978773,0.827329,0.163172);
		rgb(905pt)=(0.978186,0.828631,0.162177);
		rgb(906pt)=(0.977599,0.829932,0.161181);
		rgb(907pt)=(0.977012,0.831234,0.160186);
		rgb(908pt)=(0.976425,0.832536,0.15919);
		rgb(909pt)=(0.975838,0.833841,0.158195);
		rgb(910pt)=(0.975251,0.835168,0.157199);
		rgb(911pt)=(0.974664,0.836495,0.156204);
		rgb(912pt)=(0.974077,0.837823,0.155208);
		rgb(913pt)=(0.973489,0.83915,0.154213);
		rgb(914pt)=(0.972902,0.840477,0.153217);
		rgb(915pt)=(0.972315,0.841805,0.152222);
		rgb(916pt)=(0.971728,0.843132,0.151226);
		rgb(917pt)=(0.971155,0.844466,0.150224);
		rgb(918pt)=(0.970619,0.845819,0.149203);
		rgb(919pt)=(0.970083,0.847172,0.148182);
		rgb(920pt)=(0.969547,0.848525,0.147161);
		rgb(921pt)=(0.96902,0.849886,0.14614);
		rgb(922pt)=(0.968509,0.851265,0.145119);
		rgb(923pt)=(0.967999,0.852643,0.144098);
		rgb(924pt)=(0.967488,0.854022,0.143077);
		rgb(925pt)=(0.967,0.855411,0.142056);
		rgb(926pt)=(0.966541,0.856815,0.141035);
		rgb(927pt)=(0.966081,0.858219,0.140014);
		rgb(928pt)=(0.965622,0.859623,0.138992);
		rgb(929pt)=(0.965189,0.86104,0.137945);
		rgb(930pt)=(0.96478,0.862469,0.136873);
		rgb(931pt)=(0.964372,0.863899,0.135801);
		rgb(932pt)=(0.963963,0.865328,0.134729);
		rgb(933pt)=(0.96357,0.866773,0.133657);
		rgb(934pt)=(0.963187,0.868228,0.132585);
		rgb(935pt)=(0.962805,0.869683,0.131513);
		rgb(936pt)=(0.962422,0.871138,0.130441);
		rgb(937pt)=(0.962091,0.87261,0.129351);
		rgb(938pt)=(0.961785,0.874091,0.128253);
		rgb(939pt)=(0.961478,0.875571,0.127156);
		rgb(940pt)=(0.961172,0.877052,0.126058);
		rgb(941pt)=(0.960885,0.878571,0.124961);
		rgb(942pt)=(0.960605,0.880103,0.123863);
		rgb(943pt)=(0.960324,0.881634,0.122765);
		rgb(944pt)=(0.960043,0.883166,0.121668);
		rgb(945pt)=(0.959849,0.884719,0.120549);
		rgb(946pt)=(0.95967,0.886276,0.119426);
		rgb(947pt)=(0.959491,0.887833,0.118302);
		rgb(948pt)=(0.959313,0.88939,0.117179);
		rgb(949pt)=(0.959181,0.890995,0.116032);
		rgb(950pt)=(0.959054,0.892603,0.114884);
		rgb(951pt)=(0.958926,0.894211,0.113735);
		rgb(952pt)=(0.958799,0.895819,0.112587);
		rgb(953pt)=(0.958748,0.897453,0.111464);
		rgb(954pt)=(0.958697,0.899086,0.110341);
		rgb(955pt)=(0.958646,0.90072,0.109217);
		rgb(956pt)=(0.958602,0.902359,0.108089);
		rgb(957pt)=(0.958628,0.904043,0.106915);
		rgb(958pt)=(0.958653,0.905728,0.105741);
		rgb(959pt)=(0.958679,0.907413,0.104567);
		rgb(960pt)=(0.958718,0.909102,0.103393);
		rgb(961pt)=(0.95882,0.910812,0.102219);
		rgb(962pt)=(0.958922,0.912522,0.101044);
		rgb(963pt)=(0.959024,0.914232,0.0998703);
		rgb(964pt)=(0.959153,0.915962,0.0986961);
		rgb(965pt)=(0.959357,0.917749,0.0975219);
		rgb(966pt)=(0.959561,0.919536,0.0963477);
		rgb(967pt)=(0.959765,0.921323,0.0951736);
		rgb(968pt)=(0.959996,0.923118,0.0939907);
		rgb(969pt)=(0.960277,0.924931,0.092791);
		rgb(970pt)=(0.960557,0.926743,0.0915913);
		rgb(971pt)=(0.960838,0.928555,0.0903916);
		rgb(972pt)=(0.961151,0.930378,0.0891919);
		rgb(973pt)=(0.961509,0.932216,0.0879922);
		rgb(974pt)=(0.961866,0.934054,0.0867925);
		rgb(975pt)=(0.962223,0.935892,0.0855928);
		rgb(976pt)=(0.96262,0.937768,0.0843802);
		rgb(977pt)=(0.963053,0.939683,0.083155);
		rgb(978pt)=(0.963487,0.941597,0.0819297);
		rgb(979pt)=(0.963921,0.943512,0.0807045);
		rgb(980pt)=(0.964415,0.945426,0.0794643);
		rgb(981pt)=(0.964951,0.947341,0.0782135);
		rgb(982pt)=(0.965487,0.949255,0.0769628);
		rgb(983pt)=(0.966023,0.951169,0.075712);
		rgb(984pt)=(0.966594,0.953118,0.0744441);
		rgb(985pt)=(0.967181,0.955083,0.0731679);
		rgb(986pt)=(0.967768,0.957049,0.0718916);
		rgb(987pt)=(0.968355,0.959014,0.0706153);
		rgb(988pt)=(0.96898,0.960999,0.0693006);
		rgb(989pt)=(0.969619,0.96299,0.0679733);
		rgb(990pt)=(0.970257,0.964981,0.0666459);
		rgb(991pt)=(0.970895,0.966972,0.0653186);
		rgb(992pt)=(0.971554,0.968984,0.0639486);
		rgb(993pt)=(0.972218,0.971001,0.0625703);
		rgb(994pt)=(0.972882,0.973017,0.0611919);
		rgb(995pt)=(0.973545,0.975034,0.0598135);
		rgb(996pt)=(0.974232,0.97705,0.058318);
		rgb(997pt)=(0.974922,0.979067,0.056812);
		rgb(998pt)=(0.975611,0.981083,0.055306);
		rgb(999pt)=(0.9763,0.9831,0.0538)
	}
}
\tikzstyle arrowstyle=[scale=1]
\tikzstyle directed=[postaction={decorate,decoration={markings,
		mark=at position .65 with {\arrow[arrowstyle]{stealth}}}}]
\tikzstyle reverse directed=[postaction={decorate,decoration={markings,
		mark=at position .65 with {\arrowreversed[arrowstyle]{stealth};}}}]
\newcommand{\secref}[1]{Section~\ref{#1}}
\newcommand{\thmref}[1]{Theorem~\ref{#1}}
\newcommand{\defref}[1]{Definition~\ref{#1}}
\renewcommand{\corref}[1]{Corollary~\ref{#1}}
\newcommand{\figref}[1]{Figure~\ref{#1}}
\newcommand{\tabref}[1]{Table~\ref{#1}}
\newcommand{\algoref}[1]{Algorithm~\ref{#1}}
\newcommand{\assref}[1]{Assumption~\ref{#1}}
\DeclarePairedDelimiter{\ceil}{\lceil}{\rceil}
\newcommand{\N}{\mathbb{N}} 
\newcommand{\R}{\mathbb{R}}
\newcommand{\Rhoch}[2]{\ensuremath{\R^{{#1} \times {#2}}}}
\newcommand{\Domain}{\ensuremath{\mathcal{X}}} 
\newcommand{\dimension}{\ensuremath{d}} 
\newcommand{\timeint}{\ensuremath{T}} 
\newcommand{\timevar}{\ensuremath{t}} 
\newcommand{\indicator}[1]{\ensuremath{\mathbbm{1}_{#1}}}
\newcommand{\Lp}[1]{\ensuremath{L_{#1}}}
\newcommand{\flux}{\ensuremath{\mathbf{f}}}
\newcommand{\x}{\ensuremath{x}}
\newcommand{\y}{\ensuremath{y}}
\newcommand{\dx}{\partial_{x}}
\newcommand{\dy}{\partial_{\y}}
\newcommand{\dt}{\partial_\timevar}
\newcommand{\de}{\partial}
\newcommand{\quadxpoint}[1]{\hat{\x}_{#1}}
\newcommand{\quadxweight}[1]{\hat{w}_{#1}}
\newcommand{\quadypoint}[1]{\hat{\y}_{#1}}
\newcommand{\quadyweight}[1]{\hat{w}_{#1}}
\newcommand{\quadxpointG}[1]{{\x}_{#1}}
\newcommand{\quadxweightG}[1]{{w}_{#1}}
\newcommand{\quadypointG}[1]{{\y}_{#1}}
\newcommand{\quadyweightG}[1]{{w}_{#1}}
\newcommand{\quadRpoint}[1]{\hat{\uncertainty}_{#1}}
\newcommand{\quadRweight}[1]{\hat{\omega}_{#1}}
\newcommand{\support}[1]{\text{supp}\left( #1 \right)}
\newcommand{\dtstepsize}{\ensuremath{\Delta \timevar}}
\newcommand{\ncells}{\ensuremath{N_{\Domain}}}
\newcommand{\nRcells}{\ensuremath{N_{\randomSpace}}}
\newcommand{\timeind}{\ensuremath{n}}
\newcommand{\timepoint}[1]{\ensuremath{\timevar_{#1}}}
\newcommand{\cellind}{\ensuremath{i,j}}
\newcommand{\cellindx}{\ensuremath{i}}
\newcommand{\cellindy}{\ensuremath{j}}
\newcommand{\cellindR}{\ensuremath{l}}
\newcommand{\cell}[1]{\ensuremath{C_{#1}}}
\newcommand{\cellR}[1]{\ensuremath{D_{#1}}}
\newcommand{\polybasis}[1]{\ensuremath{\varphi_{#1}}}
\newcommand{\polybasisR}[1]{\ensuremath{\phi_{#1}}}
\newcommand{\spatialorder}{\ensuremath{{K_\Domain}}}
\newcommand{\polydegree}{\ensuremath{\spatialorder}}
\newcommand{\polydegreeR}{\ensuremath{\SGtruncorder}}
\newcommand{\cellmeant}[2][\cellind]{\ensuremath{\overline{#2}_{#1}}}
\newcommand{\cellmean}[3][\cellind]{\ensuremath{\overline{#2}^{(#3)}_{#1}}}
\newcommand{\limitervariable}{\ensuremath{\theta}}
\newcommand{\limitertolerance}{\ensuremath{\varepsilon}}
\newcommand{\hyperbolLimit}[1]{\ensuremath{\Lambda \Pi^{#1}}}
\newcommand{\Testfunction}[1][ ]{\ensuremath{v_{#1}}}
\newcommand{\SpaceOfPolynomials}[1]{\ensuremath{P^{#1}}}
\newcommand{\numericalFlux}{\ensuremath{\widehat{\flux}}}
\newcommand{\density}{\ensuremath{\rho}}
\newcommand{\energy}{\ensuremath{E}}
\newcommand{\pressure}{\ensuremath{p}}
\newcommand{\eulergamma}{\ensuremath{\gamma}}
\newcommand{\momentum}{\ensuremath{m}}
\newcommand{\momentumdim}[1]{\ensuremath{m}_{#1}}
\newcommand{\densityvar}{\ensuremath{\widetilde{\rho}}}
\newcommand{\energyvar}{\ensuremath{\widetilde{E}}}
\newcommand{\momentumdimvar}[1]{\ensuremath{\widetilde{m}}_{#1}}
\newcommand{\uncertainty}{\ensuremath{\boldsymbol{\xi}}}
\newcommand{\uncertaintyD}{\ensuremath{\xi}}
\newcommand{\xiPDF}{\ensuremath{f_\Xi}}
\newcommand{\xiBasisPoly}[1]{\ensuremath{\phi_{#1}}}
\newcommand{\SGsumIndex}{\ensuremath{\kappa}}
\newcommand{\SGsumIndexvar}{\ensuremath{\tilde{\kappa}}}
\newcommand{\xsumIndex}{\ensuremath{m}}
\newcommand{\xsumIndexvar}{\ensuremath{\tilde{m}}}
\newcommand{\SGeqIndex}{\ensuremath{j}}
\newcommand{\xiPDFdxi}{\ensuremath{\xiPDF \mathrm{d}\uncertainty}}
\newcommand{\intRS}{\ensuremath{\int_{\randomSpace}}}
\newcommand{\intcell}[2]{\ensuremath{\int\limits_{\cell{#1}} #2\,\mathrm{d}(\x,\y)}}
\newcommand{\MEintcellR}[2]{\ensuremath{\int\limits_{\cellR{#1}} #2\,\MExiPDFdxi}}
\newcommand{\MEintxR}[1]{\intcell{\cellind}{\!\MEintcellR{\cellindR}{#1}}}
\newcommand{\SGtruncorder}{\ensuremath{{K_\randomSpace}}}
\newcommand{\nbxnodes}{\ensuremath{Q_\Domain}}
\newcommand{\nbRnodes}{\ensuremath{Q_\randomSpace}}
\newcommand{\nbxiQuadNodes}{\ensuremath{Q_\Gamma}}
\newcommand{\xiQuadIndex}{\ensuremath{\rho}}
\newcommand{\xQuadIndex}{\ensuremath{\alpha}}
\newcommand{\yQuadIndex}{\ensuremath{\beta}}
\newcommand{\xQuadIndexG}{\ensuremath{p}}
\newcommand{\yQuadIndexG}{\ensuremath{q}}
\newcommand{\SGmomentvec}{\ensuremath{\bU}}
\newcommand{\SGflux}{\ensuremath{\bF}}
\newcommand{\realizableSet}{\ensuremath{\mathcal{R}}}
\newcommand{\sampleSpace}{\ensuremath{\Omega}}
\newcommand{\sigmaAlgebra}{\ensuremath{\mathcal{F}}}
\newcommand{\probabilityMeasure}{\ensuremath{\mathbb{P}}}
\newcommand{\randomSpace}{\ensuremath{\Gamma}}
\newcommand{\solution}{\ensuremath{\bu}}
\newcommand{\solutionprojected}{\ensuremath{\bu}}
\newcommand{\localsolution}[2]{\ensuremath{\bu}_{#1,#2}}
\newcommand{\auxiliaryprojected}{\ensuremath{\bw_h}}
\newcommand{\rhs}{\ensuremath{L_{\Delta x,\Delta y}}}
\newcommand{\tvbminmod}{\ensuremath{\Lambda \Pi_{\Delta x,\Delta y}}}
\newcommand{\DGmoment}[2]{\ensuremath{\bu_{#1,#2}}}
\newcommand{\MEDGmoment}[3]{\ensuremath{\bu_{#1,#2,#3}}}
\newcommand{\stageind}{s}
\newcommand{\stage}{S}
\newcommand{\RKsolution}{\bv}
\newcommand{\cutfun}[1]{h\left(#1\right)}
\newcommand{\hyperbolicity}{hyperbolicity}
\newcommand{\hyperbolic}{admissible}
\newcommand{\hypset}{hyperbolicity set}
\newcommand{\hdSG}{hDSG}
\newcommand{\randomElement}[1]{\ensuremath{D_{#1}}}
\newcommand{\indicatorVar}[2]{\ensuremath{\chi_{#1}(#2)}}
\newcommand{\inverseIndicator}[2]{\ensuremath{\chi_{#1}^{-1}(#2)}}
\newcommand{\MEIndex}{\ensuremath{l}}
\newcommand{\MEElements}{\ensuremath{{N_{\randomSpace}}}}
\newcommand{\localxiBasisPoly}[2]{\ensuremath{\phi_{#1,#2} }}
\newcommand{\E}{\ensuremath{\mathbb{E}}}
\DeclareMathOperator{\Var}{\text{Var}}
\newcommand{\nstochdim}{\ensuremath{N}}
\newcommand{\nstochdimidx}{\ensuremath{n}}
\newcommand{\uncertaintydim}[1]{\xi^{#1}}
\newcommand{\xiPDFdim}[1]{\ensuremath{f_{\Xi_{#1}}}}
\newcommand{\randomSpacedim}[1]{\ensuremath{\Gamma_{#1}}}
\newcommand{\idxset}{\mathcal{K}}
\newcommand{\MEidxset}{\mathcal{L}}
\newcommand{\Mxidxset}{\mathcal{M}}
\newcommand{\Pidxset}{\mathcal{P}}
\newcommand{\totalidxset}{\N_0^{\nstochdim}}
\newcommand{\nbxiBasisPolyDim}{\ensuremath{N_{\SGtruncorder}}}
\newcommand{\SGapproach}{\ensuremath{\sum_{\SGsumIndex\in \idxset} \solution_\SGsumIndex \xiBasisPoly{\SGsumIndex}}}
\newcommand{\localuncertainty}[1]{\ensuremath{\uncertainty_{#1}}}
\newcommand{\localuncertaintydim}[2]{\ensuremath{\xi_{#1}^{#2}}}
\newcommand{\locxiPDF}[1]{\ensuremath{f}_{\randomElement{#1}}}
\newcommand{\MExiPDFdxi}{\ensuremath{\locxiPDF{_\MEIndex} \mathrm{d}\uncertainty_\MEIndex}}
\newcommand{\velocitydim}[1]{\ensuremath{v}_{#1}}
\newcommand{\spatialdimension}{\alpha} 
\newcommand{\inty}{\int \limits_{y_{j-\frac{1}{2}}}^{y_j+ \frac{1}{2}}}
\newcommand{\intx}{\int \limits_{x_{i-\frac{1}{2}}}^{x_i+ \frac{1}{2}}}
\newcommand{\dintx}{\mathrm{d}\x}
\newcommand{\dinty}{\mathrm{d}\y}
\newcommand{\gausslegendreorder}{\ensuremath{Q_L}}
\begin{document}

\begin{abstract}
Intrusive Uncertainty Quantification methods such as stochastic Galerkin are gaining popularity, whereas the classical stochastic Galerkin approach is not ensured to preserve hyperbolicity of the underlying hyperbolic system. We apply a modification of this method that uses a slope limiter to retain admissible solutions of the system, while providing high-order approximations in the physical and stochastic space. This is done using a spatial discontinuous Galerkin scheme and a Multi-Element stochastic Galerkin ansatz in the random space. We analyze the convergence of the resulting scheme and apply it to the compressible Euler equations with various uncertain initial states in one and two spatial domains with up to three uncertainties. The performance in multiple stochastic dimensions is compared to the non-intrusive Stochastic Collocation method. The numerical results underline the strength of our method, especially if discontinuities are present in the uncertainty of the solution.
\end{abstract}
\begin{keyword}
Uncertainty Quantification \sep Polynomial chaos \sep stochastic Galerkin \sep discontinuous Galerkin \sep Hyperbolicity \sep Multi-Element
\MSC[2010] 35L60 \sep 35Q31 \sep 35Q62\sep 37L65 \sep 65M08 \sep 65M60 

\end{keyword}

\maketitle

\noindent


\section{Introduction}
Non-deterministic effects may influence the validity of accurate approximations of deterministic systems 
because uncertain inputs in parameters, initial or boundary conditions yield a propagation of the uncertainty into the solution. Therefore, it is important to account for these uncertainties and hence Uncertainty Quantification (UQ) is becoming more and more important in numerical simulations.
The two main sources for these non-deterministic effects (or uncertainties) are the limitations in measuring  physical parameters exactly and the absence of knowledge of the underlying physical processes. 

In general there exist two major approaches to quantify the influence of uncertain parameters,
see for example \cite{Smith2014,Kolb2018,abgrall2017uncertainty, PetterssonIaccarinoNordstroem2015, MaitreKnio2010} for a general overview of different UQ methods.
Statistical approaches such as (multilevel) Monte Carlo (MC) type methods \cite{mishra2012multi,mishra2012sparse,mishra2016numerical} sample the random space to obtain statistical information,
like mean, variance or higher-order moments of the corresponding random field.
While they are very robust even for non-smooth problems like solutions of hyperbolic systems,
they suffer from a slow rate of convergence dictated
by the law of large numbers. 
In contrast, non-statistical approaches, like the intrusive and non-intrusive  Polynomial Chaos (PC) expansion \refone{\cite{XiuKarniadakis2002,Blatman2008,BabuskaTemponeZouraris2004,LeMaitreOlivier2002}}, or the Stochastic Collocation method \refone{\cite{Nobile2008,BabuskaNobileTempone2010,HesthavenXiu2005}}, approximate the random field
by a series of polynomials and derive deterministic models for the stochastic modes.
The theoretical foundation for the PC expansion has been laid in \cite{Wiener1938}
and can be described as a  polynomial approximation of Gaussian random variables to represent random processes.
Later, the approach has been generalized to a larger class of distributions \cite{XiuKarniadakis2002}, which is now known as
generalized Polynomial Chaos (gPC).
The intrusive PC expansion, also known as stochastic Galerkin (SG) approach,
considers a weak formulation of the partial differential equation with respect to the stochastic variable
and uses corresponding orthonormal polynomials as ansatz and test functions.

For many random elliptic and parabolic equations, the underlying random field is sufficiently smooth with respect to the stochastic variable and hence the use of
the SG method is superior compared to MC methods. This is mainly due to the fact that the gPC approximation exhibits spectral convergence \cite{GhanemSpanos1991, Yan2005, BabuskaTemponeZouraris2004}.  
However, the naive usage of the SG approach for nonlinear hyperbolic problems typically fails \cite{Poette2009,abgrall2017uncertainty} since the polynomial expansion of discontinuous data leads to huge oscillations (also known as Gibbs phenomenon). In the case of nonlinear systems of conservation laws, the resulting SG system might even \refone{lose} its hyperbolicity.

To reduce the Gibbs oscillations, the authors of \cite{WanKarniadakis2005} developed the so-called Multi-Element method
which subdivides the random space into smaller, disjoint elements. 
This corresponds to an $h$-refinement in the random space. 
Further developments of the Multi-Element approach encompass $h$- and $hp$-adaptive refinements in the stochastic space
(\cite{WanKarniadakis2005,WanKarniadakis2009, TryoenLeMaitreErn2012}) or a multi-resolution discretization using wavelets instead of gPC,
cf. \cite{BuergerKroekerRohde2014, PetterssonIaccarionNordstroem2014}.
Another approach which ensures hyperbolicity of the resulting nonlinear SG system
is the intrusive polynomial moment method. 
It bounds the oscillations of the Gibbs phenomenon by expanding the stochastic solution not in the conserved variables but 
in so-called entropic variables \cite{Poette2009}, which is well known in the radiative transfer community as minimum entropy models \cite{Lev84,Levermore1996,BruHol01}.
The resulting SG system is hyperbolic and has good approximation properties but requires to solve (typically) expensive nonlinear systems in every space-time cell.
Furthermore, it is necessary that the system possesses a strictly convex entropy function \refone{that ensures a hyperbolic system in order} to define the entropic variables.

In this article, we extend the hyperbolicity-preserving limiter developed in  \citep{Schlachter2017a} to (spatial) discontinuous Galerkin (DG) schemes ensuring the hyperbolicity of the resulting DG-SG system. Moreover, we combine the limiter with the Multi-Element approach to further decrease Gibbs oscillations and ensure a high-order approximation in both physical and stochastic space.
We compare the performance of our numerical scheme to that of the non-intrusive Stochastic Collocation method for a smooth
solution. In less than three stochastic dimensions 
our method proves to be more efficient than the Stochastic Collocation method. 
Furthermore, we apply our method to different one- and two-dimensional Riemann problems for the compressible Euler equations,
which shows that our method is able to handle complex flow problems.


The rest of the paper is organized as follows. In \secref{sec:model} we describe our hyperbolic model problem and the classical SG approach. Moreover, we explain the Multi-Element approach and define the domain of \hyperbolicity{}. Our modification of the SG method combined with DG and the hyperbolicity-preserving limiter is stated in \secref{sec:dg}, where we also prove its \hyperbolicity{}-preservation and embed the two-dimensional compressible Euler equations into our framework. \secref{sec:results} is devoted to a numerical convergence analysis and applications to the compressible Euler equations with different uncertain initial states in multiple stochastic dimensions, which we extensively investigate numerically.
\section{Uncertainty Quantification}\label{sec:model}
Throughout this paper, we restrict ourselves to the study of two-dimensional random systems of hyperbolic conservation laws of the form
\begin{equation}\label{conslaw}\dt \solution +  \dx\flux_1 (\solution) + \dy\flux_2 (\solution) = 0,\end{equation}
with flux components $\flux_i({\solution}): \R^{\dimension} \rightarrow \R^{\dimension}$, $i=1,2$ in two spatial dimension $ (\x,\y)^\top
\in\Domain\subset\R^2$, and where the solution
$$\solution = \solution(\timevar,\x,\y,\uncertainty):\R_+ \times \R^2 \times \sampleSpace \rightarrow \R^{\dimension}$$
is depending on an $\nstochdim$-dimensional real-valued random vector
$\uncertainty=(\uncertaintydim{1},\ldots,\uncertaintydim{\nstochdim})$ defined on 
a probability space $(\sampleSpace,\sigmaAlgebra,\probabilityMeasure)$.
We assume that the entries of $\uncertainty$ are independent,
identically distributed random variables.
We denote the image of the random vector by $\randomSpace := \uncertainty(\sampleSpace)\subset \R^\nstochdim$ and its probability density function by
 $\xiPDF(\uncertainty) : \randomSpace \rightarrow \R_+$. Due to the independence of the entries of $\uncertainty$ 
we may write $\xiPDF(\uncertainty)= \prod \limits_{\nstochdimidx=1}^\nstochdim \xiPDFdim{\nstochdimidx}(\uncertaintydim{\nstochdimidx})$,
where  $\xiPDFdim{\nstochdimidx}$ is the probability density function of the random variable $\uncertaintydim{\nstochdimidx}$, for $\nstochdimidx=1,\ldots,\nstochdim$.
We further assume that the uncertainty is introduced via the initial conditions, namely
$$\solution(\timevar=0,\x,\y,\uncertainty) = \solution^0(\x,\y,\uncertainty).$$

We solve the system \eqref{conslaw} using a Runge--Kutta time-stepping method combined with a discontinuous Galerkin method in space (see \secref{sec:dg}).
The stochastic discretization uses the generalized Polynomial Chaos (gPC) theory \cite{Wiener1938, XiuKarniadakis2002}, which will be explained by the following stochastic Galerkin approach.

\subsection{Stochastic Galerkin} \label{subsec:Stochastic Galerkin}
The stochastic Galerkin (SG) method discretizes $\randomSpace$ by a suitable orthonormal basis.
For a multi-dimensional $\randomSpace\subset \R^\nstochdim$, we first introduce the family of multivariate orthonormal polynomials. 
To this end we consider the one-dimensional orthonormal polynomials $(\xiBasisPoly{\SGsumIndex}(\uncertaintydim{\nstochdimidx}))_{\SGsumIndex=0,\ldots,\infty}$,
$\nstochdimidx=1,\ldots \nstochdim$, w.r.t. 
to the inner product induced by the probability density function $\xiPDFdim{\nstochdimidx}$, i.e.,
\begin{equation} \label{eq:scalarproduct}\langle \mathbf{h}(\uncertaintydim{\nstochdimidx}),\,\bg(\uncertaintydim{\nstochdimidx})\rangle_{\randomSpacedim{\nstochdimidx}} := \int_{\omega\in\sampleSpace} \! \mathbf{h}(\uncertaintydim{\nstochdimidx}(\omega))\bg(\uncertaintydim{\nstochdimidx}(\omega)) \mathrm{d}\probabilityMeasure(\omega) = \int_{\uncertaintydim{\nstochdimidx}\in\randomSpacedim{\nstochdimidx}}\!\mathbf{h}(\uncertaintydim{\nstochdimidx})\bg(\uncertaintydim{\nstochdimidx})\xiPDFdim{\nstochdimidx}(\uncertaintydim{\nstochdimidx})\mathrm{d}{\uncertaintydim{\nstochdimidx}},\end{equation}
where $\randomSpacedim{\nstochdimidx} := \uncertaintydim{\nstochdimidx}(\Omega)$.
Thus, $(\xiBasisPoly{\SGsumIndex}(\uncertaintydim{\nstochdimidx}))_{\SGsumIndex=0,\ldots,\infty}$ form a basis of the (weighted) space $L_2(\randomSpacedim{\nstochdimidx})$.
For any multi-index $\SGsumIndex =( \SGsumIndex_1,\ldots,\SGsumIndex_{\nstochdim})^\top \in \totalidxset$,
we define the multivariate orthonormal polynomials  $\xiBasisPoly{\SGsumIndex}(\uncertainty):= \xiBasisPoly{\SGsumIndex_1}(\uncertaintydim{1})\cdot\ldots\cdot\xiBasisPoly{\SGsumIndex_\nstochdim}(\uncertaintydim{\nstochdim})$.
According to \cite{XiuKarniadakis2002}, we can write $\solution$ as the Fourier series
\begin{equation}\label{SGinfinitesum}\solution(\timevar,\x,\y,\uncertainty) = \sum_{\SGsumIndex \in \totalidxset} \solution_{\SGsumIndex}(\timevar,\x,\y) \xiBasisPoly{\SGsumIndex}(\uncertainty),\end{equation}
with deterministic coefficients $\solution_\SGsumIndex$. 
The solution $\solution$ of \eqref{conslaw} is now approximated by truncating the infinite sum \eqref{SGinfinitesum} at finite order $\SGtruncorder$.
\refone{We consider the complete polynomial space described by the following index set} $\SGsumIndex \in \idxset:=\{\SGsumIndex=(\SGsumIndex_1,\ldots\SGsumIndex_{\nstochdim})^\top \in \N_0^{\nstochdim} ~|~ \sum \limits_{\nstochdimidx=1}^{\nstochdim} \SGsumIndex_\nstochdimidx \leq \SGtruncorder \}$, cf. \cite{HesthavenXiu2005}.
Hence, the SG approximation reads as follows
\begin{equation}\label{SGapproach}\solution \approx \sum_{\SGsumIndex \in \idxset} \solution_{\SGsumIndex}(\timevar,\x,\y) \xiBasisPoly{\SGsumIndex}(\uncertainty),\end{equation}
which, by the Cameron-Martin theorem \refone{\cite{Ernst2012,Cameron1947}}, is converging to \eqref{SGinfinitesum} in $\Lp{2}(\randomSpace)$, as $\SGtruncorder\rightarrow\infty$.
The polynomial moments $\solution_{\SGsumIndex}$ of $\solution$ are determined by the Galerkin projection, given by
\begin{equation}\label{SGmoment}\solution_{\SGsumIndex} (\timevar,\x,\y) = \int_{\randomSpace} \solution(\timevar,\x,\y,\uncertainty) \xiBasisPoly{\SGsumIndex}(\uncertainty) \xiPDF \mathrm{d}\uncertainty.\end{equation}
We insert the ansatz \eqref{SGapproach} into the system of conservation laws \eqref{conslaw} and project the result onto the complete polynomial space.
We then obtain
$$\dt \intRS \left(\SGapproach\right)\!\xiBasisPoly{\SGeqIndex} \xiPDFdxi + \dx \intRS \flux_1 \!\left(\SGapproach\right) \!\xiBasisPoly{\SGeqIndex}\xiPDFdxi  + \dy \intRS \flux_2 \!\left(\SGapproach\right) \!\xiBasisPoly{\SGeqIndex}\xiPDFdxi= 0, \qquad \SGeqIndex \in \idxset.$$
The orthonormality of the basis functions yields the following SG system
\begin{equation}\label{SGsystem}
\dt \underbrace{\begin{pmatrix}
\solution_0\\ \vdots\\ \solution_{\nbxiBasisPolyDim}
\end{pmatrix}}_{=:\SGmomentvec} + 
 \dx\underbrace{\begin{pmatrix}
\intRS \flux_1\! \left( \SGapproach \right)\! \xiBasisPoly{0} \xiPDFdxi\\ \vdots\\ \intRS \flux_1\! \left( \SGapproach \right)\! \xiBasisPoly{{\nbxiBasisPolyDim}} \xiPDFdxi 
\end{pmatrix}}_{=:\SGflux_1\left(\SGmomentvec\right)}
+ 
 \dy\underbrace{\begin{pmatrix}
\intRS \flux_2\! \left( \SGapproach \right)\! \xiBasisPoly{0} \xiPDFdxi\\ \vdots\\ \intRS \flux_2\! \left( \SGapproach \right)\! \xiBasisPoly{{\nbxiBasisPolyDim}} \xiPDFdxi 
\end{pmatrix}}_{=:\SGflux_2\left(\SGmomentvec\right)}
= 0,
\end{equation}
with $\SGmomentvec,~\SGflux_\spatialdimension\!\left(\SGmomentvec\right)\in\R^{\dimension\nbxiBasisPolyDim}$, for $\spatialdimension=1,2$, 
where $\nbxiBasisPolyDim := {\nstochdim+ \SGtruncorder \choose \nstochdim}$ is the number of basis polynomials. The Jacobian matrix of this model reads
\begin{equation}\label{SGJacobian}\frac{\de \SGflux^\spatialdimension}{\de \SGmomentvec} = \begin{pmatrix}
\hat{\bF}_{00}^\spatialdimension & \cdots & \hat{\bF}_{0\nbxiBasisPolyDim}^\spatialdimension\\
\vdots & & \vdots\\
\hat{\bF}_{\nbxiBasisPolyDim 0}^\spatialdimension & \cdots & \hat{\bF}_{\nbxiBasisPolyDim\nbxiBasisPolyDim}^\spatialdimension
\end{pmatrix} ~ \in \Rhoch{\dimension\nbxiBasisPolyDim}{\dimension\nbxiBasisPolyDim},
\end{equation}
where
$$\hat{\bF}_{ji}^\spatialdimension =\intRS \frac{\de \flux_\spatialdimension}{\de \solution}\! \left(\SGapproach\right)\! \xiBasisPoly{j}\xiBasisPoly{i}\xiPDFdxi ~\in\Rhoch{\dimension}{\dimension}, \quad i,j = 0, \ldots, \nbxiBasisPolyDim,$$
for $\spatialdimension=1,2$.
The deterministic SG system \eqref{SGsystem} can then be solved by any suitable numerical method.
We will focus on the RKDG method, which will be described in detail in \secref{subsec:rkdgsg}. 
\refone{It has been shown in \cite{Poette2009} that the flux Jacobian \eqref{SGJacobian} may exhibit complex eigenvalues, hence the SG system \eqref{SGsystem} loses the hyperbolicity of the original conservation law.}

\subsection{Multi-Element Stochastic Galerkin}\label{subsec:Multielement Stochastic Galerkin}
\refone{As we have seen in the previous subsection, a major drawback of the plain SG approach for hyperbolic conservation laws is the possible loss of hyperbolicity. Moreover, for a discontinuous solution, the gPC approach may converge slowly or even fail to converge, due to Gibbs oscillations, cf. \cite{WanKarniadakis2005, Poette2009}}. To overcome this issue we employ the Multi-Element (ME) approach as presented in \cite{WanKarniadakis2006a}, where we subdivide $\randomSpace$ into disjoint elements and consider a local gPC approximation of  (\ref{conslaw}) on every random element.

For ease of presentation we assume that $\randomSpace=[0,1]^\nstochdim,$ and let $0= d_1 < d_2 < \ldots < d_{\MEElements+1}= 1$ be a decomposition of $[0,1]$.
We define $\randomElement{\nstochdimidx} := [d_{\nstochdimidx}, d_{\nstochdimidx+1} )$, for $ n= 1,\ldots, \MEElements-1,$ and 
$\randomElement{\MEElements} := [d_{\MEElements}, d_{\MEElements+1}]$.
Introducing the tensor-product index set $\MEidxset:= \{\MEIndex=(\MEIndex_1,\ldots,\MEIndex_\nstochdim)^\top  \in \N_0^{\nstochdim}: \MEIndex_{\nstochdimidx} \leq \MEElements,~ \nstochdimidx =1\ldots,\nstochdim \}
$ allows us to define the Multi-Element $\randomElement{\MEIndex} := \bigtimes \limits_{\nstochdimidx=1}^\nstochdim \randomElement{\MEIndex_\nstochdimidx}$ with $\Delta \uncertaintyD := \prod \limits_{\nstochdimidx=1}^\nstochdim (d_{\MEIndex_\nstochdimidx+1}-d_{\MEIndex_\nstochdimidx})$ and $\MEIndex\in \MEidxset$.
Moreover, we introduce an indicator variable $\chi_{\MEIndex}: \Omega \to  \{0,1\}$ on every random element
\begin{equation}\label{def:indicator}
\indicatorVar{\MEIndex}{\uncertainty(\omega)} :=
	\begin{cases}
	1, &\text{if } \uncertainty(\omega) \in \randomElement{\MEIndex}, \\
	0, &\text{else, }
	\end{cases}
\end{equation}
for $\MEIndex \in \MEidxset $. Using the indicator variable allows us to derive a disjoint partition of the sample space $\sampleSpace$ via
\begin{equation*}
\sampleSpace  = \bigcup \limits_{\MEIndex \in \MEidxset } \inverseIndicator{\MEIndex}{1}.
\end{equation*}
Hence, we can define a new local random vector $\localuncertainty{\MEIndex}=(\localuncertaintydim{\MEIndex}{1},\ldots,\localuncertaintydim{\MEIndex}{\nstochdim}): \inverseIndicator{\MEIndex}{1} \to \randomElement{\MEIndex}$ on the local probability space $(\inverseIndicator{\MEIndex}{1},\sigmaAlgebra\cap \inverseIndicator{\MEIndex}{1},\probabilityMeasure(\cdot | \indicatorVar{\MEIndex}{\uncertainty} =1))$. Using Bayes' rule we compute the following local probability 
density functions 
\begin{equation}\label{def:CondDensity}
\locxiPDF{\MEIndex}:=\locxiPDF{\MEIndex}(\localuncertainty{\MEIndex}|\indicatorVar{\MEIndex}{\uncertainty}=1) = \frac{\xiPDF(\localuncertainty{\MEIndex})}{\probabilityMeasure(\indicatorVar{\MEIndex}{\uncertainty}=1)}.
\end{equation}
\begin{remark}
	For a uniform distribution we have $\locxiPDF{\MEIndex}=\prod \limits_{\nstochdimidx=1}^\nstochdim \frac{1}{d_{\MEIndex_\nstochdimidx+1}-d_{\MEIndex_\nstochdimidx}}$.
\end{remark}
If we let $\{\localxiBasisPoly{\SGsumIndex}{\MEIndex} (\localuncertainty{\MEIndex})\}_{\SGsumIndex \in \totalidxset}$ be the orthonormal polynomials with respect to the conditional probability density function (\ref{def:CondDensity}), we may consider the local gPC approximation in element $\randomElement{\MEIndex}$,
\begin{equation}
\solution_{\MEIndex}(t,\x,\y,\localuncertainty{\MEIndex}) = \sum_{\SGsumIndex \in \totalidxset}  \solution_{\SGsumIndex, \MEIndex}(\timevar,\x,\y) \localxiBasisPoly{\SGsumIndex}{\MEIndex} (\localuncertainty{\MEIndex}) \approx
 \sum_{\SGsumIndex \in \idxset} \solution_{\SGsumIndex, \MEIndex}(\timevar,\x,\y) \localxiBasisPoly{\SGsumIndex}{\MEIndex} (\localuncertainty{\MEIndex}),
\end{equation}
for all $\MEIndex \in \MEidxset$.
The global approximation \ref{SGapproach} can then be written as
\begin{align}
\solution(t,\x,\y,\uncertainty) &= \sum_{\MEIndex \in \MEidxset} \solution_\MEIndex(t,\x,\y,\uncertainty) \indicatorVar{\MEIndex}{\uncertainty} \nonumber\\
& \refone{ \approx \sum_{\MEIndex \in \MEidxset}
  \sum_{\SGsumIndex\in \idxset} \solution_{\SGsumIndex, \MEIndex}(\timevar,\x,\y) \localxiBasisPoly{\SGsumIndex}{\MEIndex}(\uncertainty) \indicatorVar{\MEIndex}{\uncertainty},}\label{def:globalSG}
\end{align}
where the local approximation \eqref{SGapproach} converges to the global solution in $\Lp{2}(\randomSpace)$ as $\MEElements,\SGtruncorder\to\infty$, cf \cite{Alpert1993}.

\begin{remark}\label{rem:disjointME}
	Due to the disjoint decomposition of the random space, we may now apply the SG method from \secref{subsec:Stochastic Galerkin} on
	 every random element $\randomElement{\MEIndex}$, $\MEIndex \in \MEidxset$.
\end{remark}
The expected value of $\solution$ is given by its moment of zeroth order. We assume that
$ \localxiBasisPoly{0}{\MEIndex}(\localuncertainty{\MEIndex})=\prod_{\nstochdimidx=1}^{\nstochdim}\localxiBasisPoly{0}{\MEIndex}(\localuncertaintydim{\MEIndex}{\nstochdimidx})=1$ 
in $\randomElement{\MEIndex}$ and obtain the expected value and variance using the orthonormality of the basis polynomials as
\begin{align}
\E(\solution) &\approx  \int_{\randomSpace} \sum_{\MEIndex \in \MEidxset}   \sum_{\SGsumIndex\in \idxset}  \solution_{\SGsumIndex, \MEIndex}\, \localxiBasisPoly{\SGsumIndex}{\MEIndex}\, \indicatorVar{\MEIndex}{\uncertainty}\, \xiPDFdxi 
=  \sum_{\MEIndex \in \MEidxset}  \sum_{\SGsumIndex\in \idxset}  \solution_{\SGsumIndex, \MEIndex} \int_{\randomElement{\MEIndex}} \!\localxiBasisPoly{\SGsumIndex}{\MEIndex} \,\localxiBasisPoly{0}{\MEIndex} \, \probabilityMeasure(\indicatorVar{\MEIndex}{\uncertainty}=1)\,\MExiPDFdxi \nonumber 
\\ 
&= \sum_{\MEIndex \in \MEidxset} \probabilityMeasure(\indicatorVar{\MEIndex}{\uncertainty}=1) \,\solution_{0,\MEIndex}, \label{SGEV}\\[0.2cm]
\Var(\solution) &\approx
\sum_{\MEIndex \in \MEidxset}\Big( \Var(\solution_{\MEIndex}) + \big(\solution_{0,\MEIndex} - \mathbb{E}(\solution)\big)^2\Big)\,\probabilityMeasure(\indicatorVar{\MEIndex}{\uncertainty}=1),\label{SGSD}
\end{align}
where the local variance $\Var(\solution_{\MEIndex})$ is given by
$$\Var(\solution_{\MEIndex})\approx \int_{\randomElement{\MEIndex}} \! \bigg( \sum_{\SGsumIndex\in \idxset} \solution_{\SGsumIndex,\MEIndex}\localxiBasisPoly{\SGsumIndex}{\MEIndex}\bigg)^2  \MExiPDFdxi - \solution_{0,\MEIndex}^2
=\sum_{\SGsumIndex\in \idxset \setminus (0,\ldots,0)} \solution_{\SGsumIndex,\MEIndex}^2.$$
\subsection{Hyperbolicity}
The solution of the system of equations \eqref{conslaw} has to fulfill certain physical properties. For example, the density or pressure should always be nonnegative for the Euler equations. \refone{We need to ensure the hyperbolicity of the system to satisfy these physical properties. This translates into the following definition of the hyperbolicity set.}
\begin{definition}
We call the set 
\begin{align*}
		\realizableSet := \left\{\solution\in\R^\dimension~\Big|~
		\alpha_1\frac{\partial\flux_1(\solution)}{\partial\solution} + \alpha_2\frac{\partial\flux_2(\solution)}{\partial\solution} \text{ has $\dimension$ real, distinct eigenvalues } \forall \alpha_1,\alpha_2\in\R, \alpha_1^2+\alpha_2^2=1
		 \right\}
\end{align*}
	the \textbf{\hypset{}} and every solution vector $\solution\in\realizableSet$ \textbf{\hyperbolic{}}.
\end{definition}
\begin{assumption}
	\label{ass:RealizableSet}
	\refone{From now on, we assume} that the \hypset{} $\realizableSet$ is open and convex.
\end{assumption}
\refone{It has been shown in \cite{DespresPoetteLucor2013} that for non-scalar and non-symmetric models like the Euler equations, the SG system may lose its hyperbolicity and develops states outside $\realizableSet$. The shallow water example in  \cite{DespresPoetteLucor2013} even produced velocities that become greater than the speed of light by the usage of a non-hyperbolic model. This demonstrates the necessity of defining admissible states of $\realizableSet$ that are included in set of physical states. The following theorem states that the admissibility of the SG approximation translates to the hyperbolicity of the SG system.} 
\begin{theorem}\label{thm:SGsystemhyp}
	If the SG polynomial \eqref{def:globalSG} is admissible, \refone{ i.e. $$  \sum_{\MEIndex \in \MEidxset}
	\sum_{\SGsumIndex\in \idxset} \solution_{\SGsumIndex, \MEIndex}(\timevar,\x,\y) \localxiBasisPoly{\SGsumIndex}{\MEIndex}(\uncertainty) \indicatorVar{\MEIndex}{\uncertainty}\in\realizableSet,$$} then the SG system is hyperbolic.
\end{theorem}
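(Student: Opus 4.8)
The plan is to prove \thmref{thm:SGsystemhyp} by turning it into a linear‑algebra statement about the SG flux Jacobian and showing that, in the pseudo‑spectral (nodal) realisation of the Galerkin projection, this matrix is orthogonally similar to a block‑diagonal matrix whose blocks are flux Jacobians of the original system \eqref{conslaw}, each of which is $\R$‑diagonalisable precisely because the reconstruction is admissible. \textbf{Step 1: reduce to one Multi‑Element.} Because the elements $\randomElement{\MEIndex}$, $\MEIndex\in\MEidxset$, are pairwise disjoint and the indicators $\indicatorVar{\MEIndex}{\cdot}$ are orthogonal idempotents, on $\randomElement{\MEIndex}$ the reconstruction \eqref{def:globalSG} reduces to $\sum_{\SGsumIndex\in\idxset}\solution_{\SGsumIndex,\MEIndex}\localxiBasisPoly{\SGsumIndex}{\MEIndex}$; hence the component of the ME flux vector carrying the index block $\MEIndex$ depends only on the coefficients of that block, and the ME flux Jacobian (which has the form \eqref{SGJacobian} with the basis functions $\localxiBasisPoly{\SGsumIndex}{\MEIndex}\indicatorVar{\MEIndex}{\uncertainty}$) is block‑diagonal with respect to $\MEIndex$. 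Since a block‑diagonal matrix is hyperbolic iff each diagonal block is, it suffices to treat a single‑element SG system (cf.\ \rmref{rem:disjointME}), i.e.\ the setting of \secref{subsec:Stochastic Galerkin} with $\xiPDF,\xiBasisPoly{}$ replaced by $\locxiPDF{\MEIndex},\localxiBasisPoly{}{\MEIndex}$ and reconstruction $\solution_h(\uncertainty):=\sum_{\SGsumIndex\in\idxset}\solution_{\SGsumIndex,\MEIndex}\localxiBasisPoly{\SGsumIndex}{\MEIndex}(\uncertainty)$, which by hypothesis lies in $\realizableSet$ for every $\uncertainty\in\randomElement{\MEIndex}$.

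\textbf{Step 2: nodal representation of the SG Jacobian.} Fix $\alpha_1,\alpha_2\in\R$ with $\alpha_1^2+\alpha_2^2=1$, abbreviate $A(\solution):=\alpha_1\frac{\de\flux_1(\solution)}{\de\solution}+\alpha_2\frac{\de\flux_2(\solution)}{\de\solution}$, and let $\mathcal{A}_\alpha:=\alpha_1\de_{\SGmomentvec}\SGflux_1+\alpha_2\de_{\SGmomentvec}\SGflux_2$ be the corresponding directional SG Jacobian, so that by \eqref{SGJacobian} its $(i,j)$‑block is $\ints{A(\solution_h(\uncertainty))\,\localxiBasisPoly{i}{\MEIndex}\localxiBasisPoly{j}{\MEIndex}}$. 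I would then evaluate these (local) Galerkin inner products by the Gaussian quadrature $\{(\uncertainty_q,w_q)\}_q$ associated with $\locxiPDF{\MEIndex}$ that the scheme employs, chosen with as many nodes as there are basis functions; such a rule integrates each product $\localxiBasisPoly{i}{\MEIndex}\localxiBasisPoly{j}{\MEIndex}$ (a polynomial of degree $\le 2\SGtruncorder$) exactly, so orthonormality of the basis says that the square matrix $P:=\big(\sqrt{w_q}\,\localxiBasisPoly{i}{\MEIndex}(\uncertainty_q)\big)_{i,q}$ is \emph{orthogonal}. A direct computation then yields
\[
\mathcal{A}_\alpha=\big(P\otimes\mathrm{Id}_{\dimension}\big)\,\mathrm{blockdiag}_{q}\!\big(A(\solution_h(\uncertainty_q))\big)\,\big(P\otimes\mathrm{Id}_{\dimension}\big)^{\!\top},
\]
that is, $\mathcal{A}_\alpha$ is orthogonally similar to the block‑diagonal matrix whose $\dimension\times\dimension$ blocks are the flux Jacobians $A(\solution_h(\uncertainty_q))$ of \eqref{conslaw}.

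\textbf{Step 3: conclude.} Each node satisfies $\uncertainty_q\in\randomElement{\MEIndex}$, hence $\solution_h(\uncertainty_q)\in\realizableSet$, and by \assref{ass:RealizableSet} (which identifies $\realizableSet$ with the hyperbolicity set of \eqref{conslaw}) the matrix $A(\solution_h(\uncertainty_q))$ has $\dimension$ distinct real eigenvalues, hence is $\R$‑diagonalisable. A block‑diagonal matrix assembled from such blocks is $\R$‑diagonalisable, and orthogonal similarity preserves the spectrum and $\R$‑diagonalisability; thus $\mathcal{A}_\alpha$ is $\R$‑diagonalisable, and since the direction $(\alpha_1,\alpha_2)$ on the unit circle was arbitrary, the single‑element — and therefore the full — SG system is hyperbolic. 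Equivalently, Steps 2–3 can be phrased through a symmetriser: the block‑diagonal matrix inherits a block‑diagonal symmetric positive‑definite symmetriser from the pointwise symmetrisers of the $A(\solution_h(\uncertainty_q))$, and conjugating by the orthogonal $P\otimes\mathrm{Id}_{\dimension}$ produces a symmetric positive‑definite symmetriser for $\mathcal{A}_\alpha$.

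\textbf{Main obstacle.} The step carrying all the weight is the passage from the modal (moment) form to the nodal form: a generic \emph{compression} of an $\R$‑diagonalisable matrix need not be $\R$‑diagonalisable — this is exactly the mechanism by which the plain SG system \eqref{SGsystem} can lose hyperbolicity once the reconstruction leaves $\realizableSet$ — so it is essential that $P$ genuinely be orthogonal, i.e.\ that the Galerkin inner products be reproduced by a quadrature whose cardinality equals the number of basis functions. This holds automatically in one stochastic dimension (Gauss quadrature with $\SGtruncorder+1$ nodes); in several stochastic dimensions with the total‑degree ansatz of \eqref{SGapproach} one has to be careful — either by using a tensor‑product ansatz matched with tensorised Gauss quadrature, or by invoking a positive cubature with the matching number of nodes — which is also the point at which the ``numerical'' hyperbolicity set naturally enters the multivariate analysis.
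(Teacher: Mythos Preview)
The paper does not give its own proof of this theorem; it simply defers to an external reference (Thm.~2.1 in \cite{Wu2017}). So there is no in-paper argument to compare your strategy against directly, and I evaluate your proposal on its own merits.

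Your Step~1 reduction to a single Multi-Element is correct and matches the decoupling observed in \rmref{rem:disjointME}. Your Steps~2--3 give a clean and correct proof that the \emph{pseudo-spectral} SG Jacobian --- the one obtained when the stochastic integrals in \eqref{SGJacobian} are replaced by a Gaussian quadrature with exactly $\nbxiBasisPolyDim$ nodes integrating each $\xiBasisPoly{i}\xiBasisPoly{j}$ exactly --- is orthogonally similar to $\mathrm{blockdiag}_q\,A(\solution_h(\uncertainty_q))$ and hence real-diagonalisable whenever $\solution_h(\uncertainty_q)\in\realizableSet$ at every node. This is precisely the mechanism the scheme in \secref{sec:dg} relies on.

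There is, however, a gap between this and the theorem as literally stated. The SG system \eqref{SGsystem} and its Jacobian \eqref{SGJacobian} are defined with \emph{exact} integrals, and for a nonlinear flux such as Euler the entries of $A(\solution_h(\uncertainty))$ are rational, not polynomial, in $\uncertainty$; no finite quadrature reproduces $\ints{A(\solution_h)\xiBasisPoly{i}\xiBasisPoly{j}}$ exactly, so your displayed identity $\mathcal{A}_\alpha=(P\otimes\mathrm{Id}_{\dimension})\,\mathrm{blockdiag}(\cdots)\,(P\otimes\mathrm{Id}_{\dimension})^{\top}$ is an approximation, not an equality. In other words, Step~2 silently replaces the exact SG Jacobian by its collocation surrogate. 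Your ``main obstacle'' paragraph correctly flags the multivariate cubature issue for the nodal route, but this more basic exact-versus-collocation gap is present already in one stochastic dimension. Your closing remark about a symmetriser points in the right direction for the exact case, but the symmetriser you describe is still assembled from the quadrature blocks; an argument for the exact Jacobian has to be built from a pointwise symmetriser of $A(\solution)$ (e.g.\ the entropy Hessian) and does not reduce to the orthogonal-similarity identity you use.
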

\begin{proof}
	The proof can be found in \refone{\cite[Thm. 2.1]{Wu2017}}.
\end{proof}
In the following section we want to generalize the hyperbolic slope-limiter from \cite{Schlachter2017a}, which ensures hyperbolicity of the SG system, to high-order Runge--Kutta discontinuous Galerkin schemes.
\section{Hyperbolicity-preserving discontinuous stochastic Galerkin scheme (\hdSG{})}\label{sec:dg}
\refone{In this section, we develop a hyperbolicity-preserving variant of the stochastic Galerkin method. Our aim is to construct a numerical scheme that is high order in space, time and the uncertainty. Additionally, we want to ensure that it preserves hyperbolicity of the resulting SG system. The idea is to guarantee the admissibility of the spatial-stochastic cell mean after one time step of the numerical scheme. This property is proven in \thmref{thm:realizable}. Using the slope limiting technique from \cite{Schlachter2017a}, we then provide a pointwise admissible SG approximation. In a first step to formulate the hdSG method we introduce the space-time discretization of the SG system using the Runge-Kutta discontinuous Galerkin scheme.}

\subsection{The Runge--Kutta discontinuous stochastic Galerkin scheme} \label{subsec:rkdgsg}
\refone{For the spatial discretization of the SG system \eqref{SGsystem}, we consider a discontinuous Galerkin (DG) scheme.}
Similar to \secref{subsec:Multielement Stochastic Galerkin}, where we subdivided the random space  $\randomSpace$ into Multi-Elements $\cellR{\cellindR}$, $\cellindR \in\MEidxset$, we now divide the spatial domain $\Domain \subset\R^2$ into a uniform rectangular mesh with cells $\cell{\cellind}= [\x_{i-\frac{1}{2}},\x_{i+\frac{1}{2}}]\times [\y_{{j-\frac{1}{2}}},\y_{j+\frac{1}{2}}]$ and cell-widths
$\Delta x:=(\x_{i+\frac{1}{2}}- \x_{i-\frac{1}{2}}) $, $\Delta y:= (\y_{j+\frac{1}{2}}- \y_{{j-\frac{1}{2}}})$.

For the proof of hyperbolicity-preservation in Theorem \ref{thm:realizable} we do not write down the DG spatial discretization of the moment-system \eqref{SGsystem} but rather consider the weak formulation of \eqref{conslaw} with respect to $(\x,\y)$ and also $\uncertainty$.
Therefore, we test \eqref{conslaw} with test functions $\Testfunction(\x,\y,\uncertainty)$ where $\support{\Testfunction}\subseteq\cell{\cellind}\times\cellR{\cellindR}$ and obtain, after one formal integration by parts, the following weak formulation
\begin{align}
\label{eq:weakformulation}
\dt \MEintxR{\solution\,\Testfunction} & = \MEintxR{\Big(\flux_1 (\solution)\,\dx\Testfunction+ \flux_2 (\solution)\,\dy\Testfunction \Big)} \\
& -\inty \MEintcellR{\MEIndex}{\bigg[\flux_1 (\solution)\Testfunction \bigg]_{\x_{\cellindx-\frac12}}^{\x_{\cellindx+\frac12}}}\dinty
-\intx \MEintcellR{\MEIndex}{\bigg[\flux_2 (\solution)\Testfunction \bigg]_{\y_{\cellindy-\frac12}}^{\y_{\cellindy+\frac12}}}\dintx.\nonumber
\end{align}
We  replace each component of the solution $\solution$ by a piecewise-polynomial approximation, i.e., $\solution_h \big|_{\cell{\cellind}\times\cellR{\cellindR}}\in\left(\SpaceOfPolynomials{\polydegree}(\cell{\cellind})\otimes\SpaceOfPolynomials{\polydegreeR}(\cellR{\cellindR})\right)^{\otimes \dimension}$ for
polynomial degrees $\polydegree,\polydegreeR \in \N_0$. As a basis of
$\SpaceOfPolynomials{\polydegree}(\cell{\cellind})$ we use tensor-products of one-dimensional polynomials.
Thus, we define the corresponding tensor-product index set $\xsumIndex\in \Mxidxset:=\{\xsumIndex=(\xsumIndex_1,\xsumIndex_2)^T\in\mathbb{N}_0^2~\big|~\xsumIndex_i\leq\polydegree, ~i=1,2\}$.
As a basis of 
$\SpaceOfPolynomials{\polydegreeR}(\cellR{\cellindR})$ we use the complete polynomial space from \secref{subsec:Stochastic Galerkin}.

For ease of notation, \refone{we drop the index $h$ and write $\solution$ instead of $\solution_h$}.
The local approximation in the bases of the spaces of piecewise polynomials then reads as follows
\begin{align}\label{localMEDGDGApproximation}
\localsolution{\cellind}{\cellindR}(\timevar,\x,\y,\localuncertainty{\MEIndex}) 
:= \solutionprojected \big|_{\cell{\cellind}\times\cellR{\cellindR}}(\timevar,\x,\y,\localuncertainty{\MEIndex}) = \sum_{\xsumIndex\in \Mxidxset} \sum_{\SGsumIndex\in\idxset}\MEDGmoment{\xsumIndex}{\SGsumIndex}{\cellind,\MEIndex}(\timevar)\polybasis{\xsumIndex,\cellind}(\x,\y)\localxiBasisPoly{\SGsumIndex}{\MEIndex}(\localuncertainty{\MEIndex}), 
\end{align}
for $(\x,\y)\in \cell{\cellind},~\localuncertainty{\MEIndex}\in\cellR{\cellindR}$.
Here, $\{\localxiBasisPoly{\SGsumIndex}{\MEIndex}\}_{\SGsumIndex\in\idxset}$ are the local basis polynomials on the random element $\cellR{\cellindR}$ and $\{\polybasis{\xsumIndex,\cellind}\}_{\xsumIndex\in \Mxidxset}$ are the basis polynomials on the physical cell $\cell{\cellind}$.
For a simpler notation we restrict ourselves to one fixed space-stochastic cell $\cell{\cellind}\times \cellR{\cellindR}$
and drop the additional indices $\cellind$ and $\MEIndex$ in \eqref{localMEDGDGApproximation}.

Plugging the ansatz \eqref{localMEDGDGApproximation} into the weak formulation \eqref{eq:weakformulation} with test functions $v=\polybasis{\xsumIndexvar}\xiBasisPoly{\SGsumIndexvar}$ yields
\begin{align}
\label{eq:DG}
 \sum_{\xsumIndex\in \Mxidxset} \sum_{\SGsumIndex\in\idxset}\dt\DGmoment{\xsumIndex}{\SGsumIndex}{} \MEintxR{\polybasis{\xsumIndex}\polybasis{\xsumIndexvar}\xiBasisPoly{\SGsumIndex}{}\xiBasisPoly{\SGsumIndexvar}} 
 &= \MEintxR{\Big( \flux_1 (\solutionprojected)\xiBasisPoly{\SGsumIndexvar}\,\dx\polybasis{\xsumIndexvar} +  \flux_2 (\solutionprojected)\xiBasisPoly{\SGsumIndexvar}\,\dy\polybasis{\xsumIndexvar} \Big)} 
 \\ &-
 \inty \MEintcellR{\cellindR}{\bigg[\flux_1(\solutionprojected)\,\polybasis{\xsumIndexvar} \bigg]_{\x_{\cellindx-\frac12}}^{\x_{\cellindx+\frac12}}\xiBasisPoly{\SGsumIndexvar}} \dinty
 \label{eq:fluxinty} \\
 &-
 \intx \MEintcellR{\cellindR}{\bigg[\flux_2(\solutionprojected)\,\polybasis{\xsumIndexvar} \bigg]_{\y_{\cellindy-\frac12}}^{\y_{\cellindy+\frac12}}\xiBasisPoly{\SGsumIndexvar}} \dintx.
 \label{eq:fluxintx}
\end{align}
\begin{remark}
Using the orthogonality of the local basis polynomials  $\{\localxiBasisPoly{\SGsumIndex}{\MEIndex}\}_{\SGsumIndex\in\idxset}$ on the random element $\cellR{\cellindR}$, 
we would come up with the DG spatial discretization of \eqref{SGsystem}.
\end{remark}
To approximate the integrals in \eqref{eq:DG},\eqref{eq:fluxinty} and \eqref{eq:fluxintx} numerically,
we use either Gau\ss-Legendre or Gau\ss-Lobatto quadrature rules.
We denote Gau\ss-Legendre quadrature rules with Latin letters and Gau\ss-Lobatto quadrature rules with Greek letters 
and add additional hats to the points and weights.
On the spatial cell $\cell{\cellind}$ we use a tensor-product of one-dimensional Gau\ss-Lobatto quadratures with $\nbxnodes+1 = \ceil*{\frac{\spatialorder+1}{2}}+1$ points and weights, denoted by  
$(\quadxpoint{\xQuadIndex},\quadxweight{\xQuadIndex})$ and 
$(\quadypoint{\yQuadIndex},\quadyweight{\yQuadIndex})$, for $\xQuadIndex,\yQuadIndex=0,\ldots,\nbxnodes$. 
To approximate the (spatially) one-dimensional integrals in \eqref{eq:fluxinty} and \eqref{eq:fluxintx}, we use Gau\ss-Legendre quadratures with sufficient accuracy.
Let us assume that we employ Gau\ss-Legendre quadratures with  
$\gausslegendreorder \in \N_0$ points and weights denoted by 
$(\quadxpointG{\xQuadIndexG},\quadxweightG{\xQuadIndexG})$ and 
$(\quadypointG{\yQuadIndexG},\quadyweightG{\yQuadIndexG})$, for $\xQuadIndexG,\yQuadIndexG=0,\ldots,\gausslegendreorder$. 

For a uniformly-distributed random variable and Legendre basis functions, we apply a Gau\ss{}-Lobatto quadrature rule on $\cellR{\cellindR}$ with order $\SGtruncorder$, i.e., $\nbRnodes+1$  points and weights $(\quadRpoint{\xiQuadIndex},\quadRweight{\xiQuadIndex})$, $\xiQuadIndex=0,\ldots,\nbRnodes$, where $\nbRnodes = \ceil*{\frac{\SGtruncorder+1}{2}}$. For a multi-dimensional random space $\randomSpace\subset \R^N$ we also use tensor-product quadrature rules
and introduce the multi-index set $\xiQuadIndex \in  \Pidxset:=\{\xiQuadIndex=(\xiQuadIndex_1,\ldots,\xiQuadIndex_N)^T\in\mathbb{N}_0^N~\big|~\rho_i\leq\nbRnodes, ~i=1,\ldots,N\}$.  For other probability distributions we use the corresponding Gauß quadrature based on the orthogonal basis polynomials and weighted by the probability density function $\locxiPDF{\MEIndex}$. We scale the quadrature weights such that
\begin{equation}\label{eq:quad}
\sum_{\xQuadIndex=0}^{\nbxnodes} \sum_{\yQuadIndexG=0}^{\gausslegendreorder}  \sum_{\xiQuadIndex\in\Pidxset}
\quadxweight{\xQuadIndex} \quadyweightG{\yQuadIndexG}\quadRweight{\xiQuadIndex}=1, \qquad 
\sum_{\xQuadIndexG=0}^{\gausslegendreorder} \sum_{\yQuadIndex=0}^{\nbxnodes}  \sum_{\xiQuadIndex\in\Pidxset}
\quadxweightG{\xQuadIndexG} \quadyweight{\yQuadIndex}\quadRweight{\xiQuadIndex}=1, \qquad
\MEintcellR{\cellindR}{\bg} \approx  \sum_{\xiQuadIndex\in\Pidxset}\bg(\quadRpoint{\xiQuadIndex})\quadRweight{\xiQuadIndex}.\end{equation}
\begin{remark}
The Gau\ss{}-Lobatto quadrature rule includes the endpoints, i.e. cell interfaces. This property will be used in the proof of \thmref{thm:realizable}.
\end{remark}
Since $\solutionprojected |_{\cell{\cellind}\times\cellR{\cellindR}}$ is discontinuous across the physical cell interfaces $\x_{\cellindx\pm\frac12}$, $\y_{\cellindy\pm\frac12}$, 
we replace the evaluation of the flux components $\flux_1$,$\flux_2$ at these points with a numerical flux function 
$\numericalFlux_1, \numericalFlux_2$, (approximately) solving the corresponding Riemann problem at the interface.
To this end we denote the spatial traces by  
\begin{align*}
\solution_{\cellindx +\frac12}^- 
:= \solution^-(\x_{\cellindx +\frac12},y,\localuncertainty{\MEIndex})
:= \lim\limits_{\x\uparrow\x_{\cellindx +\frac12}} \solutionprojected |_{\cell{\cellind}\times\cellR{\cellindR}}(\x,\y,\localuncertainty{\MEIndex}),
\\
\solution_{\cellindx +\frac12}^+ 
:= \solution^+(\x_{\cellindx +\frac12},y,\localuncertainty{\MEIndex}) 
:= \lim\limits_{\x\downarrow\x_{\cellindx +\frac12}} \solutionprojected |_{\cell{\cellind}\times\cellR{\cellindR}}(\x,\y,\localuncertainty{\MEIndex}), \\
\solution_{\cellindy +\frac12}^- 
:= \solution^-(\x,\y_{\cellindy +\frac12},\localuncertainty{\MEIndex})
:= \lim\limits_{\y\uparrow\y_{\cellindy +\frac12}} \solutionprojected |_{\cell{\cellind}\times\cellR{\cellindR}}(\x,\y,\localuncertainty{\MEIndex}),
\\
\solution_{\cellindy +\frac12}^+ 
:= \solution^+(\x,\y_{\cellindy +\frac12},\localuncertainty{\MEIndex})
:= \lim\limits_{\y\downarrow\y_{\cellindy +\frac12}} \solutionprojected |_{\cell{\cellind}\times\cellR{\cellindR}}(\x,\y,\localuncertainty{\MEIndex}).
\end{align*}
For our numerical experiments in \secref{sec:results} we choose either the Lax-Friedrichs numerical flux with 
\begin{itemize}
\item $\numericalFlux_1(\solution_{\cellindx +\frac12}^-,\solution_{\cellindx +\frac12}^+ ):= \frac{1}{2}\Big(\flux_1(\solution_{\cellindx +\frac12}^-)+
\flux_1(\solution_{\cellindx +\frac12}^+ )- \lambda_{\max}^1(\solution_{\cellindx +\frac12}^+-\solution_{\cellindx +\frac12}^-)\Big),$ 
\item $\numericalFlux_2(\solution_{\cellindy +\frac12}^-,\solution_{\cellindy +\frac12}^+ ):= \frac{1}{2}\Big(\flux_2(\solution_{\cellindy +\frac12}^-)+
\flux_1(\solution_{\cellindy +\frac12}^+ )- \lambda_{\max}^2(\solution_{\cellindy +\frac12}^+-\solution_{\cellindy +\frac12}^-)\Big),$
\end{itemize}
where the numerical viscosity constants $\lambda_{\max}^1,\lambda_{\max}^2$ are taken as the global estimate of the 
absolute value of the largest eigenvalue of  $\frac{\partial\flux_1(\solution)}{\partial\solution}$ and
$\frac{\partial\flux_2(\solution)}{\partial\solution}$.
Alternatively, we choose the HLLE numerical flux as in  \cite{EinfeldtMunzRoe1991}
\begin{itemize}
  \item $\numericalFlux_1(\solution_{\cellindx +\frac12}^-,\solution_{\cellindx +\frac12}^+ ):= 
\frac{b_{\cellindx+\frac12}^+ \flux_1 (\solution_{\cellindx +\frac12}^-)- b_{\cellindx+\frac12}^- \flux_1(\solution_{\cellindx +\frac12}^-)}{b_{\cellindx+\frac12}^+-b_{\cellindx+\frac12}^-} 
+ \frac{b_{\cellindx+\frac12}^+b_{\cellindx+\frac12}^-}{b_{\cellindx+\frac12}^+-b_{\cellindx+\frac12}^-}(\solution_{\cellindx +\frac12}^+ - \solution_{\cellindx +\frac12}^- )$,
\end{itemize}
with signal speed estimates
$b_{\cellindx+\frac12}^-:= \{ \lambda_{\min}^1(\bar{\solution}_{\cellindx +\frac12}),\lambda_{\min}^1({\solution}_{\cellindx +\frac12}^-),0 \}$,
$b_{\cellindx+\frac12}^+:= \{ \lambda_{\max}^1(\bar{\solution}_{\cellindx +\frac12}),\lambda_{\max}^1({\solution}_{\cellindx +\frac12}^+),0 \}$
 where $\lambda_{\min}^1(\solution), \lambda_{\max}^1(\solution)$ are the smallest and biggest eigenvalues of $\frac{\partial\flux_1(\solution)}{\partial\solution}$ 
and $\bar{\solution}_{\cellindx +\frac12}$ is the corresponding Roe mean value, cf. \cite{EinfeldtMunzRoe1991}. Analogously we define 
\begin{itemize}
  \item $\numericalFlux_2(\solution_{\cellindy +\frac12}^-,\solution_{\cellindy +\frac12}^+ ):= 
\frac{b_{\cellindy+\frac12}^+ \flux_2 (\solution_{\cellindy +\frac12}^-)- b_{\cellindy+\frac12}^- \flux_2(\solution_{\cellindy +\frac12}^-)}{b_{\cellindy+\frac12}^+-b_{\cellindy+\frac12}^-} 
+ \frac{b_{\cellindy+\frac12}^+b_{\cellindy+\frac12}^-}{b_{\cellindy+\frac12}^+-b_{\cellindy+\frac12}^-}(\solution_{\cellindy +\frac12}^+ - \solution_{\cellindy +\frac12}^- )$, 
\end{itemize}
with signal speed estimates
$b_{\cellindy+\frac12}^-:=  \{ \lambda_{\min}^2(\bar{\solution}_{\cellindx +\frac12}),\lambda_{\min}^2({\solution}_{\cellindx +\frac12}^-),0 \}$,
$b_{\cellindy+\frac12}^+:=  \{ \lambda_{\max}^2(\bar{\solution}_{\cellindy +\frac12}),\lambda_{\max}^2({\solution}_{\cellindy +\frac12}),0 \}$,
 where $\lambda_{\min}^2(\solution), \lambda_{\max}^2(\solution)$ are the smallest and biggest eigenvalues of $\frac{\partial\flux_2(\solution)}{\partial\solution}$.
 
The semi-discrete system \eqref{eq:DG}-\eqref{eq:fluxintx} can now be solved numerically by a $\spatialorder$-th order SSP Runge--Kutta method, see \cite{Schneider2015b,Schneider2016,Gottlieb2005,Gottlieb2003}. 
In writing down the method we denote by $$\rhs(\solutionprojected(t,\cdot,\cdot)) :\left(\SpaceOfPolynomials{\polydegree}(\cell{\cellind})\times\SpaceOfPolynomials{\polydegreeR}(\cellR{\cellindR})\right)^{\otimes \dimension}
\to
\left(\SpaceOfPolynomials{\polydegree}(\cell{\cellind})\times\SpaceOfPolynomials{\polydegreeR}(\cellR{\cellindR})\right)^{\otimes \dimension} $$
 the right-hand side of (\ref{eq:DG}). Furthermore, $$\tvbminmod :\R^{\dimension\nbxiBasisPolyDim} \to \R^{\dimension\nbxiBasisPolyDim}$$ is the TVBM minmod slope limiter from \cite{CockburnShu2001}.
\begin{remark}\label{rem:subcelllimiter}
  For our numerical example in \secref{subsec:dmr} we use
  the Finite-Volume sub-cell limiter from \cite{SonntagMunz2017}, which proved to be more robust 
  than the TVBM limiter.
\end{remark}

The complete time-marching algorithm for the $\stage$-stage RKDG scheme for a given $\timeind$-th time-iterate $\solutionprojected^{(\timeind)} $ is given in \algoref{algo:TVBMRungeKutta}.
 \begin{algorithm}[H] 
 \caption{TVBM Runge--Kutta Time-Step}
 \label{algo:TVBMRungeKutta}
 \begin{algorithmic}[1]
\State Set $\RKsolution^{(0)}$ = $\solutionprojected^{(\timeind)}$.\hfill \refone{\textit{\# initialization time step $n$}}
\For{$\stageind=1,\ldots, \stage$} {\refone{\hfill \textit{\# loop over Runge-Kutta stages}}}
\medskip
\State Compute 
$\auxiliaryprojected^{\stageind l}=\RKsolution^{(l)} + \frac{\beta_{\stageind l}}{\alpha_{\stageind l}} \Delta t_n \rhs(\RKsolution^{(l)}),$ \hfill \refone{\textit{\# time update}}
\State Compute $ \RKsolution^{(\stageind)} =  \tvbminmod\Big( \sum \limits_{l=0}^{\stageind-1} \alpha_{\stageind l}\auxiliaryprojected^{\stageind l}\Big).$   \hfill\refone{\textit{\# call of TVBM minmod slope limiter}}
\EndFor
\State Set $\solutionprojected^{(\timeind+1)} = \RKsolution^{(\stage)}.$ \hfill \refone{\textit{\# define solution at time step $n+1$}}
\end{algorithmic}
\end{algorithm}
\begin{remark}
The initial condition $\RKsolution^{(0)}$ also has to be limited by $\tvbminmod$.
\end{remark}
The parameters $\alpha_{\stageind l}$ satisfy the conditions $\alpha_{\stageind l}\geq 0$, $\sum \limits_{l=0}^{\stageind-1} \alpha_{\stageind l}=1$ and if $\beta_{\stageind l} \neq 0$, then $\alpha_{\stageind l} \neq 0$ for all $\stageind=1,\ldots, \stage$, $l=0,\ldots,\stageind$.

\subsection{Hyperbolicity-preservation}\label{subsec:Realizability preservation}
For the proof of \thmref{thm:realizable} we need to consider positivity-preserving numerical fluxes as for example 
in \cite{ZhangShu2010}.
For a simple definition of a positivity-preserving numerical flux  let us first assume that $\bar{\reftwo{\solution}}^n_{i}$ is the approximation of the
cell average of an exact solution \reftwo{$\solution(\timevar,\x)$} 
in cell $[\x_{i-\frac{1}{2}},\x_{i+\frac{1}{2}}]$ at time $\timevar_n$. After one time-step of forward Euler we obtain at time $\timevar_{n+1}$
\begin{align} \label{eq:FVscheme}
\bar{\reftwo{\solution}}^{n+1}_{i} = \bar{\reftwo{\solution}}^n_{i} 
- \frac{\Delta \timevar}{\Delta x}\Big(\numericalFlux_1(\bar{\reftwo{\solution}}^n_{i},\bar{\reftwo{\solution}}^n_{i+1})-\numericalFlux_1(\bar{\reftwo{\solution}}^n_{i-1},\bar{\reftwo{\solution}}^n_{i}) \Big).
\end{align}
\begin{definition}\label{def:pospreserving}
	\refone{The scheme \eqref{eq:FVscheme} and the numerical flux $\numericalFlux_1$ are called \textbf{positivity-preserving}, if $\bar{\reftwo{\solution}}^n_{i} \in \realizableSet$ for all $i=1,\ldots,\ncells$ at time $\timevar_\timeind$ implies
	that $\bar{\reftwo{\solution}}^{n+1}_{i} \in \realizableSet$ for all $i=1,\ldots,\ncells$ at time $\timevar_{\timeind+1}$. }
\end{definition}
This is in general achieved under a suitable CFL condition.
Let us assume that the numerical fluxes $\numericalFlux_1, \numericalFlux_2$ are positivity-preserving.
\begin{assumption} \label{ass:ppFlux}
	The numerical fluxes $\numericalFlux_1, \numericalFlux_2$, are positivity-preserving under a suitable CFL condition
	$$\lambda_{\max}^1 \frac{\Delta \timevar }{\Delta \x} + \lambda_{\max}^2\frac{\Delta \timevar }{\Delta \y}  \leq C,$$
	where $C \in (0,1]$.
\end{assumption}
\begin{remark}
  \refone{The positivity-preserving property from \assref{ass:ppFlux} is one ingredient of our hyperbolicity-preserving numerical scheme and the proof of \thmref{thm:realizable}.  We want to mention two numerical fluxes, used in our numerical experiments, which satisfy \assref{ass:ppFlux}. The Lax-Friedrichs flux is positivity-preserving with $C=1$ (see \cite[Remark 2.4]{ZhangShu2010}, where the preservation is shown for the deterministic Euler equations). The HLLE flux also fulfills this property for the deterministic Euler equations (cf. \cite{EinfeldtMunzRoe1991}), whereas we use $C=0.5$ for our numerical results in \secref{sec:results}.}
\end{remark}
\refone{We are now able to formulate the main theorem of this work, which states that, under a modified CFL condition, the pointwise admissible local approximation \eqref{localMEDGDGApproximation} retains an admissible spatial-stochastic cell mean after one time step of forward Euler .}
\begin{theorem}\label{thm:realizable}
	Assume that $\numericalFlux_1, \numericalFlux_2$, are positivity-preserving numerical fluxes 
	and assume that at time $\timepoint{\timeind}$ all point values satisfy 
	$\solution(\timevar_\timeind,\quadxpoint{\xQuadIndex},\quadypointG{\yQuadIndexG},\quadRpoint{\xiQuadIndex})\in\realizableSet$ 
	and $\solution(\timevar_\timeind,\quadxpointG{\xQuadIndexG},\quadypoint{\yQuadIndex},\quadRpoint{\xiQuadIndex})\in\realizableSet$
	for all cells $\cell{\cellind}\times\cellR{\cellindR}$. Then the \reftwo{cell mean} 
	\begin{align}\label{eq:cellmean}
		\cellmeant[\cellind,\cellindR]{\solution} := \frac{1}{\MEintxR{1}} \MEintxR{\solution(\timevar,\x,\y,\localuncertainty{\MEIndex})} &
	\end{align} 
	is admissible after one forward-Euler time-step of \eqref{eq:DG}
	 under the modified CFL condition
	\begin{align}
		\label{eq:CFL}
		 \lambda_{\max}^1 \frac{\Delta \timevar }{\Delta \x} + \lambda_{\max}^2\frac{\Delta \timevar }{\Delta \y}  \leq C\quadxweight{0},
	\end{align}
	where $\quadxweight{0}$ is the first weight of the $(\nbxnodes+1)$-point
	Gau\ss-Lobatto quadrature.
\end{theorem}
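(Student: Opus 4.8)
\emph{Proof plan.} The argument is a space--stochastic version of the positivity-preserving discontinuous Galerkin construction of Zhang and Shu~\cite{ZhangShu2010} together with the hyperbolicity-preserving quadrature decomposition of~\cite{Schlachter2017a}. The first step is to isolate the evolution of the cell mean~\eqref{eq:cellmean}. Choosing the constant test function $v\equiv 1$ in the weak formulation~\eqref{eq:weakformulation} (equivalently, taking the constant-basis component of the discrete system~\eqref{eq:DG}) annihilates the two volume flux terms, because $\dx v=\dy v=0$. Hence, after one forward-Euler step, $\cellmean[\cellind,\cellindR]{\solution}{\timeind+1}$ equals $\cellmean[\cellind,\cellindR]{\solution}{\timeind}$ minus $\frac{\Delta\timevar}{\Delta\x}$ times the difference of $\numericalFlux_1$ between the interfaces $\x_{\cellindx+\frac12}$ and $\x_{\cellindx-\frac12}$, minus $\frac{\Delta\timevar}{\Delta\y}$ times the analogous difference of $\numericalFlux_2$ at $\y_{\cellindy\pm\frac12}$, where the interface fluxes are integrated over the tangential spatial variable by the corresponding Gau\ss--Legendre rule and over $\cellR{\cellindR}$ by the stochastic quadrature from~\eqref{eq:quad}, and the traces $\solution_{\cellindx\pm\frac12}^{\pm}$, $\solution_{\cellindy\pm\frac12}^{\pm}$ are those introduced before the theorem.

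Next I would apply the Zhang--Shu quadrature splitting. Since each component of $\solutionprojected|_{\cell{\cellind}\times\cellR{\cellindR}}$ is a polynomial of degree at most $\polydegree$ in $(\x,\y)$ and at most $\polydegreeR$ in $\uncertainty$, and the Gau\ss--Lobatto rule on $\cell{\cellind}$ has $\nbxnodes+1=\ceil*{\frac{\polydegree+1}{2}}+1$ nodes (hence is exact for polynomials of degree $2\nbxnodes-1\ge\polydegree$), the two tensor quadratures $Q_1$ (Gau\ss--Lobatto in $\x$, Gau\ss--Legendre in $\y$, stochastic quadrature in $\uncertainty$) and $Q_2$ (Gau\ss--Legendre in $\x$, Gau\ss--Lobatto in $\y$, stochastic quadrature in $\uncertainty$) both reproduce the cell mean~\eqref{eq:cellmean} exactly. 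Setting $\mu:=\lambda_{\max}^1\frac{\Delta\timevar}{\Delta\x}$ and $\nu:=\lambda_{\max}^2\frac{\Delta\timevar}{\Delta\y}$, I would write $\cellmean[\cellind,\cellindR]{\solution}{\timeind}=\frac{\mu}{\mu+\nu}Q_1(\solution)+\frac{\nu}{\mu+\nu}Q_2(\solution)$, and in $Q_1$ peel off the two Gau\ss--Lobatto boundary layers $\quadxpoint{0}=\x_{\cellindx-\frac12}$ and $\quadxpoint{\nbxnodes}=\x_{\cellindx+\frac12}$ (present because the Gau\ss--Lobatto rule includes the interval endpoints, cf.\ the remark following~\eqref{eq:quad}), and analogously in $Q_2$ for $\y$. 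The strictly interior nodes of $Q_1$ and $Q_2$ are precisely the points assumed in the hypothesis to lie in $\realizableSet$.

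Then I would substitute this representation into the cell-mean update and regroup. The interior-node contributions are nonnegative weights times states already in $\realizableSet$. For the boundary-layer contributions I would, as in the one-dimensional Zhang--Shu identity, insert auxiliary numerical-flux values $\numericalFlux_1(\solution_{\cellindx-\frac12}^{+},\solution_{\cellindx+\frac12}^{-})$ (and the $\y$-analogue) that cancel, so that for each fixed Gau\ss--Legendre spatial node and each stochastic node the $Q_1$-boundary contribution combines with the $\x$-flux difference into an expression of the exact form~\eqref{eq:FVscheme}: a first-order forward-Euler finite-volume update whose three input states are the traces $\solution_{\cellindx-\frac12}^{-},\solution_{\cellindx-\frac12}^{+},\solution_{\cellindx+\frac12}^{-}$ at that node, and whose CFL number is $\frac{\mu+\nu}{\quadxweight{0}}$. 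Every such trace value lies on a Gau\ss--Lobatto $\x$-interface of $\cell{\cellind}$ or of its neighbour $\cell{\cellindx-1,\cellindy}$, paired with a Gau\ss--Legendre $\y$-node and a stochastic node, hence is one of the admissible quadrature points from the hypothesis; and the modified CFL condition~\eqref{eq:CFL} is exactly $\frac{\mu+\nu}{\quadxweight{0}}\le C$, so \assref{ass:ppFlux} yields that each of these finite-volume subupdates lies in $\realizableSet$. The $\y$-flux difference is handled symmetrically through the boundary layer of $Q_2$; the weights $\frac{\mu}{\mu+\nu}$, $\frac{\nu}{\mu+\nu}$ are chosen exactly so that the prefactors $\frac{\Delta\timevar}{\Delta\x}$, $\frac{\Delta\timevar}{\Delta\y}$ match the Gau\ss--Lobatto endpoint weight $\quadxweight{0}$. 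Collecting terms, $\cellmean[\cellind,\cellindR]{\solution}{\timeind+1}$ is exhibited as a convex combination of states in $\realizableSet$, and the claim follows from the convexity postulated in \assref{ass:RealizableSet}.

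The conceptual heart is thus the classical Zhang--Shu splitting; the stochastic Galerkin and Multi-Element layers come along harmlessly, since the quadrature on $\cellR{\cellindR}$ weighted by $\locxiPDF{\MEIndex}$ is itself a probability measure and enters only through a final convex combination at each fixed stochastic node, where~\eqref{eq:FVscheme} acts as a purely deterministic map on $\R^{\dimension}$. The step requiring the most care is the bookkeeping of the threefold tensor quadrature: one must ensure that the Gau\ss--Legendre rules used inside $Q_1$ and $Q_2$ are the same ones used for the edge-flux integrals in~\eqref{eq:fluxinty}--\eqref{eq:fluxintx}, so that the cancellation is exact, and one must identify each trace value appearing in the regrouped sums as an admissible quadrature point of the current or an adjacent space--stochastic cell.
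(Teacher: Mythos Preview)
Your proposal is correct and follows essentially the same route as the paper's proof: take the constant test function to isolate the cell-mean update, represent $\cellmean[\cellind,\cellindR]{\solution}{\timeind}$ as the $\frac{\delta_1}{\delta_1+\delta_2}$/$\frac{\delta_2}{\delta_1+\delta_2}$ combination of the two mixed Lobatto--Legendre--stochastic quadratures, peel off the Lobatto endpoint layers, insert the auxiliary fluxes $\numericalFlux_1(\solution_{\cellindx-\frac12}^{+},\solution_{\cellindx+\frac12}^{-})$ and $\numericalFlux_2(\solution_{\cellindy-\frac12}^{+},\solution_{\cellindy+\frac12}^{-})$, and recognise each resulting boundary contribution as a one-dimensional scheme of the form~\eqref{eq:FVscheme} with effective CFL number $(\delta_1+\delta_2)/\quadxweight{0}$, so that \assref{ass:ppFlux} and the convexity of $\realizableSet$ finish the argument. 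Your bookkeeping remarks about matching the Legendre rules in $Q_1,Q_2$ to the edge quadratures in~\eqref{eq:fluxinty}--\eqref{eq:fluxintx}, and about the trace values at $\x_{\cellindx\pm\frac12}$ being Lobatto endpoints of the current or neighbouring cell (hence covered by the hypothesis), are exactly the points the paper relies on implicitly.
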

\begin{proof}
In the weak formulation \eqref{eq:weakformulation}, we choose the test functions as $\polybasis{\xsumIndexvar} = \frac{1}{\Delta \x \Delta \y}$ and $\localxiBasisPoly{\SGsumIndexvar}{\MEIndex} = \frac{1}{\Delta\uncertainty}$. We get
\begin{align*}
\partial_t \MEintxR{\frac{1}{\Delta\x\Delta\y\Delta\uncertainty}\solution} 
&= \MEintxR{\Big(\flux_1(\solution)\frac{1}{\Delta\uncertainty}\dx\frac{1}{\Delta\x\Delta y} + \flux_2(\solution)\frac{1}{\Delta\uncertainty}\partial_{\y}\frac{1}{\Delta\x \Delta y}\Big)}
 \\ 
 &-
 \inty \MEintcellR{\cellindR}{\frac{1}{\Delta\x\Delta y\Delta\uncertainty}\bigg[\flux_1(\solutionprojected) \bigg]_{\x_{\cellindx-\frac12}}^{\x_{\cellindx+ 
 \frac12}}} \dinty
 \\
 &-
 \intx \MEintcellR{\cellindR}{\frac{1}{\Delta\x\Delta y\Delta\uncertainty}\bigg[\flux_2(\solutionprojected) \bigg]_{\y_{\cellindy-\frac12}}^{\y_{\cellindy+
 \frac12}}} \dintx.
\end{align*}
The definition of the cell mean, the properties $\partial_{\x}\frac{1}{\Delta\x}=0$, $\partial_{\y}\frac{1}{\Delta\y}=0$ and using numerical quadrature yield
\begin{align*}
  \partial_t \cellmeant[\cellind,\cellindR]{\solution} 
= &-  \frac{1}{\Delta\x} \sum_{\yQuadIndexG=0}^{\gausslegendreorder} \sum_{\xiQuadIndex\in\Pidxset} 
\left(  \flux_1\big(\solution(\timevar,\x_{\cellindx+\frac{1}{2}},\y_\yQuadIndexG,\quadRpoint{\xiQuadIndex}) \big) 
- \flux_{\reftwo{1}}\big(\solution(\timevar,\x_{\cellindx-\frac{1}{2}},\y_\yQuadIndexG,\quadRpoint{\xiQuadIndex}) \big)\right) \quadyweightG{\yQuadIndexG} \quadRweight{\xiQuadIndex} 
 \\
 & -   \frac{1}{\Delta\y} \sum_{\xQuadIndexG=0}^{\gausslegendreorder} \sum_{\xiQuadIndex\in\Pidxset} 
\left(  \flux_{\reftwo{2}}\big(\solution(\timevar,\x_\xQuadIndexG,\y_{\cellindy+\frac{1}{2}},\quadRpoint{\xiQuadIndex}) \big) 
- \flux_{\reftwo{2}}\big(\solution(\timevar\x_\xQuadIndexG,y_{\cellindy-\frac{1}{2}},\quadRpoint{\xiQuadIndex}) \big)\right) \quadxweightG{\xQuadIndexG} \quadRweight{\xiQuadIndex}.
\end{align*}
In order to solve the Riemann problem at the cell interfaces we replace the flux components $\flux_i(\solution)$ 
by the corresponding numerical fluxes $\numericalFlux_{i}$, $i=1,2$.
\begin{align}
  \partial_t \cellmeant[\cellind,\cellindR]{\solution} =&
  -\frac{1}{\Delta\x} \sum_{\yQuadIndexG=0}^{\gausslegendreorder}  \sum_{\xiQuadIndex\in\Pidxset} 
   \left(  \numericalFlux_1\big(\solution(\timevar,\x^-_{\cellindx+\frac{1}{2}},\y_\yQuadIndexG,\quadRpoint{\xiQuadIndex}), \solution(\timevar,\x^+_{\cellindx+\frac{1}{2}},\y_\yQuadIndexG,\quadRpoint{\xiQuadIndex}) \big) 
	- \numericalFlux_1\big(\solution(\timevar,\x^-_{\cellindx-\frac{1}{2}},\y_\yQuadIndexG,\quadRpoint{\xiQuadIndex}), \solution(\timevar,\x^+_{\cellindx-\frac{1}{2}},\y_\yQuadIndexG,\quadRpoint{\xiQuadIndex}) \big)\right) 
	 \quadyweightG{\yQuadIndexG}\quadRweight{\xiQuadIndex}   \nonumber
	 \\ 
	 & -\frac{1}{\Delta\y} \sum_{\xQuadIndexG=0}^{\gausslegendreorder}  \sum_{\xiQuadIndex\in\Pidxset} 
   \left(  \numericalFlux_2\big(\solution(\timevar,\x_\xQuadIndexG,\y^-_{\cellindy+\frac{1}{2}},\quadRpoint{\xiQuadIndex}), \solution(\timevar,\x_\xQuadIndexG,\y^+_{\cellindy+\frac{1}{2}},\quadRpoint{\xiQuadIndex}) \big) 
	- \numericalFlux_2\big(\solution(\timevar,\x_\xQuadIndexG,\y^-_{\cellindy-\frac{1}{2}},\quadRpoint{\xiQuadIndex}), \solution(\timevar,\x_\xQuadIndexG,\y^+_{\cellindy-\frac{1}{2}},\quadRpoint{\xiQuadIndex}) \big)\right) 
	 \quadxweightG{\xQuadIndexG}\quadRweight{\xiQuadIndex}. \label{eq:odeCellmean}
\end{align}

We can write the cell mean evaluated at time step $\timevar_\timeind$ as
\begin{align*}
\cellmean[\cellind,\cellindR]{\solution}{\timeind} 
= 
\sum_{\xQuadIndexG=0}^{\gausslegendreorder}  \sum_{\yQuadIndex=0}^{\nbxnodes}\sum_{\xiQuadIndex\in\Pidxset}
\solution(\timevar_\timeind,\quadxpointG{\xQuadIndexG},\quadypoint{\yQuadIndex},\quadRpoint{\xiQuadIndex})\quadxweightG{\xQuadIndexG} \quadyweight{\yQuadIndex}\quadRweight{\xiQuadIndex}
=
\sum_{\xQuadIndex=0}^{\nbxnodes}  \sum_{\yQuadIndexG=0}^{\gausslegendreorder}\sum_{\xiQuadIndex\in\Pidxset}
\solution(\timevar_\timeind,\quadxpoint{\xQuadIndex},\quadypointG{\yQuadIndexG},\quadRpoint{\xiQuadIndex})\quadxweight{\xQuadIndex} \quadyweightG{\yQuadIndexG}\quadRweight{\xiQuadIndex}.
\end{align*}
Let us define $\delta_1:= \lambda_{\max}^1\frac{\Delta t}{\Delta x}, \delta_2:= \lambda_{\max}^2\frac{\Delta t}{\Delta y}$ and $\mu := \delta_1 + \delta_2$. This allows us to write
\begin{align} \label{eq:representationCellmean}
\cellmean[\cellind,\cellindR]{\solution}{\timeind} 
=& \frac{\delta_1}{\mu}\cellmean[\cellind,\cellindR]{\solution}{\timeind}  + \frac{\delta_2}{\mu}\cellmean[\cellind,\cellindR]{\solution}{\timeind} \nonumber \\
=& \frac{\delta_1}{\mu} \sum_{\xQuadIndex=0}^{\nbxnodes}  \sum_{\yQuadIndexG=0}^{\gausslegendreorder}\sum_{\xiQuadIndex\in\Pidxset}
\solution(\timevar_\timeind,\quadxpoint{\xQuadIndex},\quadypointG{\yQuadIndexG},\quadRpoint{\xiQuadIndex})\quadxweight{\xQuadIndex} \quadyweightG{\yQuadIndexG}\quadRweight{\xiQuadIndex}+ \frac{\delta_2}{\mu}
\sum_{\xQuadIndexG=0}^{\gausslegendreorder}  \sum_{\yQuadIndex=0}^{\nbxnodes}\sum_{\xiQuadIndex\in\Pidxset}
\solution(\timevar_\timeind,\quadxpointG{\xQuadIndexG},\quadypoint{\yQuadIndex},\quadRpoint{\xiQuadIndex})\quadxweightG{\xQuadIndexG} \quadyweight{\yQuadIndex}\quadRweight{\xiQuadIndex} \nonumber
\\
 = & \frac{\delta_1}{\mu}\sum_{\xQuadIndex=1}^{\nbxnodes-1}  \sum_{\yQuadIndexG=0}^{\gausslegendreorder}\sum_{\xiQuadIndex\in\Pidxset}
\solution(\timevar_\timeind,\quadxpoint{\xQuadIndex},\quadypointG{\yQuadIndexG},\quadRpoint{\xiQuadIndex})\quadxweight{\xQuadIndex} \quadyweightG{\yQuadIndexG}\quadRweight{\xiQuadIndex} \nonumber
\\
& + \frac{\delta_1}{\mu} \quadyweight{0}  \sum_{\yQuadIndexG=0}^{\gausslegendreorder}\sum_{\xiQuadIndex\in\Pidxset} \Big(
\solution(\timevar_\timeind,\x^+_{\cellindx-\frac{1}{2}},\quadypointG{\yQuadIndexG},\quadRpoint{\xiQuadIndex}) + \solution(\timevar_\timeind,\x^-_{\cellindx+\frac{1}{2}},\quadypointG{\yQuadIndexG},\quadRpoint{\xiQuadIndex}) \Big)\quadyweightG{\yQuadIndexG}\quadRweight{\xiQuadIndex}, \nonumber
\\
&+ \frac{\delta_2}{\mu} \sum_{\xQuadIndexG=0}^{\gausslegendreorder}  \sum_{\yQuadIndex=1}^{\nbxnodes-1}\sum_{\xiQuadIndex\in\Pidxset}
\solution(\timevar_\timeind,\quadxpointG{\xQuadIndexG},\quadypoint{\yQuadIndex},\quadRpoint{\xiQuadIndex})\quadxweightG{\xQuadIndexG} \quadyweight{\yQuadIndex}\quadRweight{\xiQuadIndex} \nonumber 
\\
& + \frac{\delta_2}{\mu} \quadyweight{0} \sum_{\xQuadIndexG=0}^{\gausslegendreorder} \sum_{\xiQuadIndex\in\Pidxset} \Big(
\solution(\timevar_\timeind,\quadxpointG{\xQuadIndexG},\y^+_{\cellindy-\frac{1}{2}},\quadRpoint{\xiQuadIndex}) + \solution(\timevar_\timeind,\quadxpointG{\xQuadIndexG},\y^-_{\cellindy+\frac{1}{2}},\quadRpoint{\xiQuadIndex}) \Big) \quadxweightG{\xQuadIndexG} \quadRweight{\xiQuadIndex}, 
\end{align}
where we used the fact that $\quadyweight{0} = \quadyweight{\nbxnodes}$, $\y^+_{\cellindy-\frac{1}{2}} = \quadypoint{0}\big|_{C_{\cellind}}$,
$\y^-_{\cellindy+\frac{1}{2}} = \quadypoint{\nbxnodes}\big|_{C_{\cellind}}$, $\x^+_{\cellindx-\frac{1}{2}} = \quadxpoint{0}\big|_{C_{\cellind}}$,
$\x^-_{\cellindx+\frac{1}{2}} = \quadxpoint{\nbxnodes}\big|_{C_{\cellind}}$.
One time-step of forward Euler of \eqref{eq:odeCellmean} reads as follows
\begin{align}
  \cellmean[\cellind,\cellindR]{\solution}{\timeind+1} 
  &= \cellmean[\cellind,\cellindR]{\solution}{\timeind} \nonumber\\
  & -\frac{\Delta \timevar}{\Delta\x} \sum_{\yQuadIndexG=0}^{\gausslegendreorder}  \sum_{\xiQuadIndex\in\Pidxset} 
   \left(  \numericalFlux_1\big(\solution(\timevar_n,\x^-_{\cellindx+\frac{1}{2}},\y_\yQuadIndexG,\quadRpoint{\xiQuadIndex}), \solution(\timevar_n,\x^+_{\cellindx+\frac{1}{2}},\y_
   \yQuadIndexG,\quadRpoint{\xiQuadIndex}) \big) 
   - \numericalFlux_1\big(\solution(\timevar_n,\x^-_{\cellindx-\frac{1}{2}},\y_\yQuadIndexG,\quadRpoint{\xiQuadIndex}), \solution(\timevar_n,\x^+_{\cellindx-\frac{1}{2}},\y_
   \yQuadIndexG,\quadRpoint{\xiQuadIndex}) \big)\right) 
    \quadyweightG{\yQuadIndexG}\quadRweight{\xiQuadIndex} \nonumber 
    \\
    & -\frac{\Delta \timevar}{\Delta\y} \sum_{\xQuadIndexG=0}^{\gausslegendreorder}  \sum_{\xiQuadIndex\in\Pidxset} 
    \left(  \numericalFlux_2\big(\solution(\timevar_n,\x_\xQuadIndexG,\y^-_{\cellindy+\frac{1}{2}},\quadRpoint{\xiQuadIndex}), \solution(\timevar_n,\x_\xQuadIndexG,\y^+_{\cellindy+\frac{1}{2}},\quadRpoint{\xiQuadIndex}) \big) 
	- \numericalFlux_2\big(\solution(\timevar_n,\x_\xQuadIndexG,\y^-_{\cellindy-\frac{1}{2}},\quadRpoint{\xiQuadIndex}), \solution(\timevar_n,\x_\xQuadIndexG,\y^+_{\cellindy-\frac{1}{2}},\quadRpoint{\xiQuadIndex}) \big)\right) \quadxweightG{\xQuadIndexG}\quadRweight{\xiQuadIndex}.
\end{align}
Using \eqref{eq:representationCellmean} gives
\begin{align*}
  \cellmean[\cellind,\cellindR]{\solution}{\timeind+1} 
  &= \frac{\delta_1}{\mu}\sum_{\xQuadIndex=1}^{\nbxnodes-1}  \sum_{\yQuadIndexG=0}^{\gausslegendreorder}\sum_{\xiQuadIndex\in\Pidxset}
\solution(\timevar_\timeind,\quadxpoint{\xQuadIndex},\quadypointG{\yQuadIndexG},\quadRpoint{\xiQuadIndex})\quadxweight{\xQuadIndex} \quadyweightG{\yQuadIndexG}\quadRweight{\xiQuadIndex} \nonumber
\\
& + \frac{\delta_1}{\mu} \quadyweight{0}  \sum_{\yQuadIndexG=0}^{\gausslegendreorder}\sum_{\xiQuadIndex\in\Pidxset} \Big(
\solution(\timevar_\timeind,\x^+_{\cellindx-\frac{1}{2}},\quadypointG{\yQuadIndexG},\quadRpoint{\xiQuadIndex}) + \solution(\timevar_\timeind,\x^-_{\cellindx+\frac{1}{2}},\quadypointG{\yQuadIndexG},\quadRpoint{\xiQuadIndex}) \Big)\quadyweightG{\yQuadIndexG}\quadRweight{\xiQuadIndex}, \nonumber
\\
&+ \frac{\delta_2}{\mu} \sum_{\xQuadIndexG=0}^{\gausslegendreorder}  \sum_{\yQuadIndex=1}^{\nbxnodes-1}\sum_{\xiQuadIndex\in\Pidxset}
\solution(\timevar_\timeind,\quadxpointG{\xQuadIndexG},\quadypoint{\yQuadIndex},\quadRpoint{\xiQuadIndex})\quadxweightG{\xQuadIndexG} \quadyweight{\yQuadIndex}\quadRweight{\xiQuadIndex} \nonumber 
\\
& + \frac{\delta_2}{\mu} \quadyweight{0} \sum_{\xQuadIndexG=0}^{\gausslegendreorder} \sum_{\xiQuadIndex\in\Pidxset} \Big(
\solution(\timevar_\timeind,\quadxpointG{\xQuadIndexG},\y^+_{\cellindy-\frac{1}{2}},\quadRpoint{\xiQuadIndex}) + \solution(\timevar_\timeind,\quadxpointG{\xQuadIndexG},\y^-_{\cellindy+\frac{1}{2}},\quadRpoint{\xiQuadIndex}) \Big) \quadxweightG{\xQuadIndexG} \quadRweight{\xiQuadIndex} 
\\
  & -\frac{\delta_1}{\lambda_{\max}^1}  \sum_{\yQuadIndexG=0}^{\gausslegendreorder}  \sum_{\xiQuadIndex\in\Pidxset} 
   \left(  \numericalFlux_1\big(\solution(\timevar_n,\x^-_{\cellindx+\frac{1}{2}},\y_\yQuadIndexG,\quadRpoint{\xiQuadIndex}), \solution(\timevar_n,\x^+_{\cellindx+\frac{1}{2}},\y_
   \yQuadIndexG,\quadRpoint{\xiQuadIndex}) \big) 
   - \numericalFlux_1\big(\solution(\timevar_n,\x^-_{\cellindx-\frac{1}{2}},\y_\yQuadIndexG,\quadRpoint{\xiQuadIndex}), \solution(\timevar_n,\x^+_{\cellindx-\frac{1}{2}},\y_
   \yQuadIndexG,\quadRpoint{\xiQuadIndex}) \big)\right) 
    \quadyweightG{\yQuadIndexG}\quadRweight{\xiQuadIndex}  
    \\
    & -\frac{\delta_2}{\lambda_{\max}^2} \sum_{\xQuadIndexG=0}^{\gausslegendreorder}  \sum_{\xiQuadIndex\in\Pidxset} 
    \left(  \numericalFlux_2\big(\solution(\timevar_n,\x_\xQuadIndexG,\y^-_{\cellindy+\frac{1}{2}},\quadRpoint{\xiQuadIndex}), \solution(\timevar_n,\x_\xQuadIndexG,\y^+_{\cellindy+\frac{1}{2}},\quadRpoint{\xiQuadIndex}) \big) 
	- \numericalFlux_2\big(\solution(\timevar_n,\x_\xQuadIndexG,\y^-_{\cellindy-\frac{1}{2}},\quadRpoint{\xiQuadIndex}), \solution(\timevar_n,\x_\xQuadIndexG,\y^+_{\cellindy-\frac{1}{2}},\quadRpoint{\xiQuadIndex}) \big)\right) \quadxweightG{\xQuadIndexG}\quadRweight{\xiQuadIndex}.
\end{align*}
Adding and subtracting $\numericalFlux_1\big(\solution(\timevar_n,\x^+_{\cellindx-\frac{1}{2}},\y_\yQuadIndexG,\quadRpoint{\xiQuadIndex}), \solution(\timevar_n,\x^-_{\cellindx+\frac{1}{2}},\y_\yQuadIndexG,\quadRpoint{\xiQuadIndex}) \big)$ and $\numericalFlux_2\big(\solution(\timevar_n,\x_\xQuadIndexG,\y^+_{\cellindy-\frac{1}{2}},\quadRpoint{\xiQuadIndex}), \solution(\timevar_n,\x_\xQuadIndexG,\y^-_{\cellindy+\frac{1}{2}},\quadRpoint{\xiQuadIndex}) \big)$ yields
\begin{align*}
\cellmean[\cellind,\cellindR]{\solution}{\timeind+1} 
&= \frac{\delta_1}{\mu}\sum_{\xQuadIndex=1}^{\nbxnodes-1}  \sum_{\yQuadIndexG=0}^{\gausslegendreorder}\sum_{\xiQuadIndex\in\Pidxset}
\solution(\timevar_\timeind,\quadxpoint{\xQuadIndex},\quadypointG{\yQuadIndexG},\quadRpoint{\xiQuadIndex})\quadxweight{\xQuadIndex} \quadyweightG{\yQuadIndexG}\quadRweight{\xiQuadIndex} 
+ 
\frac{\delta_2}{\mu} \sum_{\xQuadIndexG=0}^{\gausslegendreorder}  \sum_{\yQuadIndex=1}^{\nbxnodes-1}\sum_{\xiQuadIndex\in\Pidxset}
\solution(\timevar_\timeind,\quadxpointG{\xQuadIndexG},\quadypoint{\yQuadIndex},\quadRpoint{\xiQuadIndex})\quadxweightG{\xQuadIndexG} \quadyweight{\yQuadIndex}\quadRweight{\xiQuadIndex}  
\\
& + \frac{\delta_1}{\mu} \quadyweight{0}  \sum_{\yQuadIndexG=0}^{\gausslegendreorder}\sum_{\xiQuadIndex\in\Pidxset} \Big(
\solution(\timevar_\timeind,\x^+_{\cellindx-\frac{1}{2}},\quadypointG{\yQuadIndexG},\quadRpoint{\xiQuadIndex}) + \solution(\timevar_\timeind,\x^-_{\cellindx+\frac{1}{2}},\quadypointG{\yQuadIndexG},\quadRpoint{\xiQuadIndex}) \Big)\quadyweightG{\yQuadIndexG}\quadRweight{\xiQuadIndex}, \nonumber
\\
& + \frac{\delta_2}{\mu} \quadyweight{0} \sum_{\xQuadIndexG=0}^{\gausslegendreorder} \sum_{\xiQuadIndex\in\Pidxset} \Big(
\solution(\timevar_\timeind,\quadxpointG{\xQuadIndexG},\y^+_{\cellindy-\frac{1}{2}},\quadRpoint{\xiQuadIndex}) + \solution(\timevar_\timeind,\quadxpointG{\xQuadIndexG},\y^-_{\cellindy+\frac{1}{2}},\quadRpoint{\xiQuadIndex}) \Big) \quadxweightG{\xQuadIndexG} \quadRweight{\xiQuadIndex} 
\\
& -\frac{\delta_1}{\lambda_{\max}^1}  \sum_{\yQuadIndexG=0}^{\gausslegendreorder}  \sum_{\xiQuadIndex\in\Pidxset} 
\left(  \numericalFlux_1\big(\solution(\timevar_n,\x^-_{\cellindx+\frac{1}{2}},\y_\yQuadIndexG,\quadRpoint{\xiQuadIndex}), \solution(\timevar_n,\x^+_{\cellindx+\frac{1}{2}},\y_
\yQuadIndexG,\quadRpoint{\xiQuadIndex}) \big) - \numericalFlux_1\big(\solution(\timevar_n,\x^+_{\cellindx-\frac{1}{2}},\y_\yQuadIndexG,\quadRpoint{\xiQuadIndex}), \solution(\timevar_n,\x^-_{\cellindx+\frac{1}{2}},\y_\yQuadIndexG,\quadRpoint{\xiQuadIndex}) \big) \right)  \quadyweightG{\yQuadIndexG}\quadRweight{\xiQuadIndex}  
\\
& -\frac{\delta_1}{\lambda_{\max}^1}  \sum_{\yQuadIndexG=0}^{\gausslegendreorder}  \sum_{\xiQuadIndex\in\Pidxset} 
\left(\numericalFlux_1\big(\solution(\timevar_n,\x^+_{\cellindx-\frac{1}{2}},\y_\yQuadIndexG,\quadRpoint{\xiQuadIndex}), \solution(\timevar_n,\x^-_{\cellindx+\frac{1}{2}},\y_\yQuadIndexG,\quadRpoint{\xiQuadIndex}) \big) - \numericalFlux_1\big(\solution(\timevar_n,\x^-_{\cellindx-\frac{1}{2}},\y_\yQuadIndexG,\quadRpoint{\xiQuadIndex}), \solution(\timevar_n,\x^+_{\cellindx-\frac{1}{2}},\y_
\yQuadIndexG,\quadRpoint{\xiQuadIndex}) \big)\right) 
\quadyweightG{\yQuadIndexG}\quadRweight{\xiQuadIndex}  
\\
& -\frac{\delta_2}{\lambda_{\max}^2} \sum_{\xQuadIndexG=0}^{\gausslegendreorder}  \sum_{\xiQuadIndex\in\Pidxset} 
\left(  \numericalFlux_2\big(\solution(\timevar_n,\x_\xQuadIndexG,\y^-_{\cellindy+\frac{1}{2}},\quadRpoint{\xiQuadIndex}), \solution(\timevar_n,\x_\xQuadIndexG,\y^+_{\cellindy+\frac{1}{2}},\quadRpoint{\xiQuadIndex}) \big) 
- \numericalFlux_2\big(\solution(\timevar_n,\x_\xQuadIndexG,\y^+_{\cellindy-\frac{1}{2}},\quadRpoint{\xiQuadIndex}), \solution(\timevar_n,\x_\xQuadIndexG,\y^-_{\cellindy+\frac{1}{2}},\quadRpoint{\xiQuadIndex}) \big)\right) \quadxweightG{\xQuadIndexG}\quadRweight{\xiQuadIndex}
\\
& -\frac{\delta_2}{\lambda_{\max}^2} \sum_{\xQuadIndexG=0}^{\gausslegendreorder}  \sum_{\xiQuadIndex\in\Pidxset} 
\left( \numericalFlux_2\big(\solution(\timevar_n,\x_\xQuadIndexG,\y^+_{\cellindy-\frac{1}{2}},\quadRpoint{\xiQuadIndex}), \solution(\timevar_n,\x_\xQuadIndexG,\y^-_{\cellindy+\frac{1}{2}},\quadRpoint{\xiQuadIndex}) \big)
- \numericalFlux_2\big(\solution(\timevar_n,\x_\xQuadIndexG,\y^-_{\cellindy-\frac{1}{2}},\quadRpoint{\xiQuadIndex}), \solution(\timevar_n,\x_\xQuadIndexG,\y^+_{\cellindy-\frac{1}{2}},\quadRpoint{\xiQuadIndex}) \big)\right) \quadxweightG{\xQuadIndexG}\quadRweight{\xiQuadIndex}.
\end{align*}
If we rearrange the previous equation we obtain 
\begin{align} \label{eq:convexCombination}
\cellmean[\cellind,\cellindR]{\solution}{\timeind+1} 
&=  \frac{\delta_1}{\mu}\sum_{\xQuadIndex=1}^{\nbxnodes-1}  \sum_{\yQuadIndexG=0}^{\gausslegendreorder}\sum_{\xiQuadIndex\in\Pidxset}
\solution(\timevar_\timeind,\quadxpoint{\xQuadIndex},\quadypointG{\yQuadIndexG},\quadRpoint{\xiQuadIndex})\quadxweight{\xQuadIndex} \quadyweightG{\yQuadIndexG}\quadRweight{\xiQuadIndex} 
+ 
\frac{\delta_2}{\mu} \sum_{\xQuadIndexG=0}^{\gausslegendreorder}  \sum_{\yQuadIndex=1}^{\nbxnodes-1}\sum_{\xiQuadIndex\in\Pidxset}
\solution(\timevar_\timeind,\quadxpointG{\xQuadIndexG},\quadypoint{\yQuadIndex},\quadRpoint{\xiQuadIndex})\quadxweightG{\xQuadIndexG} \quadyweight{\yQuadIndex}\quadRweight{\xiQuadIndex}  
\\ \nonumber
& + \frac{\delta_1}{\mu} \quadyweight{0}  \sum_{\yQuadIndexG=0}^{\gausslegendreorder}\sum_{\xiQuadIndex\in\Pidxset} \quadyweightG{\yQuadIndexG}\quadRweight{\xiQuadIndex} \Big(
\solution(\timevar_\timeind,\x^+_{\cellindx-\frac{1}{2}},\quadypointG{\yQuadIndexG},\quadRpoint{\xiQuadIndex}) 
\\ \nonumber 
&-  \frac{\mu}{\lambda_{\max}^1 \quadyweight{0}} \big(\numericalFlux_1\big(\solution(\timevar_n,\x^+_{\cellindx-\frac{1}{2}},\y_\yQuadIndexG,\quadRpoint{\xiQuadIndex}), \solution(\timevar_n,\x^-_{\cellindx+\frac{1}{2}},\y_\yQuadIndexG,\quadRpoint{\xiQuadIndex}) \big) - \numericalFlux_1\big(\solution(\timevar_n,\x^-_{\cellindx-\frac{1}{2}},\y_\yQuadIndexG,\quadRpoint{\xiQuadIndex}), \solution(\timevar_n,\x^+_{\cellindx-\frac{1}{2}},\y_\yQuadIndexG,\quadRpoint{\xiQuadIndex}) \big)\big) \Big)
\\ \nonumber
& + \frac{\delta_1}{\mu} \quadyweight{0}  \sum_{\yQuadIndexG=0}^{\gausslegendreorder}\sum_{\xiQuadIndex\in\Pidxset} \quadyweightG{\yQuadIndexG}\quadRweight{\xiQuadIndex} \Big(
\solution(\timevar_\timeind,\x^-_{\cellindx+\frac{1}{2}},\quadypointG{\yQuadIndexG},\quadRpoint{\xiQuadIndex}) + \solution(\timevar_\timeind,\x^-_{\cellindx+\frac{1}{2}},\quadypointG{\yQuadIndexG},\quadRpoint{\xiQuadIndex})  
\\ \nonumber
& - \frac{\mu}{\lambda_{\max}^1\quadyweight{0}}
\big(  \numericalFlux_1\big(\solution(\timevar_n,\x^-_{\cellindx+\frac{1}{2}},\y_\yQuadIndexG,\quadRpoint{\xiQuadIndex}), \solution(\timevar_n,\x^+_{\cellindx+\frac{1}{2}},\y_
\yQuadIndexG,\quadRpoint{\xiQuadIndex}) \big) - \numericalFlux_1\big(\solution(\timevar_n,\x^+_{\cellindx-\frac{1}{2}},\y_\yQuadIndexG,\quadRpoint{\xiQuadIndex}), \solution(\timevar_n,\x^-_{\cellindx+\frac{1}{2}},\y_\yQuadIndexG,\quadRpoint{\xiQuadIndex}) \big) \big) \Big) 
\\ \nonumber
& + \frac{\delta_2}{\mu} \quadyweight{0} \sum_{\xQuadIndexG=0}^{\gausslegendreorder} \sum_{\xiQuadIndex\in\Pidxset} \quadxweightG{\xQuadIndexG} \quadRweight{\xiQuadIndex}  \Big(
\solution(\timevar_\timeind,\quadxpointG{\xQuadIndexG},\y^+_{\cellindy-\frac{1}{2}},\quadRpoint{\xiQuadIndex}) 
\\ \nonumber
& - \frac{\mu}{\lambda_{\max}^2 \quadyweight{0}} 
\big( \numericalFlux_2\big(\solution(\timevar_n,\x_\xQuadIndexG,\y^+_{\cellindy-\frac{1}{2}},\quadRpoint{\xiQuadIndex}), \solution(\timevar_n,\x_\xQuadIndexG,\y^-_{\cellindy+\frac{1}{2}},\quadRpoint{\xiQuadIndex}) \big)- \numericalFlux_2\big(\solution(\timevar_n,\x_\xQuadIndexG,\y^-_{\cellindy-\frac{1}{2}},\quadRpoint{\xiQuadIndex}), \solution(\timevar_n,\x_\xQuadIndexG,\y^+_{\cellindy-\frac{1}{2}},\quadRpoint{\xiQuadIndex}) \big) \big)\Big)
\\ \nonumber
& + \frac{\delta_2}{\mu} \quadyweight{0} \sum_{\xQuadIndexG=0}^{\gausslegendreorder} \sum_{\xiQuadIndex\in\Pidxset}  \quadxweightG{\xQuadIndexG} \quadRweight{\xiQuadIndex}
\Big(\solution(\timevar_\timeind,\quadxpointG{\xQuadIndexG},\y^-_{\cellindy+\frac{1}{2}},\quadRpoint{\xiQuadIndex}) 
\\ \nonumber
& -\frac{\mu}{\lambda_{\max}^2 \quadyweight{0}} \big(\numericalFlux_2\big(\solution(\timevar_n,\x_\xQuadIndexG,\y^-_{\cellindy+\frac{1}{2}},\quadRpoint{\xiQuadIndex}), \solution(\timevar_n,\x_\xQuadIndexG,\y^+_{\cellindy+\frac{1}{2}},\quadRpoint{\xiQuadIndex})) - \numericalFlux_2\big(\solution(\timevar_n,\x_\xQuadIndexG,\y^+_{\cellindy-\frac{1}{2}},\quadRpoint{\xiQuadIndex}), \solution(\timevar_n,\x_\xQuadIndexG,\y^-_{\cellindy+\frac{1}{2}},\quadRpoint{\xiQuadIndex}) \big)\big) \Big).
\end{align}
Now every term of the form 
\begin{align*}
\solution(\timevar_\timeind,\x^+_{\cellindx-\frac{1}{2}},\quadypointG{\yQuadIndexG},\quadRpoint{\xiQuadIndex}) -  \frac{\mu}{\lambda_{\max}^1 \quadyweight{0}} \big(\numericalFlux_1\big(\solution(\timevar_n,\x^+_{\cellindx-\frac{1}{2}},\y_\yQuadIndexG,\quadRpoint{\xiQuadIndex}), \solution(\timevar_n,\x^-_{\cellindx+\frac{1}{2}},\y_\yQuadIndexG,\quadRpoint{\xiQuadIndex}) \big) - \numericalFlux_1\big(\solution(\timevar_n,\x^-_{\cellindx-\frac{1}{2}},\y_\yQuadIndexG,\quadRpoint{\xiQuadIndex}), \solution(\timevar_n,\x^+_{\cellindx-\frac{1}{2}},\y_\yQuadIndexG,\quadRpoint{\xiQuadIndex}) \big)\big)
\end{align*}
and
\begin{align*}
\solution(\timevar_\timeind,\quadxpointG{\xQuadIndexG},\y^+_{\cellindy-\frac{1}{2}},\quadRpoint{\xiQuadIndex}) 
- \frac{\mu}{\lambda_{\max}^2 \quadyweight{0}} 
\big( \numericalFlux_2\big(\solution(\timevar_n,\x_\xQuadIndexG,\y^+_{\cellindy-\frac{1}{2}},\quadRpoint{\xiQuadIndex}), \solution(\timevar_n,\x_\xQuadIndexG,\y^-_{\cellindy+\frac{1}{2}},\quadRpoint{\xiQuadIndex}) \big)- \numericalFlux_2\big(\solution(\timevar_n,\x_\xQuadIndexG,\y^-_{\cellindy-\frac{1}{2}},\quadRpoint{\xiQuadIndex}), \solution(\timevar_n,\x_\xQuadIndexG,\y^+_{\cellindy-\frac{1}{2}},\quadRpoint{\xiQuadIndex}) \big) \big)
\end{align*}
is admissible under the modified CFL condition
\begin{align*}
\lambda_{\max}^1 \frac{\Delta \timevar }{\Delta \x} + \lambda_{\max}^2\frac{\Delta \timevar }{\Delta \y}  \leq C\quadxweight{0}
\end{align*}
\refone{because of the positivity-preserving property from \defref{def:pospreserving} under \assref{ass:ppFlux}.}
Hence, $\cellmean[\cellind,\cellindR]{\solution}{\timeind+1}$  is a convex combination of
admissible quantities  in $\realizableSet$ and therefore $\cellmean[\cellind,\cellindR]{\solution}{\timeind+1} \in \realizableSet$.
\end{proof}

\begin{remark}
In the one-dimensional case $\Domain\subset\mathbb{R}$, we only have to verify $\solution(\timevar_\timeind,\quadxpoint{\xQuadIndex},\quadRpoint{\xiQuadIndex})\in\realizableSet$ for all cells $\cell{i}\times\cellR{\cellindR}$ in order to apply \thmref{thm:realizable}.
\end{remark}

\subsection{Hyperbolicity limiter}\label{subsec: Realizability limiter}
To ensure that at time $\timepoint{\timeind}$ all point values satisfy $\solution\big(\timevar_\timeind,\quadxpoint{\xQuadIndex},\quadypointG{\yQuadIndexG},\quadRpoint{\xiQuadIndex}\big)\in\realizableSet$ and $\solution(\timevar_\timeind,\quadxpointG{\xQuadIndexG},\quadypoint{\yQuadIndex},\quadRpoint{\xiQuadIndex})\in\realizableSet$ for each cell $\cell{\cellind}\times\cellR{\cellindR}$, we define the slope-limited polynomial in $\cell{\cellind}\times\cellR{\cellindR}$ as
\begin{align}\label{eq:slopeLimitedSolution}
\hyperbolLimit{\limitervariable}\left(\localsolution{\cellind}{\cellindR}\right)(\timevar,\x,\y,\localuncertainty{\MEIndex}):= \limitervariable \,\cellmeant[\cellind,\cellindR]{\solution}+ (1-\limitervariable) \localsolution{\cellind}{\cellindR}(\timevar,\x,\y,\localuncertainty{\MEIndex}).
\end{align}
\refone{The variable $\limitervariable$ limits the polynomial towards the cell mean, which is \hyperbolic{} according to \thmref{thm:realizable}. }

The case $\limitervariable=0$ coincides with the unlimited solution and for $\limitervariable=1$ we have
$$\hyperbolLimit{\limitervariable=1} (\localsolution{\cellind}{\cellindR})(\timevar,\x,\y,\localuncertainty{\MEIndex}) =\cellmean[\cellind,\cellindR]{\solution}{\timeind},$$
which is supposed to be \hyperbolic{}. Because of this property and since $\realizableSet$ is convex, we can choose
\begin{align*}
\hat{\limitervariable}_{\cellind,\cellindR}(\timevar_\timeind) := \inf\Big\{ \tilde{\limitervariable}\in [0,1] ~\Big|~ &\hyperbolLimit{\tilde{\limitervariable}}(\localsolution{\cellind}{\cellindR})(\timevar_\timeind,\quadxpoint{\xQuadIndex},\quadypointG{\yQuadIndexG},\quadRpoint{\xiQuadIndex})  \in \realizableSet \wedge \hyperbolLimit{\tilde{\limitervariable}}(\localsolution{\cellind}{\cellindR})(\timevar_\timeind,\quadxpointG{\xQuadIndexG},\quadypoint{\yQuadIndex},\quadRpoint{\xiQuadIndex})\in\realizableSet~~\\
& \forall\, \xQuadIndex,\yQuadIndex=0,\ldots,\nbxnodes~,\reftwo{\xQuadIndexG,\yQuadIndexG=0,\ldots,\gausslegendreorder,}~ \xiQuadIndex=0,\ldots,\nbRnodes\Big\}.
\end{align*}
Due to the openness of $\realizableSet$, we need to modify $\limitervariable$ slightly in order to avoid placing the solution onto the boundary (if the limiter was active). Therefore we use 
\begin{align*}
\limitervariable = \begin{cases}
\hat{\limitervariable}, & \text{ if } \hat{\limitervariable} = 0,\\
\min(\hat{\limitervariable}+\limitertolerance,1), & \text{ if } \hat{\limitervariable} > 0,
\end{cases}
\end{align*}
where $0<\limitertolerance = 10^{-10}$ should be chosen small enough to ensure that the approximation quality is not influenced significantly.

Using the limited point values, we derive the updated coefficients via
$$\DGmoment{\xsumIndex}{\SGsumIndex}(\timevar_\timeind) =  \sum_{\xQuadIndex=0}^{\nbxnodes} \sum_{\yQuadIndex=0}^{\nbxnodes} \sum_{\xiQuadIndex\in\Pidxset} \hyperbolLimit{\limitervariable} (\localsolution{\cellind}{\cellindR} )(\timevar_\timeind,\quadxpoint{\xQuadIndex},\quadypoint{\yQuadIndex},\quadRpoint{\xiQuadIndex}) \polybasis{\xsumIndex}(\quadxpoint{\xQuadIndex},\quadypoint{\yQuadIndex})\polybasisR{\SGsumIndex}(\quadRpoint{\xiQuadIndex}) \quadxweight{\xQuadIndex}\quadyweight{\yQuadIndex}\quadRweight{\xiQuadIndex},\quad \forall \xsumIndex=0,\ldots,\polydegree,~\SGsumIndex=0,\ldots,\polydegreeR.$$
Note that the cell mean is preserved since
\begin{align*}\sum_{\xQuadIndex=0}^{\nbxnodes} \sum_{\yQuadIndex=0}^{\nbxnodes}\sum_{\xiQuadIndex\in\Pidxset} &\hyperbolLimit{\limitervariable} (\localsolution{\cellind}{\cellindR} )(\timevar_\timeind,\quadxpoint{\xQuadIndex},\quadypoint{\yQuadIndex},\quadRpoint{\xiQuadIndex}) \quadxweight{\xQuadIndex}\quadyweight{\yQuadIndex}\quadRweight{\xiQuadIndex}\\
&= \limitervariable  \,\cellmean[\cellind,\cellindR]{\solution}{\timeind} \sum_{\xQuadIndex=0}^{\nbxnodes} \sum_{\yQuadIndex=0}^{\nbxnodes}\sum_{\xiQuadIndex\in\Pidxset}\quadxweight{\xQuadIndex}\quadyweight{\yQuadIndex}\quadRweight{\xiQuadIndex}
+ (1-\limitervariable) \sum_{\xQuadIndex=0}^{\nbxnodes} \sum_{\yQuadIndex=0}^{\nbxnodes}\sum_{\xiQuadIndex\in\Pidxset}\solution\big(\timevar_\timeind,\quadxpoint{\xQuadIndex},\quadypoint{\yQuadIndex},\quadRpoint{\xiQuadIndex}\big)\quadxweight{\xQuadIndex}\quadyweight{\yQuadIndex}\quadRweight{\xiQuadIndex} \\
&=  \cellmean[\cellind,\cellindR]{\solution}{\timeind}.
\end{align*}

\refone{\begin{remark}
	After the application of the hyperbolic limiter, the SG system is guaranteed to be hyperbolic according to \thmref{thm:SGsystemhyp}.
\end{remark}}

\begin{remark}
\refone{From now on, we again} abuse notation and write $\hyperbolLimit{\limitervariable}(\solution)$ for the piecewise polynomial $\solution$ instead of the local polynomials $\localsolution{\cellind}{\cellindR}$. By this we mean the application of the hyperbolicity limiter on each cell $\cell{\cellind}\times\cellR{\cellindR}$ separately where we obtain independent values of the limiter variable $\limitervariable$.
\end{remark}

A complete Runge--Kutta time-step using the hyperbolicity-preserving limiter is shown in Algorithm \ref{alg:hdSG}.
\begin{algorithm}
\caption{TVBM Runge--Kutta time-step with hyperbolic limiter}
\label{alg:hdSG}
\begin{algorithmic}[1]
\State Set $\RKsolution^{(0)} = \solutionprojected^{(\timeind)}$. \hfill \refone{\textit{\# initialization time step $n$}}
\For{$\stageind=1,\ldots, \stage$} {\refone{\hfill \textit{\# loop over Runge-Kutta stages}}}
\medskip
\State Compute $\auxiliaryprojected^{\stageind l}=\RKsolution^{(l)} + \frac{\beta_{\stageind l}}{\alpha_{\stageind l}} \dtstepsize_n\rhs(\RKsolution^{(l)})$,\hfill\refone{\textit{\# time update}}
\State Compute
$ \RKsolution^{(\stageind)} =  \tvbminmod\Big( \sum \limits_{l=0}^{\stageind-1} \alpha_{\stageind l}\auxiliaryprojected^{\stageind l}\Big)$,
\hfill\refone{\textit{\# call of TVBM minmod slope limiter}}
\State Compute $\RKsolution^{(\stageind)}= \hyperbolLimit{\limitervariable}(\RKsolution^{(\stageind)})$. 
\hfill\refone{\textit{\# call of hyperbolicity limiter}}
\medskip
\EndFor
\State Set $\solutionprojected^{(\timeind+1)}= \RKsolution^{(\stage)}$. \hfill \refone{\textit{\# define solution at time step $n+1$}}
\end{algorithmic}
\end{algorithm}
\begin{remark}
The initial condition $\solutionprojected^0$ also has to be limited by both limiters $\tvbminmod$ and $\hyperbolLimit{\limitervariable}$.
\end{remark}
\subsubsection{Calculations for the Euler equation}\label{subsec: Euler}
The two-dimensional compressible Euler equations for the flow of an ideal gas are given by
\begin{equation}\label{eulereq}
\left.
\begin{alignedat}{3}
\hspace*{2cm}
\dt \density &+ \dx \momentumdim{1} &&+ \dy \momentumdim{2} && = 0, \\[0.05cm]
\dt \momentumdim{1} &+ \dx \left(\frac{\momentumdim{1}^2}{\density} + \pressure\right) &&+ \dy \left(\frac{\momentumdim{1}\momentumdim{2}}{\rho}\right) && = 0,\\[0.22cm]
\dt \momentumdim{2} &+ \dx \left(\frac{\momentumdim{1}\momentumdim{2}}{\rho}\right) &&+ \dy \left(\frac{\momentumdim{2}^2}{\density} + \pressure\right) && = 0, \\[0.22cm]
\dt \energy &+ \dx \left( (\energy + \pressure) \,\frac{\momentumdim{1}}{\density}\right) &&+ \dy\left( (\energy + \pressure) \,\frac{\momentumdim{2}}{\density}\right)&& = 0, \hspace*{2cm}
\end{alignedat}
\right\}
\end{equation}
where $\density$ describes the mass density, $\momentumdim{1}$ and $\momentumdim{2}$ the momentum in $\x$ and $\y$ direction and $\energy$ the energy of the gas. The three equations model the conservation of mass, momentum and energy. The pressure $\pressure$ reads
$$\pressure = (\eulergamma-1)\left(\energy - \frac{1}{2}\frac{\big(\momentumdim{1}^2+\momentumdim{2}^2\big)}{\density}\right),$$
with the adiabatic constant $\eulergamma>1$. 
The \hypset{} is given by
$$	\realizableSet = \left\{ \solution = \begin{pmatrix}
\density\\\momentumdim{1}\\ \momentumdim{2}\\\energy
\end{pmatrix}\,\Bigg|~\density>0,~\pressure = (\eulergamma-1)\left(\energy - \frac{1}{2}\frac{\big(\momentumdim{1}^2+\momentumdim{2}^2\big)}{\density}\right) >0\right\}.$$

\begin{lemma}
	Let $\widetilde{\solution}= (\densityvar,\momentumdimvar{1},\momentumdimvar{2},\energyvar)^T\in\realizableSet$ and $\solution= (\density,\momentumdim{1},\momentumdim{2},\energy)^T\in\R^4$ be arbitrary. Then the solution of the hyperbolicity limiter problem
	\begin{align*}
		\min~ &\limitervariable\\
		\text{s.t.}~ &\limitervariable\in[0,1]\\
		&\limitervariable\widetilde{\solution} + (1-\limitervariable)\solution\in\realizableSet
	\end{align*}
	for the two-dimensional Euler equations has the solution
	\begin{align}
		\limitervariable^\star &= \max\left(\cutfun{\limitervariable_1},\cutfun{\limitervariable_{2+}},\cutfun{\limitervariable_{2-}}\right),\label{eq:thetastar}\\
				\cutfun{x} &= x\indicator{[0,1]}(x),\nonumber\\
		\limitervariable_1 &= \frac{\density}{\density-\densityvar},\nonumber\\ 
		\limitervariable_{2\pm} &=  \frac{\density\energyvar - 2\density\energy + \densityvar\energy - \momentumdim{1}\momentumdimvar{1} - \momentumdim{2}\momentumdimvar{2} \pm \sqrt{\widetilde{\theta} }}
		{\momentumdim{1}^2 - 2\momentumdim{1}\momentumdimvar{1} + \momentumdim{2}^2 - 2\momentumdim{2}\momentumdimvar{2} + \momentumdimvar{1}^2 + \momentumdimvar{2}^2 - 2\density\energy + 2\density\energyvar + 2\densityvar\energy - 2\densityvar\energyvar}\,,\nonumber
	\end{align}
	where
\begin{align}\widetilde{\theta} &=\density^2\energyvar^2 - 2\density\densityvar\energy\energyvar + 2\density\energy\momentumdimvar{1}^2 + 2\density\energy\momentumdimvar{2}^2 - 2\density\energyvar\momentumdim{1}\momentumdimvar{1} 
	- 2\density\energyvar\momentumdim{2}\momentumdimvar{2} \nonumber\\
	&+ \densityvar^2\energy^2 - 2\densityvar\energy\momentumdim{1}\momentumdimvar{1} - 2\densityvar\energy\momentumdim{2}\momentumdimvar{2} + 2\densityvar\energyvar\momentumdim{1}^2 + 2\densityvar\energyvar\momentumdim{2}^2 - \momentumdim{1}^2\momentumdimvar{2}^2 + 2\momentumdim{1}\momentumdim{2}\momentumdimvar{1}\momentumdimvar{2} - \momentumdim{2}^2\momentumdimvar{1}^2. \nonumber
	\end{align}
\end{lemma}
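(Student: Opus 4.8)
The plan is to restrict the realizability requirement to the line segment $\limitervariable\mapsto\solution(\limitervariable):=\limitervariable\widetilde{\solution}+(1-\limitervariable)\solution$, to decouple it into a density constraint and a pressure constraint, and to solve each one explicitly. Every conserved component of $\solution(\limitervariable)$ depends affinely on $\limitervariable$, and for the Euler equations $\solution\in\realizableSet$ holds exactly when $\density>0$ and $\pressure>0$; hence for every $\limitervariable$ with $\density(\limitervariable)>0$ we have, using $\eulergamma>1$ and $\pressure(\solution(\limitervariable))=\tfrac{\eulergamma-1}{\density(\limitervariable)}\,g(\limitervariable)$, the equivalence $\pressure(\solution(\limitervariable))>0\iff g(\limitervariable)>0$, where $g(\limitervariable):=\density(\limitervariable)\energy(\limitervariable)-\tfrac12\big(\momentumdim{1}(\limitervariable)^2+\momentumdim{2}(\limitervariable)^2\big)$. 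Thus $\solution(\limitervariable)\in\realizableSet$ if and only if $\density(\limitervariable)>0$ and $g(\limitervariable)>0$, and since $\widetilde{\solution}=\solution(1)\in\realizableSet$ we know $\densityvar>0$ and $g(1)>0$, facts that are used throughout.

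First I would dispatch the density constraint. The map $\limitervariable\mapsto\density(\limitervariable)=\density+\limitervariable(\densityvar-\density)$ is affine and positive at $\limitervariable=1$, so $\{\limitervariable\in[0,1]:\density(\limitervariable)>0\}$ is a subinterval of $[0,1]$ with right endpoint $1$; its left endpoint is the zero $\limitervariable_1=\density/(\density-\densityvar)$ of $\density(\cdot)$ when $\limitervariable_1\in(0,1)$, and is $0$ otherwise (the case $\densityvar=\density$ imposing no constraint). A short sign check using $\densityvar>0$ shows that this left endpoint equals $\cutfun{\limitervariable_1}$ in all cases.

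Next I would solve the pressure constraint. Expanding the products shows that $g$ is a quadratic polynomial in $\limitervariable$, say $g(\limitervariable)=a\limitervariable^2+b\limitervariable+c$ with coefficients read off from $\solution$ and $\widetilde{\solution}$; the quadratic formula then produces its two (formal) roots $\limitervariable_{2\pm}$ and the discriminant $\widetilde{\theta}$ appearing in the statement — checking these identities is a direct but lengthy computation. It remains to locate $\{g>0\}$ relative to $\limitervariable=1$, for which I would distinguish the sign of the leading coefficient $a$ (equivalently, the sign of the denominator of $\limitervariable_{2\pm}$). If $a<0$, the parabola is concave and $g(1)>0$ forces $\{g>0\}=(\limitervariable_{2-},\limitervariable_{2+})$ with $\limitervariable_{2-}<1<\limitervariable_{2+}$, so, descending from $\limitervariable=1$, the inequality first fails at $\limitervariable_{2-}$. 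If $a>0$, the parabola is convex and $g(1)>0$ forces that either $g>0$ throughout $[0,1]$ (the roots are complex, or both exceed $1$), or the first failure below $\limitervariable=1$ occurs at the larger root $\limitervariable_{2+}$. The linear case $a=0$ is the obvious limiting situation. In every case the cut-off sends irrelevant roots to $0$ and, whenever both roots lie in $[0,1]$, one has $\limitervariable_{2-}\le\limitervariable_{2+}$; hence, descending from $\limitervariable=1$, the inequality $g(\limitervariable)>0$ first fails exactly at $\max\!\big(\cutfun{\limitervariable_{2-}},\cutfun{\limitervariable_{2+}}\big)$ (or never, in which case this quantity is $0$).

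Finally I would combine the two constraints: by \assref{ass:RealizableSet} the set $\realizableSet$ is convex, so $T:=\{\limitervariable\in[0,1]:\solution(\limitervariable)\in\realizableSet\}$ is an interval, and it contains $\limitervariable=1$; therefore $\inf T$ is governed solely by the behaviour of the two constraints near $\limitervariable=1$, which gives $\inf T=\max\!\big(\cutfun{\limitervariable_1},\cutfun{\limitervariable_{2-}},\cutfun{\limitervariable_{2+}}\big)=\limitervariable^\star$, i.e.\ \eqref{eq:thetastar} (the limiter then returns this infimum, slightly enlarged to keep the state off $\partial\realizableSet$ as in the preceding subsection). I expect the main obstacle to be the case distinction for the pressure term, together with the degeneracies $a=0$ and $\widetilde{\theta}<0$: one must argue that in precisely those situations the pressure inequality is inactive on $[0,1]$, so that the corresponding terms may be dropped and the closed formula stays valid — and it is here that convexity of $\realizableSet$ does the real work, ruling out the spurious admissible sub-interval near $\limitervariable=0$ that $\{g>0\}$ alone would permit when $a>0$.
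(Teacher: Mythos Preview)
Your approach is essentially the same as the paper's: split the admissibility constraint into the density condition (affine in $\limitervariable$) and the pressure condition (quadratic in $\limitervariable$ after clearing the density), solve each explicitly, then combine via the cutoff and the maximum. The paper's proof is in fact considerably terser than yours---it writes the quadratic, asserts the roots $\limitervariable_{2\pm}$, and concludes without the sign-of-$a$ case analysis or the convexity argument you sketch; your treatment of the degenerate cases ($a=0$, $\widetilde{\theta}<0$) and the role of convexity in excluding a spurious sub-interval near $\limitervariable=0$ is more careful than what the paper provides.

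One small slip worth fixing: when $a>0$ the denominator of $\limitervariable_{2\pm}$ in the statement equals $-2a<0$, so the \emph{larger} root is $\limitervariable_{2-}$, not $\limitervariable_{2+}$ as you wrote. This does not affect your conclusion, since you take $\max\big(\cutfun{\limitervariable_{2-}},\cutfun{\limitervariable_{2+}}\big)$ anyway, but the labeling in that paragraph should be swapped.
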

\begin{proof}
	We derive the values of the limiter variable $\limitervariable$, for which the expression $\limitervariable\widetilde{\solution} + (1-\limitervariable)\solution$ is in the \hypset{} $\realizableSet$.  In order to have a positive density we require
	$$\limitervariable\densityvar + (1-\limitervariable)\density > 0,$$
	yielding 
	$$\limitervariable < \frac{\density}{\density-\densityvar}=\limitervariable_1,$$
	if $\densityvar\neq\density$. Otherwise, we have $\density=\densityvar>0$ and set $\limitervariable_1=0$.
	Analogously, we calculate the pressure term and check it for positivity
	$$\frac{\pressure}{\eulergamma-1} = \limitervariable\energyvar + (1-\limitervariable)\energy - \frac{1}{2}\frac{\Big(\big(\limitervariable\momentumdimvar{1} + (1-\limitervariable)\momentumdim{1} \big)^2 + \big(\limitervariable\momentumdimvar{2} + (1-\limitervariable)\momentumdim{2} \big)^2\Big)}
	{\limitervariable\densityvar + (1-\limitervariable)\density } \overset{!}{>}0.$$ 
	Since the density is supposed to be positive we can multiply it to the above equation
	$$
	\frac{\pressure \,\big( \limitervariable\densityvar + (1-\limitervariable)\density\big) }{\eulergamma-1} 
	= \big( \limitervariable\energyvar + (1-\limitervariable)\energy\big) \big( \limitervariable\densityvar + (1-\limitervariable)\density\big) 
	- \frac{1}{2}\Big(\big(\limitervariable\momentumdimvar{1} + (1-\limitervariable)\momentumdim{1} \big)^2 + \big(\limitervariable\momentumdimvar{2} + (1-\limitervariable)\momentumdim{2} \big)^2\Big) \overset{!}{>}0.$$
	Writing this inequality as a polynomial in $\limitervariable$ we obtain
    \begin{align*} \bigg((\energy - \energyvar) (\density - \densityvar) &-\frac{1}{2}\left(\momentumdim{1}-\momentumdimvar{1}\right)^2  -\frac{1}{2}\left(\momentumdim{2}-\momentumdimvar{2}\right)^2\bigg)\,\limitervariable^2\\ &+\Big(\momentumdim{1}\left(\momentumdim{1}-\momentumdimvar{1}\right) + \momentumdim{2}\left(\momentumdim{2}-\momentumdimvar{2}\right) - \density(\energy-\energyvar)-\energy(\density - \densityvar)\Big)\,\limitervariable \\
    &+\energy\density-\frac{\momentumdim{1}^2}{2} -\frac{\momentumdim{2}^2}{2} \overset{!}{>}0. 
    \end{align*}
	The corresponding equality has the roots $\limitervariable_{2+}$ and $\limitervariable_{2-}$. 
	Combining these results with $\limitervariable\in[0,\,1]$, we end up with \eqref{eq:thetastar}.
\end{proof}
\begin{remark}
	In the previous lemma, the quantity $\widetilde{\solution}$ plays the role of the cell mean $\cellmeant[\cellind,\cellindR]{\solution}$ in \thmref{thm:realizable}, whereas $\solution$ is given by the point values $\solution(\timevar,\quadxpoint{\xQuadIndex},\quadypointG{\yQuadIndexG},\quadRpoint{\xiQuadIndex})$ and $\solution(\timevar,\quadxpointG{\xQuadIndexG},\quadypoint{\yQuadIndex},\quadRpoint{\xiQuadIndex})$.
\end{remark}

\section{Numerical Results}\label{sec:results}
In the following numerical experiments we analyze the hyperbolicity-preserving discontinuous stochastic Galerkin (\hdSG{}) method for different problems of the Euler equations.
In \secref{subsec:convTest} we apply it to a smooth solution and examine its convergence.
In \secref{subsec:compSC} we compare the performance of our scheme to the non-intrusive Stochastic Collocation method.
We evaluate both methods for a smooth solution which depends on three random variables.
Finally, in \secref{subsec:Euler Equations: Uncertain Sod Shock Test} and \secref{subsec:dmr}, 
we apply the \hdSG{} method  to a one- and two-dimensional Riemann problem and examine the behavior of the
hyperbolicity-preserving limiter. When we employ the Multi-Element (ME) ansatz from \secref{subsec:Multielement Stochastic Galerkin}, we call our method ME-\hdSG{}.

As a numerical solver for the SG system we use a modified version of the Runge--Kutta discontinuous Galerkin solver Flexi \cite{Hindenlang2012} with
a SSP-RK time-discretization of order four. We set the CFL-number from \assref{ass:ppFlux} to $C=0.45$. 
In the following numerical examples
we measure the error in mean and variance at time $\timevar=T$ in the $\Lp{1}(\Domain)$- or $\Lp{2}(\Domain)$-norm which we approximate 
by a tensor product Gau\ss ~quadrature rule with $15$ points (in one dimension) in every physical cell and $20$ points (in one dimension) in every Multi-Element. We denote the number of spatial cells by $\ncells$ and the number of MEs by $\nRcells$.
\subsection{Convergence tests}\label{subsec:convTest}
This example is devoted to the convergence of the \hdSG{} method for smooth solutions of
the Euler equations.
We let the spatial domain $\Domain = [0,\,1]^2$, where we use periodic boundary conditions
and set the adiabatic constant to $\eulergamma = 1.4$. 
As numerical flux we use Lax-Friedrichs. 
To obtain an analytical solution of the Euler equations, we consider the method of manufactured solutions. To this end we introduce an additional source term $S$  in \eqref{eulereq}, where
\begin{align}
	S(\timevar,\x,\y,\uncertainty) := \begin{pmatrix}
	\dt \density + \dx \momentumdim{1} + \dy \momentumdim{2} \\[0.05cm]
    \dt \momentumdim{1} + \dx \left(\frac{\momentumdim{1}^2}{\density} + \pressure\right) + \dy \left(\frac{\momentumdim{1}\momentumdim{2}}{\rho}\right) \\[0.22cm]
    \dt \momentumdim{2} + \dx \left(\frac{\momentumdim{1}\momentumdim{2}}{\rho}\right) + \dy \left(\frac{\momentumdim{2}^2}{\density} + \pressure\right)  \\[0.22cm]
   \dt \energy + \dx \left( (\energy + \pressure) \,\frac{\momentumdim{1}}{\density}\right) + \dy\left( (\energy + \pressure) \,\frac{\momentumdim{2}}{\density}\right)
	\end{pmatrix}.
\end{align}
This additional source term allows us to consider the following analytical solution of the Euler equations
\begin{align} \label{eq:euler2DSmoothSolution}
	\solution(\timevar,\x,\y,\uncertaintyD_1,\uncertaintyD_2,\uncertaintyD_3) = 
	\begin{pmatrix}
	\density(\timevar,\x,\y,\uncertaintyD_1,\uncertaintyD_2,\uncertaintyD_3) \\
	 \momentumdim{1}(\timevar,\x,\y,\uncertaintyD_1,\uncertaintyD_2,\uncertaintyD_3)  \\
	 \momentumdim{2}(\timevar,\x,\y,\uncertaintyD_1,\uncertaintyD_2,\uncertaintyD_3)  \\
	\energy(\timevar,\x,\y,\uncertaintyD_1,\uncertaintyD_2,\uncertaintyD_3)  
	\end{pmatrix}=
	\begin{pmatrix}
	\uncertaintyD_3+ \uncertaintyD_2\cos(2\pi (\x- \uncertaintyD_1\timevar)) \\[0.1cm]
	\uncertaintyD_3+ \uncertaintyD_2\cos(2\pi (\x- \uncertaintyD_1\timevar)) \\
	0 \\
	(\uncertaintyD_3+ \uncertaintyD_2\cos(2\pi (\x- \uncertaintyD_1\timevar)))^2
	\end{pmatrix},
\end{align}
with uniformly distributed random variables $\uncertaintyD_1\sim \mathcal{U}(0.1,1)$,
$\uncertaintyD_2\sim \mathcal{U}(0.1,0.3)$ and $\uncertaintyD_3\sim \mathcal{U}(1.8,2.5)$.

\subsubsection{Euler Equations: smooth solution, spatial refinement}\label{subsec:Euler Equations: Smooth Solution, spatial refinement}
As a first benchmark example we consider a refinement of the spatial domain  $\Domain$ for a SG polynomial degree 
of $\SGtruncorder=10$ and one Multi-Element, i.e. $\nRcells=1$, as well as DG polynomial degrees of one and three.
We choose the following analytical function
\begin{align} \label{eq:euler2DSmoothSolution1DStoch}
	\solution(\timevar,\x,\y,\uncertaintyD_1) = 
	\begin{pmatrix}
	\density(\timevar,\x,\y,\uncertaintyD_1) \\
	\momentumdim{1}(\timevar,\x,\y,\uncertaintyD_1)  \\
	\momentumdim{2}(\timevar,\x,\y,\uncertaintyD_1)  \\
	\energy(\timevar,\x,\y,\uncertaintyD_1)  
	\end{pmatrix}=
	\begin{pmatrix}
	2+ 0.1(2\pi (\x- \uncertaintyD_1\timevar)) \\
	2+ 0.1\cos(2\pi (\x- \uncertaintyD_1\timevar)) \\
	0 \\
	(2+ 0.1\cos(2\pi (\x- \uncertaintyD_1\timevar)))^2
	\end{pmatrix},
\end{align}
where $\uncertaintyD_1\sim \mathcal{U}(0.1,1)$. We compute the numerical approximation of 
\eqref{eq:euler2DSmoothSolution1DStoch} up to $T=1$.
\tabref{table:EulerSpatialRefinement} displays the error in mean and variance of the density.
\refone{The experimental order of convergence (eoc) is computed by 
\begin{equation}\label{eq:eoc}
	\text{eoc} = \frac{\log\Big(\frac{\text{error}(r)}{\text{error}(r+1)}\Big)}{\log\Big(\frac{\text{dof}(r+1)}{\text{dof}(r)}\Big)}, 
\end{equation}
where the degrees of freedom $\text{dof}(r)$ corresponds to the number of spatial cells $\ncells$ or Multi-Elements $\nRcells$ and $r$ represents the level of refinement.}

The error is clearly dominated by the spatial error as the resolution in the stochastic space is sufficiently high. 
Thus, it converges with the rate of the DG method, which is $(\polydegree+1)/2$ in two spatial dimensions. 

\begin{table}[htb!]
\centering
\pgfplotstabletypeset[
    col sep=comma,
    string type,
    every head row/.style={%
        before row={\hline
            \multicolumn{5}{|c|}{\hdSG{}, $\polydegree=1$}  \\
            \hline
        },
        after row=\hline
    },
    every last row/.style={after row=\hline},
    columns/n/.style={column name=$\ncells$, column type={|c}},
    columns/l2mean/.style={column name=$L_2$-Mean, column type={|c}},
    columns/eocmean/.style={column name=eoc, column type={|c}},
    columns/l2variance/.style={column name=$L_2$-Variance, column type={|c}},
    columns/eocvariance/.style={column name=eoc, column type={|c|}},
    ]{Images/convtestEuler2D/hRefEuler2Dp1.csv}
\pgfplotstabletypeset[
    col sep=comma,
    string type,
    every head row/.style={%
        before row={\hline
            \multicolumn{5}{|c|}{\hdSG{},$\polydegree=3$}  \\
            \hline
        },
        after row=\hline
    },
    every last row/.style={after row=\hline},
    columns/n/.style={column name=$\ncells$, column type={|c}},
    columns/l2mean/.style={column name=$L_2$-Mean, column type={|c}},
    columns/eocmean/.style={column name=eoc, column type={|c}},
    columns/l2variance/.style={column name=$L_2$-Variance, column type={|c}},
    columns/eocvariance/.style={column name=eoc, column type={|c|}},
    ]{Images/convtestEuler2D/hRefEuler2Dp3.csv}
    \caption{$\Lp{2}$-errors and experimental order of convergence (eoc) for the Euler equations (density)
     with $\SGtruncorder=10$, $\nRcells=1$, for DG polynomial degrees $\polydegree=1,3$.
     Example \ref{subsec:Euler Equations: Smooth Solution, spatial    
     refinement}.}
    \label{table:EulerSpatialRefinement} 
\end{table}

\subsubsection{Euler Equations: smooth solution, stochastic refinement}\label{subsec:Euler Equations: Smooth Solution, stochastic refinement}
As next numerical example we consider a stochastic refinement, where we increase the SG polynomial degree $\SGtruncorder$.
We consider the same analytical function \eqref{eq:euler2DSmoothSolution1DStoch} from the previous numerical experiment.
In \tabref{table:EulerqRefinement} we show a $\SGtruncorder$-refinement for one random element, 
i.e. $\nRcells=1$.
Here, the physical mesh consists of $\ncells=400$ cells and a DG polynomial degree of six.
For the smooth solution \eqref{eq:euler2DSmoothSolution1DStoch} we observe that the error exhibits the expected spectral convergence when we increase the SG polynomial degree.
\begin{table}[htb!]
\centering
\pgfplotstabletypeset[
    col sep=comma,
    string type,
    every head row/.style={%
        before row={\hline
            \multicolumn{3}{|c|}{\hdSG{}} \\
            \hline
        },
        after row=\hline
    },
    every last row/.style={after row=\hline},
    columns/n/.style={column name=$\SGtruncorder$, column type={|c}},
    columns/l2mean/.style={column name=$L_2$-Mean, column type={|c}},
    columns/l2variance/.style={column name=$L_2$-Variance, column type={|c|}},
    ]{Images/convtestEuler2D/pRefSG1D.csv}
    \caption{$\Lp{2}$-errors for the Euler equations (density)
    with $\nRcells=1$, $\ncells=400$ for DG polynomial degree $\polydegree=6$. 
    Example \ref{subsec:Euler Equations: Smooth Solution, stochastic refinement}.}
    \label{table:EulerqRefinement} 
\end{table}

\subsection{Comparison with Stochastic Collocation method} \label{subsec:compSC}
In this section we compare the \hdSG{}-method with a common non-intrusive method, namely the Stochastic Collocation (SC) 
method as described in \cite{HesthavenXiu2005}.
When we use SC in combination with the ME scheme from \secref{subsec:Multielement Stochastic Galerkin},
we call this method ME-SC.
For the collocation points in a multi-dimensional random space $\randomSpace$
we use tensor-products of one-dimensional Gau\ss-Legendre quadrature points. 
The number of collocation points in one dimension is always $\SGtruncorder+1$.

To compare both methods, we measure the total elapsed time (including post-processing for SC)
on a workstation equipped  with an AMD Ryzen ThreadRipper 2950x processor with sixteen kernels and 128 GB RAM.
As a deterministic solver for the SC method we use the standard version of Flexi \cite{Hindenlang2012}.
Both methods are parallelized using Open MPI and we measure the total elapsed time on sixteen kernels.
\refone{We denote the total elapsed time by wall time.}

As a benchmark solution we refer to the analytical solution \eqref{eq:euler2DSmoothSolution},
which is computed up to $T=1$ and 
we successively increase the number of random variables starting from $\uncertaintyD_1$ 
while fixing $\uncertaintyD_2=0.1 $, $\uncertaintyD_3=2$ and so on.
The number of physical cells is always $\ncells=400 $ and the DG polynomial degree is six. 
As numerical flux we choose Lax-Friedrichs. 
\subsubsection{Comparison with SC: $\SGtruncorder$-refinement} \label{subsec:KRefinement}
In this example we consider a global $\SGtruncorder$-refinement for one fixed ME, i.e. $\nRcells=1$.
In \figref{fig:pRefwalltime} and Tables \ref{table:KRefinement1D}, \ref{table:KRefinement2D}, \ref{table:KRefinement3D} we
compare the error vs. wall time for the \hdSG{} and the SC method in one, two and three random dimensions. 
In all three cases the \hdSG{} method yields a smaller absolute error for the same number of degrees of freedoms,
however, only in the one-dimensional case the \hdSG{} scheme proves to be more efficient (in terms of error vs. wall time)
than the SC method. 

\begin{table}[htb!]
\centering
\pgfplotstabletypeset[
    col sep=comma,
    string type,
    every head row/.style={%
        before row={\hline
            \multicolumn{4}{|c|}{\hdSG{}, $\nstochdim=1$}  \\
            \hline
        },
        after row=\hline
    },
    every last row/.style={after row=\hline},
    columns/degree/.style={column name=$\SGtruncorder$, column type={|c}},
    columns/l2mean/.style={column name=$L_2$-Mean, column type={|c}},
    columns/l2variance/.style={column name=$L_2$-Variance, column type={|c}},
    columns/walltime/.style={column name=wall time [s], column type={|c|}},
    ]{Images/pRef2DEuler/pRefSG1D.csv}   
\pgfplotstabletypeset[
    col sep=comma,
    string type,
    every head row/.style={%
        before row={\hline
            \multicolumn{4}{|c|}{SC, $\nstochdim=1$}  \\
            \hline
        },
        after row=\hline
    },
    every last row/.style={after row=\hline},
    columns/degree/.style={column name=$\SGtruncorder$, column type={|c}},
    columns/l2mean/.style={column name=$L_2$-Mean, column type={|c}},
    columns/l2variance/.style={column name=$L_2$-Variance, column type={|c}},
    columns/walltime/.style={column name=wall time [s], column type={|c|}},
    ]{Images/pRef2DEuler/pRefSC1D.csv}   
    \caption{$\Lp{2}$-errors and wall time for the Euler equations (density) for \hdSG{} and SC method in one random dimension.
     Example \ref{subsec:KRefinement}.}
    \label{table:KRefinement1D}
\end{table}

\begin{table}[htb!]
\centering
\pgfplotstabletypeset[
    col sep=comma,
    string type,
    every head row/.style={%
        before row={\hline
            \multicolumn{4}{|c|}{\hdSG{}, $\nstochdim=2$}  \\
            \hline
        },
        after row=\hline
    },
    every last row/.style={after row=\hline},
    columns/degree/.style={column name=$\SGtruncorder$, column type={|c}},
    columns/l2mean/.style={column name=$L_2$-Mean, column type={|c}},
    columns/l2variance/.style={column name=$L_2$-Variance, column type={|c}},
    columns/walltime/.style={column name=wall time [s], column type={|c|}},
    ]{Images/pRef2DEuler/pRefSG2D.csv}   
\pgfplotstabletypeset[
    col sep=comma,
    string type,
    every head row/.style={%
        before row={\hline
            \multicolumn{4}{|c|}{SC, $\nstochdim=2$}  \\
            \hline
        },
        after row=\hline
    },
    every last row/.style={after row=\hline},
    columns/degree/.style={column name=$\SGtruncorder$, column type={|c}},
    columns/l2mean/.style={column name=$L_2$-Mean, column type={|c}},
    columns/l2variance/.style={column name=$L_2$-Variance, column type={|c}},
    columns/walltime/.style={column name=wall time [s], column type={|c|}},
    ]{Images/pRef2DEuler/pRefSC2D.csv}   
    \caption{$\Lp{2}$-errors and wall time for the Euler equations (density) for \hdSG{} and SC method in two random dimensions. 
    Example \ref{subsec:KRefinement}.}
    \label{table:KRefinement2D}
\end{table}

\begin{table}[htb!]
\centering
\pgfplotstabletypeset[
    col sep=comma,
    string type,
    every head row/.style={%
        before row={\hline
            \multicolumn{4}{|c|}{\hdSG{}, $\nstochdim=3$}  \\
            \hline
        },
        after row=\hline
    },
    every last row/.style={after row=\hline},
    columns/degree/.style={column name=$\SGtruncorder$, column type={|c}},
    columns/l2mean/.style={column name=$L_2$-Mean, column type={|c}},
    columns/l2variance/.style={column name=$L_2$-Variance, column type={|c}},
    columns/walltime/.style={column name=wall time [s], column type={|c|}},
    ]{Images/pRef2DEuler/pRefSG3D.csv}   
\pgfplotstabletypeset[
    col sep=comma,
    string type,
    every head row/.style={%
        before row={\hline
            \multicolumn{4}{|c|}{SC, $\nstochdim=3$}  \\
            \hline
        },
        after row=\hline
    },
    every last row/.style={after row=\hline},
    columns/degree/.style={column name=$\SGtruncorder$, column type={|c}},
    columns/l2mean/.style={column name=$L_2$-Mean, column type={|c}},
    columns/l2variance/.style={column name=$L_2$-Variance, column type={|c}},
    columns/walltime/.style={column name=wall time [s], column type={|c|}},
    ]{Images/pRef2DEuler/pRefSC3D.csv}   
    \caption{$\Lp{2}$-errors and wall time for the Euler equations (density) for \hdSG{} and SC method in three random dimensions. 
    Example \ref{subsec:KRefinement}.}
    \label{table:KRefinement3D}
\end{table}
\begin{figure}[htb!]
\centering
  \begin{tikzpicture}
\begin{axis}[%
width=1\figurewidth,
height=\figureheight,
scale only axis,
xmode=log,
xminorticks=true,
xlabel={wall time [s]},
ymode=log,
yminorticks=false,
ymajorgrids=true,
yminorgrids = true,
ylabel={error},
legend pos = north east,
legend style={at={(1.03,0.2)},anchor=south west,legend cell align=left, align=left, draw=white!15!black},
]
\addplot [color=red, line width=1.0pt, mark size=2.5pt, mark=*, mark options={solid, red}] table [x=walltime, y=l2mean, col sep=comma] {Images/pRef2DEuler/pRefSC1D.csv};
\addlegendentry{$\Lp{2}-\text{Mean},~\nstochdim=1,~\text{SC}$}
\addplot [color=red, line width=1.0pt, mark size=2.5pt, mark=diamond*, mark options={solid, red}] table [x=walltime, y=l2mean, col sep=comma] {Images/pRef2DEuler/pRefSC2D.csv};
\addlegendentry{$\Lp{2}-\text{Mean},~\nstochdim=2,~\text{SC}$}
\addplot [color=red, line width=1.0pt, mark size=2.5pt, mark=square*, mark options={solid, red}] table [x=walltime, y=l2mean, col sep=comma] {Images/pRef2DEuler/pRefSC3D.csv};
\addlegendentry{$\Lp{2}-\text{Mean},~\nstochdim=3,~\text{SC}$}
\addplot [color=blue, line width=1.0pt, mark size=2.5pt, mark=o, mark options={solid, blue}] table [x=walltime, y=l2mean, col sep=comma] {Images/pRef2DEuler/pRefSG1D.csv};
\addlegendentry{$\Lp{2}-\text{Mean},~ \nstochdim=1$, \hdSG{} }
\addplot [color=blue, line width=1.0pt, mark size=2.5pt, mark=diamond, mark options={solid, blue}] table [x=walltime, y=l2mean, col sep=comma] {Images/pRef2DEuler/pRefSG2D.csv};
\addlegendentry{$\Lp{2}-\text{Mean},~ \nstochdim=2$, \hdSG{} }
\addplot [color=blue, line width=1.0pt, mark size=2.5pt, mark=square, mark options={solid, blue}] table [x=walltime, y=l2mean, col sep=comma] {Images/pRef2DEuler/pRefSG3D.csv};
\addlegendentry{$\Lp{2}-\text{Mean},~ \nstochdim=3$, \hdSG{} }

\end{axis}
\end{tikzpicture}%
  \caption{$\Lp{2}$-errors and wall time for the Euler equations (density) for \hdSG{} and SC method.
    Example \ref{subsec:KRefinement}.}
    \label{fig:pRefwalltime}
\end{figure}

\subsubsection{Comparison with SC: ME-refinement} \label{subsec:MERefinement}
In this numerical experiment we compare error vs. wall time for the ME-\hdSG{} and ME-SC methods, while increasing the number of
MEs. 
For both methods we consider a linear interpolation,
i.e. $\SGtruncorder=1$. Again, \figref{fig:MEwalltime} and Tables \ref{table:MERefinement1D}, \ref{table:MERefinement2D}, \ref{table:MERefinement3D}
show the error vs. wall time for both methods. Similar to the $\SGtruncorder$-refinement in \secref{subsec:KRefinement},
we observe for a one-dimensional random space $\randomSpace$, that ME-\hdSG{} is clearly more efficient than 
the ME-SC method. In the two-dimensional case and a small number of MEs we deduce similar results. However, for three random dimensions,
ME-\hdSG{} stands no chance against the ME-SC method in terms of efficiency.
This is exactly the behavior that we expect from \hdSG{}, because computing the orthogonal projection of
the fluxes in \eqref{SGsystem} becomes more and more expensive when we increase the number of random dimensions.
Hence, for a one and maybe a two-dimensional random space, the \hdSG{} method yields an efficiency gain compared to SC and is not competitive beyond two dimensions.
\begin{table}[htb!]
\centering
\pgfplotstabletypeset[
    col sep=comma,
    string type,
    every head row/.style={%
        before row={\hline
            \multicolumn{4}{|c|}{ME-\hdSG{}, $\nstochdim=1$}  \\
            \hline
        },
        after row=\hline
    },
    every last row/.style={after row=\hline},
    columns/degree/.style={column name=$\nRcells$, column type={|c}},
    columns/l2mean/.style={column name=$L_2$-Mean, column type={|c}},
    columns/l2variance/.style={column name=$L_2$-Variance, column type={|c}},
    columns/walltime/.style={column name=wall time [s], column type={|c|}},
    ]{Images/MERef2DEuler/MERefSG1D.csv}   
\pgfplotstabletypeset[
    col sep=comma,
    string type,
    every head row/.style={%
        before row={\hline
            \multicolumn{4}{|c|}{ME-SC, $\nstochdim=1$}  \\
            \hline
        },
        after row=\hline
    },
    every last row/.style={after row=\hline},
    columns/degree/.style={column name=$\nRcells$, column type={|c}},
    columns/l2mean/.style={column name=$L_2$-Mean, column type={|c}},
    columns/l2variance/.style={column name=$L_2$-Variance, column type={|c}},
    columns/walltime/.style={column name=wall time [s], column type={|c|}},
    ]{Images/MERef2DEuler/MERefSC1D.csv}   
    \caption{$\Lp{2}$-errors and wall time for the Euler equations (density) for ME-\hdSG{} and ME-SC method in one random dimension. 
    Example \ref{subsec:MERefinement}.}
    \label{table:MERefinement1D}
\end{table}

\begin{table}[htb!]
\centering
\pgfplotstabletypeset[
    col sep=comma,
    string type,
    every head row/.style={%
        before row={\hline
            \multicolumn{4}{|c|}{ME-\hdSG{}, $\nstochdim=2$}  \\
            \hline
        },
        after row=\hline
    },
    every last row/.style={after row=\hline},
    columns/degree/.style={column name=$\nRcells$, column type={|c}},
    columns/l2mean/.style={column name=$L_2$-Mean, column type={|c}},
    columns/l2variance/.style={column name=$L_2$-Variance, column type={|c}},
    columns/walltime/.style={column name=wall time [s], column type={|c|}},
    ]{Images/MERef2DEuler/MERefSG2D.csv}   
\pgfplotstabletypeset[
    col sep=comma,
    string type,
    every head row/.style={%
        before row={\hline
            \multicolumn{4}{|c|}{ME-SC, $\nstochdim=2$}  \\
            \hline
        },
        after row=\hline
    },
    every last row/.style={after row=\hline},
    columns/degree/.style={column name=$\nRcells$, column type={|c}},
    columns/l2mean/.style={column name=$L_2$-Mean, column type={|c}},
    columns/l2variance/.style={column name=$L_2$-Variance, column type={|c}},
    columns/walltime/.style={column name=wall time [s], column type={|c|}},
    ]{Images/MERef2DEuler/MERefSC2D.csv}   
    \caption{$\Lp{2}$-errors and wall time for the Euler equations (density) for ME-\hdSG{} and ME-SC method in two random dimensions. 
    Example \ref{subsec:MERefinement}.}
    \label{table:MERefinement2D}
\end{table}
\begin{table}[!htb]
\centering
\pgfplotstabletypeset[
    col sep=comma,
    string type,
    every head row/.style={%
        before row={\hline
            \multicolumn{4}{|c|}{ME-\hdSG{}, $\nstochdim=3$}  \\
            \hline
        },
        after row=\hline
    },
    every last row/.style={after row=\hline},
    columns/degree/.style={column name=$\nRcells$, column type={|c}},
    columns/l2mean/.style={column name=$L_2$-Mean, column type={|c}},
    columns/l2variance/.style={column name=$L_2$-Variance, column type={|c}},
    columns/walltime/.style={column name=wall time [s], column type={|c|}},
    ]{Images/MERef2DEuler/MERefSG3D.csv}   
\pgfplotstabletypeset[
    col sep=comma,
    string type,
    every head row/.style={%
        before row={\hline
            \multicolumn{4}{|c|}{ME-SC, $\nstochdim=3$}  \\
            \hline
        },
        after row=\hline
    },
    every last row/.style={after row=\hline},
    columns/degree/.style={column name=$\nRcells$, column type={|c}},
    columns/l2mean/.style={column name=$L_2$-Mean, column type={|c}},
    columns/l2variance/.style={column name=$L_2$-Variance, column type={|c}},
    columns/walltime/.style={column name=wall time [s], column type={|c|}},
    ]{Images/MERef2DEuler/MERefSC3D.csv}   
    \caption{$\Lp{2}$-errors and wall time for the Euler equations (density) for ME-\hdSG{} and ME-SC method in three random dimensions. 
    Example \ref{subsec:MERefinement}.}
    \label{table:MERefinement3D}
\end{table}
\begin{figure}[htb!]
\centering
  \begin{tikzpicture}
\begin{axis}[%
width=1\figurewidth,
height=\figureheight,
scale only axis,
xmode=log,
xminorticks=true,
xlabel={wall time [s]},
ymode=log,
yminorticks=false,
ymajorgrids=true,
yminorgrids = true,
ylabel={error},
legend pos = north east,
legend style={at={(1.03,0.2)},anchor=south west,legend cell align=left, align=left, draw=white!15!black},
]
\addplot [color=red, line width=1.0pt, mark size=2.5pt, mark=*, mark options={solid, red}] table [x=walltime, y=l2mean, col sep=comma] {Images/MERef2DEuler/MERefSC1D.csv};
\addlegendentry{$\Lp{2}-\text{Mean},~\nstochdim=1,~\text{ME-SC}$}
\addplot [color=red, line width=1.0pt, mark size=2.5pt, mark=diamond*, mark options={solid, red}] table [x=walltime, y=l2mean, col sep=comma] {Images/MERef2DEuler/MERefSC2D.csv};
\addlegendentry{$\Lp{2}-\text{Mean},~\nstochdim=2,~\text{ME-SC}$}
\addplot [color=red, line width=1.0pt, mark size=2.5pt, mark=square*, mark options={solid, red}] table [x=walltime, y=l2mean, col sep=comma] {Images/MERef2DEuler/MERefSC3D.csv};
\addlegendentry{$\Lp{2}-\text{Mean},~\nstochdim=3,~\text{ME-SC}$}
\addplot [color=blue, line width=1.0pt, mark size=2.5pt, mark=o, mark options={solid, blue}] table [x=walltime, y=l2mean, col sep=comma] {Images/MERef2DEuler/MERefSG1D.csv};
\addlegendentry{$\Lp{2}-\text{Mean},~ \nstochdim=1$, ME-\hdSG{} }
\addplot [color=blue, line width=1.0pt, mark size=2.5pt, mark=diamond, mark options={solid, blue}] table [x=walltime, y=l2mean, col sep=comma] {Images/MERef2DEuler/MERefSG2D.csv};
\addlegendentry{$\Lp{2}-\text{Mean},~ \nstochdim=2$, ME-\hdSG{} }
\addplot [color=blue, line width=1.0pt, mark size=2.5pt, mark=square, mark options={solid, blue}] table [x=walltime, y=l2mean, col sep=comma] {Images/MERef2DEuler/MERefSG3D.csv};
\addlegendentry{$\Lp{2}-\text{Mean},~ \nstochdim=3$, ME-\hdSG{} }

\end{axis}
\end{tikzpicture}%
  \caption{$\Lp{2}$-errors and wall time for the Euler equations (density) for ME-\hdSG{} and ME-SC method. Example \ref{subsec:MERefinement}}
  \label{fig:MEwalltime}
\end{figure}

\subsection{Uncertain Sod Shock Test}\label{subsec:Euler Equations: Uncertain Sod Shock Test}
In this numerical test we study the behavior of the hyperbolicity limiter $\hyperbolLimit{\limitervariable}$ when it is applied to
the one-dimensional uncertain Sod shock problem from \cite{Poette2009,Schlachter2017a}. 
We consider an uncertain position of the initial discontinuity, i.e., we have the following set of initial conditions
\begin{equation} \label{eq:initialEulerSod}
\left.
\hspace*{2cm}
\begin{alignedat}{1}
 \density(\timevar=0,\x,\uncertaintyD) &= \begin{cases}
1, \qquad &\x < 0.5 + 0.05\uncertaintyD, \hspace*{2cm}\\
0.125, \qquad &\x\geq 0.5 + 0.05\uncertaintyD,
\end{cases}\\
\momentum(\timevar=0,\x,\uncertaintyD) &= 0,\\
\energy(\timevar =0,\x,\uncertaintyD) &= \begin{cases}
2.5, \qquad &\x < 0.5 + 0.05\uncertaintyD,\\
0.25, \qquad &\x\geq 0.5 + 0.05\uncertaintyD,
\end{cases}
\end{alignedat}
\right\}
\end{equation}
where $\uncertaintyD\sim\mathcal{U}(-1,1)$.
The numerical solution is computed up to $T=0.2$ and we define the spatial domain as $\Domain = [0,\,1]$. At the boundary we prescribe
exact boundary conditions. We choose the Lax-Friedrichs numerical flux and the TVBM minmod limiter from \cite{CockburnShu2001} 
as spatial limiter $\tvbminmod$. 

We divide $\Domain$ into $500$ cells, set the DG polynomial degree to three and consider the \hdSG{} and ME-\hdSG{} method.
For the \hdSG{} scheme we use a truncation order of $\SGtruncorder=10$ and for ME-\hdSG{} we consider $\nRcells=10$ random elements and a linear approximation, i.e. $\SGtruncorder=1$. 
The number of quadrature nodes is set to $\nbxiQuadNodes=20$ and both methods are compared to a Monte Carlo simulation using an exact Riemann solver \cite{Backus2017} with $200\,000$ samples.

In \figref{fig:IC1_DG_dx500_p3} we compare mean and variance obtained by the \hdSG{} and ME-\hdSG{} methods against the  ``exact'' solution given by Monte Carlo sampling. The expected value in \figref{fig:Euler_a_a} and \figref{fig:Euler_a_b} indicate a good agreement between Monte Carlo, \hdSG{} and ME-\hdSG{}. However, for the \hdSG{} method we can see in \figref{fig:Euler_a_b} and \figref{fig:Euler_a_d}, especially around the shock at $\x\approx0.8$, that the \hdSG{} solution exhibits ($\SGtruncorder+1=11$) small shocks, which has also been observed for example in \cite{DespresPoetteLucor2013}. 
Because of the discontinuities in $\uncertaintyD$, the plain \hdSG{} approach suffers from Gibbs' oscillations and hence using a piecewise linear interpolation, as in the Multi-Element approach, yields a far better resolution of the mean and variance compared to the plain \hdSG{} approach.

\begin{figure}[htb!]
  \centering
  \settikzlabel{fig:Euler_a_a} \settikzlabel{fig:Euler_a_b} \settikzlabel{fig:Euler_a_c}    \settikzlabel{fig:Euler_a_d}
  \externaltikz{IC1_DG_dx500_p3}{	
	\begin{tikzpicture}		
	\begin{groupplot}[group style={group size=2 by 2, horizontal sep = 2cm,  vertical sep = 2cm},
	width=\figurewidth,
	height=\figureheight,
	scale only axis,
	cycle list name=color fabian,
	]
	\nextgroupplot[
	title = \tikztitle{Expected value $\density$ at $t=0.1$},
	xlabel= {$\x$},
	ylabel = {$\density$},
	ylabel style = {rotate=-90},			
	]
	\addplot+ [smooth,mark repeat = 222, thick] file {Images/IC1_DG_dx500_p3/MCt1.txt};
	\addplot+ [smooth,each nth point=2,mark repeat = 222, mark phase = 60] file {Images/IC1_DG_dx500_p3/hSGDGt1.txt};
	\addplot+ [smooth,each nth point=2,mark repeat = 222, mark phase = 100] file {Images/IC1_DG_dx500_p3/MEhSGDGt1.txt};
	
	\legend{ MC, \hdSG{}, ME-\hdSG{}};

	\nextgroupplot[
	title = \tikztitle{Expected value $\density$ at $t=0.2$},
	xlabel= {$\x$},
	ylabel style = {rotate=-90},			
	]
	\addplot+ [smooth,mark repeat = 222, thick] file {Images/IC1_DG_dx500_p3/MCt2.txt};
	\addplot+ [smooth,each nth point=2,mark repeat = 222, mark phase = 60] file {Images/IC1_DG_dx500_p3/hSGDGt2.txt};
	\addplot+ [smooth,each nth point=2,mark repeat = 222, mark phase = 100] file {Images/IC1_DG_dx500_p3/MEhSGDGt2.txt};
	\legend{ \small MC, \hdSG{}, ME-\hdSG{}};
	
	\nextgroupplot[
	title = \tikztitle{Variance $\density$ at $t=0.1$},
	xlabel= {$\x$},
	ylabel = {$\density$},
	ylabel style = {rotate=-90},
	scaled y ticks = false,
	y tick label style={ /pgf/number format/.cd, fixed,precision = 4,/tikz/.cd},
	]
	\addplot+ [smooth,mark repeat = 222,thick]  file {Images/IC1_DG_dx500_p3/MCt1_var.txt};
	\addplot+ [smooth,each nth point=2,mark repeat = 222, mark phase = 60] file {Images/IC1_DG_dx500_p3/hSGDGt1_var.txt};
	\addplot+ [smooth,each nth point=2,mark repeat = 222, mark phase = 100] file {Images/IC1_DG_dx500_p3/MEhSGDGt1_var.txt};
    \legend{ MC, \hdSG{}, ME-\hdSG{}};
	
    \nextgroupplot[
	title = \tikztitle{Variance $\density$ at $t=0.2$},
	xlabel= {$\x$},
	ylabel style = {rotate=-90},
	scaled y ticks = false,
	y tick label style={ /pgf/number format/.cd, fixed,precision = 4,/tikz/.cd},
	]
	\addplot+ [smooth,each nth point=2,mark repeat = 222,thick]  file {Images/IC1_DG_dx500_p3/MCt2_var.txt};
	\addplot+ [smooth,each nth point=2,mark repeat = 222, mark phase = 60] file {Images/IC1_DG_dx500_p3/hSGDGt2_var.txt};
	\addplot+ [smooth,each nth point=2,mark repeat = 222, mark phase = 100] file {Images/IC1_DG_dx500_p3/MEhSGDGt2_var.txt};
	
    \legend{ MC, \hdSG{}, ME-\hdSG{}};
	
	\end{groupplot}
	\end{tikzpicture}
	}
  \caption{Euler equations with initial state \eqref{eq:initialEulerSod}, $\ncells=500$ and DG polynomial degree $\polydegree=3$. For \hdSG{}, $\SGtruncorder=10$ and for ME-\hdSG{} $\SGtruncorder=1$, $\nRcells=10$. Example \ref{subsec:Euler Equations: Uncertain Sod Shock Test}.}
  \label{fig:IC1_DG_dx500_p3}
\end{figure}

\begin{table}[htb!]
\centering
  \begin{tabular}{|l|l|l|l|l|l|l|l|l|l|l|}
  	\cline{1-5}	\cline{7-11}
    \multicolumn{5}{|c|}{ME-\hdSG{}, $\SGtruncorder=0$}   && \multicolumn{5}{c|}{ME-\hdSG{}, $\SGtruncorder=1$}   \\  
    \cline{1-5}	\cline{7-11}
	$\nRcells$ & $\Lp{1}$-Mean & eoc & $\Lp{1}$-Variance & eoc & &
	$\nRcells$ & $\Lp{1}$-Mean &eoc & $\Lp{1}$-Variance & eoc  \\
	\cline{1-5}	\cline{7-11}
	2 &  0.0086  & --   & 8.8862e-04 & --    && 2 &0.0054 & --   & 5.5152e-04 & --     \\
	4 &  0.0043  & 1.02 & 4.3359e-04 & 1.04  && 4 & 0.0027 & 0.98 & 2.8091e-04 & 0.97   \\ 
	8 &  0.0021  & 0.99 & 2.2043e-04 & 0.98  && 8 & 0.0016 & 0.75 & 1.6526e-04 & 0.77   \\
	16 & 0.0012  & 0.83 & 1.2997e-04 & 0.76  && 16 & 0.0011 & 0.55 & 1.1586e-04 & 0.51   \\
	\cline{1-5}	\cline{7-11}
	\end{tabular}

	  \begin{tabular}{|l|l|l|l|l|} 
	  	\multicolumn{5}{c}{ }\\
	   \cline{1-5}	
	  	\multicolumn{5}{|c|}{\hdSG{}}  \\  
    \cline{1-5}	
	$\SGtruncorder$ & $\Lp{1}$-Mean & eoc & $\Lp{1}$-Variance & eoc \\
	\hline
	2 & 0.0075      & --    &  7.3846e-04 & --      \\
	4 & 0.0049      & 0.61  &  5.7026e-04 & 0.37    \\
	8 & 0.0038      & 0.37  &  4.4814e-04 & 0.35    \\
	16 & 0.0037     & 0.02  &  4.3632e-04 & 0.04    \\
	\hline
	\end{tabular}
	\caption{$\Lp{1}$-errors and experimental order of convergence (eoc) for the Euler equations (density) for $\ncells=500$ and DG polynomial degree $\polydegree=3$. Example \ref{subsec:Euler Equations: Uncertain Sod Shock Test}.}
	\label{table:convMEIC1}
\end{table}
\begin{figure}[htb!]
	\centering
	\externaltikz{surfaceIC1_L}{\begin{tikzpicture}
    \begin{groupplot}[
group style={group size=2 by 1, horizontal sep = 2cm,  vertical sep = 2cm},
        axis on top,
        scale only axis,
        enlargelimits=false,
        width = \figurewidth, 
        height = \figureheight, 
        colormap name ={jet},
        colorbar horizontal,
        title style = {yshift = 0.6cm} ,
        scaled x ticks=false,
        ylabel = {$\mathds{E}(\density)$},
        colorbar style={,
             at={(0,1.02)},
             anchor=south west,
             height=0.02\textwidth,
             width=\figurewidth, 
             xticklabel style={font=\footnotesize,anchor=south, /pgf/number format/.cd, fixed,precision = 2,/tikz/.cd},
             xticklabel shift = -7pt,
                       },
        ]
\nextgroupplot[
        xmin=0.000000,
        xmax=1.000000,
        ymin=-1.000000,
        ymax=1.000000,
        xlabel={$x$},
        ylabel={$\xi$},
        ylabel style = {rotate=-90},
        title = \tikztitle{\hdSG{}},
        point meta min = 0.085585,
        point meta max = 1.006629,
        ]
      \addplot graphics[xmin=0.000000,xmax=1.000000,ymin=-1.000000,ymax=1.000000] {Images/surfaceIC1/IC1N10.png};
      
      \nextgroupplot[
      xmin=0.000000,
      xmax=1.000000,
      ymin=-1.000000,
      ymax=1.000000,
      xlabel={$x$},
      ylabel={},
      title = \tikztitle{ME-\hdSG{}},
      point meta min = 0.085585,
      point meta max = 1.006629,
      ]
      \addplot graphics[xmin=0.000000,xmax=1.000000,ymin=-1.000000,ymax=1.000000] {Images/surfaceIC1/IC1ME.png};
    \end{groupplot}
\end{tikzpicture}}
	\caption{Expected value of the density $\density$ at $\timevar=0.2$ for Euler equations with initial state \eqref{eq:initialEulerSod}, $\ncells=500$, DG polynomial degree $\polydegree=3$. For \hdSG{}, $\SGtruncorder=10$ and for ME-\hdSG{} $\SGtruncorder=1$, $\nRcells=10$. Example \ref{subsec:Euler Equations: Uncertain Sod Shock Test}.}
	\label{fig:surf_IC1_DG_dx500_p3}
\end{figure} 

Spurious oscillations for the \hdSG{} method can also be observed in the $\x-\uncertaintyD$-diagram in \figref{fig:surf_IC1_DG_dx500_p3}, especially in the vicinity of the shock-curve around $\x\approx0.8$.
Furthermore, the influence of the $\x-\uncertaintyD$ discontinuities can be seen in Table \ref{table:convMEIC1}, where we show the error in mean and variance between the \hdSG-, ME-\hdSG-approximation and the Monte Carlo solution. 
We deduce that the error for the \hdSG-approach quickly starts to stagnate, whereas the error for the Multi-Element approach is still decreasing.
However, for all three methods the computed order of convergence in this example is smaller than one which is due to the discontinuities in $\uncertaintyD$.

In \figref{fig:limIC1} we plot the values of the limiter variable $\limitervariable$ for the \hdSG{} and the ME-\hdSG{} method. In \figref{fig:limIC1SG} we can see that the limiter clearly follows the discontinuity in $\x$. 
For the ME-\hdSG{} scheme we plot the maximum value of $\limitervariable$ over all time-steps, random and physical cells in \figref{fig:limIC1ME}. We observe that the limiter is only active 
for the initial condition. Afterwards, the SG solution does not leave the hyperbolicity set which illustrates the strength of the Multi-Element approach. The superiority of the
ME-\hdSG{} is finally demonstrated in \tabref{table:usageLimiterIC1}. Here, we display the percentage of limited cells for both methods compared to all space-time-stochastic cells. The percentage of limited cells for ME-\hdSG{} is one order of magnitude lower than for the \hdSG{} method.
\begin{figure}[htb]
  \centering
  \settikzlabel{fig:limIC1SG} \settikzlabel{fig:limIC1ME}
  \externaltikz{limIC1_L}{\begin{tikzpicture}
    \begin{groupplot}[
group style={group size=2 by 1, horizontal sep = 2cm,  vertical sep = 2cm},
        axis on top,
        scale only axis,
        enlargelimits=false,
        width = \figurewidth, 
        height = \figureheight, 
        colormap name ={myhot},
        colorbar horizontal,
        title style = {yshift = 0.6cm} ,
        scaled x ticks=false,
        ylabel = {$\mathds{E}(\density)$},
        colorbar style={,
             at={(0,1.02)},
             anchor=south west,
             height=0.02\textwidth,
             width=\figurewidth, 
             xticklabel style={font=\footnotesize,anchor=south, /pgf/number format/.cd, fixed,precision = 5,/tikz/.cd},
             xticklabel shift = -7pt,
                       },
        ]
\nextgroupplot[
        xmin=0.000000,
        xmax=1.000000,
        ymin=0.000000,
        ymax=0.200000,
        xlabel={$x$},
        ylabel={$t$},
        ylabel style = {rotate=-90},
        yticklabel style={/pgf/number format/.cd, fixed,precision = 2,/tikz/.cd},
        title = \tikztitle{\hdSG{}},
        point meta min = -14.000000,
        point meta max = -0.556721,
		colorbar style={ %
	xticklabel style={font=\footnotesize,anchor=west,/pgf/number format/.cd,
		/tikz/.cd},
	xticklabel = $10^{\pgfmathparse{\tick}\pgfmathprintnumber\pgfmathresult}$, 
	xticklabel shift = -14pt,                                                 
}, 
        ]
      \addplot graphics[xmin=0.000000,xmax=1.000000,ymin=0.0006000,ymax=0.200000] {Images/limIC1/limIC1SG.png};
      
      \nextgroupplot[
      xmin=0.000000,
      xmax=1.000000,
      ymin=0.000000,
      ymax=0.200000,
      xlabel={$x$},
      ylabel={},
      yticklabel style={/pgf/number format/.cd, fixed,precision = 2,/tikz/.cd},
      title = \tikztitle{ME-\hdSG{}}, 
      point meta min = 0.000000,
	  point meta max = 0.517412,
      ]
      \addplot graphics[xmin=0.000000,xmax=1.000000,ymin=0.00000,ymax=0.200000] {Images/limIC1/limIC1ME.png};
    \end{groupplot}
\end{tikzpicture}}
  \caption{Limiter plot for Euler equations with initial state \eqref{eq:initialEulerSod}, $\ncells=500$ and DG polynomial degree $\polydegree=3$. For \hdSG{}, $\SGtruncorder=2$ and for ME-\hdSG{} $\SGtruncorder=1$, $\nRcells=10$. For a better visualization of the \hdSG{} method, we have used a logarithmic z-scale. Example \ref{subsec:Euler Equations: Uncertain Sod Shock Test}}
  \label{fig:limIC1}
\end{figure}
\begin{table}[htb!]
\centering
  \begin{tabular}{|l|l|l|l|l|l|l|l|l|l|}
  \hline 
    $\SGtruncorder$/$\nRcells$  & 1 &2 &3 &4 &5 &6&7 &8 &9  \\ \hline
    \hdSG{}     \hfill [\%]    & 0.0146 & 0.1473 & 0.0721  &0.0457 &0.0376  &0.0386&0.0335 & 0.0251 &0.0199   \\
    ME-\hdSG{} \hfill [\%]     & 0.0146 & 0.0028  &0.0021   &0.0034  &0.0037   &0.0022 & 0.0024 &0.0021 & 0.0008  \\
    \hline
	\end{tabular}
	\caption{Percentage of limited cells over all time-steps for the Euler equations with $\ncells=500$ and DG polynomial degree $\polydegree=3$. For ME-\hdSG{} we use $\SGtruncorder=1$.  Example \ref{subsec:Euler Equations: Uncertain Sod Shock Test}  }
	\label{table:usageLimiterIC1}
\end{table}

\subsection{Double Mach Reflection with Uncertain Angle} \label{subsec:dmr}
As a final numerical test for the hyperbolicity-preserving limiter,
we consider the Double Mach Reflection test case suggested by Woodward and Colella  \cite{WoodwardColella1984}.
It consists of a Mach 10 shock wave that hits a ramp which is inclined by 30 degrees. 
The Double Mach reflection poses a very challenging problem for the 
\hdSG{} method because the solution is very likely to leave the hyperbolicity set due to the high jump in pressure.
We choose the angle of the ramp uncertain,
i.e., we let $\uncertaintyD \sim \mathcal{U}(28^\circ,32^\circ)$ and consider the following Riemann data in primitive variables
\begin{equation} \label{eq:initialDMR}
\left.
\hspace*{2cm}
\begin{alignedat}{1}
\density(\timevar=0,\x,\y,\uncertaintyD) &= \begin{cases}
8, \qquad & \x< \overline{x} + \tan\left(\frac{\uncertaintyD \pi }{180^\circ}\right)\y, \hspace*{2cm}\\
0.125, \qquad  &\x \geq \overline{x} + \tan\left(\frac{\uncertaintyD \pi }{180^\circ}\right)\y,
\end{cases}\\
\velocitydim{1}(\timevar=0,\x,\y,\uncertaintyD) &= \begin{cases}
8.25 \cos\left(\frac{\uncertaintyD \pi}{180^\circ}\right)& \x< \overline{x} + \tan\left(\frac{\uncertaintyD \pi }{180^\circ}\right)\y, \hspace*{2cm}\\
0 , &\x \geq \overline{x} + \tan\left(\frac{\uncertaintyD \pi }{180^\circ}\right)\y,
\end{cases}\\
\velocitydim{2}(\timevar=0,\x,\y,\uncertaintyD) &= \begin{cases}
-8.25 \cos\left(\frac{\uncertaintyD \pi}{180^\circ}\right),\qquad &\x< \overline{x} + \tan\left(\frac{\uncertaintyD \pi }{180^\circ}\right)\y, \hspace*{2cm}\\
0 ,\qquad & \x \geq \overline{x} + \tan\left(\frac{\uncertaintyD \pi }{180^\circ}\right)\y,
\end{cases}\\
\pressure(\timevar =0,\x,\y,\uncertaintyD) &= \begin{cases}
116.5,\qquad &\x< \overline{x} + \tan\left(\frac{\uncertaintyD \pi }{180^\circ}\right)\y, \hspace*{2cm}\\
1, \qquad & \x \geq \overline{x} + \tan\left(\frac{\uncertaintyD \pi }{180^\circ}\right)\y,
\end{cases}
\end{alignedat}
\right\}
\end{equation}
where $\overline{x}=\frac{1}{6}$ is the start of the ramp. The computational domain is $\Domain=[0,4]\times[0,1]$ and we set $\timeint=0.2$.
At the bottom of the domain we employ reflective boundary conditions whereas we prescribe outflow boundary conditions at the right.
At the remaining boundaries we apply Dirichlet boundary conditions, which correspond 
to the physical values. We use the HLLE  numerical flux and according to Remark \ref{rem:subcelllimiter} we choose the FV sub-cell limiter as spatial limiter $\tvbminmod$. To detect troubled cells we implement the modified JST indicator as described in \cite{SonntagMunz2017}.

For this numerical example we apply the ME-\hdSG{} method with $\nRcells=8$ MEs, SG polynomial degree $\SGtruncorder=4$ and $\nbxiQuadNodes=20$.
The physical mesh consists of $\ncells=190\times 40$ cells and we use a DG polynomial degree of $\polydegree = 4$.
In \figref{fig:dmrMean} we plot mean and standard deviation of the density at time  $\timeint=0.2$.
We can see that the shock front is smeared out because of the variable angle.
Thanks to the high-order resolution in physical and stochastic space, 
small-scale flow features in mean and variance of density are clearly visible.
A high standard deviation can be identified around the position of
the shock wave. Around $\overline{x}=\frac{1}{6}$, which corresponds to the start position of the ramp,
we also observe a high standard deviation.

\figref{fig:dmrLimiter} shows the values of the limiter variable $\limitervariable$ in ME one and eight at time $\timeint=0.2$. We see
that the limiter is only active around the shock front. Furthermore, the limited cells vary with the uncertain angle.
\refone{We want to emphasize that the plain SG and even the plain ME-SG approach without the hyperbolicity-preserving limiter crash immediately after initialization of the initial condition.}
This shows that our proposed scheme is a reliable and robust method to compute complex flow problems with a high resolution in
space, time and stochasticity.
	
\begin{figure}[!htb]
  \centering
  \includegraphics[width = 0.8 \textwidth]{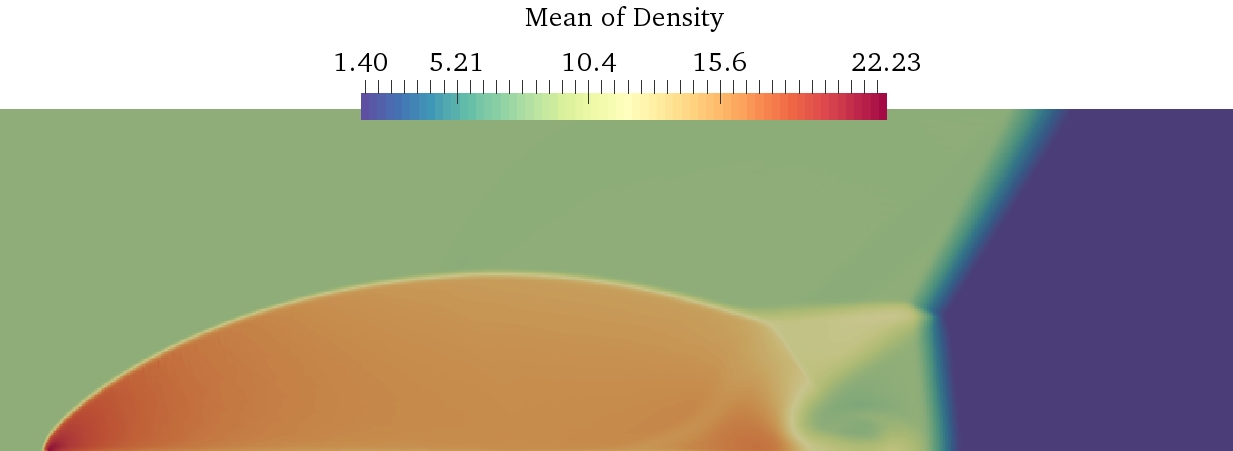}
  \includegraphics[width = 0.8 \textwidth]{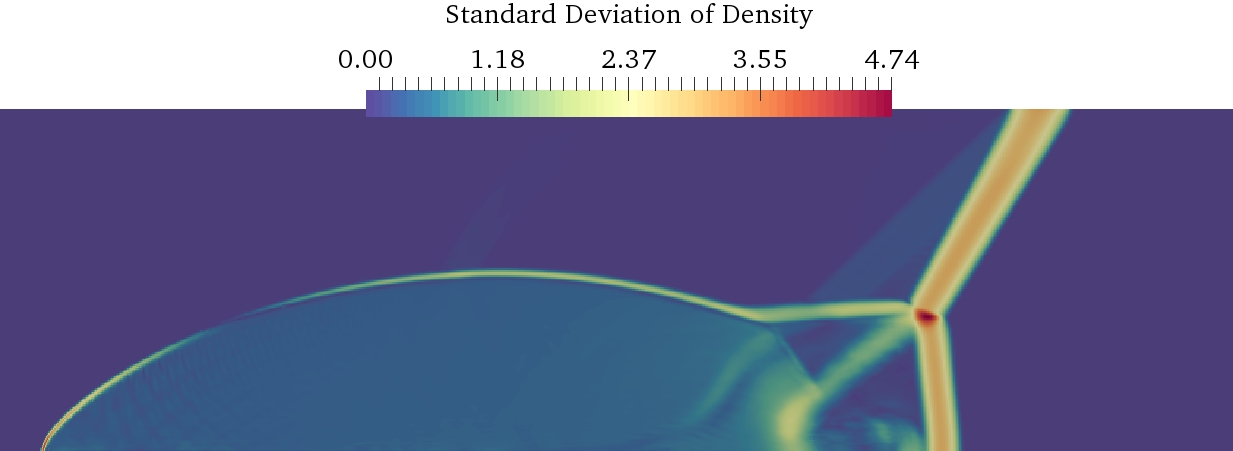}
  \caption{Euler equations with initial state \eqref{eq:initialDMR}, $\ncells=7600$ and DG polynomial degree $\polydegree=4$. For ME-\hdSG{}, $\SGtruncorder=1$, $\nRcells=8$. Example \ref{subsec:dmr} }
  \label{fig:dmrMean}
\end{figure}

\begin{figure}[!htb]
  \centering
  \includegraphics[width = 0.9 \textwidth]{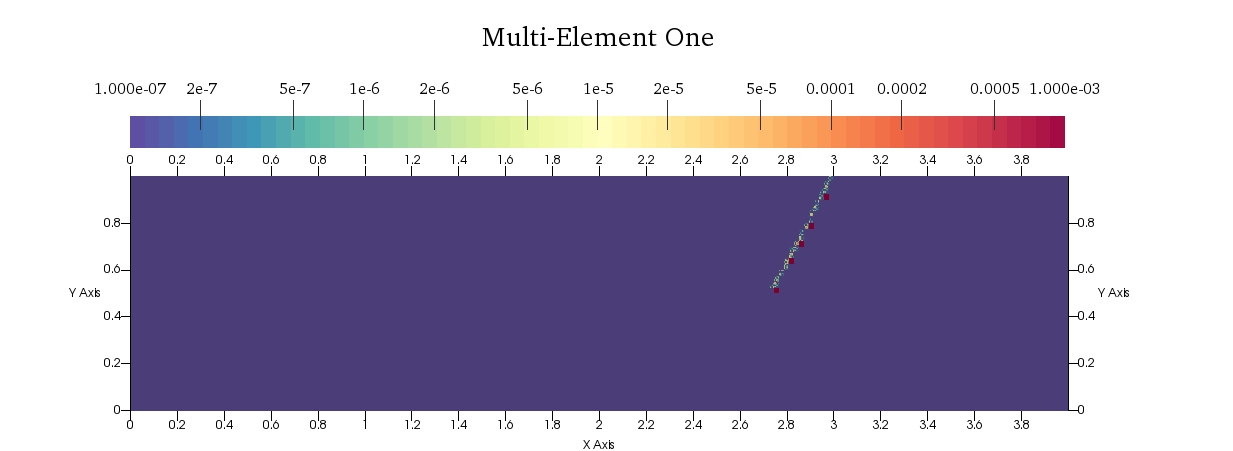}
  \includegraphics[width = 0.9 \textwidth]{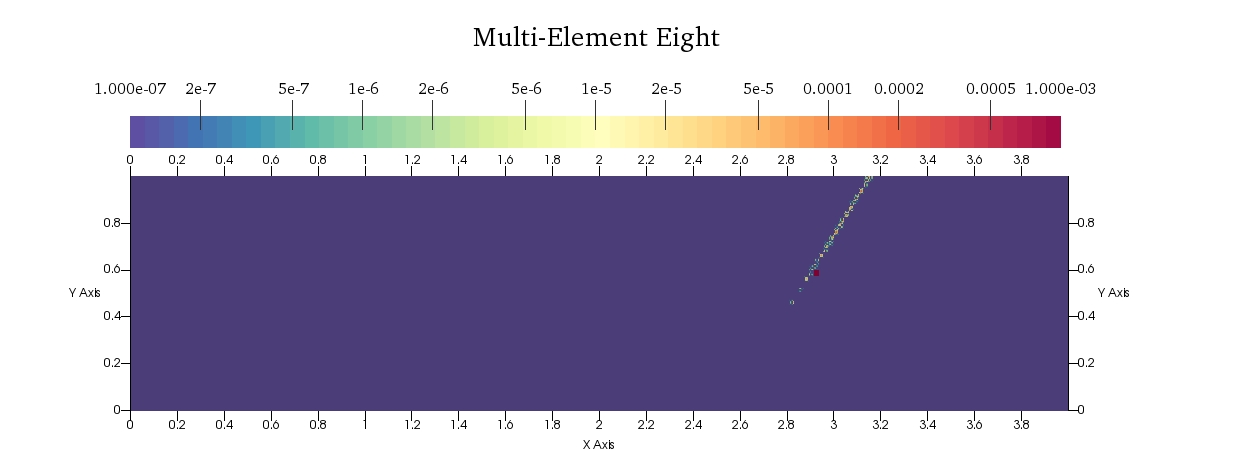}
  \caption{Limiter plot for Euler equations with initial state \eqref{eq:initialDMR}, $\ncells=7600$ and DG polynomial degree $\polydegree=4$. We plot the usage of the limiter in ME one and eight
  at time $\timeint=0.2$. Example \ref{subsec:dmr}}
  \label{fig:dmrLimiter}
\end{figure}
\section{Conclusions and Outlook}
Throughout this paper, we have extended the hyperbolicity-preserving stochastic slope limiter from \cite{Schlachter2017} to multiple dimensions in  the physical and stochastic space.
To derive a high-order method, we combined the hyperbolicity-preserving
stochastic Galerkin scheme with a Multi-Element ansatz in the stochastic space and  a Runge--Kutta discontinuous Galerkin spatial discretization.
The resulting  ME-\hdSG{} scheme significantly improved the results compared to the standard SG scheme, 
especially when discontinuities are present in the uncertainty. 
Moreover, we compared the performance of our presented numerical scheme in multiple stochastic dimensions to that of 
the non-intrusive Stochastic Collocation method.
Our results show that our method is competitive in less than three stochastic dimensions,
since the cost of the SG system in higher dimensions is increasing significantly.
As a final numerical example, we applied our numerical scheme to the Double Mach reflection problem, \refone{where even the plain ME-SG approach without hyperbolic limiter fails. This shows in particular that our method is capable to handle challenging flow problems.}

Future work should incorporate adaptive refinements in space and stochasticity to further improve the efficiency of the ME-\hdSG{} method 
and to extend the range of dimensions in which the ME-\hdSG{} scheme outperforms non-intrusive methods. 
Moreover, we want to \refone{adapt} our methodology to the intrusive polynomial moment method, which promises to require less hyperbolicity limiting.
\section*{Acknowledgments}
Funding by the Deutsche Forschungsgemeinschaft (DFG) within the RTG GrK 1932 ``Stochastic Models for Innovations in the Engineering Science'' is gratefully acknowledged.
J.D., T.K. and F.M thank the Baden-W{\"u}rttemberg Stiftung for support via the project ``SEAL''.

\bibliographystyle{siam}
\bibliography{library,bibliography,libfabian}
\end{document}